\documentclass[a4paper, 10pt]{article}
\usepackage[utf8]{inputenc}
\usepackage{amsmath,amsthm}
\numberwithin{equation}{section}
\usepackage{amssymb,esint,hyperref}
\usepackage{amscd}
\usepackage{xspace}
\usepackage{fancyhdr}
\usepackage{verbatim}
\usepackage{graphicx}
\usepackage{cite}
\usepackage{stmaryrd}
\usepackage{bbm}
\usepackage{xcolor}
\usepackage{mathrsfs}
\newtheorem{theorem}{Theorem}[section]

\newtheorem{lemma}[theorem]{Lemma}

\newtheorem{remark}[theorem]{Remark}

\def\da{\mathsf{Data}}
\def\B{\mathsf{B}}
\def\A{\mathsf{A}}
\def\M{\mathsf{M}}

\def\sH{\mathsf{H}}

\def\K{\mathsf{K}}
\def\sL{\mathsf{L}}

\def\R{\mathsf{R}}

\def\x{\mathbf{x}}

\newcommand{\eps}{\varepsilon}
	\title{Well-posedness for the mean curvature flow on the half-space and on bounded domains}
	\author{
		{{\bf Ke Chen,\thanks{E-mail address: k1chen@polyu.edu.hk, Department of Applied Mathematics, The Hong Kong Polytechnic University, Kowloon, Hong Kong, PR China.}
			~~Ruilin Hu\thanks{E-mail address: rh2488@bath.ac.uk, Department of Mathematical Sciences, University of Bath, Bath, BA2 7AY, UK.}
			~~Quoc-Hung Nguyen\thanks{E-mail address: qhnguyen@amss.ac.cn, Academy of Mathematics and Systems Science, Chinese Academy of Sciences, Beijing, 100190, China.}}}}
\begin{document}
	\maketitle
	\begin{abstract}
		We study graphical mean curvature flow in arbitrary codimension over the half-space and over smooth bounded domains, subject to homogeneous Dirichlet boundary conditions. At the scaling-critical Lipschitz regularity, we prove local well-posedness for initial graphs that can be approximated in $W^{1,\infty}$ by smooth profiles compatible with the boundary condition. Within this class, if the initial Lipschitz seminorm is sufficiently small, the corresponding solution is global; on a bounded domain, it also converges exponentially to the flat graph. Positive-time regularization is quantified by time-weighted H\"older estimates whose weighted quantities remain bounded as $t\downarrow0$. The main analytic ingredient is a boundary Schauder theory for variable-coefficient parabolic systems. On the half-space, it combines coefficient freezing with parity extensions and boundary identities intrinsic to the graphical system. On curved domains, localization and boundary flattening lead to anisotropic estimates, from which normal derivatives are recovered recursively.
	\end{abstract}
\section{Introduction}
	\subsection*{Background and formulation of the problem}
	Mean curvature flow evolves an immersed submanifold in the direction of its mean curvature vector. Let $\mathcal M$ be a $d$-dimensional manifold and let $F_0:\mathcal M\to\mathbb R^{d+N}$ be an immersion. The flow is governed by
	\begin{equation*}
		\partial_tF=\mathbf H,\qquad F|_{t=0}=F_0,
	    \end{equation*}
	where $\mathbf H$ is the mean curvature vector of $F(t,\mathcal M)$. After a tangential reparametrization, a graphical solution has the form $F(t,x)=(x,f(t,x))$, where $f$ is $\mathbb R^N$-valued. The graph function then solves the quasilinear parabolic system
	\begin{equation}\label{mcf}
		\partial_tf=\A[\nabla f]:\nabla^2f,\qquad f|_{t=0}=f_0,
	\end{equation}
	with
	\begin{equation*}
		\A[\nabla f]=\left(\mathrm{Id}+\sum_{\alpha=1}^{N}\nabla f^\alpha\otimes\nabla f^\alpha\right)^{-1},
		\qquad
		\A[\nabla f]:\nabla^2f=\sum_{i,j=1}^d\A_{ij}[\nabla f]\,\partial_i\partial_jf.
	\end{equation*}
	The coefficient matrix depends only on the slope of the graph and satisfies
	\begin{equation*}
		\frac{|\xi|^2}{1+|\nabla f|^2}
		\leq \xi\cdot\A[\nabla f]\xi
		\leq |\xi|^2,
		\qquad \xi\in\mathbb R^d,
	\end{equation*}
Thus a bound for $\|\nabla f\|_{L^\infty}$ yields uniform parabolicity. The difficulty is to use this ellipticity without losing derivatives through the nonlinear dependence of $\A$ on $\nabla f$.

	On $\mathbb R^d$ and $\mathbb R^d_+$, equation \eqref{mcf} is invariant under the parabolic scaling
	\[
		f(t,x)\longmapsto f_\lambda(t,x):=\lambda^{-1}f(\lambda^2t,\lambda x).
	\]
	The seminorm $\|\nabla f_0\|_{L^\infty}$ is invariant under this transformation, making $\dot W^{1,\infty}$ the natural critical space. The scaling also dictates the time weights in our estimates: for every integer $r\geq1$ and $\alpha\in(0,1)$,
	\[
		t^{(r-1+\alpha)/2}\|\nabla^rf(t)\|_{\dot C^\alpha},
	\]
	is invariant. We seek estimates that retain this critical scaling and remain uniform as $t\downarrow0$, although the initial graph has only one bounded derivative.
    
	For entire hypersurfaces, Ecker and Huisken established long-time existence for graphical mean curvature flow and derived interior estimates that yield immediate regularization from locally Lipschitz initial graphs \cite{EH1989,EH1991}. Clutterbuck and Schn\"urer later proved stability of graphical self-expanders arising from mean-convex cones \cite{ClutterbuckSchnurer2011}. The situation changes in higher codimension. The nonsmooth minimal graphs of Lawson and Osserman \cite{LO1977} show that Lipschitz regularity alone does not imply smoothness, even for stationary solutions. Long-time theories are nevertheless available under additional geometric hypotheses, notably the area-decreasing conditions of Wang and of Tsui and Wang \cite{W2002,TsuiWang2004}, and in the Lagrangian setting \cite{CCH2009,CCY2013}.

	At the scale-critical Lipschitz level, Koch and Lamm proved global existence, uniqueness, and analyticity for entire graphical mean curvature flow with sufficiently small slope in arbitrary codimension \cite{Koch2011}; their rough-data framework is surveyed in \cite{KochLamm2015}. A complementary approach starts from rough sets rather than a fixed graphical parametrization. Hershkovits obtained short-time smoothing and uniqueness for sufficiently flat Reifenberg sets, first in codimension one and then in arbitrary codimension \cite{Hershkovits2017,Hershkovits2018}. More recent results include long-time existence for uniformly area-decreasing entire graphs of codimension two and, under an additional asymptotically conical hypothesis, convergence to a self-expander \cite{SavasHalilajSmoczyk2024}, as well as a heat-kernel proof of well-posedness and continuous dependence for hypersurfaces with bounded second fundamental form \cite{Han2026}. These results concern the whole space or boundaryless submanifolds; they do not provide the fixed-boundary estimates developed here.

	{ Earlier, Wang proved instantaneous smoothing for Lipschitz submanifolds with sufficiently small local Lipschitz norm and treated the Dirichlet problem for the higher-codimensional minimal surface system through the associated graphical parabolic system \cite{W2004b,W2004a}.}

	The presence of a boundary introduces a different set of issues: the boundary condition must be propagated, normal regularity must be recovered from tangential estimates, and flattening a curved boundary creates variable coefficients and commutator terms. Huisken studied nonparametric mean curvature flow with prescribed boundary data in \cite{Huisken1989Boundary}; related gradient estimates and capillary-type boundary problems may be found in \cite{AC09,Jang23}. In a distinct class of geometric parabolic problems, Sire, Wei, and Zheng constructed finite-time boundary singularities for harmonic-map flow with free boundary on the half-plane \cite{SireWeiZheng2023}. Lin, Sire, Wei, and Zhou constructed singularities at prescribed boundary or interior points for nematic-liquid-crystal flow with partially free boundary \cite{LinSireWeiZhou2023}, while Sire, Wu, and Zhou obtained global weak solutions with partial regularity on smooth bounded planar domains \cite{SireWuZhou2024}.

	{ Boundary-sensitive regularity questions also arise in other parabolic systems. For the Stokes and Navier--Stokes equations, Chang and Kang showed that bounded solutions may have singular normal derivatives near the boundary and gave conditions ensuring bounded, indeed H\"older-continuous, gradients \cite{ChangKang}. For critical SQG with Dirichlet fractional dissipation, a sequence of works by Constantin and Ignatova developed nonlinear and commutator estimates for the Dirichlet fractional Laplacian, together with interior and boundary estimates in bounded domains \cite{CI17,CI16,CI20}; their subsequent work with the third author established global smooth well-posedness \cite{Constantin2023}.}

	Related analytical themes also occur in critical Yang--Mills heat flow \cite{SireWeiZhengYM2022,SireWeiZhengZhouYM2026}, nonlocal drift--diffusion equations \cite{NguyenNowakSireWeidner2024}, and nonlocal elliptic boundary regularity \cite{KimWeidner2025}. These equations and their solution theories differ from \eqref{mcf}; we mention them only as broader context for critical regularity and boundary effects.

	The purpose of this paper is to develop a critical, time-weighted theory up to a fixed boundary for Lipschitz graphs in arbitrary codimension. It is the third part of a series originating from \cite{KHN2024}. Part I \cite{KHN2025} develops a kernel-based coefficient-freezing method and proves time-weighted Schauder estimates for variable-coefficient parabolic systems on the whole space. Part II \cite{KHN2026} combines that framework with elliptic transmission estimates for the two-phase Muskat problem with surface tension near the log-critical Lipschitz threshold. Here we adapt the method to fixed boundaries. New ingredients include parity arguments on the half-space, boundary identities for the graphical system, anisotropic estimates after boundary flattening, and a long-time argument on bounded domains.

	More precisely, we consider the initial-boundary value problem
	\begin{equation}\label{meancur1}
	\begin{aligned}
	&\partial_t f=\A[\nabla f]:\nabla^2 f &&\text{in }(0,T)\times\Omega,\\
	&f(0,\cdot)=f_0 &&\text{in }\Omega,\\
	&f=0 &&\text{on }(0,T)\times\partial\Omega,
	\end{aligned}
	\end{equation}
		Here $f:[0,T]\times\Omega\to\mathbb R^N$, the domain $\Omega$ is either $\mathbb R_+^d$ or a bounded domain with $C^{2m+3}$ boundary, and $\A$ is given by \eqref{mcf}. The trace of $f_0$ is assumed to vanish on $\partial\Omega$. We prove local well-posedness when $f_0$ can be approximated in the Lipschitz norm by smooth functions satisfying the same boundary condition. Within this approximable class, smallness of $\|\nabla f_0\|_{L^\infty}$ gives global existence; on bounded domains, the solution also decays exponentially. We do not claim a local theory for arbitrary Lipschitz data in higher codimension.
	
	\subsection*{Main results}
	The interior estimates follow from the time-weighted whole-space Schauder theory established in Part I \cite{KHN2025}, recalled below as Theorem~\ref{propb=0}. The new content lies in estimates up to the boundary and their application to \eqref{meancur1}.

	Our first result concerns the half-space. The local statement permits a large slope, but requires the initial graph to lie in the $W^{1,\infty}$-closure of smooth boundary-compatible profiles. Once one such profile is chosen sufficiently close to the data, its higher norms determine the lifespan. The global statement additionally assumes that the critical Lipschitz seminorm is small.
	\begin{theorem}[Half-space]\label{eqmcls}
	Let $m\in\mathbb N$, let $\kappa\in(\frac{3}{4},1)$, and set $\Omega=\mathbb R_+^d$. There exists $\eps_0>0$ such that the following assertions hold.
	\begin{enumerate}
	\item Suppose $f_0\in W^{1,\infty}(\mathbb R_+^d;\mathbb R^N)$ has zero boundary trace and that there exists a sequence $\{\phi_\eps\}\subset C_b^\infty(\overline{\mathbb R_+^d};\mathbb R^N)$, also with zero boundary trace, such that
	\[
		\lim_{\eps\rightarrow 0}\|f_0-\phi_\eps\|_{W^{1,\infty}(\mathbb R^d_+)}=0.
	\]
	Choose $\eps>0$ so that $\|f_0-\phi_\eps\|_{W^{1,\infty}(\mathbb R^d_+)}\leq\eps_0$, and write $\phi=\phi_\eps$.
	Then there exists $T_0>0$ for which \eqref{meancur1} has a unique solution in the following class,
	\begin{align*}
	&\left\{f\in L^\infty((0,T_0);C_b^1(\overline{\mathbb{R}^d_+})) \cap C((0,T_0];C_b^{2m+2}(\overline{\mathbb{R}_+^d})):\right. \\
    &\quad\left.\sup_{0<t\leq T_0}\left[
	\|\nabla f(t)\|_{L^\infty}
	+t^{m+\frac{1+\kappa}{2}}
	\left(\|\nabla^{2m+2}f(t)\|_{\dot C^\kappa}
	+\|\nabla^{2m}\partial_tf(t)\|_{\dot C^\kappa}\right)
	\right]
	\leq C\|f_0\|_{\dot W^{1,\infty}}\right\}.
	\end{align*}
	Here $T_0$ and $C$ may depend on $d,N,m,\kappa$ and $\|\phi\|_{C_b^{2m+3}}$; $\eps_0$ depends only on $d,N,m$, and $\kappa$.

	\item If in addition $\|f_0\|_{\dot W^{1,\infty}}\leq\eps_0$, then the solution is global and
	\begin{equation*}
	\sup_{t>0}\left[
	\|\nabla f(t)\|_{L^\infty}
	+t^{m+(1+\kappa)/2}
	\left(\|\nabla^{2m+2}f(t)\|_{\dot C^\kappa}
	+\|\nabla^{2m}\partial_tf(t)\|_{\dot C^\kappa}\right)
	\right]
	\leq C\|f_0\|_{\dot W^{1,\infty}}.
	\end{equation*}
	\end{enumerate}
	In both cases the positive-time solution satisfies the additional boundary identities
	\begin{equation}\label{flat-boundary-identities-intro}
	\left.\Delta^kf(t)\right|_{\partial\mathbb R_+^d}=0,
	\qquad k=1,\ldots,m.
	\end{equation}
	\end{theorem}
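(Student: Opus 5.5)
The plan is a fixed-point/continuity argument in the time-weighted space
\[
\|f\|_{X_T}:=\sup_{0<t\le T}\Big[\|\nabla f(t)\|_{L^\infty}+t^{m+\frac{1+\kappa}{2}}\big(\|\nabla^{2m+2}f(t)\|_{\dot C^\kappa}+\|\nabla^{2m}\partial_tf(t)\|_{\dot C^\kappa}\big)\Big],
\]
driven by a linear boundary Schauder estimate on $\mathbb R^d_+$. The linear model is $\partial_t u-\A[\nabla g]:\nabla^2u=F$ with $u|_{\partial\mathbb R^d_+}=0$, where $g$ is a given slope field with $\|\nabla g\|_{L^\infty}$ bounded and the critical weighted seminorms of $g$ controlled.

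First step: prove the time-weighted estimate $\|u\|_{X_T}\lesssim\|\nabla u_0\|_{L^\infty}+(\text{weighted norms of }F)$ in the constant-coefficient case. Freeze $\A$ at a boundary point and make a linear change of variables preserving $\{x_d=0\}$ so that the operator becomes the heat operator. Extend $u$ oddly in $x_d$; the Dirichlet condition makes this compatible with the heat equation on $\mathbb R^d$. Then apply the whole-space time-weighted Schauder theory, Theorem~\ref{propb=0}, to the extended function.

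Second step: pass to variable coefficients. Write $\A[\nabla g]=\A[\nabla g(t,x_0)]+(\A[\nabla g]-\A[\nabla g(t,x_0)])$ and use the kernel coefficient-freezing method of Part I. The commutator is small either because $\|\nabla g\|_{\dot C^\kappa}$ carries a time weight, or, for large data, because $g$ is close to the smooth profile $\phi$ on a short interval. The odd extension does not commute with variable coefficients, since $\A_{id}$ with $i<d$ is not even. To handle this, I will differentiate tangentially as much as possible. Normal derivatives $\partial_d^2u$ will be recovered from the equation itself, since $\A_{dd}\ge(1+|\nabla g|^2)^{-1}$.

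The higher-order identities $\Delta^k f|_{\partial}=0$ are needed to iterate to order $2m+2$. On the boundary $f=0$, so all tangential derivatives of $f$ vanish there. Consequently $\nabla f$ is normal, $\A[\nabla f]$ restricted to the boundary has an explicit structure, and $\partial_tf=0$ gives $\A:\nabla^2f=0$ on $\partial\mathbb R^d_+$. I would verify inductively, by applying $\partial_t$ and the equation repeatedly, that $\partial_t^kf=0$ on the boundary implies $\Delta^kf=0$ there for this graphical structure. These identities supply the compatibility needed for the parity extension at each order.

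Third step: nonlinear argument. For part (2), define $\Phi(g)=u$ solving the linear problem with coefficients $\A[\nabla g]$ and data $f_0$. Smallness of $\eps_0$ makes $\Phi$ a contraction on a ball of radius $C\|f_0\|_{\dot W^{1,\infty}}$ in $X_\infty$. Differences are estimated via $\A[\nabla g_1]-\A[\nabla g_2]$ multiplied by $\nabla^2u$, which keeps the critical weights. For part (1), write $f=\phi^\sharp+w$, where $\phi^\sharp$ is the solution of the linear heat-type problem from $\phi$ with boundary-compatible extension, or simply the time-independent $\phi$. Then $w_0=f_0-\phi$ is small in $W^{1,\infty}$. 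The smallness is supplied by $\eps_0$ and by $T_0$ being small relative to $\|\phi\|_{C^{2m+3}_b}$, which makes the forcing $\A[\nabla\phi]:\nabla^2\phi$ and the coefficient oscillation small on $(0,T_0)$. Uniqueness follows from the same difference estimate, and the boundary identities pass to the limit.

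The main obstacle is the second step near the boundary. Freezing coefficients destroys parity, and the mixed terms $\A_{id}\partial_i\partial_d$ must be controlled at critical weight without loss of derivatives. I would first try to absorb them as perturbative forcing using the Hölder smallness of $\A[\nabla g]-\A[\nabla g(x_0)]$ at scale $\sqrt t$. The boundary identities would serve as a fallback to justify the extension at higher orders.
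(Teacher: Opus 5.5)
Your scheme has a genuine gap at the step you yourself flag as the main obstacle. The reason you give for that obstacle is also not the right one.

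\textbf{Forcing that does not vanish on the boundary.} Your Step~1 linear estimate (odd extension plus Theorem~\ref{propb=0}) only accepts forcing whose odd reflection is H\"older. That means forcing vanishing on $\partial\mathbb R^d_+$, and at order $2m$ also with vanishing even normal derivatives there. Part (i) unavoidably produces terms that do not vanish on the boundary:
\begin{itemize}
\item the source $\A[\nabla\phi]:\nabla^2\phi$, since $\partial_d^2\phi|_{\partial\mathbb R^d_+}$ is arbitrary;
\item the coefficient-freezing remainder $(\A[\nabla\phi](x)-\A[\nabla\phi](x_0)):\nabla^2u(x)$ for interior freezing points $x_0$. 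At a boundary point $x$ this equals $-2\sum_{i<d}\A_{id}[\nabla\phi](x_0)\,\partial_{id}u(x)$, which is nonzero in general.
\end{itemize}
So ``absorbing them as perturbative forcing'' feeds them into an estimate that does not apply to them. Recovering $\partial_d^2u$ ``from the equation'' is circular at top order unless you have an independent bound on $t^{(1+\kappa)/2}\|\partial_tu\|_{\dot C^\kappa}$.

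The ingredient you are missing is Lemma~\ref{lemheat}. It works with the Dirichlet kernel $\sH$ and uses odd reflection only for the part $F^2$ that vanishes on the boundary. For the critical term $(\A[\nabla g]-\A[\nabla\phi]):\nabla^2f$, that vanishing comes from Lemmas~\ref{lem5.1} and~\ref{lembdc}. For the general part $F^1$ and for the freezing remainder, the lemma:
\begin{itemize}
\item moves tangential derivatives through $|D_{y'}|^{\varkappa}$;
\item trades $\partial_d^2\sH$ for $\partial_t\sH-\Delta_{x'}\sH$;
\item pays with time-H\"older norms of $F^1$ and of $\nabla^2u$ (Lemma~\ref{mcbdlemti}).
\end{itemize}
Your linear estimate contains no time-H\"older quantity at all. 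That alone shows it cannot handle forcing that is nonzero on the boundary: already for $\partial_tv-\partial_x^2v=a(t)$ on $x>0$ with Dirichlet data, $\partial_x^2v(t,\cdot)$ is H\"older at $x=0$ only if $a$ is H\"older in time. For part (ii) with $\phi\equiv0$, your plan does essentially match the paper: $(\A[\nabla g]-\mathrm{Id}):\nabla^2u$ vanishes on the boundary, so odd reflection and the heat estimate close at the first level.

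\textbf{The parity claim.} The odd extension \emph{does} commute with the graphical operator. If $g|_{\partial\mathbb R^d_+}=0$, then $\A[\nabla g^{\rm odd}](x',x_d)=R\,\A[\nabla g](x',-x_d)\,R$ with $R=\mathrm{diag}(1,\dots,1,-1)$. The entries $\A_{id}$, $i<d$, are odd while $\partial_{id}u^{\rm odd}$ is even, so $\A[\nabla g^{\rm odd}]:\nabla^2u^{\rm odd}$ is odd. Geometrically, $(x',x_d,y)\mapsto(x',-x_d,-y)$ is an isometry preserving the flow. This parity is exactly what Lemma~\ref{bdcon} exploits, and what the paper uses to define $\mathcal S g$.

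The real obstruction is regularity. Odd reflections of $g$, $\phi$ and the forcing are only as smooth as the vanishing of their even normal derivatives on $\partial\mathbb R^d_+$ allows. For a generic smooth $\phi$ with zero trace, $\phi^{\rm odd}$ is only $C^{1,1}$, so $\A[\nabla\phi^{\rm odd}]$ violates \eqref{condop}.

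Your inductive claim that $\partial_t^kf=0$ on the boundary gives $\Delta^kf=0$ is also not automatic. It needs the support/parity computation for $\partial_d^j\A[\nabla g]$ in Lemma~\ref{bdcon}, which your stated premise denies.

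\textbf{What a repair would need.} You would have to do one of two things:
\begin{itemize}
\item prove a Dirichlet-kernel estimate as in Lemma~\ref{lemheat}; or
\item combine the parity with odd-smooth reference profiles (e.g.\ mollifications of $f_0^{\rm odd}$, which converge in $W^{1,\infty}$) and a whole-space theory for time-dependent, non-smooth coefficients. Theorem~\ref{propb=0} alone does not provide that theory.
\end{itemize}
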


	On a bounded domain, the Dirichlet heat semigroup provides a natural family of smooth, boundary-compatible profiles. The local assumption below requires convergence of this family to $f_0$ in $W^{1,\infty}$. This is a genuine restriction on Lipschitz data, not an automatic consequence of strong continuity in a weaker norm.
	\begin{theorem}[Bounded domains]\label{mcbdglo}
	Let $m\geq2$, let $\kappa\in(\frac{3}{4},1)$ be as in Theorem~\ref{eqmcls}, and let $\Omega\subset\mathbb R^d$ be a bounded domain with $C^{2m+3}$ boundary. There exists $\eps_0>0$ such that the following assertions hold.
	\begin{enumerate}
	\item Let $f_0\in W^{1,\infty}(\Omega;\mathbb R^N)$ have zero boundary trace. Assume that
	\begin{equation*}
		\lim_{\eps_1\rightarrow 0}\|f_0-e^{\eps_1\Delta_\Omega}f_0\|_{W^{1,\infty}(\Omega)}=0,
	\end{equation*}
	where $e^{t\Delta_\Omega}$ is the Dirichlet heat semigroup. Choose $\eps_1>0$ so that the norm in the preceding display is at most $\eps_0$. Then there exists $T_0>0$, depending on $\eps_1$, $d,N,m,\kappa$, the $C^{2m+3}$ geometry of $\Omega$, and $\|e^{\eps_1\Delta_\Omega}f_0\|_{C^{2m+3}(\overline\Omega)}$, such that \eqref{meancur1} has a unique solution in the class
	\begin{align*}
	&\left\{f\in L^\infty((0,T_0);C^1(\overline\Omega))\cap C((0,T_0];C^{2m+1}(\overline\Omega)):\right.\\
    &\quad\quad\left.\sup_{0<t\leq T_0}\left[
	\|\nabla f(t)\|_{L^\infty}
	+t^{m+\kappa/2}\|\nabla^{2m+1}f(t)\|_{C^\kappa}
	\right]
	\leq C\|f_0\|_{W^{1,\infty}}\right\}.
	\end{align*}

	\item If in addition $\|f_0\|_{W^{1,\infty}(\Omega)}\leq\eps_0$, then the solution is global, and there are constants $c,C>0$ such that
	\begin{equation}\label{bddmcgloe}
		\sup_{t>0}\sum_{j=1}^{2m} \min\{t,1\}^{(j-1)/2}
		\|\nabla^jf(t)\|_{L^\infty}
		\leq C\bigl(\|f_0\|_{W^{1,\infty}},m,\Omega\bigr).
	\end{equation}
	\end{enumerate}
		The constants may depend on $d,N,m,\kappa$, and the $C^{2m+3}$ geometry of $\Omega$; in the local assertion, $T_0$ and $C$ may also depend on $\|e^{\eps_1\Delta_\Omega}f_0\|_{C^{2m+3}(\overline\Omega)}$.
		\end{theorem}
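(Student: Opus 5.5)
We prove Theorem~\ref{mcbdglo} by a perturbative fixed-point argument around the smooth profile $\phi:=e^{\eps_1\Delta_\Omega}f_0$, followed by a separate long-time argument. The ingredient we take as given is a boundary Schauder theory on $\Omega$ for linear systems
\[
\partial_t u-\B(t,x):\nabla^2u=g,\qquad u|_{\partial\Omega}=0,\qquad u(0)=u_0,
\]
with uniformly elliptic coefficients, stated with the time-weighted norm
\[
\|u\|_{X_T}:=\sup_{0<t\le T}\Bigl[\|\nabla u(t)\|_{L^\infty}+t^{m+\kappa/2}\|\nabla^{2m+1}u(t)\|_{C^\kappa}\Bigr].
\]
This theory is obtained by localization. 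In interior patches we apply the whole-space estimate, Theorem~\ref{propb=0}. Near the boundary we flatten with $C^{2m+3}$ charts, which produces a half-space problem with variable coefficients and lower-order commutators. On the half-space we first control tangential derivatives via the parity-extension argument used for Theorem~\ref{eqmcls}. We then recover normal derivatives recursively by solving the equation for $\partial_n^2u$ in terms of tangential and mixed derivatives and $\partial_t u$. The key requirement is an estimate of the form
\[
\|u\|_{X_T}\le C\Bigl(\|\nabla u_0\|_{L^\infty}+\text{weighted norm of } g\Bigr)+C\,T^{\theta}\|u\|_{X_T}.
\]
Its constants depend on $\sup_t\|\nabla\B\|$ only through the same weighted norms, so that the nonlinear coefficient $\A[\nabla f]$ can be absorbed.

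\textbf{Local part.} Write $f=\phi^\sharp+w$, where $\phi^\sharp$ solves the flow (or the linear problem with frozen coefficients $\A[\nabla\phi]$) starting from $\phi$. Since $\phi\in C^{2m+3}(\overline\Omega)$ and $\phi$ satisfies all compatibility conditions (it lies in the domain of every power of $\Delta_\Omega$), $\phi^\sharp$ exists classically on a short interval. Its length depends on $\|\phi\|_{C^{2m+3}}$. The remainder $w$ solves
\[
\partial_tw-\A[\nabla\phi^\sharp+\nabla w]:\nabla^2w=\bigl(\A[\nabla\phi^\sharp+\nabla w]-\A[\nabla\phi^\sharp]\bigr):\nabla^2\phi^\sharp,
\]
with $w(0)=f_0-\phi$ and $\|\nabla w(0)\|_{L^\infty}\le\eps_0$. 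The right side is bounded by $\|\nabla w\|_{L^\infty}\|\nabla^2\phi^\sharp\|$, which carries no singular time weight. The nonlinearity in the coefficient is handled by writing $\A[\nabla\phi^\sharp+\nabla w]$ as a coefficient whose weighted Hölder seminorm is $\lesssim\|w\|_{X_T}+\|\phi\|_{C^{2m+3}}$. Then the map $v\mapsto w$, defined by solving the linear problem with coefficients and forcing frozen at $v$, is a contraction on a ball of radius $C\eps_0$ in $X_{T_0}$, provided $\eps_0$ is small (depending only on $d,N,m,\kappa,\Omega$) and $T_0$ is small (depending also on $\eps_1$ and $\|\phi\|_{C^{2m+3}}$). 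Uniqueness in the stated class follows by estimating the difference of two solutions with the same linear estimate in the lower norm $\sup_t\|\nabla(f_1-f_2)\|_{L^\infty}$.

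\textbf{Global part.} Assume $\|f_0\|_{W^{1,\infty}}\le\eps_0$. We run the contraction directly around $0$ on the time interval $(0,1]$, without using $\phi$; the smallness makes the coefficient perturbation $\A[\nabla f]-\mathrm{Id}$ of size $O(\eps_0)$. Since $\Omega$ is bounded, the scaling argument used on the half-space is unavailable, so for long times we exploit the spectral gap $\lambda_1>0$ of the Dirichlet Laplacian. Write the equation as
\[
\partial_tf-\Delta f=(\A[\nabla f]-\mathrm{Id}):\nabla^2f,
\]
where the right side is quadratic, $O(|\nabla f|^2|\nabla^2f|)$. On unit intervals $[k,k+1]$ we obtain, from the Schauder estimate with weight $\min\{t-k,1\}$,
\[
\|f(k+1)\|_{C^{2m}}\le C\|f(k)\|_{W^{1,\infty}}.
\]
Combining this with the Duhamel formula and $\|e^{t\Delta_\Omega}\|\lesssim e^{-\lambda_1t}$ on $C^{1}$ gives
\[
\|f(k+1)\|_{W^{1,\infty}}\le Ce^{-\lambda_1}\|f(k)\|_{W^{1,\infty}}+C\|f(k)\|_{W^{1,\infty}}^3.
\]
A naive iteration of this inequality gives geometric decay only if $Ce^{-\lambda_1}<1$, which need not hold. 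We therefore iterate over a block of $K$ unit intervals, with $K$ chosen so that $Ce^{-K\lambda_1}<1/2$ on a block. Smallness is then preserved by induction, which yields global existence, exponential convergence to $0$, and the bound~\eqref{bddmcgloe}, with the factor $\min\{t,1\}^{(j-1)/2}$ coming from the first interval.

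\textbf{Main obstacle.} The hardest step is the boundary Schauder estimate after flattening. Normal derivatives up to order $2m+1$ must be recovered from tangential ones while keeping the critical time weights. The commutators from flattening, involving derivatives of the chart up to order $2m+2$, must stay lower order. Moreover, unlike the flat case there are no exact boundary identities $\Delta^kf=0$, only approximate ones. This is why we require $C^{2m+3}$ boundary and $m\ge2$. Our first attempt would be an induction on the number of normal derivatives, treating the flattening errors as perturbations that are small on small patches.
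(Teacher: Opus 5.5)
The proposal has a genuine gap at exactly the point where the paper does its main work. You take as given a boundary Schauder estimate in the isotropic norm $\sup_t[\|\nabla u\|_{L^\infty}+t^{m+\kappa/2}\|\nabla^{2m+1}u\|_{C^\kappa}]$, and the route you sketch for it does not work.

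After flattening and freezing, the critical coefficient-oscillation term $\tilde{\B}[\nabla\tilde g_i]:\nabla^2 f_i$ (in particular $\tilde{\B}_{dd}\,\partial_d^2 f_i$) does not vanish on $\{x_d=0\}$, and neither do the geometric remainders. The flat identities $\Delta^k f|_{\partial\mathbb R^d_+}=0$ are also lost (Remark~\ref{bdydiff}). So the odd-extension argument behind Lemma~\ref{lemheat} and \eqref{eq3.15} is unavailable: the odd extension of such a forcing is discontinuous, and \eqref{unhha} breaks down.

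You then propose to recover $\partial_n^2u$ from the equation. That presupposes a critical-level bound for $\partial_t u$ and its derivatives, and this is the crux of the whole boundary analysis. The paper obtains it in three steps:
\begin{enumerate}
\item It works with the equation for $\tilde f_i'=\partial_t\tilde f_i$, which vanishes on the boundary. It rewrites the forcing as $\partial_t\tilde F_i=G^1_i+\partial_dG^2_i$ with $G^2_i=\tilde{\B}_{dd}\,\partial_d\partial_tf_i$, so that no top-order normal derivative of $f_i$ appears. Remark~\ref{rmk4.2} explains why this cannot be done at the level of $f_i$ itself.
\item This normal-divergence forcing is handled by the second model \eqref{mcbd2} via even extension (Lemmas~\ref{mcbdlemby} and \ref{maincor}). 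That model gains a full H\"older derivative only in tangential directions.
\item Higher even normal derivatives are recovered inductively through the boundary recursion of Lemma~\ref{mcbdlembdy}.
\end{enumerate}
What closes is only the anisotropic estimate in $Z^m_{T,2}$: total order $2m+2$, but at most $2m+1$ pure normal derivatives. The $C^{2m+1+\kappa}$ bound in the theorem is a consequence of that estimate, not something closed directly. None of these ingredients appears in your sketch, and your ``main obstacle'' paragraph concedes that this step is open.

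There is also a concrete error in your local part. That $\phi=e^{\eps_1\Delta_\Omega}f_0$ lies in the domain of every power of $\Delta_\Omega$ does not give compatibility for the flow, or for the frozen operator $\A[\nabla\phi]:\nabla^2$, on a curved boundary. On $\partial\Omega$ one has $\nabla\phi^\alpha=(\partial_\nu\phi^\alpha)\nu$ and $\Delta\phi=0$. From these, $\A[\nabla\phi]:\nabla^2\phi=\frac{|\partial_\nu\phi|^2}{1+|\partial_\nu\phi|^2}\,H\,\partial_\nu\phi$ there, with $H$ the mean curvature of $\partial\Omega$ (up to sign convention). This is generally nonzero, so $\phi^\sharp$ is not classical at the corner $\{0\}\times\partial\Omega$. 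The error is repairable by keeping $\phi$ stationary, as the paper does, and placing $\A[\nabla\phi]:\nabla^2\phi$ (respectively $\Delta\phi_i$) in the subcritical forcing, which costs only a factor $T^{1/2}$.

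By contrast, your global argument is a legitimate and more direct alternative to the paper's route. You restart the uniform small-data local theory on unit intervals and iterate Duhamel's formula over blocks of $K$ intervals with $Ce^{-K\lambda_1}<\frac12$. The paper instead uses $L^2$-energy decay, a polynomial-growth bootstrap of $Z^1_T$, and Gagliardo--Nirenberg interpolation. However, your argument rests entirely on the missing boundary estimate, through the local theory it restarts.
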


		\begin{remark}
		The approximation condition in Theorem~\ref{mcbdglo}(i) holds for $f_0\in C^1_0(\overline\Omega;\mathbb R^N)$. For general Lipschitz data it is an additional hypothesis, since $e^{t\Delta_\Omega}f_0\to f_0$ in $W^{1,\infty}$ need not hold.
		\end{remark}

		\begin{remark}
		Compared with the flat-boundary estimate, the boundary-patch norm in Theorem~\ref{mcbdglo} controls derivatives of total parabolic order at most $2m+2$ but restricts the number of pure normal derivatives to at most $2m+1$; thus one normal derivative is lost relative to the highest tangential order. The precise anisotropic norm is defined in Section~\ref{sec4.2}. The isotropic consequence stated in the theorem is the resulting $C^{2m+1}$ estimate.
		\end{remark}

	\begin{remark}[Flat and curved boundary identities]
		The identities \eqref{flat-boundary-identities-intro} are specific to the flat boundary; see Lemmas~\ref{bdcon} and \ref{lembdc}. After a curved boundary is flattened, geometric lower-order terms enter the forcing. Lemma~\ref{mcbdlembdy} then expresses the highest normal derivatives in terms of derivatives of the forcing and lower-order normal and tangential derivatives; see also Remark~\ref{bdydiff}.
	\end{remark}

	\subsection*{Main ideas of the proofs}
	Both theorems are proved in a perturbative, time-weighted framework. Flat and curved boundaries, however, require different linear estimates.

	\medskip\noindent
	\emph{1. Perturbation around a smooth profile.}
	For the local theory, we fix a smooth function $\phi$ satisfying the homogeneous boundary condition and regard the equation as a perturbation of the operator with coefficient $\A[\nabla\phi]$. The term
	\[
		\big(\A[\nabla f]-\A[\nabla\phi]\big):\nabla^2f
	\]
	is small in the critical norm when $\|f_0-\phi\|_{W^{1,\infty}}$ is small, while higher norms of $\phi$ determine the lifespan. On a bounded domain we take $\phi=e^{\eps_1\Delta_\Omega}f_0$. For the small-data theory we set $\phi=0$ and use the quadratic expansion $\A[\nabla f]-\mathrm{Id}=O(|\nabla f|^2)$.

	\medskip\noindent
	\emph{2. The flat boundary: freezing coefficients, parity, and compatibility.}
	For the first half-space model in Section~\ref{sec2}, we freeze the coefficient matrix at a point $x_0$ and use a linear transformation that preserves $\mathbb R^d_+$ to reduce the frozen operator to the Laplacian. The resulting error contains the increment $\B(x)-\B(x_0)$. Moment estimates for the Dirichlet heat kernel recover this increment before we set $x_0=x$. This is the boundary counterpart of the coefficient-freezing argument in \cite{KHN2025}.

	A key point is that the critical forcing $F^2$ vanishes on $\partial\mathbb R_+^d$. Its odd extension therefore has enough regularity to move derivatives onto the heat kernel. This is not an additional compatibility condition on the initial data. For positive times, the Dirichlet condition and the structure of \eqref{mcf} imply $\Delta f=0$ on the boundary; iteration gives \eqref{flat-boundary-identities-intro}. These identities permit higher-order estimates without uncontrolled boundary traces.

	The time weights absorb the singular behavior of higher derivatives near $t=0$. We then fix a coefficient field $g$ and solve
	\[
		\partial_tf=\A[\nabla g]:\nabla^2f,
	\]
	with the prescribed initial and boundary data. The resulting solution map preserves a small ball in the weighted space and is contractive. This proves existence, uniqueness, and positive-time regularization without imposing classical compatibility conditions of all orders at $t=0$.

	\medskip\noindent
	\emph{3. Curved boundaries: localization and anisotropic estimates.}
	On a bounded domain, we separate interior and boundary patches. Theorem~\ref{propb=0} controls the interior patches. Near $\partial\Omega$, we flatten the boundary, freeze the principal coefficient, and compare the localized equation with a half-space problem. The oscillation of both the geometric coefficients and $\A[\nabla f]$ is absorbed by choosing the patches sufficiently small.

	Boundary flattening and commutation with cutoffs generate lower-order terms, including a forcing of the form $F^1+\partial_dF^2$. Here $F^2$ need not vanish on the flattened boundary, so the direct odd-extension argument is no longer available. The second model, \eqref{mcbd2}, treats this normal-divergence forcing by means of the Dirichlet Green function. It yields anisotropic estimates with a full H\"older gain in tangential directions. The equation and the identities in Lemma~\ref{mcbdlembdy} then recover normal derivatives recursively. This closes the estimates at the level recorded in Theorem~\ref{mcbdglo}, with a loss of one normal derivative relative to the flat case.

	\medskip\noindent
	\emph{4. Global control on bounded domains.}
	The localized Schauder estimates contain lower-order terms that are not uniformly controlled by the local argument alone. On a bounded domain, the homogeneous Dirichlet condition and uniform ellipticity give an $L^2$ energy inequality. Poincar\'e's inequality then yields exponential decay. Interpolation transfers this decay to the lower-order norms that occur in the localized estimates. A continuation argument preserves the smallness of the critical norm, and restarting the local theory on fixed time intervals gives the uniform higher-order bound \eqref{bddmcgloe}.

	\medskip
	The paper is organized as follows. Section~\ref{sec2} establishes boundary Schauder estimates for two half-space models. Section~\ref{sec3} proves the local and global results on $\mathbb R_+^d$. Section~\ref{sec4} treats bounded domains by localization, boundary flattening, anisotropic estimates, and energy decay. Appendix~\ref{sec5} contains the boundary identities used in the higher-order estimates.

\section{Preliminaries}\label{sec2}
This section collects the notation and linear Schauder estimates that will be used throughout the nonlinear analysis.
\subsection{Notation}
This section collects the notation and linear Schauder estimates that will be used throughout the nonlinear analysis.
\begin{itemize}
\item For a Schwartz function $f:\mathbb{R}^d\to\mathbb{R}$, we use
\begin{equation*}
    \widehat f(\xi)=\int_{\mathbb R^d}f(x)e^{-ix\cdot\xi}\,dx,
    \qquad f(x)=(2\pi)^{-d}\int_{\mathbb{R}^d}\widehat f(\xi)e^{ix\cdot\xi}\,d\xi.
\end{equation*}
\item For $x\in\mathbb{R}^d$, write $x'=(x_1,\ldots,x_{d-1})$, so that $x=(x',x_d)$. Similarly, write $\xi=(\xi',\xi_d)$.
\item For any $i\in\{1,\ldots,d\}$, we abbreviate $\partial_{x_i}$ by $\partial_i$.
	    \item We use $\mathbb N=\{0,1,2,\ldots\}$. For any $\beta=(\beta_1,\ldots,\beta_d)\in\mathbb{N}^d$, we denote $\partial^\beta$ or $\nabla^\beta$ to denote $\partial_{1}^{\beta_1}\cdots\partial_{d}^{\beta_d}$.
	    \item For $\eta\in\mathbb R$, the homogeneous multiplier $|D|^\eta=(-\Delta)^{\eta/2}$ is initially defined for Schwartz functions whose Fourier transforms vanish in a neighborhood of $\xi=0$ by
    \begin{equation*}
    |D|^\eta f(x)=(2\pi)^{-d}\int_{\mathbb{R}^d}|\xi|^\eta \widehat f(\xi)e^{ix\cdot\xi}\,d\xi.
    \end{equation*}
    and is extended to the homogeneous spaces used below in the usual tempered-distribution sense, modulo polynomials when necessary. No value of the homogeneous symbol at $\xi=0$ is prescribed.
	    \item Similarly, write $\Delta_{x'}=\sum_{i=1}^{d-1}\partial_{i}^2$. For
    \[
      \widehat f(\xi',x_d):=\int_{\mathbb R^{d-1}}f(x',x_d)e^{-ix'\cdot\xi'}\,dx',
    \]
    define $|D_{x'}|^\eta=(-\Delta_{x'})^{\eta/2}$ by
    \begin{equation*}
     |D_{x'}|^{\eta}f(x)=(2\pi)^{-(d-1)}\int_{\mathbb{R}^{d-1}}|\xi'|^{\eta}\hat{f}(\xi',x_d)e^{i\xi'\cdot x'}d\xi'.
    \end{equation*}
   For functions on $\mathbb{R}_+^d$, these multipliers are applied only after a specified extension to $\mathbb{R}^d$. We define $|D_d|^\eta=(-\partial_d^2)^{\eta/2}$ in the same way.
	    \item For any function $g:\mathbb{R}^m\rightarrow\mathbb{R}^n$, define the Jacobian matrix $(\nabla g)_{ij}=\partial_jg^i$. For any $f:\mathbb{R}^d\rightarrow\mathbb{R}$, define the Hessian matrix $(\nabla^2f)_{ij}=\partial_{ij}f$.
	    \item For two quantities $I$ and $J$, we write $I\lesssim J$ if $I\leq CJ$ for a constant $C>0$ independent of the variables displayed in the estimate; any additional parameter dependence is stated where the estimate is used. We write $I\sim J$ if $I\lesssim J$ and $J\lesssim I$.
	    \item For any matrices $\mathsf X,\mathsf Y\in\mathbb{R}^{m\times n}$, we denote the Frobenius inner product $\mathsf X:\mathsf Y:=\sum_{\substack{1\leq i\leq m\\ 1\leq j\leq n}}\mathsf X_{ij}\mathsf Y_{ij}$.
    \item For any norm $\|\cdot\|_Z$, function $f:\mathbb{R}^d\rightarrow\mathbb{R}$ and $n\in\mathbb{N}$, denote
    \begin{equation*}
	        \|\nabla^nf\|_Z=\sum_{|\beta|=n}\|\partial^\beta f\|_Z.
    \end{equation*}
	    \item For $\gamma\in(0,1)$, $\dot C^{\gamma}$ denotes the homogeneous H\"older class with seminorm
    \begin{equation*}
        \|f\|_{\dot C^{\gamma}}=\sup_{x\neq y}\frac{|f(x)-f(y)|}{|x-y|^{\gamma}},\quad\forall \gamma\in(0,1),
    \end{equation*}
    For any $m\in\mathbb{N}$, set $\|f\|_{\dot C^{m+\gamma}}=\sum_{|\alpha|=m}\|\partial^\alpha f\|_{\dot C^{\gamma}}$ and adopt the integer-order convention $\|f\|_{\dot C^m}:=\|\nabla^m f\|_{L^\infty}$ (in particular, $\dot C^0=L^\infty$ at the level of norms). For the inhomogeneous norm, set
    $\|f\|_{C^{m+\gamma}}:=\sum_{j=0}^{m}\|\nabla^j f\|_{L^\infty}+\|f\|_{\dot C^{m+\gamma}}$.\quad
	    We write $C_b^m(\overline U)$ for functions on $\overline U$ whose derivatives through order $m$ are bounded and continuous up to the boundary.
    \item For the indexed H\"{o}lder space $\dot C_{x'}^{\gamma}$, we define 
    \begin{equation*}
        \|f\|_{\dot C^{\gamma}_{x'}}=\sup_{x_d}\sup_{x'\neq y'}\frac{|f(x',x_d)-f(y',x_d)|}{|x'-y'|^{\gamma}},\quad\forall \gamma\in(0,1),
    \end{equation*}
    and similarly we can define $\dot C^{\gamma}_{x_d}$.
	    \item We denote $\dot B_{p,q}^s$ to be the homogeneous Besov space. Its definition and properties can be found in \cite{BCD}.
    \item For $\alpha\in\mathbb{R}^d$, define $\delta_\alpha f(x)=f(x)-f(x-\alpha)$ whenever both $x$ and $x-\alpha$ belong to the domain of $f$. If $f=f(x,y)$ has several variables, use $\delta_\alpha^x$ or $\delta_\alpha^y$ to specify the variable in which the difference is taken. In addition, we use $\delta_\alpha^{x'}f(x',x_d)$ to denote $f(x',x_d)-f(x'-\alpha,x_d)$ where $\alpha\in\mathbb{R}^{d-1}$, and $\delta_\alpha^{x_d}f=f(x',x_d)-f(x',x_d-\alpha)$ for $\alpha\in\mathbb{R}$.
\end{itemize}
\subsection{The whole-space Schauder estimate}
This subsection recalls the whole-space time-weighted Schauder estimate needed to control the equation on interior localization patches. We recall the order-two estimate from Part I \cite{KHN2025}, which will be used on interior localization patches. Let $u,u_0,f,g$ be $\mathbb R^N$-valued and consider
\begin{equation}\label{eqpara}
\begin{aligned}
    \partial_tu+\mathcal Lu&=\mathcal P_\gamma f+g &&\text{in }(0,T)\times\mathbb R^d,\\
    u(0,\cdot)&=u_0.
\end{aligned}
\end{equation}
Here $\mathcal L$ is the pseudo-differential operator
\[
(\mathcal Lu)(t,x)=(2\pi)^{-d}\int_{\mathbb R^d}
\sL(t,x,\xi)\widehat u(t,\xi)e^{ix\cdot\xi}\,d\xi,
\]
with a symmetric $N\times N$ matrix symbol $\sL(t,x,\xi)$. For some $c_0\in(0,1)$ and $\M>1$, we assume
\begin{equation}\label{condop}
    \sL(t,x,\xi)\geq c_0|\xi|^2\mathrm{Id},
    \qquad
    \sum_{0\leq j,l\leq d+m+4}|\xi|^{l-2}
    \big|\nabla_x^j\nabla_\xi^l\sL(t,x,\xi)\big|\leq\M,
    \qquad \xi\neq0.
\end{equation}
These bounds hold uniformly for $(t,x)\in(0,T)\times\mathbb R^d$. The Fourier multiplier $\mathcal P_\gamma$ is defined in the same convention by an $N\times N$ matrix symbol $\B(\xi)$, which has order $\gamma$ in the sense that
\[
    |\nabla_\xi^j\B(\xi)|\lesssim|\xi|^{\gamma-j},
    \qquad 0\leq j\leq d+m+4,\quad \xi\neq0.
\]
Let $m\in\mathbb N$, $\kappa\in(0,1)$, $0<\gamma\leq2$, and set $\kappa_0=\kappa+\gamma-2>0$. Define
\begin{align*}
\da_T^{m,\kappa,\gamma}(f,g):={}&
\sup_{0<t\leq T}\Big(
 t^{\kappa/2}\|f(t)\|_{\dot C^{\kappa_0}}
+t^{(m+\kappa)/2}\|f(t)\|_{\dot C^{m+\kappa_0}}
+t^{m/2+1}\|g(t)\|_{\dot C^m}\Big)\\
&+\|g\|_{L^1((0,T);L^\infty)}.
\end{align*}
\begin{theorem}[Whole-space Schauder estimate \cite{KHN2025}]\label{propb=0}
Assume \eqref{condop} and
\[
    \|u_0\|_{L^\infty}+\da_T^{m,\kappa,\gamma}(f,g)<\infty.
\]
Then \eqref{eqpara} has a unique distributional solution
$u\in L^\infty((0,T)\times\mathbb R^d;\mathbb R^N)\cap C((0,T];\dot C^{m+\kappa}(\mathbb R^d;\mathbb R^N))$
satisfying
\begin{equation}\label{whole-space-estimate}
\sup_{0<t\leq T}\left(
\|u(t)\|_{L^\infty}
+t^{(m+\kappa)/2}\|u(t)\|_{\dot C^{m+\kappa}}
\right)
\leq C_1e^{C_2T\log(T+2)}
\left(\|u_0\|_{L^\infty}+\da_T^{m,\kappa,\gamma}(f,g)\right).
\end{equation}
The constants depend only on the structural parameters $d,N,m,\kappa,\gamma,c_0$, and $\M$. If $\sL$ is independent of $x$, the exponential factor in \eqref{whole-space-estimate} may be omitted. If, in addition, $u_0$ is bounded and uniformly continuous and $\da_t^{m,\kappa,\gamma}(f,g)\to0$ as $t\downarrow0$, then $u(t)\to u_0$ in $L^\infty$.
\end{theorem}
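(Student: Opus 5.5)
The plan is to prove the estimate first for constant-in-$x$ symbols and then pass to variable symbols by freezing coefficients. A priori bounds are derived for smooth solutions; existence and uniqueness then follow by approximation and linearity.

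\emph{Step 1 (frozen symbol).} Fix $x_0$ and let $\sL_0(t,\xi):=\sL(t,x_0,\xi)$. The solution operator of $\partial_t+\sL_0(t,D)$ is given by a matrix kernel
\[
K_{x_0}(t,s,x)=(2\pi)^{-d}\int e^{ix\cdot\xi}\,\mathcal T\exp\Big(-\int_s^t\sL_0(\tau,\xi)\,d\tau\Big)\,d\xi .
\]
Because $\sL_0$ is symmetric with $\sL_0\ge c_0|\xi|^2$, the symbol is bounded by $e^{-c_0(t-s)|\xi|^2}$. The $\xi$-derivative bounds in \eqref{condop}, up to order $d+m+4$, then give, via integration by parts in $\xi$, kernel bounds of the form
\[
|\nabla^j K_{x_0}(t,s,x)|\lesssim (t-s)^{-(d+j)/2}\Big(1+|x|/\sqrt{t-s}\Big)^{-d-m-3}
\]
together with the corresponding moment bounds.

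Duhamel then gives
\[
u=K u_0+\int_0^t K\big(\mathcal P_\gamma f+g\big)\,ds .
\]
The $u_0$ and $g$ contributions are estimated directly, in $L^\infty$ and with the time weight $t^{(m+\kappa)/2}$; for $g$ the time integral is split at $s=t/2$, using $\|g\|_{L^1L^\infty}$ on $s>t/2$ and $s^{m/2+1}\|g\|_{\dot C^m}$ on $s<t/2$.

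For the $f$ term, the $\gamma$ derivatives of $\mathcal P_\gamma$ are placed on the kernel, costing $(t-s)^{-\gamma/2}$. The Hölder regularity $\dot C^{\kappa_0}$ of $f$ gains $(t-s)^{\kappa_0/2}$ through the cancellation $\int\nabla K\,dx=0$, a Littlewood--Paley splitting, and the fact that $\kappa_0=\kappa+\gamma-2>0$. Combining this with $t^{\kappa/2}$ and $t^{(m+\kappa)/2}$ weights on the two halves of the time integral yields the $x$-independent version of \eqref{whole-space-estimate} with no exponential factor. Throughout, only $\xi$-symbol regularity is used, which is why the constant-in-$x$ case gives a constant independent of $T$.

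\emph{Step 2 (freezing and unfreezing).} For general $\sL$, write
\[
\partial_tu+\sL(t,x_0,D)u=\mathcal P_\gamma f+g+R_{x_0},\qquad R_{x_0}:=\big(\sL(t,x_0,D)-\sL(t,x,D)\big)u .
\]
Apply the Step 1 representation with $x_0$ fixed, evaluate at the point $x=x_0$, and only then set $x_0=x$. The increment $\sL(t,x_0,\xi)-\sL(t,y,\xi)$ is $O(|y-x_0|\,|\xi|^2)$, with higher Taylor terms controlled by the $\nabla_x^j$ bounds in \eqref{condop}. Paired with the kernel moments $\int|y-x_0|^k|\nabla^2K|\,dy\lesssim(t-s)^{(k-2)/2}$, this increment converts the second-order loss into an integrable singularity $(t-s)^{-1/2}$ for the leading error, times a lower weighted norm of $u$.

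Higher Hölder norms $\dot C^{m+\kappa}$ are obtained by differentiating in $x$ before unfreezing. Derivatives landing on the coefficients produce terms of lower order in $u$, which are interpolated between $\|u\|_{L^\infty}$ and the top weighted norm.

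\emph{Step 3 (closing; expected main obstacle).} The main obstacle is closing Step 2. Setting
\[
X(T)=\sup_{0<t\le T}\Big(\|u(t)\|_{L^\infty}+t^{(m+\kappa)/2}\|u(t)\|_{\dot C^{m+\kappa}}\Big),
\]
the target inequality is
\[
X(T)\le C\Big(\|u_0\|_{L^\infty}+\da_T^{m,\kappa,\gamma}(f,g)\Big)+C\,T^{1/2}\log(2+T)\,X(T).
\]
The delicate point is that the unfreezing error must be bounded by the weighted norm rather than the unweighted one, to avoid a loss near $t=0$. The first attempt will be to absorb it via a Beta-function integral $\int_0^t(t-s)^{-1/2}s^{-(m+\kappa)/2}\cdots ds$ combined with interpolation.

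Assuming this inequality, the error term is absorbed for $T\le T_*(c_0,\M)$. For larger $T$, restart on consecutive intervals of length $T_*$, using $u(kT_*)\in L^\infty$ as new data and the fact that the weight is comparable to $1$ away from $t=0$. Roughly $T/T_*$ restarts, each costing a constant factor together with the logarithmic growth of the $g$ and $f$ contributions, produce the factor $C_1e^{C_2T\log(T+2)}$.

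\emph{Step 4 (existence, uniqueness, initial trace).} Existence follows by mollifying the data and coefficients, solving classically, and passing to the limit using the uniform bound together with Arzelà--Ascoli on $(0,T]$. Uniqueness follows by applying the estimate to the difference of two solutions, which has zero data. Finally, if $u_0$ is bounded and uniformly continuous, then $Ku_0\to u_0$ uniformly as $t\downarrow0$. The Duhamel terms are bounded by $\da_t^{m,\kappa,\gamma}(f,g)\to0$, so $u(t)\to u_0$ in $L^\infty$.
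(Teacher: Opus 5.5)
The paper does not reprove this theorem; it quotes it from Part I. Your overall route is nevertheless the same kernel-based freezing scheme whose half-space version is Lemma~\ref{lemheat}: freeze the symbol at the evaluation point, write Duhamel with the frozen kernel, absorb the unfreezing error on a short interval, and restart. The problem is that the step you yourself call the main obstacle is never closed. The mechanism you propose for it, a Beta integral in $s$ plus interpolation, cannot close it on its own, for the following reasons.

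The error $R_{x_0}$ is second order in $u$, while $X(T)$ controls only $m+\kappa$ weighted derivatives. This causes two problems.

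\emph{(i) Moving derivatives onto the kernel.} For $s$ near $0$, and for all $s$ when $m\le1$, the derivatives must be moved off $u$ and onto the kernel, because $\|u(s)\|_{\dot C^{2+\epsilon}}\sim s^{-1-\epsilon/2}$ is not integrable. For a differential operator this is just integration by parts. Here, however, $R_{x_0}(s,y)=[(\sL(s,x_0,D)-\sL(s,y,D))u(s)](y)$ is nonlocal in $u$. Write $\sL(s,x_0,\xi)-\sL(s,y,\xi)=(x_0-y)\cdot b_{x_0}(s,y,\xi)$. You then need the transpose of the $y$-dependent operator $b_{x_0}(s,y,D)$, which is a right-quantized operator of order two, applied to $(x_0-y)K_{x_0}(t-s,x_0-y)$, together with $L^1$-type bounds for it. ``Increment times kernel moment'' does not supply this.

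\emph{(ii) The top H\"older seminorm.} At this level the first-order moment is destroyed by the difference itself. In $\delta_\alpha\nabla^2K(t-s,x-y)$ the shifted kernel is centred at $x-\alpha$, where the increment has size $|\alpha|$. So for $t-s<|\alpha|^2$ you only get $|\alpha|(t-s)^{-1}$, whose $s$-integral diverges. An extra device is required. One option is the cancellation $\int\nabla^2K\,dz=0$, combined with the top seminorm of $u$ at the shifted point. The other is what the paper does for $v_{R,x_0}$:
\begin{itemize}
\item place $|D|^{-\eta}$ on the kernel, which makes $\delta_\alpha\nabla^2|D|^{-\eta}K$ integrable in time;
\item place $|D|^{\eta}$ on the remainder, split it by the commutator formula, and estimate it with an $|x-y|^\kappa$ moment and the H\"older seminorm of the coefficient.
\end{itemize}
The resulting norm of $u$ has order $2+\eta$ with $\eta<\kappa$, which is what produces the small factor $T^{\eta/2}$.

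There are also concrete errors elsewhere.
\begin{itemize}
\item \emph{The $g$-split in Step~1 is reversed.} On $s<t/2$ the kernel is smooth, so you put all derivatives on it and use $\|g\|_{L^1L^\infty}$. On $s>t/2$ you use $s^{m/2+1}\|g\|_{\dot C^m}$. As written, $\int_0^{t/2}(t-s)^{-\kappa/2}s^{-m/2-1}\,ds=\infty$, and an $L^1$-in-time norm cannot control the singular kernel near $s=t$.
\item \emph{The kernel bound is overstated.} Condition \eqref{condop} only gives $|\nabla_\xi^l\sL|\lesssim|\xi|^{2-l}$, so the symbol is in general not smooth at $\xi=0$. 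Integration by parts in $\xi$ therefore stops at order about $d+1$, and the frozen kernel decays only like $(t-s)|x|^{-d-2}$ (compare the $(t^{1/2}+|x|)^{-d-2}$ bound in \eqref{heates}). High moments diverge, so ``higher Taylor terms'' are unavailable. Fortunately the first-order increment, with moments of order at most one, is all the argument needs.
\item \emph{The restart bookkeeping.} When restarting, the new weight $(t-kT_*)^{(m+\kappa)/2}$ vanishes at $t=kT_*$; it is not comparable to $t^{(m+\kappa)/2}$. The intervals must overlap and the ratio $t/(t-kT_*)$ must be tracked. This, together with the constant-factor growth of $\|u(kT_*)\|_{L^\infty}$ over about $T/T_*$ steps, is what yields a factor no worse than $e^{C_2T\log(T+2)}$.
\end{itemize}
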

\subsection{Two toy models on the half-space}
Throughout this subsection, $\kappa\in(0,1)$ is fixed. This subsection introduces two toy-model problems on the half-space that capture the essential analytical difficulties of the nonlinear system. We use notation and estimates from \cite{KHN2025}. Let $\K(t,x)=(4\pi)^{-\frac{d}{2}}t^{-\frac{d}{2}}\exp(-\frac{|x|^2}{4t})$ be the classical heat kernel; then the following pointwise estimate holds.
\begin{lemma}
		This subsection develops two half-space model problems that isolate the boundary estimates required for the nonlinear mean curvature system. The following estimates hold:
        \begin{equation}\label{heates}
		\begin{aligned}
			&|\nabla_x^n \K(t,x)|\lesssim t^{\frac{2-n}{2}}\frac{1}{(t^\frac{1}{2}+|x|)^{d+2}},\quad\forall n\in\mathbb{N},\\
            &\int_{\mathbb{R}^d}|\delta_\alpha\nabla^l_x \K(t,x)||x|^{\sigma}dx\lesssim \frac{|\alpha|^{\sigma}}{t^{\frac{l}{2}}}\min\{1,\frac{|\alpha|}{t^{\frac{1}{2}}}\}^{1-\sigma},\quad \forall \alpha\in\mathbb{R}^d,\ \sigma\in [0,1),\ l\in\mathbb{N}.
		\end{aligned}
        \end{equation}
  Moreover, for $\beta>0$, let $\mathcal{P}_{x'}^\beta$ and $\mathcal{P}_{x_d}^\beta$ denote Fourier multipliers with symbols $P'(\xi')$ and $P_d(\xi_d)$, respectively. Assume that, for every $n\geq0$,
        \begin{equation*}
            \begin{aligned}
                &\mathcal{P}_{x'}^\beta f(x)=(2\pi)^{-d}\int_{\mathbb R^d} P'(\xi')\widehat f(\xi)e^{ix\cdot\xi}\,d\xi,\quad &&\mathcal{P}_{x_d}^\beta f(x)=(2\pi)^{-d}\int_{\mathbb R^d} P_d(\xi_d)\widehat f(\xi)e^{ix\cdot\xi}\,d\xi,\\
                &|\nabla_{\xi'}^nP'(\xi')|\lesssim|\xi'|^{\beta-n},\quad &&|\partial_{\xi_d}^nP_d(\xi_d)|\lesssim |\xi_d|^{\beta-n},
            \end{aligned}
        \end{equation*}
        then, for any $n_1,n_2\geq 0$, we have
        \begin{equation}\label{eq2.32}
            \begin{aligned}
                |\nabla_{x'}^{n_1}\partial_{x_d}^{n_2} \mathcal{P}_{x'}^\beta \K(t,x)|\lesssim (t^\frac{1}{2}+|x|)^{-d}(t^\frac{1}{2}+|x'|)^{-\beta-n_1}(t^\frac{1}{2}+|x_d|)^{-n_2},\\
                |\nabla_{x'}^{n_1}\partial_{x_d}^{n_2} \mathcal{P}_{x_d}^\beta \K(t,x)|\lesssim (t^\frac{1}{2}+|x|)^{-d}(t^\frac{1}{2}+|x'|)^{-n_1}(t^\frac{1}{2}+|x_d|)^{-\beta-n_2}.
            \end{aligned}
        \end{equation}
	\end{lemma}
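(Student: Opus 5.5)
The first estimate in \eqref{heates} follows from scaling. Writing $\K(t,x)=t^{-d/2}G(x/t^{1/2})$ with $G$ a Gaussian, we get $\nabla_x^n\K(t,x)=t^{-(d+n)/2}(\nabla^nG)(x/t^{1/2})$. Since $|\nabla^nG(y)|\lesssim_M(1+|y|)^{-M}$ for every $M$, the choice $M=d+2$ gives
\[
|\nabla^n_x\K|\lesssim t^{-(d+n)/2}\,t^{(d+2)/2}(t^{1/2}+|x|)^{-d-2}=t^{(2-n)/2}(t^{1/2}+|x|)^{-d-2}.
\]
For the moment bound, I would rescale $x=t^{1/2}y$ and $\alpha=t^{1/2}a$. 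This reduces the left-hand side to $t^{(\sigma-l)/2}\int|\delta_a\nabla^lG(y)||y|^\sigma\,dy$, so it remains to show $\int|\delta_a\nabla^lG||y|^\sigma\,dy\lesssim|a|^\sigma\min\{1,|a|\}^{1-\sigma}$. I would treat two cases.
\begin{itemize}
\item If $|a|\le1$, write $\delta_a\nabla^lG(y)=\int_0^1a\cdot\nabla^{l+1}G(y-sa)\,ds$. The Gaussian decay then makes the weighted integral $\lesssim|a|$.
\item If $|a|>1$, bound $|\delta_a\nabla^lG|$ by the sum of the two translates. Then use $|y|^\sigma\le|y-a|^\sigma+|a|^\sigma$ on the shifted term, which gives $\lesssim|a|^\sigma$.
\end{itemize}
Undoing the scaling yields exactly $|\alpha|^\sigma t^{-l/2}\min\{1,|\alpha|t^{-1/2}\}^{1-\sigma}$.

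For \eqref{eq2.32}, I would use that $\K$ factorizes as $\K(t,x)=\K'(t,x')\K_1(t,x_d)$, a product of the heat kernels on $\mathbb R^{d-1}$ and on $\mathbb R$. Since $\mathcal P^\beta_{x'}$ acts only in $x'$, this gives $\nabla_{x'}^{n_1}\partial_{x_d}^{n_2}\mathcal P^\beta_{x'}\K=(\nabla^{n_1}_{x'}P'(D')\K')(t,x')\cdot\partial^{n_2}_{x_d}\K_1(t,x_d)$. The $x_d$ factor is a Gaussian derivative, bounded by $t^{-(1+n_2)/2}e^{-x_d^2/(8t)}$. For the $x'$ factor, scaling reduces to $t=1$ and to the function $\Phi=\mathcal F^{-1}[(i\xi')^{n_1}P'(\xi')e^{-|\xi'|^2}]$ on $\mathbb R^{d-1}$. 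I would prove $|\Phi(y')|\lesssim(1+|y'|)^{-(d-1)-\beta-n_1}$ by a Littlewood--Paley decomposition of the symbol:
\begin{itemize}
\item the piece at frequency $2^k\le1$ contributes $\lesssim2^{k(d-1+\beta+n_1)}(1+2^k|y'|)^{-M}$, by repeated integration by parts using $|\nabla^j_{\xi'}P'|\lesssim|\xi'|^{\beta-j}$;
\item the high-frequency pieces are Schwartz.
\end{itemize}
Summing over $k$ gives the claim, and rescaling gives $|\nabla^{n_1}_{x'}P'(D')\K'|\lesssim(t^{1/2}+|x'|)^{-(d-1)-\beta-n_1}$. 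The second line of \eqref{eq2.32} follows by the same argument with the roles of $x'$ and $x_d$ exchanged: the one-dimensional symbol estimate gives $(t^{1/2}+|x_d|)^{-1-\beta-n_2}$, and the $(d-1)$-dimensional Gaussian factor supplies the decay in $x'$.

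The main obstacle is assembling these one-variable bounds into the stated mixed form with the isotropic factor $(t^{1/2}+|x|)^{-d}$. In the regime where $|x_d|$ dominates (or is comparable to $|x'|$), the Gaussian factor in $x_d$ supplies arbitrary decay in $x_d/t^{1/2}$. That converts the product directly into $(t^{1/2}+|x|)^{-d}(t^{1/2}+|x'|)^{-\beta-n_1}(t^{1/2}+|x_d|)^{-n_2}$. The delicate regime is $|x'|\gg\max\{|x_d|,t^{1/2}\}$. There the $x_d$ factor gives only $t^{-(1+n_2)/2}$, while the homogeneous symbol limits the $x'$ decay to $|x'|^{-(d-1)-\beta-n_1}$. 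I would check this case first, comparing the two sides at $x_d=0$ and $|x'|\to\infty$. My expectation is that the comparison leaves a factor $|x'|/t^{1/2}$. If so, I would record the bound in this regime in the product form above and verify that this is all the later kernel estimates actually use, namely integrability after integrating in $x_d$.
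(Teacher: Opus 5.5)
\paragraph{Part (heates).} Your arguments for \eqref{heates} are correct and standard. The scaling argument gives the pointwise bound. For the moment bound, you rescale and then use the mean value theorem when $|a|\le1$, and the two translates together with $|y|^\sigma\le|y-a|^\sigma+|a|^\sigma$ when $|a|>1$. The paper itself gives no proof here; it only cites Section~2 of \cite{KHN2025}. Your Littlewood--Paley bound for $\nabla^{n_1}_{x'}P'(D')\K'$ is also fine. If $P'$ is not exactly homogeneous, run the $t=1$ argument on the rescaled symbol $t^{\beta/2}P'(t^{-1/2}\cdot)$, which satisfies the same symbol bounds uniformly in $t$.

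\paragraph{Part (eq2.32): the bound is false as stated.} Your suspicion is correct and should be a conclusion, not an expectation. The ``main obstacle'' cannot be overcome, because your product formula is an exact identity and the loss is real. Take $d=2$, $P'(\xi_1)=|\xi_1|$ (so $\beta=1$), $n_1=n_2=0$, $t=1$, $x_2=0$. Using $|D_1|f(x_1)=\frac1\pi\,\mathrm{p.v.}\int\frac{f(x_1)-f(y)}{(x_1-y)^2}\,dy$, one gets
\[
\mathcal P^{1}_{x'}\K(1,x_1,0)=(4\pi)^{-1/2}\,|D_1|\K_1(1,x_1)=-\frac{1}{2\pi^{3/2}x_1^{2}}+O(|x_1|^{-3}).
\]
The right-hand side of \eqref{eq2.32} is $(1+|x_1|)^{-3}$, so the ratio grows like $|x_1|$. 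This is exactly your factor $|x'|/t^{1/2}$. The same happens for the symbols $i\xi'|\xi'|^{-\varkappa}$ used later, whose kernels decay exactly like $|x'|^{-(d-1)-(1-\varkappa)}$.

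The second line of \eqref{eq2.32} fails too. It does not follow ``by the same argument with the roles exchanged'', which your proposal overlooks. When $|x_d|\gg\max\{|x'|,t^{1/2}\}$, the $(d-1)$-dimensional Gaussian contributes only $t^{-(d-1+n_1)/2}$. The stated bound is then off by $(|x_d|/t^{1/2})^{d-1}$; take $P_d(\xi_2)=|\xi_2|$, $x_1=0$, $x_2\to\infty$ in the same example.

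\paragraph{What is true, and why it suffices.} Your argument actually proves
\begin{gather*}
|\nabla^{n_1}_{x'}\partial^{n_2}_{x_d}\mathcal P^\beta_{x'}\K|\lesssim (t^{\frac12}+|x'|)^{-(d-1)-\beta-n_1}\,t^{-\frac{1+n_2}{2}}e^{-\frac{x_d^2}{8t}},\\
|\nabla^{n_1}_{x'}\partial^{n_2}_{x_d}\mathcal P^\beta_{x_d}\K|\lesssim t^{-\frac{d-1+n_1}{2}}e^{-\frac{|x'|^2}{8t}}\,(t^{\frac12}+|x_d|)^{-1-\beta-n_2}.
\end{gather*}
In particular, \eqref{eq2.32} holds if $(t^{1/2}+|x|)^{-d}$ is replaced by $(t^{1/2}+|x|)^{-(d-1)}(t^{1/2}+|x_d|)^{-1}$ in the first line, and by $(t^{1/2}+|x|)^{-1}(t^{1/2}+|x'|)^{-(d-1)}$ in the second.

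Your proposed repair is the right one. Every later use of \eqref{eq2.32}, namely \eqref{dxpdx}, \eqref{un1higher}, \eqref{un2half} and \eqref{eq2.22}, is a weighted $L^1$ bound for these kernels and their finite differences. For $\nabla^{n_1}_{x'}\partial^{n_2}_{x_d}\mathcal P^\beta_{x'}\K(s,\cdot)$ with weight $|z'|^\gamma$ and $0\le\gamma<\beta+n_1$, Fubini and scaling give $s^{(\gamma-\beta-n_1-n_2)/2}$. Symmetrically, $\mathcal P^\beta_{x_d}$ with weight $|z_d|^\gamma$ works for $\gamma<\beta+n_2$. Weights in the other variable cost nothing because of the Gaussian. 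Finite differences follow from the same mean-value/two-translate split you used for \eqref{heates}.

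The exponents in those applications meet these restrictions. The tightest case is the weight $|z|^\kappa$ against $\partial_{x_d}\nabla_{x'}|D_{x'}|^{-\eta}\K$ in \eqref{eq2.22}. It needs $\kappa<1-\eta$, which follows from $\eta<\frac23(1-\kappa)$.

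One caution: do not record the $x_d$-factor as $(t^{1/2}+|x_d|)^{-1-n_2}$ alone. For $n_2=0$ that is not integrable in $x_d$, so keep the Gaussian or use the mixed form above.
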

    For the proof, see Section~2 of \cite{KHN2025}.

    The first toy model on the half-space is
		  \begin{equation}\label{mcbd1}
     \begin{aligned}
         &\partial_tu-\B:\nabla^2 u=F^1+F^2,\quad \text{in}\ [0,T]\times\mathbb{R}^d_+,\\
         &u|_{t=0}=u_0,\quad \text{in}\ \mathbb{R}^d_+,\\
     &u=0,\quad \text{on}\ [0,T]\times\partial\mathbb{R}^d_+,
     \end{aligned}
 \end{equation}
 where $\B\in C^\infty(\mathbb{R}^d_+;\mathbb{R}^{d\times d}_{sym})$ is uniformly elliptic, meaning there exists $c_0\geq1$ such that 
		    \begin{equation}
		        c_0^{-1}|\xi|^2\leq \xi^\top \B \xi  \leq c_0|\xi|^2,\ \ \ \forall\ \xi\in\mathbb{R}^d,
		    \end{equation}
and the forcing terms satisfy
\[
F^1,F^2\in C^\infty((0,T]\times \overline{\mathbb{R}}^d_+),\qquad F^2|_{(0,T]\times\partial \mathbb{R}^d_+}=0.
\]
Generally, $F^1$ contains lower-order derivatives, and $F^2$ consists of critical terms. We require $F^2$ to vanish on the boundary, so that derivatives may be moved from $F^2$ onto the kernel.

		\begin{lemma}\label{mcbdlemti}
		For every smooth function $f:[0,T]\times\mathbb R_+^d\to\mathbb R$ for which the right-hand sides are finite, the following estimates hold.
		\begin{enumerate}
		\item For any $\kappa\in[0,1]$,
		\begin{equation}\label{eq2.25}
		\begin{aligned}
		\sup_{0<\tau<t<T}\frac{\tau^{\frac{\kappa}{2}}
		\|\nabla f(t)-\nabla f(\tau)\|_{L^\infty(\mathbb{R}^d_+)}}
		{(t-\tau)^{\frac{\kappa}{2}}}
		&\lesssim \sup_{0<t\leq T}\Bigl(\|\nabla f(t)\|_{L^\infty(\mathbb{R}^d_+)}\\
		&\qquad+t^{\frac{1}{2}}
		\|(\partial_t,\nabla^2)f(t)\|_{L^\infty(\mathbb{R}^d_+)}\Bigr).
		\end{aligned}
		\end{equation}
		\item For any $\kappa\in(0,1)$, there exists $\varkappa\in(0,\kappa)$ such that
		\begin{equation}\label{eq2.26}
		\begin{aligned}
		\sup_{0<\tau<t<T}\frac{\tau^{\frac{1+\varkappa}{2}}
		\|\nabla^2 f(t)-\nabla^2 f(\tau)\|_{L^\infty(\mathbb{R}^d_+)}}
		{(t-\tau)^{\frac{\varkappa}{2}}}
		&\lesssim \sup_{0<t\leq T}\Bigl(\|\nabla f(t)\|_{L^\infty(\mathbb{R}^d_+)}\\
		&\qquad+t^{\frac{1+\kappa}{2}}
		\|(\partial_t,\nabla^2)f(t)\|_{\dot C^{\kappa}(\mathbb{R}^d_+)}\Bigr).
		\end{aligned}
		\end{equation}
		\end{enumerate}
		\end{lemma}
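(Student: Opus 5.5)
Both estimates will follow from a spatial regularization at the parabolic scale $h:=(t-\tau)^{1/2}$, combined with a case distinction according to whether $t-\tau\geq\tau$ or $t-\tau<\tau$. Write $A$ for the right-hand side of \eqref{eq2.25}, respectively of \eqref{eq2.26}.

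To stay inside $\overline{\mathbb R^d_+}$, I would use a one-sided mollifier. Fix $\rho\in C_c^\infty(\mathbb R^d)$ with $\int\rho=1$ and $\operatorname{supp}\rho\subset\{y:\,|y|\leq 1,\ y_d\geq 1/2\}$. Set
\[
f_h(t,x):=\int\rho(y)\,f(t,x+hy)\,dy,
\]
which only samples points of $\mathbb R^d_+$ whenever $x\in\overline{\mathbb R^d_+}$. The standard facts still hold on the closed half-space:
\[
\|\nabla^k g-\nabla^k g_h\|_{L^\infty}\lesssim h\|\nabla^{k+1}g\|_{L^\infty},\qquad
\|\nabla^k g-\nabla^k g_h\|_{L^\infty}\lesssim h^{\kappa}\|\nabla^kg\|_{\dot C^\kappa}.
\]
Moreover $\nabla g_h=h^{-1}\int(\nabla\rho)(y)\,g(x+hy)\,dy$. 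Since $\int\nabla\rho=0$ and $\int\nabla^2\rho=0$, we get $\|\nabla g_h\|_{L^\infty}\lesssim h^{-1}\|g\|_{L^\infty}$ and $\|\nabla^2 g_h\|_{L^\infty}\lesssim h^{\kappa-2}\|g\|_{\dot C^\kappa}$. Also $\partial_t$ commutes with the mollification.

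\emph{Part (i).} If $t-\tau\geq\tau$, the weight satisfies $\tau^{\kappa/2}(t-\tau)^{-\kappa/2}\leq1$, and the bound $2\sup_s\|\nabla f(s)\|_{L^\infty}$ suffices. If $t-\tau<\tau$, then every $s\in[\tau,t]$ satisfies $s\geq\tau$, and I would split
\[
\nabla f(t)-\nabla f(\tau)=(\nabla f-\nabla f_h)(t)-(\nabla f-\nabla f_h)(\tau)+\int_\tau^t\nabla(\partial_sf)_h\,ds .
\]
The first two terms are bounded by $h\sup_{s\geq\tau}\|\nabla^2f(s)\|_{L^\infty}\lesssim (t-\tau)^{1/2}\tau^{-1/2}A$. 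The integral is bounded by $(t-\tau)\,h^{-1}\tau^{-1/2}A=(t-\tau)^{1/2}\tau^{-1/2}A$. Multiplying by $\tau^{\kappa/2}(t-\tau)^{-\kappa/2}$ gives $((t-\tau)/\tau)^{(1-\kappa)/2}A\leq A$.

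\emph{Part (ii).} I expect to be able to take $\varkappa=\kappa$; any smaller $\varkappa$ then follows trivially in the regime $t-\tau<\tau$, and this is consistent with the statement. For $t-\tau<\tau$ the same splitting is applied at the level of $\nabla^2f$. The error terms are bounded by $h^\kappa\sup_{s\geq\tau}\|\nabla^2f(s)\|_{\dot C^\kappa}\lesssim(t-\tau)^{\kappa/2}\tau^{-(1+\kappa)/2}A$. The time integral is
\[
\int_\tau^t\nabla^2(\partial_sf)_h\,ds\lesssim (t-\tau)\,h^{\kappa-2}\tau^{-(1+\kappa)/2}A=(t-\tau)^{\kappa/2}\tau^{-(1+\kappa)/2}A.
\]
Multiplying by $\tau^{(1+\kappa)/2}(t-\tau)^{-\kappa/2}$ closes this case.

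The main obstacle is the regime $t-\tau\geq\tau$. There no $L^\infty$ bound for $\nabla^2 f$ is assumed, so I would first establish the half-space interpolation inequality
\[
\|\nabla^2f\|_{L^\infty(\mathbb R^d_+)}\lesssim\|\nabla f\|_{L^\infty}^{\kappa/(1+\kappa)}\|\nabla^2f\|_{\dot C^\kappa}^{1/(1+\kappa)}.
\]
I would prove it with the same one-sided mollifier at an optimized scale $r$:
\[
|\nabla^2f|\leq|\nabla^2f-\nabla^2f_r|+|\nabla^2f_r|\lesssim r^\kappa\|\nabla^2f\|_{\dot C^\kappa}+r^{-1}\|\nabla f\|_{L^\infty}.
\]
It yields $\|\nabla^2 f(s)\|_{L^\infty}\lesssim s^{-1/2}A$. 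Hence, using $t>\tau$,
\[
\tau^{(1+\varkappa)/2}\,\|\nabla^2f(t)-\nabla^2f(\tau)\|_{L^\infty}\,(t-\tau)^{-\varkappa/2}\lesssim\tau^{\varkappa/2}(t-\tau)^{-\varkappa/2}A\leq A .
\]
This completes the plan.
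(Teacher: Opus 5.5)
Your proof is correct, and it takes a genuinely different route from the paper in the choice of regularization. The paper mollifies with the reflected kernel $\tilde\rho_\eps(x'-y',x_d,y_d)=\rho_\eps(x'-y',x_d+y_d)+\rho_\eps(x'-y',x_d-y_d)$, which amounts to convolving the even extension. This kernel does not commute with normal derivatives. After splitting $\nabla^2$ into its $x'x'$, $x'd$ and $dd$ blocks, only the tangential block is treated with the H\"older norm of $\partial_t f$. The mixed block, and the $dd$ block with its boundary trace of $\partial_t\partial_d f$ on $\{y_d=0\}$, are instead controlled through part~(i), at the cost of factors $\eps^{-1}$ and $\eps^{-2}$. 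Optimizing $\eps$ then gives only $\varkappa=\kappa/(2(1+\kappa))$, and the paper's subsequent remark concedes that $\varkappa=\kappa$ should be optimal.

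Your kernel, supported in $\{y_d\geq 1/2\}$ and shifted into the interior, avoids all of this. It commutes with every spatial derivative, its derivatives have mean zero, and it only samples points of $\mathbb R^d_+$. Hence both derivatives in $\nabla^2(\partial_sf)_h$ can be placed on the kernel and paired with $\partial_sf(s,x+hy)-\partial_sf(s,x)$, with no boundary term at all. At the scale $h=(t-\tau)^{1/2}$ this yields the sharp index $\varkappa=\kappa$. Every smaller $\varkappa\in(0,\kappa)$, which is what the statement literally asks for, then follows from the factor $((t-\tau)/\tau)^{(\kappa-\varkappa)/2}\leq1$, as you note.

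The far regime is handled essentially as in the paper: the same interpolation $\|\nabla^2f\|_{L^\infty}\lesssim\|\nabla f\|_{L^\infty}^{\kappa/(1+\kappa)}\|\nabla^2f\|_{\dot C^\kappa}^{1/(1+\kappa)}$, with threshold $\tau$ instead of $\tau/4$. You also supply a proof of this inequality on the half-space, which the paper merely quotes.

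What your route relies on is that $\mathbb R^d_+$ is convex and contains, at every scale, a ball translated inward from each point; this is exactly the situation here. The result is both simpler and stronger. In the paper's application, any $\varkappa>0$ suffices to make $(t-\tau)^{\varkappa/2}\|\delta_\alpha\nabla^2\sH(t-\tau)\|_{L^1}$ integrable near $\tau=t$, so the improvement is not needed downstream. It does, however, remove the non-optimality flagged in that remark.
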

		The proof of Lemma~\ref{mcbdlemti} is given after the proof of Lemma~\ref{lemheat}.

		\begin{lemma}\label{lemheat}
		    Let $T>0$, $\kappa\in(0,1)$, and let $\eta>0$ satisfy $\kappa+\frac{3\eta}{2}<1$. Let
		    \[
		    u\in C([0,T];W_{\mathrm{loc}}^{1,\infty}(\overline{\mathbb{R}^d_+}))
		    \cap C^\infty((0,T]\times\overline{\mathbb R^d_+})
		    \]
		    be a solution to \eqref{mcbd1}, with $u_0\in \dot W^{1,\infty}(\mathbb{R}^d_+)$. Then
 \begin{equation}\label{re5.1}
    \begin{aligned}
  &  \sup_{0<t\leq T}\left(\|\nabla u(t)\|_{L^\infty(\mathbb{R}^d_+)}+t^\frac{1+\kappa}{2}\|\nabla^2 u(t)\|_{\dot C^{\kappa}(\mathbb{R}^d_+)}\right)\\
    &\quad \lesssim \|u_0\|_{\dot W^{1,\infty}(\mathbb{R}^d_+)}+\sup_{0<t\leq T}\left(t^{\frac{1}{2}}\|F^1(t)\|_{L^\infty(\mathbb{R}^d_+)}+t^\frac{1+\kappa}{2}\|F^1(t)\|_{\dot C^\kappa(\mathbb{R}^d_+)}+t^\frac{1+\kappa}{2}\|F^2(t)\|_{\dot C^\kappa(\mathbb{R}^d_+)}\right)\\
    &\quad+\sup_{0<\tau<t<T}\tau^{\frac{1+\kappa}{2}}\frac{\|F^1(t)-F^1(\tau)\|_{L^\infty(\mathbb{R}^d_+)}}{(t-\tau)^\frac{\kappa}{2}}\\
	    &\qquad+T^{\frac{\eta}{2}}(1+T)\left(\|\B\|_{\dot C^\kappa}+\|\B\|_{\dot W^{1,\infty}}\right)\sup_{0<t\leq T}\left(\|\nabla u(t)\|_{L^\infty(\mathbb{R}^d_+)}+t^{\frac{1+\kappa}{2}}\|(\partial_t,\nabla^2)u(t)\|_{\dot C^{\kappa}(\mathbb{R}^d_+)}\right),
 \end{aligned}
 \end{equation}
		 where the implicit constant depends only on $c_0,d,\kappa$, and $\eta$.
		\end{lemma}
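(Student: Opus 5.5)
Fix $(t,x)$ with $x\in\mathbb R^d_+$ and freeze the coefficient at a point $x_0$, rewriting \eqref{mcbd1} as
\[
\partial_tu-\B(x_0):\nabla^2u=F^1+F^2+(\B(x)-\B(x_0)):\nabla^2u .
\]
Because $\B(x_0)$ is symmetric and positive definite, there is a linear map $L_{x_0}$ (a lower-triangular square root of $\B(x_0)$, after a shear in $x'$ only) that preserves $\mathbb R^d_+$ and $\partial\mathbb R^d_+$ and turns $\B(x_0):\nabla^2$ into $\Delta$. The transformed $u$ then solves the constant-coefficient Dirichlet heat equation on the half-space. Its Green function is
\[
G(t,x,y)=\K(t,x-y)-\K(t,x-y^*), \qquad y^*=(y',-y_d).
\]
Duhamel gives
\[
u(t)=G(t)*u_0+\int_0^t G(t-s)*\big(F^1+F^2+(\B-\B(x_0)):\nabla^2u\big)(s)\,ds .
\]
Only at the end do we set $x_0=x$, after differentiating in $x$ twice, or once for the gradient part. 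All constants depend on $c_0$ through $L_{x_0}$, uniformly in $x_0$.

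\emph{The terms treated one by one.}
\begin{itemize}
\item \emph{Initial data.} Write $G(t)*u_0$ as $\K(t)*\tilde u_0$, where $\tilde u_0$ is the odd reflection of $u_0$. This reflection is still Lipschitz because $u_0$ has zero trace. Then $\|\nabla(\cdot)\|_{L^\infty}\lesssim\|u_0\|_{\dot W^{1,\infty}}$, and $t^{\frac{1+\kappa}2}\|\nabla^2(\cdot)\|_{\dot C^\kappa}$ follows from \eqref{heates} by putting one derivative on $u_0$ and the rest on $\K$.
\item \emph{The forcing $F^2$.} Use its odd extension $\tilde F^2$. Since $F^2=0$ on the boundary, $\tilde F^2\in\dot C^\kappa(\mathbb R^d)$ with comparable seminorm, so the $F^2$-term is $\int_0^t\K(t-s)*\tilde F^2\,ds$. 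The standard whole-space Schauder argument applies: subtract $\tilde F^2(s,x)$, use $\int\nabla^2\K=0$, and split the time integral at $s=t/2$. Together with the moment bound \eqref{heates}, this gives $\nabla^2\in\dot C^\kappa$ with weight $t^{\frac{1+\kappa}2}$, and the gradient bound by integrating $s^{-\frac{1+\kappa}2}$ against $(t-s)^{-\frac{1-\kappa}2}$ in time.
\item \emph{The forcing $F^1$.} It does not vanish on the boundary, so its odd extension is only piecewise $C^\kappa$ and has a jump across $x_d=0$. I would handle it by splitting differently:
  \begin{itemize}
  \item for $s<t/2$, use crude kernel bounds together with $s^{1/2}\|F^1\|_{L^\infty}$;
  \item for $s>t/2$, write $F^1(s)=F^1(t)+(F^1(s)-F^1(t))$. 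The time-Hölder difference quotient in the statement controls the second piece. For the first piece, $\int_{t/2}^tG(t-s)\,ds*F^1(t)$ is an elliptic-type Dirichlet problem with a time-independent right side.
  \end{itemize}
  The normal second derivative $\partial_d^2$ of this term is recovered from the equation, $\partial_d^2 = \partial_t -\Delta_{x'}-F$, using the $\dot C^\kappa$ bound on $F^1(t)$. The tangential derivatives commute with $G$.
\item \emph{The commutator $(\B(x)-\B(x_0)):\nabla^2u$.} Put all spatial derivatives on $G$ and use the moment estimate \eqref{heates} with weight $|x-y|^\sigma$, $\sigma=\kappa+\eta$, bounded by $\|\B\|_{\dot C^\kappa}$ or $\|\B\|_{\dot W^{1,\infty}}$ times a power of $|x-y|$. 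This converts the increment into a time gain $(t-s)^{\sigma/2}$. After integration, and the restriction $\kappa+\frac{3\eta}2<1$ which keeps the time integrals convergent, this yields the factor
\[
T^{\eta/2}(1+T)\bigl(\|\B\|_{\dot C^\kappa}+\|\B\|_{\dot W^{1,\infty}}\bigr)\sup_{0<t\le T}\Bigl(\|\nabla u(t)\|_{L^\infty}+t^{\frac{1+\kappa}2}\|(\partial_t,\nabla^2)u(t)\|_{\dot C^\kappa}\Bigr).
\]
\end{itemize}

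\emph{Time derivative and the main obstacle.} The $\partial_t u$ part of the right side of \eqref{re5.1} feeds back through the equation, $\partial_tu=\B:\nabla^2u+F$. The time-Hölder quantities that arise are controlled by Lemma~\ref{mcbdlemti}.

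The main obstacle I expect is the Hölder difference $\delta_\alpha$ for $\nabla^2$ near the boundary when $x$ and $x_0=x-\alpha$ straddle different scales relative to $x_d$. There the reflected kernel $\K(t,x-y^*)$ is no longer small, and the freezing point must be chosen consistently for both points. I would handle this by freezing at the single point $x$ for both evaluations. I would then estimate the extra term $(\B(x)-\B(x-\alpha))\nabla^2u(x-\alpha)$ directly by $|\alpha|^\kappa\|\B\|_{\dot C^\kappa}\|\nabla^2u\|_{L^\infty}$, with $\|\nabla^2u\|_{L^\infty}$ controlled by interpolating between $\nabla u$ and $\nabla^2u\in\dot C^\kappa$. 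The loss from this interpolation is what produces the $T^{\eta/2}$ and $\|\B\|_{\dot W^{1,\infty}}$ factors.
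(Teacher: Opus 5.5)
Most of your architecture matches the paper. This includes freezing at $x_0$ with a linear map preserving $\mathbb R^d_+$, the Dirichlet kernel, odd reflection for $u_0$ and $F^2$, and, for $F^1$, the splitting $F^1(s)=F^1(t)+(F^1(s)-F^1(t))$ combined with $\partial_{x_d}^2\sH=\partial_t\sH-\Delta_{x'}\sH$. The genuine gap is the $\dot C^\kappa$ estimate of $\nabla^2$ of the commutator term, which is the heart of the lemma.

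\textbf{Why your commutator step fails.} With $x_0=x$, the density $\rho_x(s,y)=(\B(y)-\B(x)):\nabla^2u(s,y)$ vanishes at $y=x$ but not at the shifted centre $y=x-\alpha$. Suppose you put $\delta_\alpha\nabla^2$ entirely on the kernel and use only $|\rho_x(s,y)|\lesssim|x-y|^\sigma\|\nabla^2u(s)\|_{L^\infty}$. Then for $t-s<|\alpha|^2$, \eqref{heates} gives only $|\alpha|^\sigma(t-s)^{-1}\|\nabla^2u(s)\|_{L^\infty}$, because the part of $\delta_\alpha\nabla^2\K$ centred at $x-\alpha$ sees $|x-y|\approx|\alpha|$. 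The integral $\int^t(t-s)^{-1}\,ds$ diverges.
\begin{itemize}
\item No choice of $\sigma$ or $\eta$ removes this divergence. The condition $\kappa+\frac{3\eta}{2}<1$ does not make any time integral converge.
\item The problem already occurs in the whole space. It is not a boundary effect: the reflected kernel is harmless, since $|x-y|\le|x-y^*|$.
\item Your proposed repair does not close the gap. Re-centring the density at $x-\alpha$ does not produce the pointwise quantity $(\B(x)-\B(x-\alpha)):\nabla^2u(x-\alpha)$. It produces $(\B(x-\alpha)-\B(x))$ times the short-time second-order potential $\nabla^2\int_{t-|\alpha|^2}^t G(t-s)*\nabla^2u(s)\,ds$, evaluated at $x-\alpha$. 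That operator is of Calder\'on--Zygmund type and is not bounded on $L^\infty$, so $\|\nabla^2u\|_{L^\infty}$, even after interpolation, cannot control it.
\end{itemize}

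\textbf{What is missing, and how the paper supplies it.} You need spatial regularity of $\nabla^2u$ inside the near-diagonal part of the Duhamel integral. You also need a separate treatment of the normal--normal component, where the Dirichlet kernel has no exact cancellation. The paper does this in two steps.
\begin{itemize}
\item \emph{Tangential components.} It moves $\eta$ tangential derivatives from the kernel onto the density: $\nabla_{z'}\nabla|D_{y'}|^{-\eta}\sH$ acts on $|D_{y'}|^{\eta}\tilde\R_{x_0}$. This is split as $\tilde\B_{x_0}:|D_{y'}|^{\eta}(\nabla^2u\circ\psi_{x_0})$ plus a commutator. The kernel then has order $2-\eta$, so $(t-\tau)^{-1+\eta/2}$ is integrable. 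The norm $\||D_{y'}|^\eta\nabla^2u\|_{L^\infty}$ is interpolated between $L^\infty$ and $\dot C^\kappa$. The commutator bounds on $\delta_h\tilde\B$ are where $\kappa+\frac{3\eta}{2}<1$ is actually used.
\item \emph{Normal--normal component.} It uses \eqref{eq2.15}, subtracts $\tilde\R_{x_0}(t)$, and controls $\tilde\R_{x_0}(\tau)-\tilde\R_{x_0}(t)$ by the time-H\"older bound \eqref{eq2.26}.
\end{itemize}
Your ``extra term'' appears exactly in the second step, as the endpoint term $\tilde\R_{x_0}(t,z-\beta)|_{x_0=x}$ of the time integration. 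Only there is the pointwise bound $|\beta|^\kappa\|\B\|_{\dot C^\kappa}\|\nabla^2u(t)\|_{L^\infty}$ legitimate.

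\textbf{A possible repair of your route.} Your single-freezing approach could be rescued by bounding the re-centring error with three ingredients:
\begin{itemize}
\item $\|\nabla^2u\|_{\dot C^\kappa}$;
\item the cancellation $\int\nabla_{x'}\nabla\sH\,dy=0$;
\item the uniform bound $\int_0^\infty\bigl|\partial_{x_d}^2\int_{\mathbb R^d_+}\sH(r,x,y)\,dy\bigr|\,dr\lesssim1$.
\end{itemize}
This gives $|\alpha|^\kappa\|\B\|_{\dot C^\kappa}\bigl(|\alpha|^\kappa t^{-\frac{1+\kappa}{2}}+t^{-\frac12}\bigr)$, which is acceptable. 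Your proposal as written does not contain this step.
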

		To close the estimate, use the equation to bound
		\[
		\|\partial_tu\|_{\dot C^\kappa}\lesssim
		\|\B\|_{C^\kappa}\big(\|\nabla^2u\|_{\dot C^\kappa}+\|\nabla^2u\|_{L^\infty}\big)
		+\|F^1+F^2\|_{\dot C^\kappa}.
		\]
		Hence the final term in \eqref{re5.1} is absorbed whenever
		$T^{\eta/2}(1+T)(\|\B\|_{\dot C^\kappa}+\|\B\|_{\dot W^{1,\infty}})$
		is sufficiently small, with the threshold depending only on $c_0,d,\kappa$, and $\eta$.
		\begin{proof}
		We freeze the coefficient at $x_0\in \mathbb{R}^d_+$ and denote $\B_{0}=\B(x_0)$; then \eqref{mcbd1} can be rewritten as
		\begin{align*}
		   \partial_tu(t,x)-\B_{0}:\nabla^2 u(t,x)=F^1(t,x)+F^2(t,x)+\mathsf{R}_{x_0}(t,x),
		\end{align*}
		where
		\begin{equation*}
		    \mathsf{R}_{x_0}(t,x)=(\B(x)-\B_{0}):\nabla^2 u(t,x).
		\end{equation*}
	Since $\B$ is uniformly elliptic, there exists a linear transformation $\psi_{x_0}$ such that $\B_0=\nabla\psi_{x_0}(\nabla\psi_{x_0})^{\top}$ and
    \begin{equation*}
        (\nabla\psi_{x_0})_{dj}=0,\quad\forall j<d.
    \end{equation*}
    This implies $((\B_{0}:\nabla^2u)\circ\psi_{x_0})(x)=(\Delta(u\circ\psi_{x_0}))(x)$ and $\psi_{x_0}$ is a bijection from $\mathbb{R}_+^d$ to $\mathbb{R}_+^d$. Furthermore, the uniform ellipticity ensures 
    \begin{equation}\label{eq2.5}
        |\nabla\psi_{x_0}|+|\nabla\psi_{x_0}^{-1}|\leq C(c_0).\end{equation}
    Let
    \begin{equation}\label{eq2.84}
        v_{x_0}(t,x)=u(t,\psi_{x_0}(x)),\qquad u(t,x)=v_{x_0}(t,\psi_{x_0}^{-1}(x)),
    \end{equation}
    then we obtain the heat equation on the half-space:
 \begin{equation}\label{mcbdheat}
    \begin{aligned}
    &\partial_t v_{x_0}-\Delta v_{x_0}=\tilde F^1_{x_0}+\tilde F^2_{x_0}+\tilde {\mathsf{R}}_{x_0}, \quad \text{in}\ [0,T]\times\mathbb{R}^d_+,\\
         &v_{x_0}|_{t=0}=u_0\circ\psi_{x_0},\quad \text{in}\ \mathbb{R}^d_+,\\
     &v_{x_0}(t,x)=0,\quad \text{on}\ [0,T]\times\partial\mathbb{R}^d_+,
 \end{aligned}
 \end{equation}
 with forcing terms
\begin{equation}\label{eq2.6}
     \tilde F^1_{x_0}(t,x)=(F^1(t)\circ\psi_{x_0})(x), \ \ \ \tilde F^2_{x_0}(t,x)=(F^2(t)\circ\psi_{x_0})(x),\ \ \ \tilde {\mathsf{R}}_{x_0}(t,x)=(\mathsf{R}_{x_0}(t)\circ\psi_{x_0})(x).
\end{equation}
		The transformed forcing $\tilde{F}_{x_0}^2$ still vanishes on the boundary because $\psi_{x_0}(\partial\mathbb{R}_+^d)=\partial\mathbb{R}_+^d$. The Dirichlet heat kernel on the half-space is
		  \begin{align}\label{defH}
		  \sH(t,x'-y',x_d,y_d)= \K(t,x'-y',x_d-y_d)-\K(t,x'-y',x_d+y_d),
		  \end{align}
		  where we denote $x=(x',x_d)\in\mathbb{R}^d$ with $x'\in\mathbb{R}^{d-1}$. Similarly we denote the kernel with Neumann condition
		  \begin{align*}
		  \tilde{\sH}(t,x'-y',x_d,y_d)= \K(t,x'-y',x_d-y_d)+\K(t,x'-y',x_d+y_d).
		  \end{align*}
		  From \eqref{heates}, we obtain for any $x\in\mathbb{R}_+^d$, $n\in\mathbb{N}$,
		  \begin{equation}\label{eskerha}
		  	  \begin{aligned}
			   &\int_{\mathbb{R}^d_+}|\nabla_x ^n \sH(t,x'-y',x_d,y_d)|dy
			   \lesssim t^{-\frac{n}{2}},
			   \quad\quad\int_{\mathbb{R}^d_+}|\nabla_x ^n \tilde{\sH}(t,x'-y',x_d,y_d)|dy
			   \lesssim t^{-\frac{n}{2}},\\
			   &\int_{\mathbb{R}^d_+}|\nabla_x ^n \sH(t,x'-y',x_d,y_d)||x-y|^\sigma dy
			   +\int_{\mathbb{R}^d_+}|\nabla_x ^n \tilde{\sH}(t,x'-y',x_d,y_d)||x-y|^\sigma dy
			   \lesssim t^{\frac{\sigma-n}{2}},\qquad \sigma\in[0,1],\\
			   &\int_{\mathbb{R}^d_+}|\delta_\alpha^x\nabla_x ^n \sH(t,x'-y',x_d,y_d)|
			   |x-y|^{\beta}dy\lesssim t^{-\frac{n}{2}}|\alpha|^{\beta}
			   \min\{1,|\alpha|t^{-\frac{1}{2}}\}^{1-\beta},\\
           &\int_{\mathbb{R}^d_+}|\delta_\alpha^x\nabla_x ^n \tilde{\sH}(t,x'-y',x_d,y_d)|
           |x-y|^{\beta}dy\lesssim t^{-\frac{n}{2}}|\alpha|^{\beta}
           \min\{1,|\alpha|t^{-\frac{1}{2}}\}^{1-\beta},\\
           &\hspace{7em}\alpha\in\mathbb{R}^d,\quad x-\alpha\in\mathbb{R}^d_+,
           \quad\beta\in[0,1).
		  \end{aligned}
		  \end{equation}
		 In fact, to get the last two inequalities of \eqref{eskerha} for $\sH$, we use a change of variable to obtain 
		  \begin{align*}
		    &\int_{\mathbb{R}^d_+}|\delta_\alpha\nabla_x ^n \sH(t,x'-y',x_d,y_d)||x-y|^{\beta}dy\\
		    &\quad\lesssim\int_{\mathbb{R}^d_+}|\delta_\alpha\nabla_x ^n \K(t,x-y)||x-y|^{\beta}dy+\int_{\mathbb{R}^d_-}|\delta_\alpha\nabla_x ^n \K(t,x-y)||(x'-y',x_d+y_d)|^{\beta}dy.
		  \end{align*}
	  For the second term, since $x\in\mathbb{R}_+^d$, and $y\in\mathbb{R}_-^d$, obviously $|(x'-y',x_d+y_d)|^{\beta}\leq |x-y|^{\beta}$, which leads to the estimate. Similar results also hold for $\tilde{\sH}$.

Using $\sH$, we rewrite the solution of \eqref{mcbdheat} as
		  \begin{equation}\label{mcbdfor}
     \begin{aligned}
         v_{x_0}(t,x)&=\int_{\mathbb{R}_+^d} \sH(t,x'-y',x_d,y_d)v_{x_0}(0,y)dy+\sum_{k=1}^2\int_0^t\int_{\mathbb{R}^d_+} \sH(t-\tau,x'-y',x_d,y_d)\tilde F^k_{x_0}(\tau,y)dyd\tau\\
         &\quad\quad+\int_0^t\int_{\mathbb{R}^d_+} \sH(t-\tau,x'-y',x_d,y_d)\tilde {\mathsf{R}}_{x_0}(\tau,y)dyd\tau\\
         &:=v_{L,x_0}(t,x)+\sum_{k=1,2}v_{N,x_0}^k(t,x)+v_{R,x_0}(t,x).
           \end{aligned}
           	  \end{equation}
              By \eqref{eq2.84}, for any $x,x-\alpha\in\mathbb{R}_+^d$, set
              \begin{equation*}
                  z_{x_0,x}:=\psi_{x_0}^{-1}(x),\qquad
                  \beta:=z_{x_0,x}-z_{x_0,x-\alpha}
                  =(\nabla\psi_{x_0}^{-1})\alpha.
              \end{equation*}
              We regard gradients of scalar functions as row vectors, consistently
              with the Jacobian convention in Section~\ref{sec2}. Since
              $\psi_{x_0}$ is linear, the chain rule gives
              \begin{align}
              \nabla u(t,x)
              &=\Bigl(\nabla v_{L,x_0}(t,z_{x_0,x})
              +\sum_{k=1,2}\nabla v_{N,x_0}^k(t,z_{x_0,x})
              +\nabla v_{R,x_0}(t,z_{x_0,x})\Bigr)
              \nabla\psi_{x_0}^{-1},\label{eq2.86}\\
              \delta_\alpha^x\nabla^2u(t,x)
              &=(\nabla\psi_{x_0}^{-\top})
              \Bigl(\delta_\beta^z\nabla^2v_{L,x_0}(t,z_{x_0,x})
              +\sum_{k=1,2}\delta_\beta^z\nabla^2v_{N,x_0}^k(t,z_{x_0,x})\nonumber\\
              &\hspace{12em}+\delta_\beta^z\nabla^2v_{R,x_0}(t,z_{x_0,x})\Bigr)
              (\nabla\psi_{x_0}^{-1}).\nonumber
              \end{align}
	             	  These identities hold for every fixed $x_0\in\mathbb{R}^d_+$. Since
              $\psi_{x_0}^{\pm1}$ are uniformly bounded linear maps,
                      \begin{equation*}
                          |\alpha|\lesssim |\beta|\lesssim |\alpha|.
                      \end{equation*}
                      Here and below, all derivatives and finite differences in the
transformed variable are first taken with $x_0$ fixed. Only afterwards
do we set $x_0=x$ and evaluate at $z_{x_0,x}=\psi_{x_0}^{-1}(x)$.
Thus, for instance, $\left.w_{x_0}(z_{x_0,x})\right|_{x_0=x}=w_x(\psi_x^{-1}(x)).$
	            	  With this convention,
           	  \begin{equation}\label{spliv}
           	       \begin{aligned}
           	    & |\nabla v_{x_0}(t)|\leq |(\nabla v_{L,x_0}(t))|+\sum_{k=1,2}|(\nabla v_{N,x_0}^k(t))|+|(\nabla v_{R,x_0}(t))|,\\
           	     &|\delta_\alpha\nabla^2v_{x_0}(t)|\leq |(\delta_\alpha\nabla^2v_{L,x_0}(t))|+\sum_{k=1,2}|(\delta_\alpha\nabla^2v_{N,x_0}^k(t))|\\
           	     &\quad\quad\quad\quad\quad\quad+|(\delta_\alpha\nabla^2v_{R,x_0}(t))|.
           	  \end{aligned}
           	  \end{equation}
           	  We first estimate the linear part $v_{L,x_0}$. Using integration by parts, since $v_{x_0}|_{\partial\mathbb{R}_+^d}=0$, one has 
 \begin{align*}
    &\nabla_{x'} v_{L,x_0}(t,x)=\int_{\mathbb{R}^d_+}\nabla_{x'} \sH(t,x'-y',x_d,y_d)v_{x_0}(0,y)dy
    =\int_{\mathbb{R}^d_+} \sH(t,x'-y',x_d,y_d)\nabla_{y'} v_{x_0}(0,y)dy,\\
    &\partial_{x_d} v_{L,x_0}(t,x)=\int_{\mathbb{R}^d_+}\partial_{x_d} \sH(t,x'-y',x_d,y_d)v_{x_0}(0,y)dy
    =\int_{\mathbb{R}^d_+} \tilde{\sH}(t,x'-y',x_d,y_d)\partial_{y_d} v_{x_0}(0,y)dy.
 \end{align*}
 Hence, by \eqref{eskerha} we obtain 
 \begin{equation*}
 \begin{aligned}
    &\|\nabla v_{L,x_0}(t)\|_{L^\infty}\lesssim \sup_{x\in\mathbb{R}_+^d}\int_{\mathbb{R}_+^d}|\tilde{\sH}(t,x'-y',x_d,y_d)|dy\|\nabla v_{x_0}(0)\|_{L^\infty(\mathbb{R}^d_+)}\\
    &\qquad+\sup_{x\in\mathbb{R}_+^d}\int_{\mathbb{R}_+^d}|\sH(t,x'-y',x_d,y_d)|dy\|\nabla v_{x_0}(0)\|_{L^\infty(\mathbb{R}^d_+)}\lesssim \|\nabla u_0\|_{L^\infty(\mathbb{R}^d_+)},
 \\
   &\| \delta_\alpha\nabla^2 v_{L,x_0}(t)\|_{L^\infty}\lesssim \int_{\mathbb{R}^d_+} |\delta_\alpha\nabla_x\tilde{\sH}(t,x'-y',x_d,y_d)|dy\|\nabla v_{x_0}(0)\|_{L^\infty(\mathbb{R}^d_+)}\\
   &\qquad+\int_{\mathbb{R}^d_+} |\delta_\alpha\nabla_x\sH(t,x'-y',x_d,y_d)|dy\|\nabla v_{x_0}(0)\|_{L^\infty(\mathbb{R}^d_+)}\lesssim \min\left\{1,\frac{|\alpha|}{t^{\frac{1}{2}}}\right\}t^{-\frac{1}{2}}\|\nabla u_0\|_{L^\infty(\mathbb{R}^d_+)},
 \end{aligned}
 \end{equation*}
holds uniformly for any $x_0\in\mathbb{R}_+^d$. Then we get
 \begin{align}\label{lllha}
     \|\nabla v_{L,x_0}(t)\|_{L^\infty}+t^\frac{1+\kappa}{2}\sup_{\substack{\alpha\in\mathbb R^d\\x,x-\alpha\in\mathbb{R}_+^d}}\frac{\|\delta_\alpha\nabla^2 v_{L,x_0}(t)\|_{L^\infty}}{|\alpha|^\kappa}\lesssim \|u_0\|_{\dot W^{1,\infty}},
 \end{align}
holds for any $x_0\in\mathbb{R}_+^d$. Then we consider $v_{N,x_0}^1(t,x)$. We have 
 \begin{align*}
    \nabla v_{N,x_0}^1(t,x)=\int_0^t\int_{\mathbb{R}^d_+} \nabla_x \sH(t-\tau,x'-y',x_d,y_d)\tilde F^1_{x_0}(\tau,y)dyd\tau.
 \end{align*}
Applying \eqref{eskerha} and using $\int_0^t(t-\tau)^{-\frac{1}{2}}\tau^{-\frac{1}{2}}\,d\tau\lesssim 1$, we obtain
 \begin{align}\label{unlinft}
    \|\nabla v_{N,x_0}^1(t)\|_{L^\infty}\lesssim  \int_0^t (t-\tau)^{-\frac{1}{2}}\|\tilde F^1_{ x_0}(\tau)\|_{L^\infty} d\tau\lesssim \sup_{0<t\leq T}t^{\frac{1}{2}}\|F^1(t)\|_{ L^\infty}.
 \end{align}
 For higher-order derivatives, it suffices to consider $\nabla_{x'}\nabla v_{N,x_0}^1$ and $\partial_{x_d}^2  v_{N,x_0}^1$ respectively. Note that
 \begin{align*}
    &\delta_\alpha^x\nabla_{x'}\nabla v_{N,x_0}^1(t,x)=\int_0^t\int_{\mathbb{R}^d_+}\delta_\alpha^x\nabla_{x'}\nabla_x \sH(t-\tau,x'-y',x_d,y_d)\tilde F^1_{x_0}(\tau,y)dyd\tau\\
    &\quad\quad=-\int_0^t\int_{\mathbb{R}^d_+}\delta_\alpha^x\nabla_{y'}|D_{y'}|^{-\varkappa}\nabla_x \sH(t-\tau,x'-y',x_d,y_d)|D_{y'}|^{\varkappa}\tilde F^1_{x_0}(\tau,y)dyd\tau\\
    &\quad\quad=-\int_0^t\int_{\mathbb{R}^d_+}
    \delta_\alpha^x\nabla_{y'}|D_{y'}|^{-\varkappa}\nabla_x
    \sH(t-\tau,x'-y',x_d,y_d)\\
    &\qquad\qquad\times
    \left(|D_{y'}|^{\varkappa}\tilde F^1_{x_0}(\tau,y',y_d)
    -|D_{y'}|^{\varkappa}\tilde F^1_{x_0}(\tau,x',y_d)\right)dyd\tau,
 \end{align*}
	for some $\varkappa\in(0,\kappa)$. By \eqref{eq2.32}, interpolation, and singular integral theory, we obtain
 \begin{equation}\label{dxpdx}
     \begin{aligned}
    &\|\delta_\alpha^x\nabla_{x'}\nabla  v_{N,x_0}^1(t) \|_{L^\infty}\\
    &\quad\lesssim \int _0^t \int_{\mathbb{R}^d_+} |\delta_\alpha^x \nabla_{y'}|D_{y'}|^{-\varkappa}\nabla_x\sH(t-\tau,x'-y',x_d,y_d)||x'-y'|^{\kappa-\varkappa} dy  \||D_{y'}|^{\varkappa}\tilde F^1_{x_0}(\tau)\|_{\dot C_{x'}^{\kappa-\varkappa}(\mathbb{R}^d_+)} d\tau\\
   &\quad\lesssim \int_0^t (t-\tau)^{-\frac{2-\varkappa}{2}}|\alpha|^{\kappa-\varkappa}
  \min\Big\{1,\tfrac{|\alpha|}{(t-\tau)^{1/2}}\Big\}^{1-\kappa+\varkappa}
  \tau^{-\frac{1+\kappa}{2}}\,d\tau
  \cdot \sup_{0<t\leq T} t^{\frac{1+\kappa}{2}}\|F^1(t)\|_{\dot C^\kappa}\\
   &\quad\lesssim  |\alpha|^\kappa t^{-\frac{1+\kappa}{2}}\sup_{0<t\leq T}t^\frac{1+\kappa}{2}\|F^1(t)\|_{\dot C^\kappa(\mathbb{R}^d_+)},
 \end{aligned}
 \end{equation}
 The last time integral is estimated by splitting at $\tau=t/2$ and, when $|\alpha|^2<t$, at $\tau=t-|\alpha|^2$; the resulting bound is $|\alpha|^\kappa t^{-(1+\kappa)/2}$ in every regime. We also use
 \begin{equation}\label{eq2.14}
     \begin{aligned}
         \||D_{y'}|^{\varkappa}\tilde F^1_{x_0}(\tau)\|_{\dot C_{x'}^{\kappa-\varkappa}}&\sim \||D_{y'}|^{\varkappa}\tilde F^1_{x_0}(\tau)\|_{L^\infty \dot B_{\infty,\infty}^{\kappa-\varkappa}(\mathbb{R}_+\times\mathbb{R}^{d-1})}\sim \|\tilde F^1_{x_0}(\tau)\|_{L^\infty \dot B_{\infty,\infty}^{\kappa}(\mathbb{R}_+\times\mathbb{R}^{d-1})}\\
         &\sim \|\tilde F^1_{x_0}(\tau)\|_{L^\infty \dot C^{\kappa}(\mathbb{R}_+\times\mathbb{R}^{d-1})}\lesssim \|\tilde F^1_{x_0}(\tau)\|_{\dot C^{\kappa}(\mathbb{R}_+^d)},
     \end{aligned}
 \end{equation}
and the uniform bound of $\psi_{x_0}$. For $\partial_{x_d}^2  v_{N,x_0}^1$, we use the following property of the kernel,
 \begin{equation}\label{eq2.15}
     \partial_{x_d}^2\sH(t,x'-y',x_d,y_d)=\partial_t\sH(t,x'-y',x_d,y_d)-\Delta_{x'}\sH(t,x'-y',x_d,y_d),
 \end{equation}
 then we can write 
 \begin{align*}
    \partial_{x_d}^2 v_{N,x_0}^1(t,x)=\int_0^t\int_{\mathbb{R}^d_+} \partial_t\sH(t-\tau,x'-y',x_d,y_d)\tilde F^1_{x_0}(\tau,y)dyd\tau- \Delta_{x'}v_{N,x_0}^1(t,x).
 \end{align*}
 By \eqref{dxpdx}, we know that 
 \begin{align*}
  \sup_{0<t\leq T}t^\frac{1+\kappa}{2} \sup_{\alpha} \frac{\|\delta_\alpha\Delta_{x'}v_{N,x_0}^1(t)\|_{L^\infty}}{|\alpha|^\kappa}\lesssim \sup_{0<t\leq T}t^\frac{1+\kappa}{2}\|{F}^1(t)\|_{\dot   C^\kappa(\mathbb{R}^d_+)}.
 \end{align*}
 Hence it suffices to consider 
 \begin{align*}
   & \int_0^t\int_{\mathbb{R}^d_+} \partial_t\sH(t-\tau,x'-y',x_d,y_d)\tilde F^1_{ x_0}(\tau,y)dyd\tau\\
    &\quad\quad\quad\quad\quad=  \int_0^t\int_{\mathbb{R}^d_+} \partial_t\sH(t-\tau,x'-y',x_d,y_d)(\tilde F^1_{ x_0}(\tau,y)-\tilde F^1_{x_0}(t,y))dyd\tau\\
    &\quad\quad\quad\quad\quad\quad\quad+ \int_0^t\int_{\mathbb{R}^d_+} \partial_t\sH(t-\tau,x'-y',x_d,y_d)\tilde F^1_{ x_0}(t,y)dyd\tau\\
    &\quad\quad\quad\quad\quad=  \int_0^t\int_{\mathbb{R}^d_+} \partial_t\sH(t-\tau,x'-y',x_d,y_d)(\tilde F^1_{ x_0}(\tau,y)-\tilde F^1_{x_0}(t,y))dyd\tau\\
    &\quad\quad\quad\quad\quad\quad\quad+ \int_{\mathbb{R}^d_+} \sH(t,x'-y',x_d,y_d)\tilde F^1_{ x_0}(t,y)dy-\tilde F^1_{ x_0}(t,x)\\
    &\quad\quad\quad\quad\quad=: I^1_{ x_0}(t,x)+I^2_{ x_0}(t,x)-\tilde F^1_{x_0}(t,x).
 \end{align*}
Here we use $\sH(t,x'-y',x_d,y_d)|_{t=0}=\delta_{x=y}$, where $\delta_{x=y}$ denotes the Dirac measure. By \eqref{eskerha}, the uniform bounds for $\psi_{x_0}$, and the fundamental theorem of calculus,
\begin{align*}
&\left|\delta_\alpha I^1_{ x_0}(t,x) \right|\\
&\quad\lesssim \int_0^t\int_{\mathbb{R}^d_+}| \delta_\alpha\partial_t\sH(t-\tau,x'-y',x_d,y_d)|dy|t-\tau|^\frac{\kappa}{2}\tau^{-\frac{1+\kappa}{2}}d\tau \sup_{0<\tau<t<T}\tau^{\frac{1+\kappa}{2}}\frac{\|\tilde{F}_{x_0}^1(t)-\tilde{F}^1_{x_0}(\tau)\|_{L^\infty}}{(t-\tau)^\frac{\kappa}{2}}\\
&\quad\lesssim |\alpha|^\kappa t^{-\frac{1+\kappa}{2}}\sup_{0<\tau<t<T}\tau^{\frac{1+\kappa}{2}}\frac{\|F^1(t)- F^1(\tau)\|_{L^\infty}}{(t-\tau)^\frac{\kappa}{2}},
\end{align*}
and
 \begin{align*}
    \left|\delta_\alpha I^2_{ x_0}(t,x)\right|\lesssim |\alpha|^\kappa t^{-\frac{\kappa}{2}}\|F^1(t)\|_{L^\infty}.
 \end{align*}
 Hence we conclude that 
 \begin{align*}
   &\sup_{0<t\leq T}t^\frac{1+\kappa}{2}\sup_\alpha\frac{ \|\delta_\alpha \partial_{x_d}^2 v_{N,x_0}^1(t)\|_{L^\infty}}{|\alpha|^\kappa}\\
   &\quad\quad\lesssim \sup_{0<\tau<t<T}\tau^{\frac{1+\kappa}{2}}\frac{\|F^1(t)-F^1(\tau)\|_{L^\infty(\mathbb{R}^d_+)}}{(t-\tau)^\frac{\kappa}{2}}+\sup_{0<t\leq T}\left(t^{\frac{1}{2}}\|F^1(t)\|_{L^\infty(\mathbb{R}^d_+)}+t^\frac{1+\kappa}{2}\|F^1(t)\|_{\dot C^\kappa(\mathbb{R}^d_+)}\right).
 \end{align*}
 Combining this with \eqref{unlinft} and \eqref{dxpdx}, and noting that these estimates hold for every $x_0$, in particular for $x_0=x$, we obtain
 \begin{equation}\label{unnha}
 \begin{aligned}
    & \sup_{0<t\leq T}\left(\|\nabla v_{N,x_0}^1(t)\|_{L^\infty(\mathbb{R}^d_+)}+t^\frac{1+\kappa}{2}\sup_{\alpha}\frac{\| \delta_\alpha\nabla^2 v_{N,x_0}^1(t)\|_{L^\infty(\mathbb{R}^d_+)}}{|\alpha|^\kappa}\right)\\
     &\quad\lesssim \sup_{0<\tau<t<T}\tau^{\frac{1+\kappa}{2}}\frac{\|F^1(t)-F^1(\tau)\|_{L^\infty(\mathbb{R}^d_+)}}{(t-\tau)^\frac{\kappa}{2}}+\sup_{0<t\leq T}\left(t^{\frac{1}{2}}\|F^1(t)\|_{L^\infty(\mathbb{R}^d_+)}+t^\frac{1+\kappa}{2}\|F^1(t)\|_{\dot C^\kappa(\mathbb{R}^d_+)}\right).
 \end{aligned}
 \end{equation}
 Next we consider $v_{N,x_0}^2$. Note that by assumption, $\tilde F^2_{ x_0}$ vanishes on boundary, hence we can denote the odd extension of $\tilde{F}^2_{x_0}$ on $(0,T]$ by
\begin{align*}
    \tilde F^{2,odd}_{ x_0}(t,x)=\begin{cases}
        \tilde F^2_{ x_0}(t,x',x_d),\ \ \ x\in\mathbb{R}^d_+,\\
        -\tilde F^2_{ x_0}(t,x',-x_d),\ \ \ x\in \mathbb{R}^d_-.
    \end{cases}
 \end{align*} 
 By the boundary condition of $\tilde{F}_{x_0}^2$, the odd extension is well-defined, and its $\dot C^\kappa$ seminorm is controlled by that of $\tilde F^2_{x_0}$; namely,
 \begin{equation}\label{eq2.16}
    \|\tilde F^{2,odd}_{x_0}\|_{\dot C^\kappa(\mathbb{R}^d)}\lesssim \| \tilde F^2_{x_0}\|_{\dot C^\kappa(\mathbb{R}^d_+)}\lesssim \|F^2\|_{\dot C^\kappa(\mathbb{R}^d_+)},\qquad \forall x_0\in\mathbb{R}^d_+.
 \end{equation}
By the odd extension, we can rewrite the equation of $v_{N,x_0}^2$ as
 \begin{align*}
   v_{N,x_0}^2(t,x)&=\int_0^t \int_{\mathbb{R}^d_+}\sH(t-\tau,x'-y',x_d,y_d)\tilde F^2_{ x_0}(\tau,y)dyd\tau\\
    &=\int_0^t \int_{\mathbb{R}^d}\K(t-\tau,x-y)\tilde F^{2,odd}_{x_0}(\tau,y)dyd\tau.
 \end{align*}
 We have 
 \begin{equation}\label{unlha}
    \begin{aligned}
    \|\nabla v_{N, x_0}^2(t)\|_{L^\infty}&\lesssim \left\|\int_0^t \int_{\mathbb{R}^d} \nabla \K(t-\tau,x-y)(\tilde F^{2,odd}_{ x_0}(\tau,y)-\tilde F^{2,odd}_{ x_0}(\tau,x))dyd\tau \right\|_{L^\infty}\\
    &\lesssim \int_0^t \int_{\mathbb{R}^d} |\nabla \K(t-\tau,y)||y|^\kappa dy \|\tilde F^{2,odd}_{x_0}(\tau)\|_{\dot C^\kappa} d\tau \\
    &\lesssim \int_0^t(t-\tau)^{-\frac{1-\kappa}{2}}\tau^{-\frac{1+\kappa}{2}}d\tau \sup_{0<t\leq T}t^\frac{1+\kappa}{2}\| F^{2}(t)\|_{\dot C^\kappa}\\
    &\lesssim \sup_{0<\tau\leq T}\tau^\frac{1+\kappa}{2}\| F^{2}(\tau)\|_{\dot C^\kappa}.
    \end{aligned}
 \end{equation}
 On the other hand, for H\"{o}lder estimates of $v_{N,x_0}^2$, we will use the methods in \cite{KHN2025}. For some $\varkappa\in(0,\kappa)$,
 set
 \[
 \mathscr K_{\alpha,\varkappa}(s,z)
 :=\delta_\alpha\nabla_z^2|D_z|^{-\varkappa}\K(s,z).
 \]
 \begin{equation}\label{unhha}
 \begin{aligned}
     \|\delta_\alpha^x \nabla^2v_{N,x_0}^2(t)\|_{L^\infty}
     &\lesssim \left\| \int_0^t\int_{\mathbb{R}^d}
     \mathscr K_{\alpha,\varkappa}(t-\tau,x-y)
     |D_y|^{\varkappa}\tilde F^{2,\mathrm{odd}}_{x_0}(\tau,y)dyd\tau\right\|_{L^\infty} \\
     &\lesssim \left\| \int_0^t\int_{\mathbb{R}^d}
     \mathscr K_{\alpha,\varkappa}(t-\tau,x-y)|D_y|^{\varkappa}
     \left(\tilde F^{2,\mathrm{odd}}_{x_0}(\tau,y)
     -\tilde F^{2,\mathrm{odd}}_{x_0}(\tau,x)\right)dyd\tau\right\|_{L^\infty} \\
     &\lesssim \int_0^t\left\|\int_{\mathbb{R}^d}
     \mathscr K_{\alpha,\varkappa}(t-\tau,x-y)|x-y|^{\kappa-\varkappa}dy
     \right\|_{L^\infty}
     \||D_y|^{\varkappa}\tilde{F}^{2,\mathrm{odd}}_{x_0}(\tau)
     \|_{\dot C^{\kappa-\varkappa}}d\tau \\
     &\lesssim \int_0^t(t-\tau)^{-\frac{2-\varkappa}{2}}
     |\alpha|^{\kappa-\varkappa}
     \min\left\{1,\frac{|\alpha|}{(t-\tau)^{\frac{1}{2}}}\right\}^{1-\kappa+\varkappa}
     \tau^{-\frac{1+\kappa}{2}}d\tau\\
     &\qquad\times\sup_{0<\tau\leq T}\tau^{\frac{1+\kappa}{2}}
     \|F^2(\tau)\|_{\dot C^{\kappa}}\\
     &\lesssim|\alpha|^{\kappa}t^{-\frac{1+\kappa}{2}}\sup_{0<\tau\leq T}\tau^{\frac{1+\kappa}{2}}\|F^2(\tau)\|_{\dot C^\kappa},
 \end{aligned}
 \end{equation}
 where we use \eqref{eq2.16}, the uniform bound of $\psi_{x_0}$ and a similar argument as \eqref{eq2.14}. We conclude from  \eqref{unlha} and \eqref{unhha} that 
  \begin{equation}\label{ununun2}
 \begin{aligned}
     & \sup_{0<t\leq T}\left(
     \|\nabla v_{N,x_0}^2(t)\|_{L^\infty(\mathbb{R}^d_+)}
     +t^\frac{1+\kappa}{2}\sup_{\alpha}
     \frac{\|\delta_\alpha\nabla^2 v_{N,x_0}^2(t)\|_{L^\infty(\mathbb{R}^d_+)}}
     {|\alpha|^\kappa}\right)\\
     &\qquad\lesssim \sup_{0<t\leq T}t^\frac{1+\kappa}{2}
     \|F^2(t)\|_{\dot C^\kappa}.
 \end{aligned}
 \end{equation}
 For the remainder term $v_{R,x_0}$, we follow the coefficient-freezing argument of \cite{KHN2025}. The increment $\B(y)-\B(x_0)$ supplies the spatial factor needed to treat this term as a lower-order contribution; we also use Lemma~\ref{mcbdlemti} for its time increments.
Since by \eqref{eq2.5}, $\nabla\psi_{x_0}$ is uniformly bounded, we have the following estimate for $\tilde{\R}_{x_0}$ defined in \eqref{eq2.6} that
 \begin{equation*}
     \begin{aligned}
         |\tilde {\mathsf{R}}_{ x_0}(t,y)|\lesssim |\B(x_0)-\B(\psi_{x_0}(y))| \|\nabla^2u(t)\|_{L^\infty}&\lesssim|x_0-\psi_{x_0}(y)|\|\B\|_{\dot {W}^{1,\infty}}\|\nabla^2u(t)\|_{L^\infty}\\
         &\lesssim |y-z_{x_0,x_0}|\|\B\|_{\dot {W}^{1,\infty}}\|\nabla^2u(t)\|_{L^\infty}.
     \end{aligned}
 \end{equation*}
 So by \eqref{eskerha}, we take $x_0=x$, and obtain
 \begin{equation*}
     \begin{aligned}
         \left|(\nabla v_{R,x_0}(t,z_{x_0,x}))|_{x_0=x}\right|
        & \lesssim \int_0^t \int_{\mathbb{R}_+^d}|\nabla_x\sH(t-\tau,(z_{x_0,x})'-y',(z_{x_0,x})_d,y_d)||z_{x_0,x}-y|dy\|\nabla^2u(\tau)\|_{L^\infty}d\tau\|\B\|_{\dot W^{1,\infty}}\\
         &\lesssim T^{\frac{1}{2}}\|\B\|_{\dot W^{1,\infty}}\sup_{0<\tau\leq T}\tau^{\frac{1}{2}}\|\nabla^2u(\tau)\|_{L^\infty}.
   \end{aligned}
 \end{equation*}
 For the normal derivative, the preceding calculation is justified by writing
 $\partial_{x_d}=\lim_{\beta\to0^+}\delta_{\beta e_d}^{x}/\beta$.
 For fixed $\beta>0$ the finite difference may be passed through the integral; the kernel bounds above provide an integrable majorant, and dominated convergence gives the asserted normal-derivative estimate.
By \eqref{eq2.86}, we will estimate $\delta_\beta^z(\nabla^2v_{R,x_0}(t,z_{x_0,x}))|_{x_0=x}$. For $\nabla^2 v_{R,x_0}$, we consider $\nabla_{z'}\nabla$ and $\partial_{d}^2$ separately. For the former one, note that
 \begin{equation*}
     \begin{aligned}
         &\delta_\beta^z\nabla_{z'}\nabla v_{R,x_0}(t,z_{x_0,x})|_{x_0=x} \\
         &\quad= \int_0^t\int_{\mathbb{R}_+^d}\delta_\beta^z \nabla_{z'}\nabla|D_{y'}|^{-\eta}\sH(t-\tau,(z_{x_0,x})'-y',(z_{x_0,x})_d,y_d)\left(|D_{y'}|^\eta\tilde{\R}_{x_0}(\tau,y)\right)dyd\tau\big|_{x_0=x},
     \end{aligned}
 \end{equation*}
 where we take $\eta\in(0,\min\{\kappa,\frac{2}{3}(1-\kappa)\})$. We use the fractional-Laplacian representation
 \begin{equation*}
     |D_{y'}|^{\eta}f(y)=C_{d,\eta}P.V.\int_{\mathbb{R}^{d-1}}\frac{\delta_h^{y'} f(y)}{|h|^{d-1+\eta}}dh,
 \end{equation*}
where we recall that $\delta_\alpha^{y'}f(y)=f(y',y_d)-f(y'-\alpha,y_d)$ is the finite difference with respect to $y'$. This leads to
 \begin{equation*}
     |D_{y'}|^{\eta}(fg)(y)=g(y)|D_{y'}|^{\eta}f(y)+C_{d,\eta}\,\mathrm{p.v.}\!\int_{\mathbb{R}^{d-1}}\frac{f(y'-h,y_d)\delta_h^{y'} g(y)}{|h|^{d-1+\eta}}\,dh.
 \end{equation*}
 By this equality, we denote $\tilde{\B}_{x_0}(y)=\B(\psi_{x_0}(y))-\B(x_0)$, and $\tilde{\R}_{x_0}(\tau,y)=\tilde{\B}_{x_0}(y):(\nabla^2u(\tau)\circ\psi_{x_0})(y)$. We can write
 \begin{equation*}
    \begin{aligned}
         &|D_{y'}|^\eta\tilde{\R}_{x_0}(\tau,y)=\tilde{\B}_{x_0}(y):|D_{y'}|^\eta\left((\nabla^2u(\tau))\circ\psi_{x_0}\right)+C_{d,\eta}\int_{\mathbb{R}^{d-1}}\frac{(\nabla^2u(\tau))\circ\psi_{x_0}(y'-h,y_d)\delta_h^{y'} \tilde{\B}_{x_0}(y)}{|h|^{d-1+\eta}}dh\\
         &\qquad:=\tilde{\R}_{x_0}^1(\tau,y)+\tilde{\R}_{x_0}^2(\tau,y)
    \end{aligned}
 \end{equation*}
 Note that
 \begin{equation*}
     \begin{aligned}
         &\left. |\tilde{\B}_{x_0}(y)|\right|_{x_0=x}\lesssim \|\B\|_{\dot C^\kappa}|z_{x_0,x}-y|^{\kappa},\\
         &|\delta_h^{y'} \tilde{\B}_x(y)|\lesssim(\|\B\|_{ \dot W^{\frac{\eta}{2},\infty}}+\|\B\|_{ \dot W^{\frac{3\eta}{2},\infty}})\min\{1,|h|^{\frac{3}{2}\eta},|h|^{\frac{\eta}{2}}\},\\
         &|\delta_h^y\delta_\beta^{y'}\tilde{\B}_x(y)|\lesssim \min\{|h|,|\beta|\}^{\kappa+\frac{3}{2}\eta}\|\B\|_{\dot W^{\kappa+\frac{3\eta}{2},\infty}},
     \end{aligned}
 \end{equation*}
     when $\kappa+\frac{3}{2}\eta< 1$. This directly gives
     \begin{align*}
        & \left| \left.(\tilde{\R}_{x_0}^2(\tau,y)-\tilde{\R}_{x_0}^2(\tau,z_{x_0,x}))\right|_{x_0=x}\right|\lesssim |z_{x_0,x}-y|^\eta\|\nabla^2u(\tau)\|_{\dot C^{\eta}}\|\B\|_{\dot W^{\frac{\eta}{2},\infty}\cap \dot W^{\frac{3\eta}{2},\infty}}\\
	         &\quad\quad\quad+|z_{x_0,x}-y|^\kappa\|\nabla^2u(\tau)\|_{L^\infty}\|\B\|_{\dot W^{\kappa+\frac{\eta}{2},\infty}\cap \dot W^{\kappa+\frac{3\eta}{2},\infty}}
     \end{align*}
	     For brevity, write
	     \[
	     \mathscr Q_{h,\eta}(s;x,y)
	     :=\delta_h^x\nabla_{x'}\nabla|D_{y'}|^{-\eta}
	     \sH(s,x'-y',x_d,y_d).
	     \]
	     Combining the preceding estimates, we deduce that
 \begin{equation}\label{eq2.22}
     \begin{aligned}
         &\left|\left(\delta_\beta^z\nabla_{z'}\nabla v_{R,x_0}(t,z_{x_0,x})\right)|_{x_0=x}\right|\\
         &\quad\lesssim \left|\int_0^t\int_{\mathbb{R}_+^d}
         \left(\mathscr Q_{\beta,\eta}(t-\tau;z_{x_0,x},y)
         \tilde{\R}_{x_0}^1(\tau,y)\right)|_{x_0=x}dyd\tau\right|  \\
         &\quad\quad+\left|\int_0^t\int_{\mathbb{R}_+^d}
         \left(\mathscr Q_{\beta,\eta}(t-\tau;z_{x_0,x},y)
         (\tilde{\R}_{x_0}^2(\tau,y)-\tilde{\R}_{x_0}^2(\tau,z_{x_0,x}))\right)|_{x_0=x}dyd\tau\right|  \\
         &\quad\lesssim \int_0^t\int_{\mathbb{R}_+^d}
         |\mathscr Q_{\beta,\eta}(t-\tau;z_{x_0,x},y)|\,|z_{x_0,x}-y|^{\kappa}
         \||D_{y'}|^\eta\nabla^2u(\tau)\|_{L^\infty}dyd\tau
         \|\B\|_{\dot C^\kappa}\\
	         &\quad\quad+ \int_0^t\int_{\mathbb{R}_+^d}
		         |\mathscr Q_{\beta,\eta}(t-\tau;z_{x_0,x},y)|\,|z_{x_0,x}-y|^{\eta}
	         \|\nabla^2u(\tau)\|_{\dot C^{\eta}}dyd\tau
	         \|\B\|_{\dot W^{\frac{\eta}{2},\infty}\cap \dot W^{\frac{3}{2}\eta,\infty}}\\
	         &\quad\quad+ \int_0^t\int_{\mathbb{R}_+^d}
		         |\mathscr Q_{\beta,\eta}(t-\tau;z_{x_0,x},y)|\,|z_{x_0,x}-y|^{\kappa}
	         \|\nabla^2u(\tau)\|_{L^\infty}dyd\tau
	         \|\B\|_{\dot W^{\kappa+\frac{\eta}{2},\infty}\cap
	         \dot W^{\kappa+\frac{3\eta}{2},\infty}}\\
         &\quad\lesssim |\beta|^{\kappa}t^{-\frac{1+\kappa-\eta}{2}}
         (\|\B\|_{\dot W^{\frac{\eta}{2},\infty}}+\|\B\|_{\dot W^{1,\infty}})\\
         &\qquad\times\sup_{0<t\leq T}\left(t^{\frac{1}{2}}\|\nabla^2u(t)\|_{L^\infty}
         +t^{\frac{1+\eta}{2}}\|\nabla^2u(t)\|_{\dot C^{\eta}}\right),
     \end{aligned}
 \end{equation}
 Here we use interpolation, \eqref{eq2.32}, and the uniform bound of $\psi_x$. For the $\partial_d^2$ term, by using the equality \eqref{eq2.15} and the estimate \eqref{eq2.22}, we only need to control the $\partial_t$ type terms. By \eqref{eq2.26} and the formula of $\tilde{\R}_{x_0}$, we have
 \begin{equation*}
     \begin{aligned}
         &\left|(\tilde{\R}_{x_0}(\tau,y)-\tilde{\R}_{x_0}(t,y))|_{x_0=x}\right|\\
	    &\quad\lesssim |z_{x_0,x}-y|^{\kappa}(t-\tau)^{\frac{\varkappa}{2}}\|\B\|_{\dot W^{\kappa,\infty}} \tau^{-\frac{1+\varkappa}{2}}\sup_{0<t\leq T}\left(\|\nabla u(t)\|_{L^\infty(\mathbb{R}^d_+)}+t^{\frac{1}{2}}\|(\partial_t,\nabla^2)u(t)\|_{L^\infty(\mathbb{R}^d_+)}\right),
     \end{aligned}
 \end{equation*}
 for some $\varkappa\in(0,\kappa)$. So we deduce that
 \begin{equation}\label{eq2.23}
     \begin{aligned}
         &\left|\int_0^t\int_{\mathbb{R}_+^d}\delta_\beta^z\partial_t\sH(t-\tau,(z_{x_0,x})'-y',(z_{x_0,x})_d,y_d)\left(\tilde{\R}_{x_0}(\tau,y)\right)|_{x_0=x}dyd\tau \right|\\
         &\lesssim \left|\int_0^t\int_{\mathbb{R}_+^d}\delta_\beta^z\partial_t\sH(t-\tau,(z_{x_0,x})'-y',(z_{x_0,x})_d,y_d)\left(\tilde{\R}_{x_0}(\tau,y)-\tilde{\R}_{x_0}(t,y)\right)|_{x_0=x}dyd\tau \right|\\
         &\qquad\qquad+\left|\left(\tilde{\R}_{x_0}(t,z_{x_0,x}-\beta)\right)|_{x_0=x}\right|+\left|\int_{\mathbb{R}_+^d}\delta_\beta^z \sH(t,(z_{x_0,x})'-y',(z_{x_0,x})_d,y_d)\tilde{\R}_{x_0}(t,y)dy\right|\\
         &\lesssim |\beta|^{\kappa}\|\B\|_{\dot C^\kappa}\left(\int_0^t(t-\tau)^{-1+\frac{\varkappa}{2}}\tau^{-\frac{1+\varkappa}{2}}d\tau+t^{-\frac{1}{2}}\right)\sup_{0<t\leq T}\left(\|\nabla u(t)\|_{L^\infty(\mathbb{R}^d_+)}+t^{\frac{1+\kappa}{2}}\|(\partial_t,\nabla^2)u(t)\|_{\dot C^{\kappa}(\mathbb{R}^d_+)}\right)\\
         &\lesssim t^{-\frac{1}{2}}|\beta|^{\kappa}\|\B\|_{\dot C^\kappa}\sup_{0<t\leq T}\left(\|\nabla u(t)\|_{L^\infty(\mathbb{R}^d_+)}+t^{\frac{1+\kappa}{2}}\|(\partial_t,\nabla^2)u(t)\|_{\dot C^{\kappa}(\mathbb{R}^d_+)}\right),
     \end{aligned}
 \end{equation}
 where we use \eqref{eq2.26} in the second inequality for some $\varkappa\in(0,\kappa)$. \eqref{eq2.22} and \eqref{eq2.23} imply that 
 \begin{equation}\label{vrhha}
     \begin{aligned}
      & \sup_{0<t\leq T}\left(|(\nabla v_{R,x_0}(t,z_{x_0,x}))|_{\substack{x_0=x}}|+t^\frac{1+\kappa}{2}\sup_{\alpha}\frac{|( \delta_\beta^z\nabla^2 v_{R,x_0}(t,z_{x_0,x}))|_{\substack{x_0=x}}|}{|\alpha|^\kappa}\right)\\
      &\quad\quad\quad\quad\quad\lesssim T^{\frac{\eta}{2}}(\|\B\|_{\dot C^\kappa}+\|\B\|_{\dot W^{1,\infty}})\sup_{0<t\leq T}\left(\|\nabla u(t)\|_{L^\infty(\mathbb{R}^d_+)}+t^{\frac{1+\kappa}{2}}\|(\partial_t,\nabla^2)u(t)\|_{\dot C^{\kappa}(\mathbb{R}^d_+)}\right).
 \end{aligned}
 \end{equation}
 Finally, combining \eqref{spliv}, \eqref{lllha}, \eqref{unnha}, \eqref{ununun2}, and \eqref{vrhha} with \eqref{mcbdfor}, \eqref{eq2.86} yields \eqref{re5.1}, completing the proof of the lemma.
		\end{proof}
\begin{proof}[Proof of Lemma~\ref{mcbdlemti}]
		Before the proof, we define the following notation. For $g:[0,T]\times\mathbb{R}^d_+\to \mathbb{R}$ and $\eps\in(0,1)$, define the mollification of $g$ on the half-space by
    \begin{equation*}
        g_\eps(t,x)=\int_{\mathbb{R}_+^d}g(t,y)\tilde\rho_\eps(x'-y',x_d,y_d)dy,
    \end{equation*}
    with 
    \begin{equation*}
        \begin{aligned}
        \tilde \rho_\eps(x'-y',x_d,y_d)=\rho_\eps(x'-y',x_d+y_d)+\rho_\eps(x'-y',x_d-y_d),
        \end{aligned}
        \end{equation*}
  where $\rho_\eps$ is the standard mollifier in $\mathbb{R}^d$.

First we prove \eqref{eq2.25}. For $0<\tau<t<T$, split $\nabla f(t)-\nabla f(\tau)$ into the following three terms.
\begin{equation*}
    \begin{aligned}
        |\nabla f(t,x)-\nabla f(\tau,x )|
        &\lesssim |(\nabla f)_\eps(t,x)-(\nabla f )_\eps(\tau,x)|\\
        &\quad+|\nabla f (t,x)-(\nabla f)_\eps(t,x)|
        +|\nabla f(\tau,x )-(\nabla f)_\eps(\tau,x )|\\
        &:=I_1(t,\tau,x)+I_2(t,x)+I_3(\tau,x),
    \end{aligned}
    \end{equation*}
 For $I_1$, using integration by parts we have
\begin{equation*}
    \begin{aligned}
        &I_1(t,\tau,x)\lesssim\left|\int_\tau ^t\partial_{\tau'}(\nabla f)_\eps(\tau',x)d\tau'\right|\\
        &\lesssim\int_\tau ^t\int_{\mathbb{R}_+^d}|\partial_{\tau'}f(\tau',y)||\nabla_{y}(\tilde \rho_\eps(x'-y',x_d,y_d))|dyd\tau'+\int_\tau^t\int_{\mathbb{R}^{d-1}}|\partial_{\tau'}f(\tau',y',0)||\rho_\eps(x'-y',x_d)|dy'd\tau'\\
        &\lesssim \eps^{-1}|t-\tau |\|\partial_t f\|_{L^\infty([\tau ,t], L^\infty)}\lesssim  \eps^{-1}|t-\tau | \tau ^{-\frac{1}{2}}\sup_{0<\tau'\leq T}\tau'^{\frac{1}{2}}\|\partial_{\tau'}f(\tau')\|_{L^\infty}.
    \end{aligned}
\end{equation*}
For $I_2$, we have
\begin{equation*}
    \begin{aligned}
       I_2(t,x)\lesssim& \left|\int_{\mathbb{R}_+^d}
       (\nabla f(t,x)-\nabla f(t,y))\tilde \rho_\eps(x'-y',x_d,y_d)dy \right|\\
       \lesssim& \eps t^{-\frac{1}{2}}
       \sup_{0<s\leq T}s^{\frac{1}{2}}\|\nabla^2f(s)\|_{L^\infty}.
    \end{aligned}
\end{equation*}
Similarly,
\[
I_3(\tau,x)\lesssim \eps \tau^{-\frac{1}{2}}\sup_{0<\tau'\leq T}\tau'^{\frac{1}{2}}\|\nabla^2f(\tau')\|_{L^\infty}.
\]
 Taking $\eps=|t-\tau |^{\frac{1}{2}}$ yields the following estimate
\begin{equation*}
\begin{aligned}
    \|\nabla f(t)-\nabla f(\tau )\|_{L^\infty}
    &\lesssim \|I_1(t,\tau)\|_{L^\infty}
    +\|I_2(t)\|_{L^\infty}+\|I_3(\tau)\|_{L^\infty}\\
    &\lesssim\tau ^{-\frac{1}{2}}(t-\tau )^{\frac{1}{2}}
    \sup_{0<\tau'\leq T}\tau'^{\frac{1}{2}}
    \|(\partial_{\tau'},\nabla^2)f(\tau')\|_{L^\infty}.
\end{aligned}
\end{equation*}
Since obviously $|\nabla f(t)-\nabla f(\tau )|\lesssim\sup_{0<\tau'\leq T}\|\nabla f(\tau')\|_{L^\infty}$, by interpolation we have
\begin{equation*}
\begin{aligned}
    \|\nabla f(t)-\nabla f(\tau )\|_{L^\infty}
    &\lesssim \tau ^{-\frac{\kappa}{2}}(t-\tau )^{\frac{\kappa}{2}}\\
    &\quad\times\sup_{0<t\leq T}\left(\|\nabla f(t)\|_{L^\infty}
    +t^{\frac{1}{2}}\|(\partial_t,\nabla^2)f(t)\|_{L^\infty}\right),
    \qquad \kappa\in[0,1].
\end{aligned}
    \end{equation*}
    This completes the proof of \eqref{eq2.25}.

    For \eqref{eq2.26}, it suffices to consider $t-\tau<\tau/4$.
    Indeed, if $t-\tau\geq\tau/4$, the left-hand side of \eqref{eq2.26}
    is controlled by the following interpolation inequality together with
    $(t-\tau)^{-\varkappa/2}\lesssim\tau^{-\varkappa/2}$:
    \[
    \|\nabla^2f(\tau)\|_{L^\infty}\lesssim
    \|\nabla f(\tau)\|_{L^\infty}^{\frac{\kappa}{1+\kappa}}
    \|\nabla^2f(\tau)\|_{\dot C^\kappa}^{\frac{1}{1+\kappa}}
    \]
    Thus the required estimate holds in this regime. In the remaining regime,
    arguing as in the proof of \eqref{eq2.25}, we write
    \begin{equation*}
    \begin{aligned}
        |\nabla^2 f(t,x)-\nabla^2 f(\tau,x )|&\lesssim |(\nabla^2 f)_\eps(t,x)-(\nabla^2 f )_\eps(\tau,x)|+|\nabla^2 f (t,x)-(\nabla^2 f)_\eps(t,x)|\\
        &\quad\quad+|\nabla^2 f(\tau,x )-(\nabla^2 f)_\eps(\tau,x )|\\
        &:=II_1(t,\tau,x)+II_2(t,x)+II_3(\tau,x),
    \end{aligned}
    \end{equation*}
    For $II_2$, note that
    \begin{equation}\label{eq2.27}
        \begin{aligned}
       II_2(t,x)\lesssim& \left|\int_{\mathbb{R}_+^d} (\nabla^2 f(t,x)-\nabla^2 f(t,y))\tilde \rho_\eps(x'-y',x_d,y_d)dy \right|\lesssim \eps^{\kappa} t^{-\frac{1+\kappa}{2}}\sup_{0<\tau'\leq T}\tau'^{\frac{1+\kappa}{2}}\|\nabla^2f(\tau')\|_{\dot C^{\kappa}},
    \end{aligned}
    \end{equation}
    and similarly 
    \begin{equation}\label{eq2.28}
        II_3\lesssim\eps^{\kappa} \tau^{-\frac{1+\kappa}{2}}\sup_{0<\tau'\leq T}\tau'^{\frac{1+\kappa}{2}}\|\nabla^2f(\tau')\|_{\dot C^{\kappa}}.
    \end{equation}
	    The term $II_1$ is more delicate. Split $\nabla^2$ into the three cases $\nabla_{x'}^2$, $\nabla_{x'}\partial_d$, and $\partial_d^2$, denoted by $II_{1,x'x'}$, $II_{1,x'd}$, and $II_{1,dd}$, respectively. For $II_{1,x'x'}$,
    \begin{equation}\label{eq2.29}
    \begin{aligned}
        &II_{1,x'x'}(t,\tau,x)\lesssim\left|\int_\tau ^t\partial_{\tau'}(\nabla_{x'}^2 f)_\eps(\tau',x)d\tau'\right|\\
        &\lesssim\int_\tau ^t\int_{\mathbb{R}_+^d}|\partial_{\tau'}f(\tau',y)-\partial_{\tau'}f(\tau',x)||\nabla_{y'}^2(\tilde \rho_\eps(x'-y',x_d,y_d))|dyd\tau'\\
        &\lesssim \eps^{-2+\kappa}|t-\tau |\|\partial_t f\|_{L^\infty([\tau ,t], \dot C^{\kappa})}\lesssim  \eps^{-2+\kappa}|t-\tau | \tau ^{-\frac{1+\kappa}{2}}\sup_{0<\tau'\leq T}\tau'^{\frac{1+\kappa}{2}}\|\partial_{\tau'}f(\tau')\|_{\dot C^{\kappa}}.
    \end{aligned}
\end{equation}
For $II_{1,x'd}$, by \eqref{eq2.25} we have
\begin{equation}\label{eq2.30}
    \begin{aligned}
        &II_{1,x'd}(t,\tau,x)\lesssim\left|\int_\tau ^t\int_{\mathbb{R}_+^d}\partial_{\tau'}(\partial_d f)(\tau',y)\nabla_{y'}\tilde{\rho}_\eps(x'-y',x_d,y_d)dyd\tau'\right|\\
        &\lesssim \left|\int_{\mathbb{R}_+^d}\left(\partial_df(\tau,y)-\partial_df(t,y)\right)\nabla_{y'}\tilde{\rho}_\eps(x'-y',x_d,y_d)dy \right|\\
        &\lesssim  \eps^{-1}|t-\tau |^{\frac{1}{2}} \tau ^{-\frac{1}{2}}\sup_{0<t\leq T}\left(\|\nabla f(t)\|_{L^\infty(\mathbb{R}^d_+)}+t^{\frac{1}{2}}\|(\partial_t,\nabla^2)f(t)\|_{L^\infty(\mathbb{R}^d_+)}\right).
    \end{aligned}
\end{equation}
For $II_{1,dd}$, there will be boundary terms, 
\begin{equation}\label{eq2.31}
    \begin{aligned}
        &II_{1,dd}(t,\tau,x)\\
        &\lesssim\left|\int_\tau ^t\int_{\mathbb{R}_+^d}
        \partial_{\tau'}\partial_df(\tau',y)
        \partial_{y_d}\tilde{\rho}_\eps(x'-y',x_d,y_d)dyd\tau'\right|\\
        &\quad+\left|\int_\tau^t\int_{\mathbb{R}^{d-1}}
        \partial_{\tau'}\partial_df(\tau',y',0)
        \rho_\eps(x'-y',x_d)dy'd\tau'\right|\\
        &\lesssim\left|\int_\tau ^t\int_{\mathbb{R}_+^d}
        \partial_{\tau'}f(\tau',y)\partial_{y_d}^2
        \tilde{\rho}_\eps(x'-y',x_d,y_d)dyd\tau'\right|\\
        &\quad+\int_{\mathbb{R}^{d-1}}
        |\partial_df(\tau,y',0)-\partial_df(t,y',0)|
        |\rho_\eps(x'-y',x_d)|dy'\\
        &\lesssim \tau^{-\frac{1}{2}}
        \left(\eps^{-2}|t-\tau |+\eps^{-1}|t-\tau|^{\frac{1}{2}}\right)\\
        &\qquad\times\sup_{0<t\leq T}\left(\|\nabla f(t)\|_{L^\infty(\mathbb{R}^d_+)}
        +t^{\frac{1}{2}}\|(\partial_t,\nabla^2)f(t)\|_{L^\infty(\mathbb{R}^d_+)}\right),
    \end{aligned}
\end{equation}
where we used \eqref{eq2.25} and the fact that $\partial_{y_d}\tilde \rho_\eps|_{y_d=0}=0$. Set $h=t-\tau$ and, since $h<\tau/4$ implies $t\simeq\tau$, take
\begin{equation*}
    \eps=t^{\frac{\kappa}{2(1+\kappa)}}h^{\frac{1}{2(1+\kappa)}},
    \qquad \varkappa=\frac{\kappa}{2(1+\kappa)}.
\end{equation*}
The four powers appearing in \eqref{eq2.27}, \eqref{eq2.28}, \eqref{eq2.29}, \eqref{eq2.30}, and \eqref{eq2.31} then satisfy
\begin{equation*}
\eps^\kappa\tau^{-\frac{1+\kappa}{2}},\quad
\eps^{-2+\kappa}h\tau^{-\frac{1+\kappa}{2}},\quad
\eps^{-1}h^{\frac12}\tau^{-\frac12},\quad
\eps^{-2}h\tau^{-\frac12}
\ \lesssim\ h^{\frac{\varkappa}{2}}\tau^{-\frac{1+\varkappa}{2}},
\end{equation*}
where powers of $h/\tau\leq1/4$ are discarded. This proves \eqref{eq2.26}.
\end{proof}
\begin{remark}
    The estimate \eqref{eq2.26} may not be optimal (the optimal index should be $\varkappa=\kappa$). However, for the critical term $v_{N,x_0}$, we use only \eqref{eq2.25}. Estimate \eqref{eq2.26} is used only for the remainder term $v_{R,x_0}$ to gain an additional factor $(t-\tau)^{\varkappa/2}$, so that $(t-\tau)^{\varkappa/2}\|\delta_\alpha \nabla^2\sH(t-\tau)\|_{L^1}$ is integrable at the endpoint $\tau\to t$.
\end{remark}

The second toy model is the heat equation on the half-space with the following forcing terms.
\begin{equation}\label{mcbd2}
 		\begin{aligned}
 			&\partial_tu(t,x)-\Delta u(t,x)=F^1(t,x)+\partial_dF^2(t,x),\quad \text{in}\ [0,T]\times \mathbb{R}_+^d,\\
 			&u(0,x)=u_0(x),\quad \text{in}\ \mathbb{R}_+^d,\\
 			&u(t,x)=0,\quad \text{on}\ [0,T]\times\partial\mathbb{R}_+^d,
 		\end{aligned}
 	\end{equation}
  Compared with \eqref{mcbd1}, the parabolic operator is simpler, but the forcing need not vanish on the boundary and therefore cannot in general be extended oddly. The divergence form $\partial_dF^2$ leads instead to the anisotropic regularity in the following lemma.
 \begin{lemma}\label{mcbdlemby}
 	Let $\kappa\in(0,1)$. Consider $F^1,F^2\in C^\infty([0,T]\times \mathbb{R}_+^d)$ and $u_0\in W^{1,\infty}(\mathbb{R}_+^d)$. If $u$ is a solution to the system \eqref{mcbd2}, then
    \begin{equation}\label{mcbddlow}
		\sup_{0<t\leq T}\left(\|u(t)\|_{C^1}+t^{\frac{\kappa}{2}}\|\nabla u(t)\|_{\dot C^{\kappa}}+t^{\frac{1+\kappa}{2}}\|\nabla_{x'}\nabla u(t)\|_{\dot C^{\kappa}}\right)\lesssim \|u_0\|_{W^{1,\infty}}+(1+T^{1/2})\mathbf{B}_{0}(F^1,F^2)(T).
 	\end{equation}
    Moreover, for any $\gamma\geq0$, we have
\begin{equation}\label{mcbddpri}
    \begin{aligned}
        \sup_{0<t\leq T}\left(t^{\gamma+\frac{\kappa}{2}}\|\nabla u(t)\|_{\dot C^{\kappa}}+t^{\gamma+\frac{1+\kappa}{2}}\|\nabla_{x'}\nabla u(t)\|_{\dot C^{\kappa}} \right)\lesssim \sup_{0<t\leq T}t^{\gamma}\| u(t)\|_{C^1}+\mathbf{B}_{\gamma}(F^1,F^2)(T),
    \end{aligned}
\end{equation}
where 
\begin{equation*}
   \mathbf{B}_{\gamma}(F^1,F^2)(T)=\sup_{0<t\leq T}\left(t^{\gamma+\frac{1}{2}}\|F^1(t)\|_{L^\infty}+t^\gamma\|F^2(t)\|_{L^\infty}+t^{\gamma+\frac{\kappa}{2}}\|F^2(t)\|_{\dot C^\kappa}+t^{\gamma+\frac{1+\kappa}{2}}\|(F^1,\nabla_{x'}F^2)(t)\|_{\dot C^{\kappa}} \right).
\end{equation*}
    \end{lemma}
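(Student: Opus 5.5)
The plan is to write the solution of \eqref{mcbd2} by Duhamel's formula with the Dirichlet kernel $\sH$ from \eqref{defH}, and to move the normal derivative in $\partial_dF^2$ onto the kernel. Since $\sH$ vanishes at $y_d=0$, integration by parts in $y_d$ produces no boundary term, whether or not $F^2$ vanishes there. Moreover,
\[
\partial_{y_d}\sH(t,x'-y',x_d,y_d)=-\partial_{x_d}\tilde{\sH}(t,x'-y',x_d,y_d).
\]
Hence
\[
\int_0^t\!\!\int_{\mathbb R^d_+}\sH\,\partial_{y_d}F^2\,dy\,d\tau
=\partial_{x_d}w,\qquad
w(t,x):=\int_0^t\!\!\int_{\mathbb R^d}\K(t-\tau,x-y)F^{2,\mathrm{even}}(\tau,y)\,dy\,d\tau ,
\]
where $F^{2,\mathrm{even}}$ is the even reflection of $F^2$. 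Unlike the odd extension used for $v^2_{N,x_0}$ in Lemma~\ref{lemheat}, the even reflection needs no vanishing trace. It still satisfies $\|F^{2,\mathrm{even}}\|_{\dot C^\kappa(\mathbb R^d)}\lesssim\|F^2\|_{\dot C^\kappa(\mathbb R^d_+)}$, and similarly $\|\nabla_{x'}F^{2,\mathrm{even}}\|_{\dot C^\kappa}\lesssim\|\nabla_{x'}F^2\|_{\dot C^\kappa}$, because tangential derivatives commute with the reflection. So
\[
u=u_L+u_{N}^1+\partial_dw,
\]
with $u_L$ the Dirichlet evolution of $u_0$ and $u^1_N$ the Dirichlet Duhamel term of $F^1$.

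For $u_L$, the argument of \eqref{lllha} (integration by parts, passing to $\tilde{\sH}$ for $\partial_{x_d}$) gives $\|u_L\|_{L^\infty}+\|\nabla u_L\|_{L^\infty}\lesssim\|u_0\|_{W^{1,\infty}}$. It also gives $\|\delta_\alpha\nabla u_L\|_{L^\infty}\lesssim\min\{1,|\alpha|t^{-1/2}\}\|\nabla u_0\|_{L^\infty}$ and $\|\delta_\alpha\nabla^2u_L\|_{L^\infty}\lesssim t^{-1/2}\min\{1,|\alpha|t^{-1/2}\}\|\nabla u_0\|_{L^\infty}$. These yield all three weighted terms of \eqref{mcbddlow}.

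For $u^1_N$ I would copy \eqref{unlinft} and \eqref{dxpdx}. The $\|u\|_{L^\infty}$ part costs $\int_0^t\tau^{-1/2}d\tau\sim t^{1/2}$, which is the source of the factor $T^{1/2}$. The point is that only $\nabla_{x'}\nabla u$ is required, so the delicate $\partial_d^2$ step of Lemma~\ref{lemheat}, which needed time differences of $F^1$, is avoided entirely.

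For $\partial_dw$ I would use whole-space kernel bounds \eqref{heates}. First, $\|\partial_dw\|_{L^\infty}\lesssim\int_0^t(t-\tau)^{-1/2}\|F^2\|_{L^\infty}d\tau\lesssim t^{1/2}\sup\|F^2\|_{L^\infty}$. Second, for $\nabla\partial_dw$, subtracting $F^{2,\mathrm{even}}(\tau,x)$ gives the bound $\int_0^t(t-\tau)^{-1+\kappa/2}\tau^{-\kappa/2}d\tau\lesssim1$. Third, the $\dot C^\kappa$ seminorm of $\nabla\partial_dw$ (weight $t^{\kappa/2}$) follows from the $|D|^{-\varkappa}$ trick of \eqref{unhha} applied to $\nabla^2(\K*F^{2,\mathrm{even}})$. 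Fourth, for the top term I would write $\nabla_{x'}\nabla\partial_dw=\nabla\partial_d\bigl(\K*\nabla_{x'}F^{2,\mathrm{even}}\bigr)$. This is again a second derivative of a heat potential with a $\dot C^\kappa$ density of weight $\tau^{-(1+\kappa)/2}$, so \eqref{unhha} verbatim gives $|\alpha|^\kappa t^{-(1+\kappa)/2}$. The main obstacle is precisely this last step: checking that the time integral
\[
\int_0^t(t-\tau)^{-(2-\varkappa)/2}|\alpha|^{\kappa-\varkappa}\min\{1,|\alpha|(t-\tau)^{-1/2}\}^{1-\kappa+\varkappa}\tau^{-(1+\kappa)/2}\,d\tau
\]
closes near both endpoints (split at $\tau=t/2$ and $\tau=t-|\alpha|^2$). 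It also explains why one normal derivative is unavoidably lost: $\partial_d^3w$ would require $\partial_dF^2$.

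For \eqref{mcbddpri} I would localize in time. Fix $t$, restart the equation at $t/2$, and apply the above estimates on $[t/2,t]$ with initial datum $u(t/2)$. The contributions of $u(t/2)$ to the seminorms are bounded by $t^{-\kappa/2}\|u(t/2)\|_{C^1}$ and $t^{-(1+\kappa)/2}\|u(t/2)\|_{C^1}$. The forcing terms enter only through the seminorm parts, without the low-order $t^{1/2}$ contributions, since the $L^\infty$ bound of $u$ is now on the right. On $[t/2,t]$ all weights $s^\gamma$ are comparable to $t^\gamma$, so multiplying by $t^\gamma$ gives \eqref{mcbddpri} with no $T$-dependence.
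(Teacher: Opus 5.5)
Your proposal is correct and follows the paper's proof essentially step by step. Both use the Duhamel formula with the Dirichlet kernel $\sH$. Both integrate by parts in $y_d$ so that the $\partial_dF^2$ contribution becomes $\partial_{x_d}$ of the whole-space heat potential of the even extension $F^{2,e}$. Both handle $u_N^1$ with the tangential $|D_{y'}|^{-\varkappa}$ trick, and both use a fractional-multiplier trick for the H\"older bounds on $\nabla\partial_d w$ and on $\nabla_{x'}\nabla\partial_d w=\nabla\partial_d(\K*\nabla_{x'}F^{2,e})$. There the paper uses the one-dimensional normal multiplier $|D_{y_d}|^{-\varkappa}$ where you use the full $|D|^{-\varkappa}$, which makes no real difference. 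Both then restart at $t/2$ to obtain \eqref{mcbddpri}.

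One small point: as in the paper, your bound on $\partial_{x_d}u_L$ at $t_0=0$ tacitly uses that $u_0$ has zero trace. This is harmless for \eqref{mcbddpri}, because there the restart datum $u(t/2)$ does vanish on $\partial\mathbb{R}^d_+$.
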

 \begin{proof}
	 	We first prove \eqref{mcbddlow}. With the Green function $\sH$ defined in \eqref{defH}, we rewrite \eqref{mcbd2} as the following integral equation:
 	\begin{equation}\label{mcbddlemfml}
 		\begin{aligned}
	 		u(t,x)&=\int_{\mathbb{R}_+^d}
	 		\sH(t-t_0,x'-y',x_d,y_d)u(t_0,y)dy\\
	 		&\quad+\int_{t_0}^t\int_{\mathbb{R}^d_+}
	 		\sH(t-\tau,x'-y',x_d,y_d)
	 		(F^1+\partial_dF^2)(\tau,y)dyd\tau\\
 			&:=u_{L}(t_0,t,x)+\sum_{k=1}^2u^k_{N}(t_0,t,x)\quad \forall t_0<t.
 		\end{aligned}
 	\end{equation}
 	By \eqref{eskerha}, we can prove that
 	\begin{equation}\label{ulhalf}
 		\begin{aligned}
 			&\| u_{L}(t_0,t)\|_{C^1(\mathbb{R}^d_+)}\lesssim \|u(t_{0})\|_{W^{1,\infty}(\mathbb{R}^d_+)},
 			\\
            &\| \delta_\alpha u_{L}(t_0,t)\|_{C^1(\mathbb{R}^d_+)}\lesssim (t-t_0)^{-\frac{\kappa}{2}}|\alpha|^{\kappa}\|u(t_{0})\|_{W^{1,\infty}(\mathbb{R}^d_+)},
 			\\
 			&\| \delta_\alpha\nabla_{x'}\nabla u_{L}(t_0,t)\|_{L^\infty(\mathbb{R}^d_+)}\lesssim \int_{\mathbb{R}^d_+} |\delta_\alpha\nabla\sH(t-t_0,x'-y',x_d,y_d)|dy\|\nabla_{x'} u(t_{0})\|_{L^\infty(\mathbb{R}^d_+)}\\
 			&\quad\quad\quad\quad\quad\quad\quad\quad\quad\quad\quad\quad\lesssim \min\left\{1,\frac{|\alpha|}{(t-t_0)^{\frac{1}{2}}}\right\}(t-t_0)^{-\frac{1}{2}}\|\nabla u(t_0)\|_{L^\infty(\mathbb{R}^d_+)}.
 		\end{aligned}
 	\end{equation}
 	For $u_{N}^i$ with $i=1,2$, it is easy to get the $L^\infty$ bound as 
    \begin{equation}\label{unlinf}
        \begin{aligned}
            \sup_{t_0\leq t\leq T}\|u_N^{i}(t_0,t)\|_{L^\infty}\lesssim (T-t_0)^{\frac{1}{2}}\sup_{0<t\leq T}\left(t^{\frac{1}{2}}\|F^1(t)\|_{L^\infty}+\|F^2(t)\|_{L^\infty} \right).
        \end{aligned}
    \end{equation}
    For $u_N^1$, using integration by parts to get
 	\begin{equation}\label{un1half}
 		\begin{aligned}
 			&\|\nabla u_{N}^1(t_0,t)\|_{L^\infty(\mathbb{R}^d_+)}\lesssim \int_{t_0}^t\int_{\mathbb{R}^d_+} |\nabla_x\sH(t-\tau,x'-y',x_d,y_d)||F^1(\tau,y)|dyd\tau\\
 			&\quad\quad\quad\quad\quad\quad\quad\quad\quad\quad\quad\quad\lesssim \sup_{t\in[t_0,T]}(t-t_0)^{\frac{1}{2}}\|F^1(t)\|_{L^\infty},\\
            &\|\delta_\alpha\nabla u_{N}^1(t_0,t)\|_{L^\infty(\mathbb{R}^d_+)}\lesssim \int_{t_0}^t\int_{\mathbb{R}^d_+} |\delta_\alpha\nabla_x\sH(t-\tau,x'-y',x_d,y_d)||F^1(\tau,y)|dyd\tau\\
 			&\quad\quad\quad\quad\quad\quad\quad\quad\quad\quad\quad\quad\lesssim |\alpha|^{\kappa}(t-t_0)^{-\frac{\kappa}{2}}\sup_{t\in[t_0,T]}(t-t_0)^{\frac{1}{2}}\|F^1(t)\|_{L^\infty}.
 		\end{aligned}
 	\end{equation}
	    For higher-order H\"older estimates, we use the same argument as in \eqref{dxpdx}:
    \begin{equation}\label{un1higher}
        \begin{aligned}
            &\| \delta_\alpha\nabla_{x'}\nabla u_{N}^1(t_0,t)\|_{L^\infty(\mathbb{R}^d_+)}\lesssim \int_{t_0}^t\int_{\mathbb{R}^d_+} |\delta_\alpha\nabla_x\nabla_{x'}|D_{x'}|^{-\varkappa}\sH(t-\tau,x'-y',x_d,y_d)|\\
            &\quad\quad\quad\times\left||D_{y'}|^{\varkappa}F^1(\tau,y)-|D_{y'}|^{\varkappa}F^1(\tau,x)\right|dyd\tau\\
            &\quad\lesssim \int_{t_0}^t\int_{\mathbb{R}^d_+} |\delta_\alpha\nabla_x\nabla_{x'}|D_{x'}|^{-\varkappa}\sH(t-\tau,x'-y',x_d,y_d)||x'-y'|^{\kappa-\varkappa}\||D_{y'}|^{\varkappa}F^1(\tau)\|_{\dot C^{\kappa-\varkappa}_{x'}}dyd\tau\\
 			&\quad\lesssim |\alpha|^{\kappa}(t-t_0)^{-\frac{1+\kappa}{2}}\sup_{t\in[t_0,T]}(t-t_0)^{\frac{1+\kappa}{2}}\|F^1(t)\|_{\dot C^\kappa},
        \end{aligned}
    \end{equation}
 	for some $\varkappa\in(0,\kappa)$, and we use \eqref{eskerha} for the last inequality.  For $u_N^2$, integrate by parts and use the boundary condition of $\sH$ and $\tilde{\sH}$, we have
    \begin{equation*}
        \begin{aligned}
            u_N^2&=\int_{t_0}^t\int_{\mathbb{R}_+^d}\partial_{x_d}\tilde{\sH}(t-\tau,x'-y',x_d,y_d)F^2(\tau,y)dyd\tau\\
            &=\int_{t_0}^t\int_{\mathbb{R}^d}\partial_{x_d}\K(t-\tau,x-y)F^{2,e}(\tau,y)dyd\tau,
        \end{aligned}
    \end{equation*}
    where we denote $F^{2,e}$ to be the even extension of $F^2$,
    \begin{align}\label{evenext}
 F^{2,e}(t,x)=\begin{cases}
        F^2(t,x',x_d),\ \ \ x\in\mathbb{R}^d_+,\\
        F^2(t,x',-x_d),\ \ \ x\in \mathbb{R}^d_-.
    \end{cases}
 \end{align} 
 Obviously, $\nabla_{x'}F^{2,e}$ is the even extension of $\nabla_{x'}F^2$. Furthermore, by the fundamental theorem of calculus, for any $f:\mathbb{R}_+^d\rightarrow\mathbb{R}$, the $\dot C^{\eta}$ norm is retained under even extension $f^{\mathrm{even}}$, say
    \begin{equation*}
        \|f^{\mathrm{even}}\|_{\dot C^{\eta}(\mathbb{R}^d)}\lesssim \|f\|_{\dot C^{\eta}(\mathbb{R}^d_+)},\quad \forall \eta\in(0,1).
    \end{equation*}
    So for $\varkappa\in(0,\kappa)$,we have
 	\begin{align}
	 		&\|\nabla u_{N}^2(t_0,t)\|_{L^\infty(\mathbb{R}^d_+)}\lesssim \int_{t_0}^t\int_{\mathbb{R}^d} |\nabla\partial_{x_d}\K(t-\tau,x-y)||F^{2,e}(\tau,y)-F^{2,e}(\tau,x)|dyd\tau\label{un2half}\\
 			&\quad\quad\quad\quad\quad\quad\quad\quad\quad\quad\quad\quad\lesssim \sup_{t\in[t_0,T]}(t-t_0)^{\frac{\kappa}{2}}\|F^2(t)\|_{\dot C^\kappa},\nonumber\\
            &\|\delta_\alpha^x\nabla u_{N}^2(t_0,t)\|_{L^\infty(\mathbb{R}^d)}\nonumber\\
            &\quad\lesssim \int_{t_0}^t\int_{\mathbb{R}^d} |\delta_\alpha^x\nabla\partial_{x_d}|D_{y_d}|^{-\varkappa}\K(t-\tau,x-y)|\left||D_{y_d}|^{\varkappa}F^{2,e}(\tau,y',y_d)-|D_{x_d}|^{\varkappa}F^{2,e}(\tau,y',x_d)\right|dyd\tau\nonumber\\
            &\quad\lesssim\int_{t_0}^t\int_{\mathbb{R}^d} |\delta_\alpha^x\nabla\partial_{x_d}|D_{y_d}|^{-\varkappa}\K(t-\tau,x-y)||x_d-y_d|^{\kappa-\varkappa}\||D_{x_d}|^{\varkappa}F^{2,e}(\tau)\|_{\dot C^{\kappa-\varkappa}_{x_d}}dyd\tau\nonumber\\
 			&\quad\lesssim |\alpha|^{\kappa}(t-t_0)^{-\frac{\kappa}{2}}\sup_{t\in[t_0,T]}(t-t_0)^{\frac{\kappa}{2}}\|F^2(t)\|_{\dot C^\kappa},\nonumber\\
            &\| \delta_\alpha^x\nabla_{x'}\nabla u_{N}^2(t_0,t)\|_{L^\infty(\mathbb{R}^d_+)}\lesssim \int_{t_0}^t\int_{\mathbb{R}^d} |\delta_\alpha^x\nabla\partial_{x_d}|D_d|^{-\varkappa}\K(t-\tau,x-y)|\nonumber\\
            &\quad\quad\quad\quad\quad\quad\quad\quad\quad\quad\quad\quad\quad\quad\quad\quad\times\left||D_{y_d}|^{\varkappa}\nabla_{x'}F^{2,e}(\tau,y',y_d)-|D_{x_d}|^{\varkappa}\nabla_{x'}F^{2,e}(\tau,y',x_d)\right|dyd\tau\nonumber\\
            &\quad\lesssim\int_{t_0}^t\int_{\mathbb{R}^d} |\delta_\alpha^x\nabla\partial_{x_d}|D_{y_d}|^{-\varkappa}\K(t-\tau,x-y)||x_d-y_d|^{\kappa-\varkappa}\||D_{x_d}|^{\varkappa}\nabla_{x'}F^{2,e}(\tau)\|_{\dot C^{\kappa-\varkappa}_{x_d}}dyd\tau\nonumber\\
 			&\quad\lesssim |\alpha|^{\kappa}(t-t_0)^{-\frac{1+\kappa}{2}}\sup_{t\in[t_0,T]}(t-t_0)^{\frac{1+\kappa}{2}}\|\nabla_{x'}F^2(t)\|_{\dot C^\kappa}\nonumber.
	 	\end{align}
	 Taking $t_0=0$ in \eqref{ulhalf}, \eqref{unlinf}, \eqref{un1half}, \eqref{un1higher}, and \eqref{un2half} gives \eqref{mcbddlow}.
 	
	 	Starting from any $\tau\in(0,t)$, \eqref{mcbddlow} implies, for every $\gamma\geq0$,
 	\begin{equation}\label{mcbddmlemh}
 		\begin{aligned}
 			&\sup_{t\in(\tau,T]}\left((t-\tau)^{\frac{\kappa}{2}}\|\nabla u(t)\|_{C^\kappa}+(t-\tau)^{\frac{1+\kappa}{2}}\|\nabla_{x'}\nabla u(t)\|_{C^\kappa}\right)\\
 			&\quad\lesssim \|\nabla u(\tau)\|_{L^\infty}+ \sup_{t\in[\tau,T]}(t-\tau)^{\frac{1}{2}}\|F^1(t)\|_{L^\infty}+ \sup_{t\in[\tau,T]}(t-\tau)^{\frac{1+\kappa}{2}}\|(F^1,\nabla_{x'}F^2)(t)\|_{C^\kappa}\\
            &\quad\quad\quad+ \sup_{t\in[\tau,T]}(t-\tau)^{\frac{\kappa}{2}}\|F^2(t)\|_{C^\kappa}\\
            &\quad\lesssim \tau^{-\gamma}\left(\tau^{\gamma}\|\nabla u(\tau)\|_{L^\infty}+ \sup_{0<t\leq T}t^{\gamma+\frac{1}{2}}\|F^1(t)\|_{L^\infty}+ \sup_{t\in[\tau,T]}t^{\gamma+\frac{1+\kappa}{2}}\|(F^1,\nabla_{x'}F^2)(t)\|_{C^\kappa}\right.\\
            &\left.\quad\quad\quad+ \sup_{t\in[\tau,T]}t^{\gamma+\frac{\kappa}{2}}\|F^2(t)\|_{C^\kappa}\right).
 		\end{aligned}
 	\end{equation}
 	Then for any $t>0$, take $\tau=\frac{t}{2}$, and multiply $t^{\gamma}$ on both sides of \eqref{mcbddmlemh}, and we obtain \eqref{mcbddpri}. 
    \end{proof}

\section{Well-posedness of the mean curvature flow on the half-space}\label{sec3}
	This section proves local and global well-posedness for graphical mean curvature flow on the half-space with homogeneous Dirichlet boundary data. Throughout this section, $\kappa\in(\frac34,1)$ is fixed. We establish the local and global well-posedness of the graphical mean curvature equation on the half-space. The proof relies on the a priori estimates derived in Section~\ref{sec2} and a perturbative argument around the variable-coefficient operator with coefficient $\A[\nabla\phi]$ and homogeneous Dirichlet boundary condition.
    
	    The mean curvature system on the half-space with homogeneous Dirichlet boundary condition is
	\begin{equation}\label{eqmcbdd}
		\begin{aligned}
			&\partial_t f=\A[\nabla f]:\nabla^2 f,\ \ \ \text{in}\ [0,T]\times \mathbb{R}_+^d,\\
			&f|_{t=0}=f_0,\ \ \ \text{in}\ \mathbb{R}_+^d,\\
			&f=0,\ \ \text{on}\ [0,T]\times\{x_d=0\},
		\end{aligned}
	\end{equation}
 where the matrix $\A$ is defined in \eqref{mcf}. Note that the operator $\A[\nabla f]:\nabla^2$ is uniformly elliptic; more precisely,
    \begin{equation*}
        \frac{|\xi|^2}{1+\|\nabla f\|_{L^\infty}^2}\leq \A[\nabla f]:(\xi\otimes\xi)\leq |\xi|^2.
    \end{equation*}
	
	\begin{remark}
	    For general quasilinear Dirichlet problems, rough initial data need not satisfy the higher-order compatibility conditions required for full boundary regularity at $t=0$. The special structure of \eqref{eqmcbdd} propagates additional flat-boundary identities: Lemma~\ref{lembdc} shows that any solution $f\in L^\infty(0,T;C^{2m+2}(\overline{\mathbb{R}}_+^d))$ satisfies $\Delta^l f|_{(0,T)\times \partial\mathbb{R}_+^d}=0$ for $0\leq l\leq m+1$. On a curved boundary, Lemma~\ref{mcbdlembdy} instead expresses the relevant normal derivatives in terms of lower-order derivatives after flattening. These identities provide the positive-time boundary regularity used below.
\end{remark}
\subsection{Functional spaces and auxiliary lemma}\label{sec3.1}
This subsection introduces the time-weighted solution spaces and establishes the auxiliary linear estimate used in the fixed-point argument.
For $f:[0,T]\times\mathbb{R}^d_+\to \mathbb{R}^N$, define 
\begin{align*}
&\|f\|_{X_T}:=\|\nabla f\|_{L^\infty((0,T)\times\mathbb{R}^d_+)}+\sup_{0<t\leq T}t^{\frac{1+\kappa}{2}}\|\left(\nabla^2f,\partial_tf\right)(t)\|_{\dot C^{\kappa}(\mathbb{R}^d_+)},\\
&\|f\|_{Y_T^m}:=\|\nabla f\|_{L^\infty((0,T)\times\mathbb{R}^d_+)}+\sum_{n=0}^m\sup_{0<t\leq T}t^{n+\frac{\kappa+1}{2}}\|\nabla^{2n}\left(\nabla^2f,\partial_tf\right)(t)\|_{\dot C^{\kappa}(\mathbb{R}^d_+)}.
\end{align*}
Let $\phi\in C_b^{\infty}(\overline{\mathbb{R}}^d_+)$ satisfy $\phi|_{\partial\mathbb{R}^d_+}=0$, and let $\sigma\in(0,1)$. We define
 \begin{equation*}
 \begin{aligned}
     \mathcal{X}_{T,\phi}^\sigma=\left\{f\in L^\infty((0,T);C_b^1(\overline{\mathbb{R}}^d_+))\cap C((0,T];C_b^2(\overline{\mathbb{R}}^d_+)):f|_{(0,T]\times\partial\mathbb{R}^d_+}=0,\ \|f-\phi\|_{X_T}\leq\sigma\right\},
     \end{aligned}
     \end{equation*}
     \begin{equation*}
         \begin{aligned}
     &\mathcal{Y}_{T,\phi}^{m,\sigma}=\left\{f\in L^\infty((0,T);C_b^{1}(\overline{\mathbb{R}}_+^d))\cap C\left((0,T];C_b^{2m+2}(\overline{\mathbb R}_+^d)\right),\right.\\ 
     &\quad\quad\left.f|_{(0,T]\times \partial\mathbb{R}_+^d}=0,\ 
     \Delta^k f|_{(0,T]\times\partial\mathbb{R}_+^d}=0,\ k=1,\ldots,m:\ 
     \|f-\phi\|_{Y_T^m}\leq\sigma\right\}.
     \end{aligned}
 \end{equation*}
The following is a key lemma to prove Theorem~\ref{eqmcls}, which is also a direct corollary of Lemma~\ref{lemheat}.
\begin{lemma}\label{lemmbd}
			   Let $F=F^1+F^2\in C^\infty((0,T]\times \mathbb{R}_+^d;\mathbb R^N)$ with $F^2|_{(0,T]\times \partial\mathbb{R}_+^d}=0$. Let $\A:\mathbb R^{N\times d}\to\mathbb R_{\rm sym}^{d\times d}$ be the graphical mean-curvature coefficient map in \eqref{mcf}, and let $u_0,\phi\in C_b^\infty(\overline{\mathbb{R}}_+^d;\mathbb R^N)$ have zero boundary trace. Assume that the coefficient field $\A[\nabla\phi]$ is uniformly elliptic and has the bounded derivatives used below. If $u:(0,T]\times\mathbb R_+^d\to\mathbb R^N$ solves
		    \begin{equation}\label{mcbd}
     \begin{aligned}
         &\partial_tu(t,x)-\A[\nabla\phi](t,x):\nabla^2u(t,x)=F(t,x)=(F^1+F^2)(t,x),\quad \text{in}\ [0,T]\times\mathbb{R}_+^d,\\
         &u(0,x)=u_0(x),\quad \text{in}\ \mathbb{R}_+^d,\\
     &u(t,x)=0,\quad \text{on}\ [0,T]\times\{x_d=0\}.
     \end{aligned}
 \end{equation}
 Then we have
 \begin{equation}\label{uxha}
 \begin{aligned}
    &\|u\|_{X_T}\lesssim \|u_0\|_{\dot{W}^{1,\infty}}
    +T^{\frac{\eta}{2}}(1+T)\|\nabla\phi\|_{C^2}
    \sup_{0<t\leq T}\left(\|\nabla u(t)\|_{L^\infty}
    +
    t^{\frac{1}{2}}\|(\partial_t,\nabla^2)u(t)\|_{L^\infty}\right)\\
    &\quad+\sup_{0<t\leq T}\left(t^{\frac{1}{2}}\|F^1(t)\|_{L^\infty}
    +t^{\frac{1+\kappa}{2}}\|F^1(t)\|_{\dot C^\kappa}
    +t^{\frac{1+\kappa}{2}}\|F^2(t)\|_{\dot C^\kappa}\right)\\
    &\quad+\sup_{0<\tau<t<T}\tau^{\frac{1+\kappa}{2}}
    \frac{\|F^1(t)-F^1(\tau)\|_{L^\infty}}{(t-\tau)^{\frac{\kappa}{2}}},
 \end{aligned}
 \end{equation}
 where $0<\eta<\frac{2}{3}(1-\kappa)$ is as in Lemma~\ref{lemheat}. Moreover, for every $\gamma\geq0$,
 \begin{equation}\label{uzha}
 \begin{aligned}
     &\sup_{0<t\leq T}t^{\gamma+\frac{1+\kappa}{2}}(\|\nabla^2u(t)\|_{\dot C^{\kappa}}+\|\partial_tu(t)\|_{\dot C^{\kappa}})\\
     &\lesssim \sup_{0<t\leq T}t^{\gamma}\|\nabla u(t)\|_{L^\infty}
     +T^{\frac{\eta}{2}}(1+T)\|\nabla \phi\|_{C^2}
     \sup_{0<t\leq T}t^{\gamma}\left(\|\nabla u(t)\|_{L^\infty}
     +t^{\frac{1}{2}}\|(\partial_t,\nabla^2)u(t)\|_{L^\infty}\right)\\
     &\quad+\sup_{0<t\leq T}\left(t^{\gamma+\frac{1}{2}}\|F^1(t)\|_{L^\infty}
     +t^{\gamma+\frac{1+\kappa}{2}}\|F^1(t)\|_{\dot C^\kappa}
     +t^{\gamma+\frac{1+\kappa}{2}}\|F^2(t)\|_{\dot C^{\kappa}}\right)\\
     &\quad+\sup_{0<\tau<t<T}\tau^{\gamma+\frac{1+\kappa}{2}}
     \frac{\|F^1(t)-F^1(\tau)\|_{L^\infty}}{(t-\tau)^{\frac{\kappa}{2}}}.
 \end{aligned}
 \end{equation}
\end{lemma}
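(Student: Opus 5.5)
The plan is to obtain \eqref{uxha} by applying Lemma~\ref{lemheat} with $\B:=\A[\nabla\phi]$, and then to derive \eqref{uzha} from it by a restart-in-time argument like the one that produced \eqref{mcbddpri} from \eqref{mcbddlow}.

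\emph{The coefficient field.} Since $\A$ is a smooth map on $\mathbb R^{N\times d}$ and $\nabla\phi$ is bounded, $\B$ is uniformly elliptic with $c_0$ depending only on $\|\nabla\phi\|_{L^\infty}$. By the chain rule, $\|\B\|_{\dot W^{1,\infty}}\lesssim\|\nabla^2\phi\|_{L^\infty}$. The fractional seminorms used in the proof of Lemma~\ref{lemheat} are $\dot C^\kappa$, $\dot W^{\eta/2,\infty}$, $\dot W^{3\eta/2,\infty}$ and $\dot W^{\kappa+3\eta/2,\infty}$, all of order below one. Interpolating between $L^\infty$ and $\dot W^{1,\infty}$ bounds each of them by $\|\nabla\phi\|_{C^2}$, with constants depending on $\|\nabla\phi\|_{L^\infty}$. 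This is the only place where the specific structure of $\A$ enters.

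\emph{Proof of \eqref{uxha}.} Apply \eqref{re5.1} directly. The left side is exactly $\|u\|_{X_T}$ once we add $t^{\frac{1+\kappa}{2}}\|\partial_tu\|_{\dot C^\kappa}$, which we bound through the equation as in the remark after Lemma~\ref{lemheat}:
\[
\|\partial_tu\|_{\dot C^\kappa}\lesssim\|\B\|_{C^\kappa}\bigl(\|\nabla^2u\|_{\dot C^\kappa}+\|\nabla^2u\|_{L^\infty}\bigr)+\|F\|_{\dot C^\kappa}.
\]
Here $\|\nabla^2u\|_{L^\infty}$ carries weight $t^{1/2}$ and is controlled by interpolation between $\nabla u$ and $\nabla^2u\in\dot C^\kappa$.

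\emph{Main obstacle.} The last term of \eqref{re5.1} contains $t^{\frac{1+\kappa}{2}}\|(\partial_t,\nabla^2)u\|_{\dot C^\kappa}$, whereas \eqref{uxha} only allows the weaker quantity $t^{1/2}\|(\partial_t,\nabla^2)u\|_{L^\infty}$. I would handle this in one of two ways:
\begin{enumerate}
\item Absorb that term. Its coefficient is $T^{\eta/2}(1+T)\|\nabla\phi\|_{C^2}$; if this is small, the $\dot C^\kappa$ part moves to the left-hand side, and what remains appears in the form displayed. This requires $T$ small.
\item Avoid smallness of $T$. Rerun the remainder estimates \eqref{eq2.22}--\eqref{eq2.23}, where only $t^{1/2}\|\nabla^2u\|_{L^\infty}$ and the lower Hölder norm $t^{\frac{1+\eta}{2}}\|\nabla^2u\|_{\dot C^\eta}$ actually appear. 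Interpolate $\dot C^\eta$ between $L^\infty$ and $\dot C^\kappa$, then apply Young's inequality: a small multiple of the top norm is absorbed into the left-hand side, and the rest is dominated by the displayed lower-order sup.
\end{enumerate}
I would try the second route first, since it matches the form of \eqref{uxha} without restricting $T$.

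\emph{Proof of \eqref{uzha}.} Fix $t\in(0,T]$ and set $\tau=t/2$. Apply \eqref{uxha} on $[\tau,T]$ with initial datum $u(\tau)$ and time variable $s-\tau$; the equation is autonomous in time, so this is legitimate. On $[t/2,t]$ every weight $(s-\tau)^{a}$ is dominated by $s^{a}$, and $\|u(\tau)\|_{\dot W^{1,\infty}}\le\tau^{-\gamma}\sup_s s^\gamma\|\nabla u(s)\|_{L^\infty}$. Similarly, each forcing and remainder sup is bounded by $\tau^{-\gamma}$ times the corresponding $s^\gamma$-weighted sup. For the time-difference term, restrict to $\tau<\tau'<t'$ and use $\tau'\ge t/2$.

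Evaluating at time $t$, where $(t-\tau)^{\frac{1+\kappa}{2}}\simeq t^{\frac{1+\kappa}{2}}$, and multiplying by $t^\gamma\simeq\tau^\gamma$ gives \eqref{uzha}.
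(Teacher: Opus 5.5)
Your outline is the paper's proof. The paper applies Lemma~\ref{lemheat} with $\B=\A[\nabla\phi]$ to get \eqref{wha}, bounds $t^{\frac{1+\kappa}{2}}\|\partial_tu\|_{\dot C^\kappa}$ through the equation in \eqref{ux2ha}, and obtains \eqref{uzha} by restarting at $\tau=t/2$ and multiplying by $\tau^\gamma$, exactly as you propose.

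The discrepancy you flag is real, and the paper does not resolve it. The last term of \eqref{wha} still contains $\sup_t t^{\frac{1+\kappa}{2}}\|\nabla^2u(t)\|_{\dot C^\kappa}$, and the text simply asserts that \eqref{uxha} follows. Your first route is the correct fix: absorb that term when $a:=T^{\eta/2}(1+T)\|\nabla\phi\|_{C^2}$ is small. This is effectively how the lemma is used later. In \eqref{mchsprxt} and \eqref{mchsfpxt}, the coefficient term is bounded by $a$ times the full $X_T$ norm and absorbed after taking $T$ small. For $\phi\equiv0$ the remainder in \eqref{re5.1} vanishes, so no restriction is needed there.

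Your preferred second route rests on a misreading of the remainder estimate. The tangential bound \eqref{eq2.22} does involve only $t^{1/2}\|\nabla^2u\|_{L^\infty}$ and intermediate norms of $\nabla^2u$. The normal part $\partial_d^2v_{R,x_0}$, however, is handled in \eqref{eq2.23} through the time increment $\tilde{\R}_{x_0}(\tau)-\tilde{\R}_{x_0}(t)$. That increment is controlled by \eqref{eq2.26}, and \eqref{eq2.26} needs $t^{\frac{1+\kappa}{2}}\|(\partial_t,\nabla^2)u\|_{\dot C^\kappa}$ (see \eqref{eq2.27}--\eqref{eq2.29}). Some Hölder input of this kind is unavoidable: $L^\infty$ bounds on $(\partial_t,\nabla^2)u$ give no time modulus of continuity for $\nabla^2u$.

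You can repair the route by running \eqref{eq2.26} with an exponent $\kappa'<\kappa$. Then interpolate $t^{\frac{1+\kappa'}{2}}\|(\partial_t,\nabla^2)u\|_{\dot C^{\kappa'}}$ between $t^{\frac12}\|\cdot\|_{L^\infty}$ and $t^{\frac{1+\kappa}{2}}\|\cdot\|_{\dot C^\kappa}$. After Young's inequality, however, the lower-order sup carries a coefficient of size $a+a^{p}$ with $p>1$, not $a$. This matches the linear factor in \eqref{uxha} only when $a\lesssim1$. So neither route gives \eqref{uxha} as displayed for arbitrary $T$. The statement you can actually prove carries a hypothesis $a\le c$, or the coefficient $a+a^p$, and that suffices for every application in Section~\ref{sec3}.

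A smaller point concerns the coefficient paragraph. To bound the fractional seminorms of $\B$ linearly by $\|\nabla\phi\|_{C^2}$, interpolate $\B-\mathrm{Id}$ rather than $\B$, using $\|\B-\mathrm{Id}\|_{L^\infty}\lesssim\|\nabla\phi\|_{L^\infty}^2$. Interpolating $\B$ itself gives only $\|\nabla^2\phi\|_{L^\infty}^{s}$, which is not linear when $\nabla^2\phi$ is small.
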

		\begin{proof}
 Lemma~\ref{lemheat}, applied with $\B=\A[\nabla\phi]$, gives for every $T>0$
\begin{equation}\label{wha}
    \begin{aligned}
        &\sup_{0<t\leq T}\left(\|\nabla u(t)\|_{L^\infty(\mathbb{R}_+^d)}
        +t^{\frac{1+\kappa}{2}}\|\nabla^2u(t)\|_{\dot C^{\kappa}(\mathbb{R}_+^d)}\right)\\
        &\quad\lesssim \|u_0\|_{\dot W^{1,\infty}(\mathbb{R}_+^d)}
        +\sup_{0<\tau<t<T}\tau^{\frac{1+\kappa}{2}}
        \frac{\|F^1(t)-F^1(\tau)\|_{L^\infty(\mathbb{R}_+^d)}}{(t-\tau)^{\frac{\kappa}{2}}}\\
        &\quad+\sup_{0<t\leq T}\left(t^{\frac{1}{2}}\|F^1(t)\|_{L^\infty(\mathbb{R}^d_+)}+t^{\frac{1+\kappa}{2}}\|F^1(t)\|_{\dot C^{\kappa}(\mathbb{R}_+^d)}+t^{\frac{1+\kappa}{2}}\|F^2(t)\|_{\dot C^{\kappa}(\mathbb{R}_+^d)}\right)\\
        &\quad+T^\frac{\eta}{2}\|\nabla\phi\|_{C^2}\sup_{0<t\leq T}(\|\nabla u(t)\|_{L^\infty(\mathbb{R}_+^d)}+t^{\frac{1+\kappa}{2}}\|\nabla^2u(t)\|_{\dot C^{\kappa}(\mathbb{R}_+^d)}).
    \end{aligned}
\end{equation}
Furthermore, by \eqref{mcbd}, we can see that
\begin{equation}\label{ux2ha}
    \begin{aligned}
        &\sup_{0<t\leq T}t^{\frac{1+\kappa}{2}}\|\partial_tu(t)\|_{\dot C^{\kappa}}
        \lesssim \sup_{0<t\leq T}t^{\frac{1+\kappa}{2}}
        \left(\|F(t)\|_{\dot C^{\kappa}}+\|\A[\nabla \phi]:\nabla^2u(t)\|_{\dot C^{\kappa}}\right)\\
        &\quad\lesssim \sup_{0<t\leq T}t^{\frac{1+\kappa}{2}}\|F(t)\|_{\dot C^{\kappa}}
        + \sup_{0<t\leq T}t^{\frac{1+\kappa}{2}}\|\nabla^2u(t)\|_{\dot C^\kappa}\\
        &\qquad+T^{\frac{\eta}{2}}\|\nabla \phi\|_{\dot C^\kappa}
        \sup_{0<t\leq T}\left(\|\nabla u(t)\|_{L^\infty(\mathbb{R}_+^d)}
        +t^{\frac{1+\kappa}{2}}\|\nabla^2u(t)\|_{\dot C^{\kappa}(\mathbb{R}_+^d)}\right).
    \end{aligned}
\end{equation}
Then \eqref{uxha} follows from \eqref{wha} and \eqref{ux2ha}.
It remains to prove \eqref{uzha}. For any $\tau\in(0,T)$, taking $\tau$ as a new initial time and applying \eqref{uxha}, we obtain
\begin{equation*}
    \begin{aligned}
        &\sup_{t\in[\tau,T]}(\|\nabla u(t)\|_{L^\infty}+(t-\tau)^{\frac{1+\kappa}{2}}\|(\nabla^2,\partial_t)u(t)\|_{\dot C^{\kappa}})\\
        &\lesssim \|\nabla u(\tau)\|_{L^\infty}
        +T^{\frac{\eta}{2}}(1+T)\|\nabla\phi\|_{C^2}
        \sup_{0<t\leq T}\left(\|\nabla u(t)\|_{L^\infty(\mathbb{R}_+^d)}
        +t^{\frac{1+\kappa}{2}}\|\nabla^2u(t)\|_{\dot C^{\kappa}(\mathbb{R}_+^d)}\right)\\
        &\quad+\sup_{t\in[\tau,T]}\left((t-\tau)^{\frac{1}{2}}\|F^1(t)\|_{L^\infty}
        +(t-\tau)^{\frac{1+\kappa}{2}}\|F^1(t)\|_{\dot C^\kappa}\right)\\
        &\quad+\sup_{\tau<\tau_1<\tau_2<T}(\tau_1-\tau)^{\frac{1+\kappa}{2}}
        \frac{\|F^1(\tau_1)-F^1(\tau_2)\|_{L^\infty}}
        {(\tau_2-\tau_1)^{\frac{\kappa}{2}}}\\
        &\quad+\sup_{t\in[\tau,T]}(t-\tau)^{\frac{1+\kappa}{2}}\|F^2(t)\|_{\dot{C}^\kappa},
        \end{aligned}
\end{equation*}
which implies that
\begin{equation}\label{eq3.4}
    \begin{aligned}
       &\tau^{\gamma}\sup_{t\in[\tau,T]}(\|\nabla u(t)\|_{L^\infty}+(t-\tau)^{\frac{1+\kappa}{2}}\|(\nabla^2,\partial_t)u(t)\|_{\dot C^{\kappa}})\\
       &\quad\lesssim \sup_{0<t\leq T}t^{\gamma}\| \nabla u(t)\|_{L^\infty}+T^{\frac{\eta}{2}}(1+T)\|\nabla\phi\|_{C^2}\sup_{0<t\leq T}(t^{\gamma}\|\nabla u(t)\|_{L^\infty}+t^{\gamma+\frac{1+\kappa}{2}}\|(\nabla^2,\partial_t)u(t)\|_{\dot C^{\kappa}})\\
    &\quad\quad+\sup_{0<t\leq T}\big(t^{\gamma+\frac{1}{2}}\|F^1(t)\|_{L^\infty}+t^{\gamma+\frac{1+\kappa}{2}}\|F^1(t)\|_{ \dot{C}^\kappa}\big)+\sup_{0<\tau_1<\tau_2<T}\tau_1^{\gamma+\frac{1+\kappa}{2}}\frac{\|F^1(\tau_1)-F^1(\tau_2)\|_{L^\infty}}{(\tau_2-\tau_1)^{\frac{\kappa}{2}}}\\
        &\quad\quad+\sup_{0<t\leq T}t^{\gamma+\frac{1+\kappa}{2}}\|F^2(t)\|_{\dot{C}^\kappa},\quad\forall\tau\in(0,T).
    \end{aligned}
\end{equation}
Since 
\begin{equation*}
   \sup_{0<t\leq T}t^{\gamma+\frac{1+\kappa}{2}}(\|\nabla^2u(t)\|_{\dot C^{\kappa}}+\|\partial_tu(t)\|_{\dot C^{\kappa}})\lesssim \sup_{0<t\leq T}(\frac{t}{2})^{\gamma}\sup_{\tau\in[\frac{t}{2},T]}\tau^{\frac{1+\kappa}{2}}\|(\nabla^2,\partial_t)u(\tau)\|_{\dot C^{\kappa}} ,
\end{equation*}
we can take $\tau=\frac{t}{2}$ in \eqref{eq3.4} to get \eqref{uzha}, and we complete the proof of the lemma.
		\end{proof}
	\subsection{Proof of Theorem~\ref{eqmcls}}
    This subsection proves the half-space theorem by constructing a contraction around a smooth reference profile and then deriving the small-data global estimate.
Using the special structure of the nonlinear term in \eqref{eqmcbdd}, we obtain the two boundary-condition lemmas, Lemmas~\ref{bdcon} and~\ref{lembdc}; their proofs are deferred to the appendix. We are now ready to prove the main theorem of this subsection.

		\begin{proof}[Proof of Theorem~\ref{eqmcls}]
 We will construct the solution to the system \eqref{eqmcbdd} by a fixed point argument.
Let $g\in \mathcal{Y}^{m,\sigma}_{T,\phi}$, where this space is defined at the beginning of Section~\ref{sec3.1}; the parameters $\sigma,T$, and $\phi$ will be fixed below. Define $\mathcal{S}g=f$, where $f$ is a strong solution of
    \begin{equation*}
		\begin{aligned}
			&\partial_t f=\A[\nabla g]:\nabla^2 f,\ \ \ \text{in}\ [0,T]\times \mathbb{R}_+^d,\\
			&f|_{t=0}=f_0,\ \ \ \text{in}\ \mathbb{R}_+^d,\\
			&f=0,\ \ \text{on}\ [0,T]\times\partial \mathbb{R}_+^d.
		\end{aligned}
	\end{equation*}
For smooth $g$ satisfying the boundary identities in the definition of $\mathcal Y_{T,\phi}^{m,\sigma}$, its odd extension across $\{x_d=0\}$ has the required regularity through order $2m+1$. The resulting extension of the coefficient field $\A[\nabla g]$ has the parity needed to extend the linear Dirichlet problem to a uniformly parabolic whole-space system. Standard smooth linear theory, followed by restriction to $\mathbb R_+^d$, therefore defines $\mathcal Sg$; the whole-space Schauder estimate used for its positive-time bounds is Theorem~\ref{propb=0} (see also \cite{KHN2025}). For nonsmooth $g$ in the completed fixed-point space, $\mathcal Sg$ is defined by smooth approximation and the uniform estimates proved below.
    
    \begin{remark}\label{rmk3.3}
        Throughout the proof, we estimate the equation for each component $f^\ell$ of $f=(f^\ell)_{\ell=1}^N$ separately and suppress the component index for simplicity.
    \end{remark}
    First consider the equation for $f-\phi$, with $\phi|_{\partial\mathbb{R}_+^d}=0$ and $\|f_0-\phi\|_{\dot W^{1,\infty}}\leq \eps_0$. We write
\begin{equation*}
    \begin{aligned}
			&\partial_t (f-\phi)-\A[\nabla \phi]:\nabla^2 (f-\phi)=F^1+F^2,\ \ \ \text{in}\ [0,T]\times \mathbb{R}_+^d,\\
			&(f-\phi)|_{t=0}=f_0-\phi ,\ \ \ \text{in}\ \mathbb{R}_+^d,\\
			&f-\phi =0,\ \ \text{on}\ [0,T]\times\partial \mathbb{R}_+^d,
		\end{aligned}
\end{equation*}
where 
\begin{align*}
   &F^1=\A[\nabla \phi]:\nabla^2 \phi,\\
         &F^2=(\A[\nabla g]-\A[\nabla\phi]):\nabla^2f.
\end{align*}
By Lemma~\ref{lembdc} with $F=0$, we have
\begin{equation*}
    \Delta f|_{\partial\mathbb{R}_+^d}(t)=0,\quad\forall t\in(0,T].
\end{equation*}
Note that $f|_{\partial\mathbb{R}_+^d}=\Delta f|_{\partial\mathbb{R}_+^d}=g|_{\partial\mathbb{R}_+^d}=\phi|_{\partial\mathbb{R}_+^d}=0$, by Lemma~\ref{lem5.1} we have
\begin{equation*}
    F^2|_{\partial\mathbb{R}_+^d}=0.
\end{equation*}
Since $F^1$ is smooth and independent of $t$, we have
\begin{equation}\label{eq3.6}
    \begin{aligned}
        &\sup_{0<t\leq T}\|F^1(t)\|_{C^{2}}\lesssim \|\nabla\phi\|_{C^{3}}(1+\|\nabla\phi\|_{C^{3}})^{3},\\
          &\sup_{0<t\leq T}t^{\frac{1}{2}}\|F^2(t)\|_{L^\infty}\lesssim \|g-\phi\|_{X_T}(\|f-\phi\|_{X_T}+T^{\frac{1}{2}}\|\nabla\phi\|_{C^2}),\\
          &\sup_{0<t\leq T}t^{\frac{1+\kappa}{2}}\|F^2(t)\|_{\dot C^\kappa}\lesssim (\|g-\phi\|_{X_T}+T^{\frac{\kappa}{2}}(1+T)\|\nabla\phi\|_{C^2})(\|f-\phi\|_{X_T}+T^{\frac{1}{2}}\|\nabla\phi\|_{C^2}),
    \end{aligned}
\end{equation}
Hence, we deduce from \eqref{uxha} and \eqref{eq3.6} that 
\begin{equation}\label{mchsprxt}
    \begin{aligned}
        \|f-\phi\|_{X_T}\leq& C_1\|f_0-\phi\|_{\dot{W}^{1,\infty}}+C_1(\|g-\phi\|_{X_T}+T^\frac{\eta}{2}(1+T)\|\nabla\phi\|_{C^2})(\|f-\phi\|_{X_T}+T^{\frac{1}{2}}\|\nabla\phi\|_{C^2})\\
        &\quad+T^{\frac{1}{2}}(1+T)\|\nabla\phi\|_{C^2}(1+\|\nabla\phi\|_{C^2})^3
    \end{aligned}
\end{equation}
Since $g\in \mathcal{Y}_{T,\phi}^{m,\sigma}$, we take $\eps_0$, $\sigma$, $\phi$ and $T$ such that
\begin{equation}\label{eq3.17}
\sigma\leq \frac{1}{2^{10dm}C_1},\ \ \ \eps_0\leq\frac{\sigma}{2^{10dm}C_1C_0'} \ \ \ T\leq 
\left(\frac{\sigma}{2^{10dm}C_1(2+\|\nabla\phi\|_{C^{2m+2}})^2}\right)^{10},
\end{equation}
for some $C_0'$ to be fixed later. Then for $\|f_0-\phi\|_{\dot W^{1,\infty}}\leq \eps_0$, we obtain the following estimate
\begin{equation}\label{bdmcxts}
    \|f-\phi\|_{X_T}\leq \frac{1}{2^{5dm}C_0'}\sigma.
\end{equation}

Next we estimate the  higher-order norm $\|f-\phi\|_{Y_T^m}$. Denote $f^m=\Delta^mf$, then one can write the equation of $f^m$ as 
\begin{equation*}
    \begin{aligned}
			&\partial_t f^m-\A[\nabla g]:\nabla^2 f^m=F^m,\ \ \ \text{in}\ (0,T]\times \mathbb{R}_+^d,
		\end{aligned}
\end{equation*}
with 
\begin{equation*}
     F^m=\Delta^m(\A[\nabla g]:\nabla^2f)-\A[\nabla g]:\nabla^2\Delta^mf.
\end{equation*}
By induction, we can prove the boundary vanishing condition for $f^m$, say
\begin{equation}\label{eq3.12}
    f^m(t,x)=0,\ \ \text{on}\ (0,T]\times\partial \mathbb{R}_+^d.
\end{equation}
Precisely, obviously \eqref{eq3.12} holds for $m=0$. Assume \eqref{eq3.12} holds for any $m\leq M$. By Lemma~\ref{bdcon}, we have 
\begin{equation}\label{eq3.11}
    F^m|_{\partial\mathbb{R}_+^d}=0,\quad\forall  m\leq M
\end{equation}
Applying Lemma~\ref{lembdc}, we can get $\Delta^{M+1}f|_{\partial\mathbb{R}_+^d}=0$, which implies \eqref{eq3.12} holds for $m=M+1$ and the induction proof is complete. Furthermore, 
\begin{align*}
    \partial_t f^m-\A[\nabla \phi]:\nabla^2 f^m=F^m+\left(\A[\nabla g]-\A[\nabla\phi]\right):\nabla^2f^m:=\tilde{F}^m.
\end{align*}
By \eqref{eq3.11} and Lemma~\ref{lem5.1}, we deduce that 
\begin{equation*}
    \tilde{F}^m|_{\partial\mathbb{R}_+^d}=0.
\end{equation*}
Note that since $f^m(0)$ is not well-defined, for any $t>0$, we take $t_0=\frac{t}{2}$ as the new initial time. Apply \eqref{uzha} to $f^m$ with  $\gamma=m$, $F^1=0$, $F^2=\tilde{F}^m$, for any $t\leq T$, one has
 \begin{align*}
     &t^{m+\frac{1+\kappa}{2}}\|(\nabla^2,\partial_t)f^m(t)\|_{\dot C^{\kappa}}\lesssim(t-t_0)^{m+\frac{1+\kappa}{2}}\|(\nabla^2,\partial_t)f^m(t)\|_{\dot C^{\kappa}}\\
     &\quad\lesssim (t-t_0)^{m}\|\nabla f^m(t_0)\|_{L^\infty}+\sup_{\tau\in[t_0,t]}(\tau-t_0)^{m+\frac{1+\kappa}{2}}\|\tilde{F}^m(\tau)\|_{\dot{C}^\kappa}\\
     &\quad\quad+T^\frac{\eta}{2}(1+T)\|\nabla\phi\|_{C^{2}}\sup_{\tau\in[t_0,t]}\left((\tau-t_0)^{m}\|\nabla f^m(\tau)\|_{L^\infty}+(\tau-t_0)^{m+\frac{1+\kappa}{2}}\|(\nabla^2,\partial_t)f^m(\tau)\|_{\dot C^{\kappa}}\right)\\
     &\quad\lesssim\sup_{\tau\in(0,T]}\tau^{m}\| \nabla f^m(\tau)\|_{L^\infty}+\sup_{0<\tau\leq T}\tau^{m+\frac{1+\kappa}{2}}\|\tilde{F}^m(\tau)\|_{\dot{C}^\kappa}\\
     &\quad\quad+T^\frac{\eta}{2}(1+T)\|\nabla\phi\|_{C^{2}}\sup_{0<\tau\leq T}\left(\tau^{m}\|\nabla f^m(\tau)\|_{L^\infty}+\tau^{m+\frac{1+\kappa}{2}}\|(\nabla^2,\partial_t)f^m(\tau)\|_{\dot C^{\kappa}}\right).
 \end{align*}
Since $\Delta^kf|_{\partial\mathbb{R}_+^d}=0$ for any $k\leq m+1$ (see Lemmas~\ref{lem5.1}--\ref{lembdc}), the odd extension $f^{\mathrm{odd}}$ of $f$ across $\partial\mathbb R_+^d$ is $C^{2m+2}$ for every $t>0$, and the full-space Calder\'on--Zygmund estimate gives
\begin{equation}\label{eq3.15}
    \|\nabla^{2m+2}f\|_{\dot{C}^{\kappa}}\lesssim\|\nabla^{2m+2}f^{\mathrm{odd}}\|_{\dot{C}^{\kappa}}\lesssim\|\nabla^2\Delta^mf^{\mathrm{odd}}\|_{\dot{C}^{\kappa}}\lesssim\|\nabla^2\Delta^mf\|_{\dot{C}^{\kappa}},
\end{equation}
and by interpolation,
\begin{equation}\label{eq3.14}
    \|u\|_{\dot{C}^{\gamma_0}}\lesssim\|u\|_{\dot{C}^{\gamma_1}}^{\frac{\gamma_2-\gamma_0}{\gamma_2-\gamma_1}}\|u\|_{\dot{C}^{\gamma_2}}^{\frac{\gamma_0-\gamma_1}{\gamma_2-\gamma_1}},\quad 0<\gamma_1\leq \gamma_0\leq \gamma_2,\quad \gamma_1<\gamma_2.
\end{equation}
We use the following fundamental lemma; its proof can be found in \cite{KHN2025}.
		\begin{lemma}\label{lemcom}
				Let $m,M,N\in\mathbb{N}$, $\alpha\in(0,1)$, and $\Lambda>0$. Consider $g,g_1,g_2,g_3,g_4:\mathbb{R}^d\to\mathbb{R}^N$ and $f:\mathbb{R}^N\to \mathbb{R}^M$ satisfying
			\begin{align*}
				\sum_{k=1}^{m+2}\|f^{(k)}\|_{L^\infty}\leq\Lambda.
			\end{align*}
			Then the implicit constants below may depend on $\Lambda$.
			\begin{equation}\label{mainint11}
				\|\nabla^m(f\circ g)\|_{L^\infty}\lesssim  \|g\|_{\dot C^1}^m+\| g\|_{\dot C^m},
			\end{equation}
			\begin{equation}\label{mainint12}
				\|\nabla^m(f\circ g)\|_{\dot C^\alpha}\lesssim  \| g\|_{\dot C^\alpha}^\frac{m+\alpha}{\alpha}+\|g\|_{\dot C^{m+\alpha}},
			\end{equation}
			\begin{equation}\label{mainint21}
				\|\nabla^m(f\circ g_1-f\circ g_2)\|_{L^\infty}\lesssim\sum_{n=0}^m \|g_1-g_2\|_{\dot C^n}(\|(g_1,g_2)\|_{\dot C^1}^{m-n}+\|(g_1,g_2)\|_{\dot C^{m-n}}),
			\end{equation}
			\begin{equation}\label{mainint22}
				\begin{aligned}
					\|\nabla^m(f\circ g_1-f\circ g_2)\|_{\dot C^\alpha}\lesssim &
					\sum_{n=0}^m\left\{\|g_1-g_2\|_{\dot C^{n+\alpha}}(\|(g_1,g_2)\|_{\dot C^1}^{m-n}+\|(g_1,g_2)\|_{\dot C^{m-n}})\right.\\
					&\left.\quad\quad+\|g_1-g_2\|_{\dot C^n}(\|(g_1,g_2)\|_{\dot C^\alpha}^\frac{m-n+\alpha}{\alpha}+\|(g_1,g_2)\|_{\dot C^{m-n+\alpha}})\right\}.
				\end{aligned}
			\end{equation}
			\begin{equation*}
				\begin{aligned}
					&\left\|\nabla^m\big((f\circ g_1-f\circ g_2)-(f\circ g_3-f\circ g_4)\big)\right\|_{L^\infty}\\
					&\lesssim \sum_{n=0}^m\|(g_1-g_2)-(g_3-g_4)\|_{\dot C^n}(\|(g_1,g_2)\|_{\dot C^1}^{m-n}+\|(g_1,g_2)\|_{\dot C^{m-n}})\\
					&\quad\quad\quad+\sum_{m_1+m_2+m_3=m}\|g_3-g_4\|_{\dot C^{m_1}}\|(g_1-g_3,g_2-g_4)\|_{\dot C^{m_2}}\left(\sum_{k=1}^4(\|g_k\|_{\dot C^1}^{m_3}+\|g_k\|_{\dot C^{m_3}})\right).
				\end{aligned}
			\end{equation*}
			
		\end{lemma}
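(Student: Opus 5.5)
We would prove all five estimates from the Fa\`a di Bruno formula combined with the interpolation inequality \eqref{eq3.14} (and its integer-order analogue $\|\nabla^j g\|_{L^\infty}\lesssim\|\nabla g\|_{L^\infty}^{\frac{m-j}{m-1}}\|\nabla^m g\|_{L^\infty}^{\frac{j-1}{m-1}}$ for $1\le j\le m$). For \eqref{mainint11}, Fa\`a di Bruno writes $\nabla^m(f\circ g)$ as a sum of terms
\[
f^{(k)}(g)\,\nabla^{j_1}g\cdots\nabla^{j_k}g,\qquad j_1+\cdots+j_k=m,\ j_i\ge1,\ 1\le k\le m.
\]
Each $f^{(k)}(g)$ is bounded by $\Lambda$. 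Interpolating each factor between $\dot C^1$ and $\dot C^m$, the product is bounded by $\|g\|_{\dot C^1}^{\theta m}\|g\|_{\dot C^m}^{1-\theta}$ for some $\theta\in[0,1]$. Young's inequality then gives $\|g\|_{\dot C^1}^m+\|g\|_{\dot C^m}$.

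For \eqref{mainint12} we would take the $\dot C^\alpha$ seminorm of the same expansion. When the difference falls on $f^{(k)}(g)$, we use $\|f^{(k)}\circ g\|_{\dot C^\alpha}\le\Lambda\|g\|_{\dot C^\alpha}$ (here $k\le m+1$ derivatives of $f$ are needed, within the hypothesis). When it falls on a factor $\nabla^{j_i}g$, the Leibniz rule for H\"older seminorms applies. Every resulting term has total homogeneity $m+\alpha$ in derivatives of $g$. We would interpolate every factor, $\nabla^{j}g\in L^\infty$ or $\dot C^\alpha$, between the endpoints $\dot C^\alpha$ and $\dot C^{m+\alpha}$ using \eqref{eq3.14} and the Besov characterization $\dot C^{s}\simeq\dot B^{s}_{\infty,\infty}$ for noninteger $s$. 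Counting exponents gives $\|g\|_{\dot C^\alpha}^{\theta(m+\alpha)/\alpha}\|g\|_{\dot C^{m+\alpha}}^{1-\theta}$, and Young's inequality yields the claim.

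For the difference estimates \eqref{mainint21}--\eqref{mainint22} we would write
\[
f\circ g_1-f\circ g_2=\int_0^1 f'(g_s)\,ds\,(g_1-g_2),\qquad g_s=sg_2+(1-s)g_1,
\]
and apply Leibniz: $\nabla^m$ distributes as $\nabla^{m-n}\bigl(\int_0^1 f'(g_s)\,ds\bigr)\cdot\nabla^n(g_1-g_2)$. The first factor is controlled by \eqref{mainint11}/\eqref{mainint12} applied to $f'$, which uses derivatives of $f$ up to order $m+2$ and the bounds $\|g_s\|\le\|(g_1,g_2)\|$. The $\dot C^\alpha$ version splits via the H\"older product rule into exactly the two groups displayed in \eqref{mainint22}. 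The $n=m-n=0$ endpoint terms need only $L^\infty$ bounds of $f'$.

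For the second-difference estimate we iterate this idea. We write
\[
(f\circ g_1-f\circ g_2)-(f\circ g_3-f\circ g_4)
= \int_0^1 f'(g^{12}_s)\,ds\,\bigl[(g_1-g_2)-(g_3-g_4)\bigr]
+\Bigl(\int_0^1 \bigl(f'(g^{12}_s)-f'(g^{34}_s)\bigr)ds\Bigr)(g_3-g_4).
\]
The first term is handled as in \eqref{mainint21}. In the second we apply the mean value formula once more to $f'$, with $f''$ evaluated on the segment between $g^{12}_s$ and $g^{34}_s$. This produces a factor of $g^{12}_s-g^{34}_s$, a combination of $g_1-g_3$ and $g_2-g_4$. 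Distributing $m$ derivatives over the three factors gives the triple sum over $m_1+m_2+m_3=m$, with the derivatives of the $f''$-composition bounded by \eqref{mainint11} in terms of all four $g_k$.

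We expect the only delicate point to be the exponent bookkeeping in \eqref{mainint12}, where factors measured in $L^\infty$ must be rewritten through $\dot C^\alpha$ and $\dot C^{m+\alpha}$ endpoints. This is handled by working in the Besov scale, where all interpolations are valid on $\mathbb R^d$. The remaining steps are routine.
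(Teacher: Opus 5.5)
Your proposal is correct. Fa\`a di Bruno with Landau--Kolmogorov/Besov interpolation and Young's inequality gives \eqref{mainint11}--\eqref{mainint12} with consistent exponents, and the integral mean value formula, iterated once with $f''$ evaluated on convex combinations of all four $g_k$, gives the difference estimates without ever using more than $f^{(m+2)}$. The paper does not prove this lemma (it cites Part I \cite{KHN2025}), so there is nothing to compare against. The only caveat is that the degenerate case $m=0$ of \eqref{mainint11} needs a bound on $|f|$ itself, which $\Lambda$ does not control; this is a defect of the statement, not of your argument.
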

The lemma also applies to maps between finite-dimensional matrix spaces after identifying those spaces with Euclidean spaces; $f^{(k)}$ denotes the $k$th Fr\'echet derivative and all norms are taken componentwise. In particular, it applies to $p\mapsto\A[p]$.
Notice that by \eqref{mainint11}, we have
\begin{equation}\label{mcbdhsesf}
    \begin{aligned}
    &\sup_{0<t\leq T}t^{m+\frac{1}{2}}\|\tilde{F}^m(t)\|_{L^\infty}\\
    &\quad\lesssim \sup_{0<t\leq T}t^{m+\frac{1}{2}}\sum_{\ell=0}^{2m-1}\|\nabla^2f(t)\|_{\dot C^{\ell}}\|\A[\nabla g](t)\|_{\dot C^{2m-\ell}}+\sup_{0<t\leq T}t^{m+\frac{1}{2}}\|\A[\nabla g]-\A[\nabla\phi]\|_{L^\infty}\|\nabla^2f^m(t)\|_{L^\infty}\\
    &\quad \lesssim \sup_{0<t\leq T}t^{m+\frac{1}{2}}\left(\sum_{\ell=0}^{2m-1}\|\nabla^2f(t)\|_{\dot C^{\ell}}\left(\|\nabla g(t)\|_{\dot C^1}^{2m-\ell}+\|\nabla g(t)\|_{\dot C^{2m-\ell}} \right)+\|\nabla(g-\phi)\|_{L^\infty}\|\nabla^2f(t)\|_{\dot C^{2m}}\right) \\
    &\quad\lesssim M_{g,\phi,T}^{2m+3}(\|g-\phi\|_{Y_T^m}+T^{\frac{\kappa}{2}}(1+T)^{m+2}\|\nabla\phi\|_{C^{2m+2}})(\|f-\phi\|_{Y_T^m}+T^{\frac{\kappa}{2}}(1+T)^{m+2}\|\nabla\phi\|_{C^{2m+2}}),
    \end{aligned}
\end{equation}
with
\begin{equation*}
    M_{g,\phi,T}=1+\|g-\phi\|_{Y_T^m}+T^{\frac{\kappa}{2}}(1+T)^m\|\nabla\phi\|_{C^{2m+2}}.
\end{equation*}
Furthermore, by \eqref{mainint12} we have
\begin{align}
    &\sup_{0<t\leq T}t^{m+\frac{1+\kappa}{2}}\|\tilde{F}^m(t)\|_{\dot C^\kappa}\label{eq3.22}\\
    &\quad\lesssim \sup_{0<t\leq T}t^{m+\frac{1+\kappa}{2}}\sum_{\ell=0}^{2m-1}\left(\|\nabla^2f(t)\|_{\dot C^{\ell}}\|\A[\nabla g](t)\|_{\dot C^{2m-\ell+\kappa}}+\|\nabla^2f(t)\|_{\dot C^{\ell+\kappa}}\|\A[\nabla g](t)\|_{\dot C^{2m-\ell}}\right)\nonumber\\
    &\quad\quad+\sup_{0<t\leq T}t^{m+\frac{1+\kappa}{2}}\left(\|\A[\nabla g]-\A[\nabla\phi]\|_{L^\infty}\|\nabla^2f^m(t)\|_{\dot C^\kappa}+\|\A[\nabla g]-\A[\nabla\phi]\|_{\dot C^\kappa}\|\nabla^2f^m(t)\|_{L^\infty}\right)\nonumber\\
    &\quad \lesssim \sup_{0<t\leq T}t^{m+\frac{1+\kappa}{2}}\sum_{\ell=0}^{2m-1}\|\nabla^2f(t)\|_{\dot C^{\ell+\kappa}}\left(\|\nabla g(t)\|_{\dot C^1}^{2m-\ell}+\|\nabla g(t)\|_{\dot C^{2m-\ell}} \right)\nonumber \\
    &\quad\quad+\sup_{0<t\leq T}t^{m+\frac{1+\kappa}{2}}\sum_{\ell=0}^{2m-1}\|\nabla^2f(t)\|_{\dot C^{\ell}}\left(\|\nabla g(t)\|_{\dot C^1}^{2m-\ell+\kappa}+\|\nabla g(t)\|_{\dot C^{2m-\ell+\kappa}}\right) \nonumber \\
    &\quad\quad+\sup_{0<t\leq T}t^{m+\frac{1+\kappa}{2}}\left(\|\nabla(g,\phi)(t)\|_{C^\kappa}\|\nabla^2f(t)\|_{\dot C^{2m}}+\|\nabla(g-\phi)(t)\|_{L^\infty}\|\nabla^2f(t)\|_{\dot C^\kappa}\right) \nonumber\\
    &\quad\lesssim M_{g,\phi,T}^{2m+3}(\|g-\phi\|_{Y_T^m}+T^{\frac{\kappa}{2}}(1+T)^{m+2}\|\nabla\phi\|_{C^{2m+2}})(\|f-\phi\|_{Y_T^m}+T^{\frac{\kappa}{2}}(1+T)^{m+2}\|\nabla\phi\|_{C^{2m+2}}),\nonumber
    \end{align}
So by \eqref{uzha}, \eqref{eq3.15}, \eqref{mcbdhsesf} and \eqref{eq3.22}, we can get
\begin{align}
&\sup_{0<t\leq T}t^{m+\frac{1+\kappa}{2}}\|(\nabla^2,\partial_t)\nabla^{2m}(f-\phi)(t)\|_{\dot C^\kappa}\leq C_2\sup_{0<t\leq T}t^{m}\| \nabla f^m(t)\|_{L^\infty}+ C_2 T^{\frac{\kappa}{2}}(1+T)^{m+2}\|\nabla\phi\|_{C^{2m+2}}\label{mchspryt}\\
        &\quad+C_2M_{g,\phi,T}^{2m+3}(\|g-\phi\|_{X_T}+T^{\frac{1}{2}}(1+T)^{m+3}\|\nabla\phi\|_{C^{2m+2}})(\|f-\phi\|_{Y_T^m}+T^{\frac{\kappa}{2}}(1+T)^{m+2}\|\nabla\phi\|_{C^{2m+2}}).\nonumber
\end{align}
By the interpolation inequality \eqref{eq3.14}, the first term on the right-hand side of \eqref{mchspryt} satisfies
\begin{equation*}
   \begin{aligned}
       \sup_{0<t\leq T}t^{m}\| \nabla f^m(t)\|_{L^\infty}\leq & \eta\sup_{0<t\leq T}t^{m+\frac{1+\kappa}{2}}\|(\nabla^2,\partial_t)\nabla^{2m}(f-\phi)(t)\|_{\dot C^\kappa}\\
       &+C_\eta\sup_{0<t\leq T}\|\nabla f(t)\|_{L^\infty}+T^{m}(1+T^2)\|\nabla\phi\|_{C^{m+2}},
   \end{aligned}
\end{equation*}
for any $\eta\in(0,1)$. Combining the estimates above with lower-order estimate \eqref{bdmcxts}, and take 
\begin{equation}\label{eq3.16}
    \sigma\leq \frac{1}{2^{10dm}C_1C_2},\ \ \ C_0'=100C_3, \ \ \ T\leq \left(\frac{ \sigma}{2^{10dm}C_1C_2(2+\|\nabla\phi\|_{C^{2m+2}})^{10dm}}\right)^{10},
\end{equation}
for $C_0'$ in \eqref{eq3.17}, to ensure
\begin{equation}\label{eq3.19}
    \begin{aligned}
        \sup_{0<t\leq T}t^{m+\frac{1+\kappa}{2}}\|(\nabla^2,\partial_t)\nabla^{2m}(f-\phi)(t)\|_{\dot C^{\kappa}}\leq  \frac{\sigma}{2^{3dm}}.
    \end{aligned}
\end{equation}
By \eqref{bdmcxts}, \eqref{eq3.16} and \eqref{eq3.19} (note that \eqref{eq3.16} implies \eqref{eq3.17}), we have
\begin{equation}\label{eq3.20}
    \|f-\phi\|_{Y_T^m}\leq \sigma.
\end{equation}
Hence
\begin{equation*}
    \mathcal{S}:\mathcal{Y}_{T,\phi}^{m,\sigma}\rightarrow\mathcal{Y}_{T,\phi}^{m,\sigma}.
\end{equation*}

It remains to prove that $\mathcal{S}$ is a contraction.
Let $f_1=\mathcal{S}g_1$, $f_2=\mathcal{S}g_2$, $g_1,g_2\in\mathcal{Y}_{T,\phi}^{m,\sigma}$, and set $\mathbf{f}=f_1-f_2$, $\mathbf{g}=g_1-g_2$. For brevity, write $f=(f_1,f_2)$ and $g=(g_1,g_2)$. The equation for $\mathbf{f}$ is
\begin{equation*}
    \begin{aligned}
        &\partial_t\mathbf f-\A[\nabla \phi]:\nabla^2\mathbf f=(\A[\nabla g_1]-\A[\nabla g_2]):\nabla^2f_2+(\A[\nabla g_1]-\A[\nabla\phi]):\nabla^2\mathbf{f}=:\mathbf{F}_1,\quad \text{in}\ [0,T]\times\mathbb{R}_+^d,\\
        &\mathbf{f}(0,x)=0,\quad \text{in}\ \mathbb{R}_+^d,\\
        &\mathbf f(t,x)=0,\quad\text{on}\ [0,T]\times\partial\mathbb{R}_+^d.
    \end{aligned}
\end{equation*}
Then apply Lemma~\ref{lemmbd} for $\mathbf{f}$ with $F^1=0$, $F^2=\mathbf{F}_1$. By the fact that $f_i|_{\partial\mathbb{R}_+^d}=\Delta f_i|_{\partial\mathbb{R}_+^d}=0$ for $i=1,2$, apply Lemma~\ref{lem5.1}, then we have
\begin{equation*}
    \mathbf{F}_1|_{\partial\mathbb{R}_+^d}=0.
\end{equation*}
By \eqref{uxha}, one has
\begin{equation}\label{mchsfpxt}
    \begin{aligned}
\|\mathbf{f}\|_{X_T}\leq C_4 (\|g-\phi\|_{X_T}+T^{\frac{\eta}{2}}(1+T)\|\nabla\phi\|_{C^2})\|\mathbf{f}\|_{X_T}+C_4\sup_{0<t\leq T}t^{\frac{1+\kappa}{2}}\|\mathbf{F}_1(t)\|_{C^\kappa}.
    \end{aligned}
\end{equation}
For $\mathbf{F}_1$, since $f,g\in\mathcal{Y}_{T,\phi}^{m,\sigma}$, one has
\begin{equation}\label{mchsfpf1}
    \begin{aligned}
        &\sup_{0<t\leq T}t^{\frac{1}{2}}\|\mathbf{F}_1(t)\|_{L^\infty}\leq C_5(1+\sigma+T^{\frac{\kappa}{2}}(1+T)^m\|\nabla\phi\|_{C^2})^6\|\mathbf{g}\|_{X_T}(\sigma+T^{\frac{1}{2}}\|\nabla\phi\|_{C^2}),\\
        &\sup_{0<t\leq T}t^{\frac{1+\kappa}{2}}\|\mathbf{F}_1(t)\|_{\dot C^\kappa}\leq C_5(1+\sigma+T^{\frac{\kappa}{2}}(1+T)\|\nabla\phi\|_{C^2})^6\|\mathbf{g}\|_{X_T}(\sigma+T^{\frac{1}{2}}\|\nabla\phi\|_{C^2}).
        \end{aligned}
\end{equation}
Combining \eqref{mchsfpxt} and \eqref{mchsfpf1}, we can take 
\begin{equation*}
    \sigma\leq\frac{1}{2^{10dm}C_4C_5},\quad T\leq (\frac{C_1'}{2^{10dm}C_4C_5(2+\|\nabla\phi\|_{C^2})^7})^{\frac{2}{\kappa}},
\end{equation*}
to get
\begin{equation}\label{eq3.24}
    \|\mathbf{f}\|_{X_T}\leq C_1'\|\mathbf{g}\|_{X_T},
\end{equation}
for some $C_1'<1$ to be decided later. 

For $\mathbf{f}^m=\Delta^m\mathbf{f}=\Delta^m(f_1-f_2)$, we can write the equation of $\mathbf{f}^m$ as
\begin{equation*}
    \begin{aligned}
	        &\partial_t\mathbf{f}^m-\A[\nabla \phi]:\nabla^2\mathbf{f}^m=\mathbf{F}_1^m,\quad \text{in }(0,T]\times\mathbb{R}_+^d,\\
    &\mathbf{f}^m(t,x)=0,\quad\text{on}\ (0,T]\times\partial\mathbb{R}_+^d,
    \end{aligned}
\end{equation*}
with 
\begin{equation*}
    \mathbf{F}_1^m=\Delta^m((\A[\nabla g_1]-\A[\nabla g_2]):\nabla^2f_2)+\Delta^m(\A[\nabla g_1]:\nabla^2\mathbf{f})-\A[\nabla g_1]:\nabla^2\Delta^m\mathbf{f}+(\A[\nabla g_1]-\A[\nabla\phi]):\nabla^2\mathbf{f}^m.
\end{equation*}
Note that by Lemma~\ref{bdcon}, for any $x\in\partial\mathbb{R}_+^d$,
\begin{equation*}
    \Delta^m\big((\A[\nabla g_1]-\A[\nabla g_2]):\nabla^2f_2\big)-(\A[\nabla g_1]-\A[\nabla g_2]):\nabla^2\Delta^mf_2=0,
\end{equation*}
\begin{equation*}
    \Delta^m(\A[\nabla g_1]:\nabla^2\mathbf{f})-\A[\nabla g_1]:\nabla^2\Delta^m\mathbf{f}=0.
\end{equation*}
It follows from Lemma~\ref{bdcon} and \ref{lembdc} that $\Delta^lf|_{\partial\mathbb{R}^{d}_+}=0$ for all $l\leq m+1$, by Lemma~\ref{lem5.1}, for any $x\in\partial\mathbb{R}_+^d$,
\begin{equation*}
    (\A[\nabla g_1]-\A[\nabla g_2]):\nabla^2\Delta^{m}f_2=(\A[\nabla g_1]-\A[\nabla\phi]):\nabla^2\mathbf{f}^m=0.
\end{equation*}
These two boundary-vanishing conditions imply
\begin{equation*}
    \mathbf{F}_1^m|_{\partial\mathbb{R}_+^d}=0.
\end{equation*}
Applying Lemma~\ref{lemmbd} to $\mathbf{f}^m$ with $F^1=0$ and $F^2=\mathbf{F}^m_1$, we obtain \begin{equation}\label{eq3.25}
    \begin{aligned}
        &\sup_{0<t\leq T}t^{m+\frac{1+\kappa}{2}}\|(\nabla^2,\partial_t)\mathbf{f}^m(t)\|_{\dot C^\kappa}\leq C_6\sup_{0<t\leq T}t^{m-\frac{1}{2}}\|\mathbf{f}^m(t)\|_{L^\infty}+C_6\sup_{0<t\leq T}t^{m+\frac{1+\kappa}{2}}\|\mathbf{F}^m_1(t)\|_{\dot C^\kappa}\\
        &\quad+C_6M_{g,\phi,T}^{2m+8}(\|g-\phi\|_{Y^m_T}+T^\frac{\kappa}{2}(1+T)^6\|\nabla\phi\|_{C^2})\\
        &\quad\quad\quad\quad\times(\sup_{0<t\leq T}t^{m}\|\nabla \mathbf f^m(t)\|_{L^\infty}+\sup_{0<t\leq T}t^{m+\frac{1+\kappa}{2}}\|(\partial_t,\nabla^2)\mathbf{f}^m(t)\|_{\dot C^{\kappa}}).
    \end{aligned}
\end{equation}
For $m\geq 1$, interpolation gives the following bound for the first term on the right-hand side:
\begin{equation}\label{mchsfpyi}
    C_6\sup_{0<t\leq T}t^{m-\frac{1}{2}}\|\mathbf{f}^m(t)\|_{L^\infty}\leq \frac{1}{100C_6}\sup_{0<t\leq T}t^{m+\frac{1+\kappa}{2}}\|\nabla^2\mathbf{f}^m(t)\|_{\dot C^\kappa}+C_7\sup_{0<t\leq T}\|\nabla\mathbf{f}(t)\|_{L^\infty}.
\end{equation}
For $\mathbf{F}_1^m$, by \eqref{mainint21} and \eqref{mainint22}, we have
\begin{equation}\label{mchsfpf2}
    \begin{aligned}
        &\sup_{0<t\leq T}t^{m+\frac{1+\kappa}{2}}\|\mathbf{F}_1^m(t)\|_{C^\kappa}\\
        &\quad\lesssim \sup_{0<t\leq T}t^{m+\frac{1+\kappa}{2}}\left(\sum_{\ell=0}^{2m}(\|\nabla\mathbf{g}(t)\|_{C^{\ell+\kappa}}\|\nabla^2f(t)\|_{\dot C^{2m-\ell}}+\|\nabla\mathbf{g}(t)\|_{\dot C^{\ell}}\|\nabla^2f(t)\|_{ C^{2m-\ell+\kappa}})\right.\\
        &\quad\quad+\sum_{\ell=1}^{2m}(\|\nabla g(t)\|_{C^{\ell+\kappa}}\|\nabla^2\mathbf{f}(t)\|_{\dot C^{2m-\ell}}+\|\nabla g(t)\|_{\dot C^{\ell}}\|\nabla^2\mathbf{f}(t)\|_{ C^{2m-\ell+\kappa}})\\
        &\quad\quad\left.+ (\|\nabla(g-\phi)(t)\|_{C^\kappa}\|\nabla^2\mathbf{f}^m(t)\|_{L^\infty}+\|\nabla(g-\phi)(t)\|_{L^\infty}\|\nabla^2\mathbf{f}^m(t)\|_{C^\kappa}) \right)\\
    &\quad\lesssim C_7 (1+\sigma+\|\nabla\phi\|_{C^{2m+2}})^{2m+3}\\
    &\quad\quad\times\Bigl[
    \|\mathbf{g}\|_{Y^m_T}
    \left(\|f-\phi\|_{Y^m_T}
    +T^{\frac{\kappa}{2}}(1+T)^{m+2}\|\nabla\phi\|_{C^{2m+2}}\right)\\
    &\qquad\qquad+
    \|\mathbf{f}\|_{Y_T^m}
    \left(\|g-\phi\|_{Y^m_T}
    +T^{\frac{\kappa}{2}}(1+T)^{m+2}\|\nabla\phi\|_{C^{2m+2}}\right)
    \Bigr].
    \end{aligned}
\end{equation}
Since $g\in\mathcal{Y}_{T,\phi}^{m,\sigma}$, combining \eqref{eq3.15}, \eqref{eq3.20}, \eqref{eq3.24}, \eqref{eq3.25}, \eqref{mchsfpyi} and \eqref{mchsfpf2}, we deduce that
\begin{equation}\label{eq3.29}
    \begin{aligned}
        &\sup_{0<t\leq T}t^{m+\frac{1+\kappa}{2}}\|(\nabla^2,\partial_t)\nabla^{2m}\mathbf{f}(t)\|_{\dot C^\kappa}\leq 2C_1'C_7\|\mathbf{g}\|_{Y^m_T}\\
        &\quad\quad+C_7M_{g,\phi,T}^{2m+8}
        (1+\sigma+\|\nabla\phi\|_{C^{2m+2}})^{2m+3}\\
        &\qquad\qquad\times
        \left(\sigma+T^{\frac{\kappa}{2}}(1+T)^{2m+3}
        \|\nabla\phi\|_{C^{2m+2}}\right)
        (\|\mathbf{g}\|_{Y_T^m}+\|\mathbf{f}\|_{Y_T^m}).
    \end{aligned}
\end{equation}
So if we take
\begin{align*}
    \sigma&<\frac{1}{2^{10dm}C_6C_7},\\
    T&<\min\left\{
    \left(\frac{ \sigma}
    {2^{10dm}C_6C_7(2+\|\nabla\phi\|_{C^{2m+2}})^{2m+9}}\right)^{\frac{2}{\kappa}},
    \left(\frac{ \sigma}
    {2^{10dm}C_1C_7(2+\|\nabla\phi\|_{C^{2m+2}})^{2m+9}}\right)^2
    \right\}^{10dm},
\end{align*}
and $C_1'$ satisfies $C_1'C_7\leq \frac{1}{100}$, then \eqref{eq3.24} and \eqref{eq3.29} give that
\begin{equation*}
    \|\mathbf{f}\|_{Y_T^m}\leq\frac{1}{2}\|\mathbf{g}\|_{Y_T^m},
\end{equation*}
The contraction mapping theorem therefore gives existence and uniqueness, proving part~(i) of Theorem~\ref{eqmcls}.

For part~(ii), take $\phi\equiv0$ and use the same estimates. Write $\mathcal{X}^\sigma=\mathcal{X}_{\infty,0}^\sigma$, $\|\cdot\|_{X}=\|\cdot\|_{X_\infty}$, $\mathcal{Y}^{m,\sigma}=\mathcal{Y}_{\infty,0}^{m,\sigma}$, and $\|\cdot\|_{Y^m}=\|\cdot\|_{Y^m_\infty}$ for brevity. Taking $\phi\equiv0$ in \eqref{mchsprxt} gives
\begin{equation}\label{mchsprxi}
    \|f\|_{X}\leq C_1\|f_0\|_{\dot{W}^{1,\infty}}+C_1(1+\|g\|_{X})\|g\|_{X}\|f\|_{X}.
\end{equation}
By taking $\sigma$ small enough such that $C_1(1+\sigma)\sigma\leq\frac{1}{100C_1}$, $\eps_0$ small enough such that $C_1\eps_0\leq\frac{\sigma}{2^{10dm}}$, one can prove 
\begin{equation*}
    \|f\|_{X}\leq \frac{1}{2^{5dm}}\sigma.
\end{equation*}
Furthermore, by taking $\phi\equiv 0$ in \eqref{mchspryt}, one can prove
\begin{equation}\label{mchspryi}
	    \sup_{t\in(0,T]}t^{m+\frac{1+\kappa}{2}}\|(\nabla^2,\partial_t)\nabla^{2m}f(t)\|_{\dot{C}^\kappa}\leq C_4(1+\|g\|_{Y^m})^{2m+3}\|g\|_{Y^m}\sup_{t\in(0,T]}\|\nabla f(t)\|_{L^\infty}.
\end{equation}
So by taking $\sigma$ small enough such that $C_1C_4\sigma(1+\sigma)^{2m+3}\leq\frac{1}{100}$, then \eqref{mchsprxi} and \eqref{mchspryi} give the following a priori estimate, 
\begin{equation*}
    \mathcal{S}:\mathcal{Y}^{m,\sigma}\rightarrow\mathcal{Y}^{m,\sigma}.
\end{equation*}
For the contraction argument, taking $\phi=0$ in \eqref{mchsfpxt} and \eqref{mchsfpf1} gives
\begin{equation*}
    \|\mathbf{f}\|_{X}\leq C_1(\|g_1\|_X+\|g_2\|_X)\|\mathbf{f}\|_X+C_1C_5(1+\sigma)^6\|\mathbf{g}\|_X\|f_2\|_X.
\end{equation*}
By taking $\sigma\leq \frac{1}{2^{10dm}C_1C_5}$, one has
\begin{equation}\label{mchsfpfx}
    \|\mathbf{f}\|_X\leq \frac{1}{2^{5dm}}\|\mathbf{g}\|_X.
\end{equation}
Finally by taking $\phi=0$ in \eqref{eq3.25} and \eqref{mchsfpf2}, one has 
\begin{equation*}
    \begin{aligned}
    &\sup_{t\in(0,T]}t^{m+\frac{1+\kappa}{2}}\|(\nabla^2,\partial_t)\nabla^{2m}\mathbf{f}(t)\|_{\dot {C}^\kappa}\leq C_5(1+\|g\|_{Y^m})^{2m+3}\\
    &\qquad\qquad\times\left(\|g\|_{Y^m}\sup_{t\in(0,T]}\|\nabla\mathbf{f}(t)\|_{L^\infty}+\|\mathbf{g}\|_{Y^m}(\|f_1\|_{Y^m}+\|f_2\|_{Y^m})+\|\mathbf{f}\|_{Y^m}(\|g_1\|_{Y^m}+\|g_2\|_{Y^m})\right).
    \end{aligned}
\end{equation*}
Apply the above estimate, together with \eqref{mchsfpfx}, and take $\sigma$ small enough, then we can get
\begin{equation*}
    \|\mathbf{f}\|_{Y^m}\leq\frac{1}{2}\|\mathbf{g}\|_{Y^m},
\end{equation*}
which completes the proof by the contraction mapping theorem.
\end{proof}

 \begin{remark}\label{bdydiff}
     Bounded domains differ from the half-space because their boundaries are curved. On the flat boundary, certain higher-order derivatives remain zero (see Lemma~\ref{lembdc}); this property does not persist unchanged after flattening a curved boundary. Only tangential derivatives of the Dirichlet trace vanish directly, so the higher-order boundary terms require a separate analysis.
 \end{remark}
 \begin{remark}
 The flat geometry makes the half-space global-regularity argument simpler. On a general bounded domain, localization produces lower-order remainders; these are handled together with exponential decay obtained from an energy estimate.
 \end{remark}
 \section{Well-posedness of the mean curvature flow on a bounded domain}\label{sec4}
This section transfers the half-space analysis to smooth bounded domains through localization, boundary flattening, and long-time decay estimates. Throughout this section, $\kappa\in(\frac34,1)$ is fixed. We prove local and global existence and uniqueness for the mean curvature system \eqref{meancur1} on a bounded domain $\Omega$. For smooth data, define $\mathcal Sg=f$, where $f$ solves the linear problem
 \begin{equation}\label{eqmcbd}
 		\begin{aligned}
 			&\partial_t f=\A[\nabla g]:\nabla^2 f,\ \ \ \text{in}\ [0,T]\times \Omega,\\
 			&f|_{t=0}=f_0,\ \ \ \text{in}\ \Omega,\\
 			&f=0,\ \ \text{on}\ [0,T]\times\partial \Omega.
 		\end{aligned}
 	\end{equation} 
	    As in Remark~\ref{rmk3.3}, we consider each component $f^\ell$ of $f=(f^\ell)_{\ell=1}^N$ and suppress the index. We decompose the domain into interior and boundary patches. In the interior, the equation is treated as a perturbation of the whole-space problem in Theorem~\ref{propb=0}. Near the boundary, we flatten the boundary and reduce the problem locally to a half-space system with additional lower-order forcing terms. Global-in-time control is obtained by combining local Schauder estimates with energy-based exponential decay. Section~\ref{sec4.1} treats localization; Sections~\ref{sec4.2} and~\ref{sec4.3} prove local and global well-posedness, respectively.
 \subsection{Renormalization of the equation}\label{sec4.1}
This subsection localizes the equation and renormalizes each boundary patch into a constant-coefficient half-space problem with controlled forcing terms. We first localize and renormalize \eqref{eqmcbd}. To regularize the initial data $f_0$, let $e^{t\Delta_\Omega}$ denote the Dirichlet heat semigroup on $\Omega$: $v=e^{t\Delta_\Omega}u$ solves
 \begin{align*}
 	&\partial_tv(t,x)-\Delta v(t,x)=0,\quad\text{in}\ [0,\infty)\times\Omega,\\
 	&v(0,x)=u(x),\quad\text{in}\ \Omega,\\
 	&v(t,x)=0,\quad\text{on}\ [0,\infty)\times\partial\Omega.
 \end{align*}
 We denote  \begin{align*}
 	\phi=e^{\eps_1\Delta_\Omega} f_0,
 \end{align*}	
where $\eps_1$ is small and will be fixed later. 

We now localize the system on a bounded domain $\Omega\subset\mathbb R^d$. Its boundary $\partial\Omega$ is a compact $C^{2m+3}$ hypersurface. We choose the following finite cover:

For any $r\in(0,1)$ and $x\in\partial\Omega$, let $B(x,r)$ be the $r$-neighborhood of $x$ in $\mathbb{R}^d$. Then $\{B(x,r)\}_{x\in\partial\Omega}$ covers $\partial\Omega$. We can take a finite Vitali-type subcover $\{B(x_i,r)\}_{i=1}^K$ such that:
\begin{itemize}
	    \item i) $\{B(x_i,r)\}_{i=1}^K$ covers $\partial\Omega$.
\item ii) There exists a constant $C_d$, depending only on the dimension $d$, such that for any $i\leq K$, at most $C_d$ indices $j\leq K$ satisfy $B(x_i,3r)\cap B(x_j,3r)\neq\varnothing$.
\end{itemize}
The radius $r$ will be fixed later in \eqref{eq4.3}. For fixed $r$, define $\Omega_i=B(x_i,r)$ and $\tilde{\Omega}_i=B(x_i,2r)$. There exists an interior open set $\Omega_0\Subset \Omega$ such that
\[
\Omega\subset\Omega_0\cup \bigcup_{i=1}^K{\Omega}_i.
\]
Then we define two families of smooth and compactly supported cut-off functions $\{\chi_i\}_{i=0}^K$ and $\{\tilde{\chi}_i\}_{i=0}^K$ which map $\Omega$ to $[0,1]$, such that
\[
\Omega_i\subset \chi_i^{-1}(1),\quad \operatorname{supp}(\chi_i)\subset\tilde{\chi}^{-1}_i(1),\quad\operatorname{supp}(\tilde{\chi}_i)\subset \tilde{\Omega}_i ,\quad\operatorname{supp}(\tilde{\chi}_0)\Subset\Omega.
\]
By this definition, there exist $c,C>0$, depending only on $d$, such that
\begin{equation*}
    c\leq \sum_{i}\chi_i\leq \sum_{i}\tilde{\chi}_i\leq C,\quad\forall x\in\Omega.
\end{equation*}
We choose $r$ and the cover sufficiently small so that, for every $i$,
\begin{equation}\label{eq4.3}
	    |\A[\nabla (\tilde{\chi}_i\phi)](x)-\A[\nabla(\tilde{\chi}_i\phi)](y)|\leq \eps,\quad \forall x,y\in\operatorname{supp}(\chi_i),
\end{equation} 
with $\eps$ to be fixed at the end of the a priori estimate. The patch radius is chosen sufficiently small, depending on $\eps$, $\|\phi\|_{C^2}$, and the boundary-chart norms, so that \eqref{eq4.3} holds.
Furthermore, by condition~(ii) of the cover $\{\Omega_i\}_{i=0}^K$, for any $x\in\Omega$, there exist at most $C_d$ sets $\tilde \Omega_i$ containing $x$, which implies
\begin{equation}\label{eq4.2}
    \|f\|_{L^\infty (\Omega)}\leq\sup_{0\leq i\leq K}\|\chi_if\|_{L^\infty},
\end{equation}
which allows the partition to be refined arbitrarily, even though $K$ may then be large.

Finally, for any $i\in\{1,\ldots,K\}$, we may assume without loss of generality, after a translation and rotation, that
\begin{equation*}
	    \partial\Omega\cap\tilde{\Omega}_i=\{(x',x_d):x_d=\varphi_i(x')\}\cap\tilde\Omega_i,
\end{equation*}
in a neighborhood of $0$, for some $C^{2m+3}$ function $\varphi_i$ with $\varphi_i(0)=0$. Let $U_i^+:=\Psi_i(\tilde\Omega_i\cap\Omega)\subset\mathbb R_+^d$. On these local coordinate neighborhoods, define the mutually inverse flattening maps
    \begin{equation*}
        \begin{aligned}
	            &\Phi_i(x)=(x',x_d+\varphi_i(x')):U_i^+\rightarrow \tilde{\Omega}_i\cap\Omega,\\
	            &\Psi_i(x)=(x',x_d-\varphi_i(x')):\tilde{\Omega}_i\cap\Omega\rightarrow U_i^+.
        \end{aligned}
    \end{equation*}
Then $\Phi_i$ maps $U_i^+\cap\partial\mathbb{R}^d_+$ to the corresponding portion of $\partial\Omega$. After extending each localized function by zero outside its coordinate patch, decompose the $\mathbb R^N$-valued function $f$ into
\begin{equation*}
	    f_{in}:=(\chi_0f):\mathbb{R}^d\rightarrow\mathbb{R}^N,\quad\{(\chi_if)\circ\Phi_i\}_{i=1}^K:\mathbb{R}_+^d\rightarrow\mathbb{R}^N.
\end{equation*}
We will renormalize the equation of $f_{in}$ and the boundary case separately. For $f_{in}$, it satisfies
\begin{equation}\label{eqinter}
 		\begin{aligned}
 			&\partial_tf_{in}-\A[\nabla \phi]:\nabla^2f_{in}=F_{in},\quad \text{in}\ [0,T]\times\mathbb{R}^d,\\
 			&f_{in}|_{t=0}=\chi_0f_0,\quad \text{in} \ \mathbb{R}^d,
 		\end{aligned}
 	\end{equation}
 	with
 	\begin{equation*}
 		F_{in} =-2\A[\nabla g]:(\nabla f\otimes\nabla\chi_0)-\A[\nabla g]:(f\nabla^2\chi_0)+(\A[\nabla g]-\A[\nabla\phi]):\nabla^2 f_{in}.
 	\end{equation*}
	For the boundary term, first multiply the equation for $f$ by $\chi_i$. Since the operator $\A[\nabla g]:\nabla^2$ is invariant under translations and rotations, we have
 	\begin{equation}\label{eqbd}
 		\begin{aligned}
 			&\partial_t(\chi_if)-\A[\nabla g_i]:\nabla^2(\chi_if)=G_i(f,g),\\
 			&\chi_if|_{t=0}=\chi_if_0,
 		\end{aligned}
 	\end{equation}
	 	with $g_i:=\tilde{\chi}_ig$ and lower-order forcing term
 	\begin{equation*}
 		\begin{aligned}
 		    G_i(f,g)=-2\A[\nabla g_i]:(\nabla\chi_i\otimes\nabla f)-\A[\nabla g_i]:(f\nabla^2\chi_i).
 		\end{aligned}
 	\end{equation*}
 After applying the flattening map, a matrix field $\alpha:U_i^+\to\mathbb R^{N\times d}$ gives the principal coefficient $\tilde{\A}[\alpha]$; for any scalar component $h$ of $f$,
 	\begin{equation}\label{mcbdnewc}
 		\begin{aligned}
			&\left(\A[\alpha]:\nabla^2h\right)\circ\Phi_i=\tilde{\A}[\alpha]:\nabla^2(h\circ\Phi_i)+R_i[h,\alpha],\\
 			&\tilde{\A}[\alpha]=(\nabla\Phi_i)^{-1}\A[\alpha](\nabla\Phi_i)^{-\top},\\
			&R_i[h,\alpha]=-\tilde\A[\alpha]:\left((\nabla h\circ\Phi_i)\cdot\nabla^2\Phi_i\right).
 		\end{aligned}
 	\end{equation}
	    Here $M^{-\top}=(M^{\top})^{-1}$. Set $\tilde g_i=(\tilde\chi_i g)\circ\Phi_i$ and
	    \[
	      \mathcal R_i[f,g]:=R_i\!\left[\chi_i f,\nabla\tilde g_i(\nabla\Phi_i)^{-1}\right].
	    \]
	    After flattening, $(\chi_if)\circ\Phi_i$ satisfies
    \begin{equation*}
        \begin{aligned}
	            &\partial_t\left((\chi_if)\circ\Phi_i\right)-\tilde{\A}[\nabla\tilde g_i(\nabla\Phi_i)^{-1}]:\nabla^2\left((\chi_if)\circ\Phi_i\right)=G_i(f,g)\circ\Phi_i+\mathcal R_i[f,g],\\
            &\left((\chi_if)\circ\Phi_i\right)|_{\partial\mathbb{R}_+^d}=0,
        \end{aligned}
    \end{equation*}
	 	where $\mathcal R_i[f,g]$ is the lower-order geometric remainder from \eqref{mcbdnewc}.
   This transforms \eqref{eqbd} to the half-space. We next freeze the coefficient at $0$. The matrix $\tilde\A$ is uniformly bounded and positive definite. Set $\tilde{\phi}_i:=(\tilde{\chi}_i\phi)\circ\Phi_i$ and $\tilde{\A}_0:=\tilde{\A}[\nabla\tilde{\phi}_i(\nabla\Phi_i)^{-1}](0)$. Choose an invertible linear map $\Theta_i:\mathbb R_+^d\to\mathbb R_+^d$ satisfying $\tilde\A_0=\Theta_i\Theta_i^\top$; then
    \begin{equation*}
        (\tilde{\A}_0:\nabla^2f)\circ\Theta_i=\Delta(f\circ\Theta_i).
    \end{equation*}
	Such a map is obtained by an RQ factorization of a square root of $\tilde{\A}_0$: multiplying an arbitrary square root $\tilde\Theta_i$ on the right by a suitable orthogonal matrix gives an upper-triangular factor $\Theta_i=\tilde\Theta_i\mathcal O$ with positive diagonal. Thus
	    \[
	    \tilde{\A}_0=\Theta_i\Theta_i^{\top},\qquad(\Theta_i)_{dj}=0\quad(j<d),\qquad(\Theta_i)_{dd}>0.
	    \]
    so $\Theta_i$ maps $\mathbb R^d_+$ onto itself. Uniform ellipticity gives uniform bounds for $\Theta_i$ and $\Theta_i^{-1}$.
	    We choose the patches and cutoffs as in \eqref{eq4.3}. For $\phi=e^{\eps_1\Delta_\Omega}f_0$, this gives
    \begin{equation}\label{eq4.6}
	        |\tilde{\A}[\nabla ((\tilde{\chi}_i\phi)\circ\Phi_i)(\nabla\Phi_i)^{-1}](x)-\tilde{\A}_0|\lesssim C_{\Omega}(1+\|\phi\|_{C^2}) \eps,\quad \forall x\in\operatorname{supp}({\chi}_i\circ\Phi_i).
    \end{equation}
Here we shrink the support of $\chi_i$ with respect to $\eps$ if necessary. Freezing the coefficient in this way replaces $\tilde{\A}$ by the constant matrix $\tilde{\A}_0$ in the principal part. The function $f_i(t,x):=\left((\chi_if)\circ\Phi_i\right)\circ\Theta_i$ then satisfies
 	\begin{equation}\label{eqbdref}
 		\begin{aligned}
 			&\partial_tf_i(t,x)-\Delta f_i(t,x)=F_i[f,g](t,x),\quad \text{in}\ [0,T]\times\mathbb{R}^d_+,\\
 			&f_i|_{t=0}=f_{i,0}=(\chi_i f_0)\circ\Phi_i\circ\Theta_i,\quad \text{in}\ \mathbb{R}^d_+,\\
 			&f_i(t,x)=0,\ \text{on}\ [0,T]\times\partial \mathbb{R}^d_+,
 		\end{aligned}
 	\end{equation}
 	where the forcing term has the form
 	\begin{equation}\label{deftilF}
 		\begin{aligned}
			&F_i[f,g]=\left(G_i[f,g]\circ\Phi_i+\mathcal R_i[f,g]\right)\circ\Theta_i\\
			&\qquad+\sum_{1\leq l,j\leq d}\left(\Theta_i^{-1}\left((\tilde{\A}[\nabla\tilde g_i(\nabla\Phi_i)^{-1}]-\tilde{\A}_0)\circ\Theta_i\right)\Theta_i^{-\top}\right)_{lj}\partial_{lj}f_i.
 		\end{aligned}
 	\end{equation}
    In the following proof, we will apply the following strategy:
    \begin{itemize}
	        \item Interior: no boundary is present, so Theorem~\ref{propb=0} applies directly.
	        \item Boundary: coefficient freezing reduces the localized system to \eqref{eqbdref}; we then apply the estimate for the toy model \eqref{mcbd2}. Lemma~\ref{mcbdlembdy} reorganizes the forcing so that no uncontrolled highest-order normal derivative occurs.
	        \item Global regularity: we combine the local result of Section~\ref{sec4.2} with the energy decay in \eqref{mcbdexpd} to prevent norm inflation.
    \end{itemize}
\subsection{Local well-posedness}\label{sec4.2}
This subsection localizes the equation and renormalizes each boundary patch into a constant-coefficient half-space problem with controlled forcing terms, and establishes the local-in-time a priori estimate in Theorem~\ref{mcbdglo} by combining interior Schauder estimates with boundary half-space estimates. We use $\|\cdot\|_{Z_{T,1}^m}$ on interior patches and the anisotropic norm $\|\cdot\|_{Z_{T,2}^m}$ on boundary patches.
 For $f:[0,T]\times\mathbb{R}^d\to  \mathbb{R}^N$,  define 
 \begin{align*}
 	\|f\|_{Z_{T,1}^m}=\sup_{t\in(0,T]}\left(\| f(t)\|_{C^1}+\sum_{1\leq 2l+k\leq 2m+2}t^{\frac{2l+k-1+\kappa}{2}}\|\partial_t^l\nabla^{k}f(t)\|_{C^{\kappa}}\right). 
 \end{align*}
 For $f:[0,T]\times\mathbb{R}^d_{+}\to  \mathbb{R}^N$,  define 
 \begin{equation}\label{defnormZ}
 	\begin{aligned}
	 	\|f\|_{Z^m_{T,2}}=\sup_{0<t\leq T}\left(\|f(t)\|_{C^1}+\sum_{\substack{1\leq n+k+2l\leq 2m+2\\
        k\leq 2m+1}}t^{\frac{n+k+2l-1+\kappa}{2}}\|\nabla^{n}_{x'}\partial_{d}^{k}\partial_t^lf(t)\|_{C^{\kappa}}\right).
 	\end{aligned}
 \end{equation} 
 The norm $\|f\|_{Z_{T,2}^m}$ is anisotropic: it allows less regularity in the normal direction than in the tangential directions.
 Let $f:[0,T]\times\Omega\to \mathbb{R}^N$. For some fixed $m$, we define 
 \begin{equation*}
 	\begin{aligned}
 		&\|f\|_{Z_T^m}:=\|f_{in}\|_{Z_{T,1}^m}+\sup_{1\leq i\leq K}\|f_i\|_{Z_{T,2}^m},
 	\end{aligned}
 \end{equation*}
where $f_{in}$ and $\{f_i\}_{i=1}^K$ are defined as in \eqref{eqinter} and \eqref{eqbdref}. We use a supremum over boundary patches because the overlap is uniformly finite, as in \eqref{eq4.2}. The uniform $C^{2m+3}$ bounds for the charts, the finite-overlap property, and the standard change-of-variables estimates for H\"older norms give
 \begin{equation}\label{holembed}
	\sup_{0<t\leq T}\left(\|f(t)\|_{C^{1}}+t^{\frac{2m+\kappa}{2}}\|f(t)\|_{C^{2m+1+\kappa}}\right)\leq C_{\Omega,m,\kappa}\|f\|_{Z_T^m}.
\end{equation}
 Let $\mathfrak Z_T^m$ be the completion, in the norm $Z_T^m$, of smooth functions on $(0,T]\times\overline\Omega$ with zero boundary trace. This makes the fixed-point space complete. For $\sigma>0$, define
 \begin{equation}\label{spacedef}
	 	\mathcal{Z}_{T,\phi}^{m,\sigma}=\{f\in\phi+\mathfrak Z_T^m:f|_{(0,T]\times\partial\Omega}=0,\ \|f-\phi\|_{Z_T^m}\leq \sigma\}.
 \end{equation}
 By \eqref{holembed}, every element of this space belongs to $L^\infty((0,T);C^1(\overline\Omega))\cap C((0,T];C^{2m+1+\kappa}(\overline\Omega))$ after choosing its canonical positive-time representative.
The anisotropic structure of $\|\cdot\|_{Z_{T,2}^m}$ reflects the loss of regularity in the normal direction induced by the Dirichlet boundary condition and is consistent with the half-space estimates in Section~\ref{sec2}. Furthermore, we define the following time-weighted norm,
    \begin{equation*}
 	\begin{aligned}
	 	\|f\|_{Z^{m,\gamma}_{T,2}}=\sup_{0<t\leq T}\left(t^{\gamma}\|f(t)\|_{C^1}+\sum_{n+2k+2l\leq 2m+1}t^{\gamma+\frac{n+2k+2l+\kappa}{2}}\|\nabla^{n}_{x'}\partial_{d}^{2k}\nabla\partial_t^lf(t)\|_{C^{\kappa}}\right).
 	\end{aligned}
 \end{equation*}
     We define the following lower-order auxiliary norm, which will be useful for the proof of global existence.
\begin{equation*}
    \begin{aligned}
      &\|f\|_{Z_{T,1}^{m,\mathrm{low}}}=\sup_{t\in(0,T]}\left(t^{\frac{\kappa}{2}}\| f(t)\|_{C^{1+\kappa}}+\sum_{1\leq 2l+k\leq 2m+2}t^{\frac{2l+k-1}{2}}\|\partial_t^l\nabla^{k}f(t)\|_{L^\infty}\right),\\
	       &\|f\|_{Z^{m,\mathrm{low}}_{T,2}}=\sup_{0<t\leq T}\left(t^{\frac{\kappa}{2}}\|f(t)\|_{C^{1+\kappa}}+\sum_{\substack{1\leq n+k+2l\leq 2m+2\\
        k\leq 2m+1}}t^{\frac{n+k+2l-1}{2}}\|\nabla^{n}_{x'}\partial_{d}^{k}\partial_t^lf(t)\|_{L^\infty}\right).
    \end{aligned}
\end{equation*}
Similarly, denote $\|f\|_{Z_T^{m,\mathrm{low}}}:=\|f_{in}\|_{Z_{T,1}^{m,\mathrm{low}}}+\sup_{1\leq i\leq K}\|f_i\|_{Z_{T,2}^{m,\mathrm{low}}}$. By interpolation, for any $\theta\in(0,1)$,
\begin{equation*}
    \begin{aligned}
        &\|f\|_{Z_{T,1}^{m,\mathrm{low}}}\leq \theta \|f\|_{Z_{T,1}^m}+C(\theta)\sup_{0<t\leq T}\|f(t)\|_{C^1},\\
        &\|f\|_{Z_{T,2}^{m,\mathrm{low}}}\leq \theta \|f\|_{Z_{T,2}^m}+C(\theta)\sup_{0<t\leq T}\|f(t)\|_{C^1}.
    \end{aligned}
\end{equation*}
 Analogous to Lemma~\ref{lembdc} in the case of half-space, we have the following lemma considering the behavior of solution at the boundary. As we mentioned in Remark~\ref{bdydiff}, unlike the half-space case, $\partial_d^2 f$ is not zero at boundary when considering general bounded domains. Thanks to Lemma~\ref{mcbdlembdy}, we can prove that higher-order normal derivatives $\partial_d^{2k}f$ can be decomposed into a sum of terms with lower-order normal derivatives on the boundary.

 The following result is a corollary of Lemma~\ref{mcbdlemby} and gives the a priori estimate for higher-order derivatives.
    \begin{lemma}\label{maincor}
        Assume $u$ is the solution to \eqref{mcbd2}, $\gamma\geq 0$, then we have
 	\begin{equation*}
 		\begin{aligned}
 			&\|u\|_{Z_{T,2}^{m,\gamma}}\lesssim   \sup_{0<t\leq T}t^{\gamma}\|u(t)\|_{C^1}+\mathbf{C}_{m}^\gamma(F^1,F^2)(T),
 		\end{aligned}
 	\end{equation*}
 	where we denote
    \begin{equation}\label{defCgam}
 	\begin{aligned}
	    \mathbf{C}_{n}^\gamma(F^1,F^2)(T)
	    &=\sup_{0<t\leq T}\sum_{2i+j\leq 2n}\Bigl(
	    t^{\gamma+i+\frac{1+j}{2}}\|\partial_t^i\nabla^{j}F^{1}(t)\|_{L^\infty}\\
	    &\quad+t^{\gamma+i+\frac{1+j+\kappa}{2}}
	    \|\partial_t^i\nabla^{j}(F^{1},\nabla_{x'}F^2)(t)\|_{C^\kappa}\\
    &\quad+t^{\gamma+i+\frac{j}{2}}\|\partial_t^i\nabla^{j}F^2(t)\|_{L^\infty}
    +t^{\gamma+i+\frac{j+\kappa}{2}}\|\partial_t^i\nabla^{j}F^2(t)\|_{C^\kappa}\Bigr).
 	\end{aligned}
    \end{equation}
    \end{lemma}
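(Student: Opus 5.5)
The argument is an induction on the differentiation order. It differentiates the equation \eqref{mcbd2} only in directions that preserve the homogeneous Dirichlet condition, and applies \eqref{mcbddpri} of Lemma~\ref{mcbdlemby} to each derivative. Tangential derivatives $\nabla_{x'}^n$ and time derivatives $\partial_t^l$ commute with $\partial_t-\Delta$ and with the boundary condition $u|_{x_d=0}=0$. Therefore $w=\nabla_{x'}^n\partial_t^l u$ again solves \eqref{mcbd2}, with forcing $\nabla_{x'}^n\partial_t^lF^1+\partial_d(\nabla_{x'}^n\partial_t^lF^2)$. Applying \eqref{mcbddpri} with weight $\gamma+\frac{n+2l}{2}$ gives control of $t^{\gamma+\frac{n+2l+\kappa}{2}}\|\nabla w\|_{C^\kappa}$ and $t^{\gamma+\frac{n+2l+1+\kappa}{2}}\|\nabla_{x'}\nabla w\|_{C^\kappa}$. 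The cost is $\sup_t t^{\gamma+\frac{n+2l}{2}}\|w\|_{C^1}$ plus $\mathbf B_{\gamma+(n+2l)/2}$ of the differentiated forcings. The forcing contributions are exactly the terms in $\mathbf C^\gamma_m$ with $j=n$, $i=l$. Since $\partial_t^l$ cannot be applied at $t=0$, each application uses \eqref{mcbddpri} restarted at $t/2$, exactly as in the proof of Lemma~\ref{mcbdlemby}. The weights absorb the initial layer.

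Normal derivatives are obtained from the equation instead of by differentiating in $x_d$. Indeed $\partial_d^2 u=\partial_tu-\Delta_{x'}u-F^1-\partial_dF^2$, so
\[
\nabla_{x'}^n\partial_d^{2k}\nabla\partial_t^lu=\nabla_{x'}^n\partial_d^{2k-2}\nabla\partial_t^{l}(\partial_t-\Delta_{x'})u-\nabla_{x'}^n\partial_d^{2k-2}\nabla\partial_t^l(F^1+\partial_dF^2).
\]
The first term has one fewer pair of normal derivatives, and the parabolic order $n+2k+2l$ is unchanged. Iterating $k$ times reduces every term in $\|u\|_{Z^{m,\gamma}_{T,2}}$ to tangential and time derivatives of $\nabla u$, which the previous paragraph controls. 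The remaining terms are derivatives of $F^1$ and $\partial_dF^2$ of total order at most $2m+1$, measured in $C^\kappa$ with weight $t^{\gamma+\frac{n+2k+2l+\kappa}{2}}$. These are dominated by the $F^1$ and $F^2$ entries of $\mathbf C^\gamma_m$ after matching $j=n+2k-1$ or $j=n+2k$ with the corresponding power of $t$. The $C^\kappa$ and $L^\infty$ terms carry exactly the weights recorded in \eqref{defCgam}; the $L^\infty$ parts of the inhomogeneous $C^\kappa$ norms are handled by the $L^\infty$ entries.

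The lower-order inputs $t^{\gamma+\frac{n+2l}{2}}\|\nabla_{x'}^n\partial_t^lu\|_{C^1}$ must also be bounded. This is done by interpolation between $t^\gamma\|u\|_{C^1}$ and the top-order quantities, absorbing a small multiple into the left-hand side. A finite induction on $n+2l$ then closes the bound, using at each step the bound already obtained at one lower order: $\|\nabla\nabla^{n-1}_{x'}\partial_t^lu\|_{L^\infty}$ is dominated by the previous step's $C^\kappa$ seminorm of $\nabla_{x'}\nabla$. Time derivatives are converted likewise via the equation, $\partial_t u=\Delta u+F^1+\partial_dF^2$. I expect the main obstacle to be this bookkeeping. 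One must check that every term produced by the recursion, in particular those where $\partial_d$ falls on $F^2$ or a time derivative, appears in $\mathbf C_m^\gamma$ with a weight no worse than the one needed. One must also check that the constraint $n+2k+2l\le 2m+1$ never forces use of $\partial_d$ on $\nabla_{x'}\nabla u$ beyond what the equation supplies.
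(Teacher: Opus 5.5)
Your argument is correct, but it takes a different route from the paper's proof.

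\textbf{The paper's route.} The paper differentiates only in space. Tangential derivatives are estimated directly from the Duhamel formula \eqref{mcbddlemfml}, splitting the time integral at $t/2$. For $\partial_d^{2k}u_n$ with $u_n=\nabla_{x'}^nu$, which no longer vanishes on $\{x_d=0\}$, it subtracts the boundary trace given by Lemma~\ref{mcbdlembdy}. Then $u_{n,2k}=\partial_d^{2k}u_n-(F_n^{1,k-1}+\partial_dF_n^{2,k-1}+R_n^{2k})$ solves a Dirichlet problem to which \eqref{mcbddpri} applies. The remainder $R_n^{2k}$ is handled by induction on the number of normal derivatives, and time derivatives are converted into spatial ones through the equation.

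\textbf{Your route.} You instead take $\nabla_{x'}^n\partial_t^lu$ as the unknowns; their Dirichlet traces vanish automatically. You remove normal pairs in the interior via $\partial_d^2=\partial_t-\Delta_{x'}-F^1-\partial_dF^2$. This dispenses with Lemma~\ref{mcbdlembdy} and the trace correction altogether. The paper itself uses the same device later, since $\tilde f_i'=\partial_t\tilde f_i$ in Section~\ref{sec4.2}. The forcing terms you generate are exactly the time-derivative entries already present in $\mathbf C_m^\gamma$. At the top level $n+2k+2l=2m+1$, the $\partial_dF^2$ terms must be matched with the $\nabla_{x'}F^2$ entry; this works because parity forces $n\geq1$.

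\textbf{The price: lower-order inputs.} For $n\geq1$, $t^{\gamma+\frac{n+2l}{2}}\|\nabla\nabla_{x'}^n\partial_t^lu\|_{L^\infty}$ follows from the previous level by tangential interpolation between $\|\nabla\nabla_{x'}^{n-1}\partial_t^lu\|_{L^\infty}$ and $\|\nabla_{x'}\nabla\nabla_{x'}^{n-1}\partial_t^lu\|_{\dot C^\kappa}$, and the weights match exactly. For $n=0$, however, a strict induction on $n+2l$ does not close. Rewriting $\nabla\partial_t^lu$ through the equation produces $\partial_d^3\partial_t^{l-1}u$, which your scheme recovers from $\partial_d\partial_t^lu$ again. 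So this term must be absorbed at the same level. One way is to write $\nabla\partial_t^lu=\nabla\Delta^lu$ plus forcing terms, and interpolate $\nabla^{2l+1}u$ between $\|\nabla u\|_{L^\infty}$ and $\|\nabla^{2l+1}u\|_{\dot C^\kappa}$, which belongs to the left-hand side. This is the same interpolation-and-absorption step the paper performs in \eqref{estmainest}, so both routes rely on the left-hand side being finite a priori.
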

    \begin{proof}
 	Since the case $\gamma>0$ can be obtained by a procedure similar to \eqref{mcbddmlemh}, we concentrate on the case $\gamma=0$ and omit the superscript $\gamma$ in the proof.
    It suffices to consider spatial derivatives, since by \eqref{mcbd2}, the time derivatives can be controlled by spatial derivatives. Precisely, we will prove
 	\begin{equation*}
 		\begin{aligned}
		    &\sup_{0<t\leq T}\sum_{n+2k\leq 2m}\Bigl(
		    t^{\frac{n}{2}+k}\|\partial_{d}^{2k}\nabla_{x'}^{n}u(t)\|_{C^1}\\
		    &\qquad+t^{\frac{n}{2}+k+\frac{\kappa}{2}}
		    \|\nabla(\partial_{d}^{2k}\nabla_{x'}^{n}u)(t)\|_{\dot C^{\kappa}}\\
		    &\qquad+t^{\frac{n}{2}+k+\frac{1+\kappa}{2}}
		    \|\nabla_{x'}\nabla(\partial_{d}^{2k}\nabla_{x'}^{n}u)(t)\|_{C^\kappa}\Bigr)\\
        &\quad\quad\lesssim \sup_{0<t\leq T}\|u(t)\|_{C^1}+\mathbf{C}_{m}(F^1,F^2)(T).
 		\end{aligned}
 	\end{equation*}
	   First consider derivatives in the first $d-1$ coordinates. Integrating by parts in the tangential variables in \eqref{mcbddlemfml} produces no boundary terms and gives the linear estimate
   \begin{equation*}
       \|\nabla_{x'}^l\nabla u_L(t)\|_{C^\kappa}\lesssim t^{-\frac{l+\kappa}{2}}\|u_0\|_{C^1},
   \end{equation*}
   the forcing terms 
   \begin{align*}
       \|\nabla_{x'}^l\nabla u_{N}^1(t)\|_{C^\kappa}&\lesssim \int_0^{\frac{t}{2}}\int_{\mathbb{R}_+^d}\nabla_{x'}^l\nabla\sH(t-\tau,x'-y',x_d,y_d)F^1(\tau,y',y_d)dyd\tau\\
       &\quad+\int_{\frac{t}{2}}^t\int_{\mathbb{R}_+^d}\nabla\sH(t-\tau,x'-y',x_d,y_d)\nabla_{y'}^lF^1(\tau,y',y_d)dyd\tau\\
       &\lesssim t^{-\frac{l+\kappa}{2}}\mathbf{C}_m(F^1,0),
   \end{align*}
   and
   \begin{align*}
       \|\nabla_{x'}^l\nabla u_{N}^2(t)\|_{C^\kappa}&\lesssim \int_0^{\frac{t}{2}}\int_{\mathbb{R}^d}\nabla_{x'}^l\partial_{d}\nabla\K(t-\tau,x-y)F^{2,e}(\tau,y)dyd\tau\\
       &\quad+\int_{\frac{t}{2}}^t\int_{\mathbb{R}^d}\nabla\partial_d\K(t-\tau,x-y)\nabla_{y'}^lF^{2,e}(\tau,y)dyd\tau\\
       &\lesssim  t^{-\frac{l+\kappa}{2}}\mathbf{C}_m(0,F^{2}),
   \end{align*}
   where we denote $F^{2,e}$ to be the even extension of $F^2$ defined in \eqref{evenext}, and use the estimate in \eqref{un2half}. So we conclude that
    \begin{equation}\label{mcbddtanest}
        \sum_{l\leq 2m}\sup_{0<t\leq T}\left(t^{\frac{l+\kappa}{2}}\|\nabla_{x'}^{l}\nabla u(t)\|_{C^\kappa}+t^{\frac{l+1+\kappa}{2}}\|\nabla_{x'}^{l+1}\nabla u(t)\|_{C^\kappa} \right)\lesssim \|u_0\|_{W^{1,\infty}}+ \mathbf{C}_{m}(F^1,F^2)(T).
    \end{equation}
 	It remains to estimate $\partial_d^{2k}\nabla_{x'}^nu$ with $n+2k\leq 2m$ and $k\neq 0$. First we denote $u_{n}=\nabla_{x'}^nu$, then we can write the equation of $u_n$ as
 	\begin{equation}\label{equn}
 		\begin{aligned}
 			&\partial_tu_n(t,x)-\Delta u_n(t,x)=F_n^1(t,x)+\partial_dF_n^2(t,x),\quad \text{in}\ (0,T]\times\mathbb{R}^d_+,\\
 			&u_n(t,x)=0,\ \text{on}\ (0,T]\times\partial \mathbb{R}^d_+,
 		\end{aligned}
 	\end{equation}
 	with
 	\begin{equation*}
 		F_n^i=\nabla_{x'}^nF^i,\quad i=1,2.
 	\end{equation*}
 	By Lemma~\ref{mcbdlembdy},  we have the boundary condition for $\partial_d^{2k}u_n$ that
 	\begin{equation*}
 		\partial_{d}^{2k}u_n=F_{n}^{1,k-1}+\partial_dF_n^{2,k-1}+R^{2k}_n,\quad \text{on}\ \partial\mathbb{R}^d_+,
 	\end{equation*}
 	where 
 	\begin{align}
 		&F_{n}^{i,k-1}=\sum_{2j+|\alpha|=2k-2}C_{j,\alpha}^{1,i,n}\partial_t^j\nabla^\alpha F_n^i,\ \ i=1,2,\label{formFR}\\
        &R^{2k}_n=\sum_{\substack{l+|\beta|= 2k\\l<2k-1}}C_{l,\beta}^{2,n}\partial_{d}^{l}\nabla_{x'}^{\beta}u_n.\nonumber
 	\end{align}
 	Furthermore, denote
    \begin{equation*}
        u_{n,2k}=\partial_{d}^{2k}u_n-(F_{n}^{1,k-1}+\partial_dF_{n}^{2,k-1}+R^{2k}_n)
    \end{equation*}
    then we will get the equation of $u_{n,2k}$ with Dirichlet boundary condition as
 	\begin{equation}\label{un2k}
 		\begin{aligned}
 			&\partial_tu_{n,2k}(t,x)-\Delta u_{n,2k}(t,x)=\tilde{F}_{n,2k}^1(t,x)+\partial_d\tilde{F}_{n,2k}^2(t,x)+\tilde{R}_n^{2k},\quad \text{in}\ (0,T]\times\mathbb{R}^d_+,\\
 			&u_{n,2k}(t,x)=0,\ \text{on}\ (0,T]\times\partial \mathbb{R}^d_+,
 		\end{aligned}
 	\end{equation}
 	with
 	\begin{equation*}
 		\tilde{F}_{n,2k}^{i}=-(\partial_t-\Delta)F_n^{i,k-1}+\partial_d^{2k}F_n^i,\quad\tilde{R}^{2k}_n=-(\partial_t-\Delta)R_n^{2k}.
 	\end{equation*}
 	By the form of $R_n^{2k}$ and equation of $u_n$ in \eqref{equn}, we have 
 	\begin{equation*}\label{formR}
 		\begin{aligned}
 			\tilde{R}^{2k}_n&=  -(\partial_t-\Delta) R^{2k}_n=\sum_{\substack{l+|\beta|= 2k\\l<2k-1}}C_{l,\beta}^{2,n}\partial_{d}^{l}\nabla_{x'}^{\beta}(\partial_t-\Delta)u_n\\
 			&=\sum_{\substack{l+|\beta|= 2k\\l<2k-1}}C_{l,\beta}^{2,n}\partial_{d}^{l}\nabla_{x'}^{\beta}F^1_{n}+\partial_d\left(\sum_{\substack{l+|\beta|= 2k\\l<2k-1}}C_{l,\beta}^{2,n}\partial_{d}^{l}\nabla_{x'}^{\beta}F^2_{n}\right):=R^{1,2k}_n+\partial_dR_{n}^{2,2k}.
 		\end{aligned}
 	\end{equation*}
 	Applying the estimate \eqref{mcbddpri} to the equation \eqref{un2k} with $\gamma=\frac{n+2k}{2}$, we will get
 	\begin{equation}\label{mcbddhighest}
 		\begin{aligned}
 			&\sup_{0<t\leq T}\left(t^{\frac{n+2k+\kappa}{2}}\|\nabla u_{n,2k}(t)\|_{C^\kappa}+t^{\frac{n+2k+1+\kappa}{2}}\|\nabla_{x'}\nabla u_{n,2k}(t)\|_{C^\kappa} \right)\lesssim  \sup_{0<t\leq T}t^{\frac{n+2k}{2}}\|\nabla u_{n,2k}(t)\|_{L^\infty}\\
            &\quad\quad+\sup_{0<t\leq T}\left(t^{\frac{n+2k+1}{2}}\left(\|\tilde{F}_{n,2k}^{1}(t)\|_{L^\infty}+\|R_n^{1,2k}(t)\|_{L^\infty}\right)+t^{\frac{n+2k+1+\kappa}{2}}\left(\|\tilde{F}_{n,2k}^{1}(t)\|_{C^\kappa}+\|R_n^{1,2k}(t)\|_{C^\kappa}\right)\right)\\
            &\quad\quad+\sup_{0<t\leq T}\left(t^{\frac{n+2k}{2}}\left(\|\tilde{F}_{n,2k}^{2}(t)\|_{L^\infty}+\|R_n^{2,2k}(t)\|_{L^\infty}\right)+t^{\frac{n+2k+\kappa}{2}}\left(\|\tilde{F}_{n,2k}^{2}(t)\|_{C^\kappa}+\|R_n^{2,2k}(t)\|_{C^\kappa}\right)\right)\\
 			&\quad\quad\quad\quad+\sup_{t\in(0,T]}t^{\frac{n+2k+1+\kappa}{2}}\left(\|\nabla_{x'}\tilde{F}_{n,2k}^{2}(t)\|_{C^\kappa}+\|\nabla_{x'}R_n^{2,2k}(t)\|_{C^\kappa}\right).
 		\end{aligned}
 	\end{equation}
 	By the definition of $\tilde{F}_{n,2k}^{1},\tilde{F}_{n,2k}^{2},R_n^{1,2k},R_n^{2,2k}$, for any $n+2k\leq 2m$, one has 
 	\begin{align}
    &\sup_{0<t\leq T}\left(t^{\frac{n+2k+1}{2}}\left(\|\tilde{F}_{n,2k}^{1}(t)\|_{L^\infty}+\|R_n^{1,2k}(t)\|_{L^\infty}\right)+t^{\frac{n+2k+1+\kappa}{2}}\left(\|\tilde{F}_{n,2k}^{1}(t)\|_{C^\kappa}+\|R_n^{1,2k}(t)\|_{C^\kappa}\right)\right)\nonumber\\
 		&\quad+\sup_{0<t\leq T}\left(t^{\frac{n+2k}{2}}\left(\|\tilde{F}_{n,2k}^{2}(t)\|_{L^\infty}+\|R_n^{2,2k}(t)\|_{L^\infty}\right)+t^{\frac{n+2k+\kappa}{2}}\left(\|\tilde{F}_{n,2k}^{2}(t)\|_{C^\kappa}+\|R_n^{2,2k}(t)\|_{C^\kappa}\right)\right)\nonumber\\
 		&\quad+\sup_{0<t\leq T}t^{\frac{n+2k+1+\kappa}{2}}\left(\|(\tilde{F}_{n,2k}^{1},\nabla_{x'}\tilde{F}_{n,2k}^{2})(t)\|_{C^\kappa}+\|(R_n^{1,2k},\nabla_{x'}R_n^{2,2k})(t)\|_{C^\kappa}\right)\lesssim \mathbf{C}_{m}(F^1,F^2)(T).\label{eq4.24}
 	\end{align}
 	So we conclude from the estimate above and \eqref{mcbddhighest} that
    \begin{equation}\label{eq4.22}
        \begin{aligned}
            &\sup_{0<t\leq T}\sum_{n+2k\leq 2m}\left(t^{\frac{n+2k+\kappa}{2}}\|\nabla u_{n,2k}(t)\|_{C^\kappa}+t^{\frac{n+2k+1+\kappa}{2}}\|\nabla_{x'}\nabla u_{n,2k}(t)\|_{C^\kappa} \right)\\
            &\quad\lesssim \sum_{n+2k\leq2m}\sup_{0<t\leq T}t^{\frac{n+2k}{2}}\|\nabla u_{n,2k}(t)\|_{L^\infty}+\mathbf{C}_{m}(F^1,F^2)(T).
        \end{aligned}
    \end{equation}
    By the definition of $u_{n,2k}$, we still need to estimate the remainder terms other than $\partial_d^{2k}\nabla_x'^nu$, namely $F^{1,k-1}_n$, $\partial_dF^{2,k-1}_n$ and $R^{2k}_n$. By the form of $F_{n}^{i,k-1}$ for $i=1,2$ in \eqref{formFR} and the fact that $n+2k\leq 2m$, we deduce that
    \begin{equation}\label{estforrem}
        \begin{aligned}
            &\sup_{0<t\leq T}\left(t^{\frac{n+2k+\kappa}{2}}\|\nabla(F_{n}^{1,k-1}+\partial_dF_n^{2,k-1})(t)\|_{C^\kappa}+t^{\frac{n+2k+1+\kappa}{2}}\|\nabla_{x'}\nabla (F_{n}^{1,k-1}+\partial_dF_n^{2,k-1})(t)\|_{C^\kappa} \right)\\
            &\quad\quad\lesssim \mathbf{C}_{m}(F^1,F^2)(T).
        \end{aligned}
    \end{equation}
The term $R_n^{2k}$ cannot be estimated only by the
    forcing.  Indeed, by its definition in \eqref{formFR}, it contains
    derivatives of $u$ with strictly fewer than $2k-1$ normal derivatives.
    Introduce the lower-normal-order quantity
    \begin{equation*}\label{deflowernormal}
    \begin{aligned}
    \mathcal L_{n,k}(u;T):={}&
    \sum_{\substack{l+|\beta|=2k\\l<2k-1}}
    \sup_{0<t\leq T}\Bigl(
    t^{\frac{n+2k+\kappa}{2}}
    \|\nabla\partial_d^l\nabla_{x'}^{n+|\beta|}u(t)\|_{C^\kappa}\\
    &\hspace{8em}+t^{\frac{n+2k+1+\kappa}{2}}
    \|\nabla_{x'}\nabla\partial_d^l
    \nabla_{x'}^{n+|\beta|}u(t)\|_{C^\kappa}\Bigr).
    \end{aligned}
    \end{equation*}
    Since the coefficients in \eqref{formFR} are fixed constants, the
    triangle inequality gives the correct estimate
    \begin{equation}\label{esttanrem}
    \begin{aligned}
       &\sup_{0<t\leq T}\left(
       t^{\frac{n+2k+\kappa}{2}}\|\nabla R_{n}^{2k}(t)\|_{C^\kappa}
       +t^{\frac{n+2k+1+\kappa}{2}}
       \|\nabla_{x'}\nabla R_{n}^{2k}(t)\|_{C^\kappa}\right)
       \lesssim \mathcal L_{n,k}(u;T).
    \end{aligned}
    \end{equation}
     So \eqref{eq4.22}, \eqref{estforrem} and \eqref{esttanrem} lead to 
    \begin{equation}\label{eq4.25}
        \begin{aligned}
        &\sum_{n+2k\leq 2m}\sup_{0<t\leq T}\left(t^{\frac{n+2k+\kappa}{2}}\|\nabla \nabla_{x'}^{n}\partial_d^{2k}u(t)\|_{C^\kappa}+t^{\frac{n+2k+1+\kappa}{2}}\|\nabla \nabla_{x'}^{n+1}\partial_d^{2k} u(t)\|_{C^\kappa} \right)\\
            &\quad\quad\lesssim \sum_{n+2k\leq 2m}\sup_{0<t\leq T}t^{\frac{n+2k}{2}}\|\nabla u_{n,2k}(t)\|_{L^\infty}+\mathbf{C}_{m}(F^1,F^2)(T)+\sum_{\substack{n+2k\leq2m\\k\geq1}}
            \mathcal L_{n,k}(u;T)\\
            &\quad\quad\lesssim \sum_{n+2k\leq 2m}\sup_{0<t\leq T}t^{\frac{n+2k}{2}}\|\nabla\nabla_{x'}^{n}\partial_d^{2k} u(t)\|_{L^\infty}+\mathbf{C}_{m}(F^1,F^2)(T)+\sum_{\substack{n+2k\leq2m\\k\geq1}}
            \mathcal L_{n,k}(u;T).
        \end{aligned}
    \end{equation}
    We close this estimate by induction on the number of normal
    derivatives.  For $k=0$ the required bounds are precisely the tangential
    estimates \eqref{mcbddtanest}.  At the step $k\geq1$, every summand in
    $\mathcal L_{n,k}(u;T)$ has $l\leq2k-2$ normal derivatives and at least
    two additional tangential derivatives.  It is therefore among the
    lower-normal-order quantities retained in the induction hypothesis
    (with the same total parabolic weight $n+2k$).  Consequently
    \begin{equation*}\label{lowernormalinduction}
       \sum_{\substack{n+2k\leq2m\\k\geq1}}
       \mathcal L_{n,k}(u;T)
       \lesssim \sum_{n+2k\leq2m}\sup_{0<t\leq T}t^{\frac{n+2k}{2}}\|\nabla \nabla_{x'}^n\partial_d^{2k}u(t)\|_{L^\infty}
       +\mathbf C_m(F^1,F^2)(T),
    \end{equation*}
    once the estimates are proved in increasing normal order.\\
    On the other hand, interpolation together with \eqref{eq4.24} and \eqref{eq4.22} bounds the first term on the right-hand side of \eqref{eq4.25} by
    \begin{equation}\label{estmainest}
 	\begin{aligned}
 		\sup_{0<t\leq T}t^{\frac{n+2k}{2}}\|\nabla\nabla_{x'}^{n}\partial_d^{2k} u(t)\|_{L^\infty}&\leq \eta\sup_{0<t\leq T}\left(t^{\frac{n+2k+\kappa}{2}}\|\nabla \nabla_{x'}^{n}\partial_d^{2k}u(t)\|_{C^\kappa}+t^{\frac{n+2k+1+\kappa}{2}}\|\nabla \nabla_{x'}^{n+1}\partial_d^{2k} u(t)\|_{C^\kappa} \right)\\
        &\quad\quad+C(\eta)\left(\sup_{0<t\leq T}\|u(t)\|_{C^1}+\mathbf{C}_{m}(F^1,F^2)\right),\quad \forall \eta\in(0,1),
 	\end{aligned}
    \end{equation}
so we may take $\eta\ll 1$ sufficiently small to absorb this term into the left-hand side of \eqref{eq4.25}. Combining \eqref{eq4.25} and \eqref{estmainest} gives
    \begin{align*}
       & \sum_{n+2k\leq 2m}\sup_{0<t\leq T}\left(t^{\frac{n+2k+\kappa}{2}}\|\nabla\nabla_{x'}^n\partial_d^{2k} u(t)\|_{C^\kappa}+t^{\frac{n+2k+1+\kappa}{2}}\|\nabla\nabla_{x'}^{n+1}\partial_d^{2k} u(t)\|_{C^\kappa} \right)\\
        &\quad\lesssim \sup_{0<t\leq T}\|u(t)\|_{C^1}+ \mathbf{C}_{m}(F^1,F^2)(T),
    \end{align*}
  which completes the proof.
 \end{proof}
 We are now in a position to prove the local well-posedness result in Theorem~\ref{mcbdglo}.
 \begin{proof}[Proof of Theorem~\ref{mcbdglo}, i)] Let $f=\mathcal{S}g$ be the solution to the system \eqref{eqmcbd} for $g\in\mathcal{Z}_{T,\phi}^{m,\sigma}$ with $\phi=e^{\eps_1\Delta_\Omega}f_0$, where $\mathcal{Z}_{T,\phi}^{m,\sigma}$ is defined in \eqref{spacedef}, and  \begin{align*}
 		\|f_0-\phi\|_{W^{1,\infty}}\leq\eps_0,
 	\end{align*} 
 for some $\eps_0,\eps_1\ll 1$ to be fixed later.	First we consider the solution $f_{in}$ to \eqref{eqinter}. For $\eps_1$ small enough, we define the inner cut-off function $\chi_0$ satisfying 
    \begin{equation*}
        \chi_0(x)=0 \quad\text{for} \quad x\in \{\operatorname{dist}(x,\partial\Omega)\leq \eps_1^{\frac{1}{2}}\},\quad \chi_0(x)=1,\quad\text{for} \quad x\in \{\operatorname{dist}(x,\partial\Omega)\geq 2\eps_1^{\frac{1}{2}}\}. 
    \end{equation*}
    Denote $\tilde{f}_{in}=f_{in}-\chi_0\phi$, then $\tilde{f}_{in}$ satisfies
    \begin{equation*}
 		\begin{aligned}
 			&\partial_t\tilde{f}_{in}-\A[\nabla \phi]:\nabla^2\tilde{f}_{in}=\tilde{F}_{in},\quad \text{in}\ [0,T]\times\mathbb{R}^d,\\
 			&\tilde{f}_{in}|_{t=0}=\chi_0(f_0-\phi),\quad \text{in} \ \mathbb{R}^d,
 		\end{aligned}
 	\end{equation*}
 	with
 	\begin{equation*}
 		\tilde{F}_{in} =-2\A[\nabla g]:(\nabla f\otimes\nabla\chi_0)-\A[\nabla g]:(f\nabla^2\chi_0)+\A[\nabla g]:\nabla^2(\chi_0\phi)+(\A[\nabla g]-\A[\nabla\phi]):\nabla^2 \tilde{f}_{in}.
 	\end{equation*}
    By Theorem~\ref{propb=0}, since $g\in\mathcal{Z}_{T,\phi}^{m,\sigma}$ with small $\sigma$, there exists $T=T(\|\phi\|_{C^{2m+3}})>0$ such that
 	\begin{equation}\label{ee1ho}
 		\begin{aligned}
 			\|\tilde{f}_{in}\|_{Z^m_{T,1}}\lesssim \|\chi_0(f_0-\phi)\|_{ W^{1,\infty}}+\sup_{0<t\leq T}\left(t^\frac{1+\kappa}{2}\|\tilde{F}_{in}(t)\|_{ C^\kappa}+t^\frac{2m+1+\kappa}{2}\|\tilde{F}_{in}(t)\|_{ C^{2m+\kappa}}\right).
 		\end{aligned}
 	\end{equation}
	For the data term $\|\chi_0(f_0-\phi)\|_{W^{1,\infty}}$, we have
	    \[
	    \|\chi_0(f_0-\phi)\|_{W^{1,\infty}}\lesssim \|f_0-\phi\|_{W^{1,\infty}}+\|\nabla\chi_0(f_0-\phi)\|_{L^\infty}.
	    \]
	    Since $(f_0-\phi)|_{\partial\Omega}=0$, choose $\chi_0$ so that
	    $\operatorname{dist}(\operatorname{supp}\nabla\chi_0,\partial\Omega)
	    \sim \eps_1^{1/2}$ and
	    $\|\nabla\chi_0\|_{L^\infty}\lesssim\eps_1^{-1/2}$.
	    The trace condition and the Lipschitz bound imply
	    \[
	    |(f_0-\phi)\nabla\chi_0|
	    \lesssim \eps_1^{1/2}\|\nabla\chi_0\|_{L^\infty}
	    \|f_0-\phi\|_{\dot W^{1,\infty}}.
	    \]
	    Therefore,
 	\begin{equation}\label{mcbdbdysc}
 		\|\chi_0(f_0-\phi)\|_{W^{1,\infty}}\lesssim \|f_0-\phi\|_{W^{1,\infty}}.
 	\end{equation}
	Then interpolation, Lemma~\ref{lemcom}, the definition of $\tilde{F}_{in}$, the uniform ellipticity of $\A$, and \eqref{holembed} give
    \begin{equation}\label{estF0}
    \begin{aligned}
        &\|\tilde{F}_{in}(t)\|_{\dot C^{n+\kappa}}\lesssim \sum_{l\leq n}\sum_{l_1+l_2=l}\|\nabla^2f_{in}(t)\|_{\dot C^{n-l+\kappa}}\|\nabla (g-\phi)(t)\|_{\dot C^{l_1}}\left(\|\nabla (g,\phi)(t)\|_{\dot C^1}^{l_2}+\|\nabla (g,\phi)(t)\|_{\dot C^{l_2}}\right)\\
        &\quad\quad+\sum_{l\leq n}\sum_{l_1+l_2=l}\|\nabla^2f_{in}(t)\|_{\dot C^{n-l}}\|\nabla(g-\phi)(t)\|_{\dot C^{l_1}}\left(\|\nabla (g,\phi)(t)\|_{\dot C^\kappa}^{\frac{l_2+\kappa}{\kappa}}+\|\nabla (g,\phi)(t)\|_{\dot C^{l_2+\kappa}}\right)\\
        &\quad\quad+\sum_{l\leq n}\sum_{l_1+l_2=l}\|\nabla^2f_{in}(t)\|_{\dot C^{n-l}}\|\nabla(g-\phi)(t)\|_{\dot C^{l_1+\kappa}}\left(\|\nabla (g,\phi)(t)\|_{\dot C^1}^{l_2}+\|\nabla (g,\phi)(t)\|_{\dot C^{l_2}}\right)\\
        &\quad\quad+\|\chi_0\|_{C^{n+3}}\sum_{l_1+l_2\leq n}\left(\| f(t)\|_{\dot C^{l_1+1+\kappa}}\|\A[\nabla g](t)\|_{C^{l_2}}+\| f(t)\|_{\dot C^{l_1+1}}\|\A[\nabla g](t)\|_{C^{l_2+\kappa}}\right)\\
        &\quad\quad+\|\phi\|_{C^{n+3}}(1+\|\chi_0\|_{C^{n+3}})\|\nabla g(t)\|_{C^{n+\kappa}}\\
        &\quad\lesssim t^{-\frac{n+1+\kappa}{2}}\Bigl[
        \left(\|g-\phi\|_{Z_T^m}
        +T^{\frac{\kappa}{2}}(1+T)^{m}\|\phi\|_{C^{n+3}}\right)\\
        &\qquad\times\left(1+\|g-\phi\|_{Z_T^m}
        +T^{\frac{\kappa}{2}}(1+T)^{m}\|\phi\|_{C^{n+3}}\right)^m
        \|\tilde{f}_{in}\|_{Z_{T,1}^m}\\
        &\quad\quad+T^{\frac{\kappa}{2}}(1+T)^m
        \left(1+\|g-\phi\|_{Z_T^m}
        +T^{\frac{\kappa}{2}}\|\phi\|_{C^{n+3}}\right)^m\\
        &\qquad\times\left(\|f-\phi\|_{Z^{m,\mathrm{low}}_T}
        +T^{\frac{\kappa}{2}}\|\phi\|_{C^{n+3}}\right)\Bigr],
        \qquad n\leq 2m.
        \end{aligned}
        \end{equation}
In case of $g\in \mathcal{Z}_{T,\phi}^{m,\sigma}$, we derive from \eqref{ee1ho}, \eqref{mcbdbdysc} and \eqref{estF0} that
\begin{equation}\label{bddmciorh}
    \begin{aligned}
        &\|\tilde{f}_{in}\|_{Z_{T,1}^m}\leq C_1{\|\chi_0\|_{C^{2m+3}}}\left(\|f_0-\phi\|_{W^{1,\infty}}+\left(\sigma+ T^{\frac{\kappa}{2}}(1+T)^m\|\phi\|_{C^{2m+3}}\right)\|\tilde{f}_{in}\|_{Z_{T,1}^m}\right.\\
        &\quad\quad\left.+T^{\frac{\kappa}{2}}(1+T)^{m}(1+\sigma+T^{\frac{\kappa}{2}}\|\phi\|_{C^{2m+3}})^m(\|f-\phi\|_{Z_T^{m,\mathrm{low}}}+T^{\frac{\kappa}{2}}\|\phi\|_{C^{2m+3}}) \right) ,
    \end{aligned}
\end{equation}
  for some $\sigma, T$ small such that $\sigma+T^{\frac{\kappa}{2}}\|\phi\|_{C^{2m+3}}\ll 1$. So since $\|f-\phi\|_{Z_T^{m,\mathrm{low}}}\lesssim \|f-\phi\|_{Z_T^m}$ and $\|f_0-\phi\|_{W^{1,\infty}}\leq \eps_0$, by taking 
 	\begin{equation*}
	 	\sigma\leq\frac{1}{2^{10dm}C_1\|\chi_0\|_{C^{2m+3}}},\quad \eps_0\leq\frac{\sigma}{2^{10dm}C_1},\quad T\leq (\frac{\sigma}{2^{10dm}C_1\|\phi\|_{C^{2m+3}}})^{\frac{200}{\kappa}},
	 \end{equation*}
 	then \eqref{bddmciorh} leads to
\begin{equation}\label{bdmczt1}
 		\|\tilde f_{in}\|_{Z_{T,1}^m}\leq \frac{1}{10}(\sigma+\|f-\phi\|_{Z_T^m}).
 	\end{equation}

For the boundary estimate, unlike in the half-space case, higher-order normal derivatives do not vanish on the boundary in general bounded domains. To avoid uncontrollable boundary terms, we reorganize the forcing terms using Lemma~\ref{mcbdlembdy}, which expresses even normal derivatives through derivatives of the forcing and lower-normal-order derivatives of the solution.
First we derive the lower-order estimate in the $Z_{T,2}^0$ norm. We use the notation of Section~\ref{sec4.1}. Define $(f_i,{\phi}_i)=(\chi_if,\chi _i\phi)\circ\Phi_i\circ\Theta_i$ and $\tilde{f}_i=f_i-{\phi_i}$. By definition, $\tilde{f}_i$ satisfies
\begin{equation*}
    \begin{aligned}
        &\partial_t\tilde f_i(t,x)-\Delta \tilde f_i(t,x)=\tilde{F}_i[f,g](t,x),\quad \text{in}\ [0,T]\times\mathbb{R}^d_+,\\
 			&\tilde f_i|_{t=0}=f_{i,0}-{\phi}_i,\quad \text{in}\ \mathbb{R}^d_+,\\
 			&\tilde f_i(t,x)=0,\ \text{on}\ [0,T]\times\partial \mathbb{R}^d_+,
 		\end{aligned}
 	\end{equation*}
 	where $\tilde{F}_i$ has the form
 	\begin{equation}\label{eq4.34}
 		\begin{aligned}
 			&\tilde{F}_i[f,g]=F_i+\Delta{\phi}_i,
    \end{aligned}
\end{equation}
with $F_i$ defined in \eqref{deftilF}. Then by taking $F_1=\tilde{F}_i$, $F_2=0$ in Lemma~\ref{mcbdlemby}, we can prove that
    \begin{equation}\label{estz0}
        \|\tilde{f}_i\|_{Z_{T,2}^0}\leq C_2\|\tilde{f}_{i,0}\|_{\dot W^{1,\infty}}+C_2\sup_{0<t\leq T}\left(t^{\frac{1}{2}}\|\tilde{F}_i(t)\|_{L^\infty}+t^{\frac{1+\kappa}{2}}\|\tilde{F}_i(t)\|_{\dot C^{\kappa}}\right).
    \end{equation}
	    For brevity, set
	    \[
	    \mathcal H_i[f,g]
	    :=\bigl(G_i[f,g]\circ\Phi_i+\mathcal R_i[f,g]\bigr)\circ\Theta_i.
	    \]
	    By \eqref{eq4.34}, the uniform bounds for $\Phi_i,\Theta_i$, and the
	    smallness condition \eqref{eq4.6},
    \begin{equation*}
        \begin{aligned}
	           & \sup_{0<t\leq T}\left(t^{\frac{1}{2}}\|\tilde{F}_i(t)\|_{L^\infty}
	           +t^{\frac{1+\kappa}{2}}\|\tilde{F}_i(t)\|_{\dot C^{\kappa}}\right)\\
	           &\quad\lesssim\sup_{0<t\leq T}\left(t^{\frac{1}{2}}\|F_i(t)\|_{L^\infty}
	           +t^{\frac{1+\kappa}{2}}\|F_i(t)\|_{\dot C^{\kappa}}\right)
	           +T^{\frac{1}{2}}(1+T)\|\tilde{\phi}_i\|_{C^{3}}\\
		           &\quad\lesssim\sup_{0<t\leq T}\left(t^{\frac{1}{2}}\|\mathcal H_i[f,g](t)\|_{L^\infty}
		           +t^{\frac{1+\kappa}{2}}\|\mathcal H_i[f,g](t)\|_{\dot C^{\kappa}}\right)
		           +T^{\frac{1}{2}}(1+T)\|\tilde{\phi}_i\|_{C^{3}}\\
           &\quad\quad+\sup_{0<t\leq T}\left(\|\tilde{\A}[\nabla\tilde g_i(\nabla\Phi_i)^{-1}](t)-\tilde{\A}_0\|_{L^\infty}\left(t^{\frac{1}{2}}\|\nabla^2f_i(t)\|_{L^\infty}+t^{\frac{1+\kappa}{2}}\|\nabla^2f_i(t)\|_{C^{\kappa}}\right)\right)\\
           &\quad\quad+\sup_{0<t\leq T}t^{\frac{1+\kappa}{2}}\left(\|\tilde{\A}[\nabla\tilde g_i(\nabla\Phi_i)^{-1}](t)\|_{\dot C^{\kappa}}\|\nabla^2f_i(t)\|_{L^\infty}\right).
        \end{aligned}
    \end{equation*}
  This is not a closed estimate in $Z_{T,2}^0$ alone: its right-hand side contains the higher norm $Z_{T,2}^1$. It is therefore used only as one component of the coupled full-$Z_{T,2}^m$ estimate below.
     The terms $G_i$ and $\mathcal R_i$ are subcritical. The contributions containing $\nabla^2f_i$ involve two normal derivatives and therefore are not controlled by $Z_T^0$ alone; they are controlled by $Z_T^1$, which includes normal regularity through order $3+\kappa$. Consequently,
     \begin{equation*}
         \begin{aligned}
             &\sup_{0<t\leq T}\left(t^{\frac{1}{2}}\|\tilde{F}_i(t)\|_{L^\infty}
             +t^{\frac{1+\kappa}{2}}\|\tilde{F}_i(t)\|_{\dot C^{\kappa}}\right)\\
             &\quad\lesssim T^{\frac{1}{2}}(1+T)
             (1+\|g-\phi\|_{Z_T^0}+T^{\frac{\kappa}{2}}\|\phi\|_{C^3})\\
             &\qquad\quad\times
             (\|f-\phi\|_{Z_T^{0,\mathrm{low}}}+T^{\frac{\kappa}{2}}\|\phi\|_{C^3})\\
            &\qquad+(\eps+\|g-\phi\|_{Z_T^0}+T^{\frac{\kappa}{2}}\|\phi\|_{C^3})
            (\|\tilde{f}_i\|_{Z_T^1}+T^{\frac{\kappa}{2}}\|\phi\|_{C^3}),
         \end{aligned}
     \end{equation*}
     where we use \eqref{eq4.6} to claim the smallness of $\|\tilde{\A}[\nabla\tilde g_i(\nabla\Phi_i)^{-1}](t)-\tilde{\A}_0\|_{L^\infty}$.
    \begin{remark}\label{rmk4.2}
    The forcing estimate contains $\partial_d^2\tilde f_i$, so the estimate at the $Z_{T,2}^0$ level is not closed by itself. At higher order this obstruction is removed: for $l\geq1$, we rewrite
    \begin{equation*}
      \tilde{\B}[\nabla\tilde{ g}_i]_{dd}\partial_{d}^{2l+2}f_i=\partial_d\left(\tilde{\B}[\nabla\tilde{ g}_i]_{dd}\partial_{d}^{2l+1}f_i\right)-\partial_d\left(\tilde{\B}[\nabla\tilde{ g}_i]_{dd}\right)\partial_{d}^{2l+1}f_i,
    \end{equation*}
    but for $l=0$ we cannot do like this, since then $\partial_d\left(\tilde{\B}[\nabla\tilde{ g}_i]_{dd}\right)$ becomes the highest order term.
    \end{remark}
    
    Now we are going to estimate the derivatives. First, for the tangent derivatives, by definition of $\tilde{F}$, \eqref{defnormZ}, \eqref{mcbddtanest} and $g\in \mathcal{Z}_{T,\phi}^{m,\sigma}$, we have
    \begin{equation}\label{esttg1.3}
        \begin{aligned}
        &\sum_{l\leq 2m+1}\sup_{0<t\leq T}\left(t^{\frac{l+\kappa}{2}}\|\nabla_{x'}^l\nabla \tilde{f}_i (t)\|_{C^{\kappa}}\right)\lesssim \|\tilde{f}_{i,0}\|_{W^{1,\infty}}+\sum_{l\leq 2m+1}\sup_{0<t\leq T}t^{\frac{1+l+\kappa}{2}}\|\nabla_{x'}^l\tilde{F}_i(t)\|_{C^{\kappa}}\\
        &\quad\lesssim\|\tilde{f}_{i,0}\|_{W^{1,\infty}}+\left(\eps+\sigma+T^{\frac{\kappa}{2}}\|\phi\|_{C^{2m+3}}\right)(1+\sigma+T^{\frac{\kappa}{2}}\|\phi\|_{C^{2m+3}})^{2m+3}(\|f-\phi\|_{Z_T^m}+T^{\frac{\kappa}{2}}\|\phi\|_{C^{2m+3}})\\
        &\quad\quad+\|\tilde{\chi}_i\|_{C^{m}}T^{\frac{\kappa}{2}}(1+T)^{m}(1+\sigma+T^{\frac{\kappa}{2}}\|\phi\|_{C^{2m+3}})^{2m+3}(\|f-\phi\|_{Z_T^{m,\mathrm{low}}}+T^{\frac{\kappa}{2}}\|\phi\|_{C^{2m+3}}).
            \end{aligned}
    \end{equation}
For higher-order normal derivatives, due to Remark~\ref{rmk4.2}, we will study the equation of $\partial_d^2\tilde{f}_i$ instead. Note that $\tilde{f}_i(t)|_{\partial\mathbb{R}_+^d}=\partial_t\tilde{f}_i(t)|_{\partial\mathbb{R}^d_+}=0$ for any $t\in[0,T]$, then \eqref{eqbdref} implies
\begin{equation*}
    \partial_{dd}\tilde{f}_i+\Delta_{x'}\tilde{f}_i+\tilde{F}_i[f,g]=0,\quad \forall (t,x)\in [0,T]\times \{x_d=0\}.
\end{equation*}
	Thus define
	\[
	\tilde{f}_i'=\partial_{dd}\tilde{f}_i+\Delta_{x'}\tilde{f}_i+\tilde{F}_i[f,g].
	\]
and we can write the equation of $\tilde{f}_i'$ as
\begin{equation}\label{eq4.72}
    \begin{aligned}
        &\partial_t\tilde{f}_i'-\Delta\tilde{f}_i'=\partial_{dd}\tilde{F}_i+(\partial_{t}-\Delta)(\Delta_{x'}\tilde{f}_i+\tilde{F}_i)=\partial_t\tilde{F}_i,\\
&\tilde{f}_i'|_{\partial\mathbb{R}_+^d}=0.
    \end{aligned}
\end{equation}
We need to make further decomposition for the forcing term $\partial_t\tilde{F}_i$, which can be written as
\begin{equation}\label{defFG}
\begin{aligned}
   &\partial_t\tilde{F}_i=G^1_i+\partial_{d}G^2_i ,\\
   &G^1_i=\partial_t\left(\tilde{F}_{1,i}+\tilde{F}_{2,i}^1\right)+\partial_t(\tilde{\B}[\nabla \tilde{g}_i]_{dd})\partial_d^2
{f}_i-\partial_d(\tilde{\B}[\nabla \tilde{g}_i]_{dd})\partial_d\partial_t{f}_i,\\
   &G^2_i=\tilde{\B}[\nabla \tilde{g}_i]_{dd}\partial_d\partial_t{f}_i,\\
	   &\tilde{F}_{1,i}=\left(G_i[f,g]\circ\Phi_i+\mathcal R_i[f,g]\right)\circ\Theta_i+\Delta\tilde{\phi}_i,\\
	   &\tilde{F}_{2,i}^1=\sum_{l\neq d\ \text{or}\ j\neq d}\tilde{\B}[\nabla \tilde g_i]_{lj}\partial_{lj} f_i,\\
   &\tilde{\B}[\nabla \tilde{g}_i]=\Theta_i^{-1}\left((\tilde{\A}[\nabla\tilde g_i(\nabla\Phi_i)^{-1}]-\tilde{\A}_0)\circ\Theta_i\right)\Theta_i^{-\top}.
\end{aligned}
\end{equation}
Since $\tilde{f}_i'(0)$ is not defined, for any $t>0$ take $t_0=\frac{t}{2}$ and use $\tilde{f}_i'(t_0)$ as the initial datum. Then
\begin{align*}
	    &t\|\tilde{f}_i'(t)\|_{C^1}+\sum_{n+2k+2l\leq 2m-1}t^{1+\frac{n+2k+2l+\kappa}{2}}\|\nabla_{x'}^n\partial_d^{2k}\nabla\partial_t^l\tilde{f}_i'(t)\|_{C^\kappa}\\
	    &\qquad\approx(t-t_0)\|\tilde{f}_i'(t)\|_{C^1}+\sum_{n+2k+2l\leq 2m-1}(t-t_0)^{1+\frac{n+2k+2l+\kappa}{2}}\|\nabla_{x'}^n\partial_d^{2k}\nabla\partial_t^l\tilde{f}_i'(t)\|_{C^\kappa}.
\end{align*}
Furthermore,
\begin{align*}
	    &\sup_{\tau\in[t_0,T]}\sum_{2i+j\leq 2m-2}\left((\tau-t_0)^{\gamma+i+\frac{1+j}{2}}\|\partial_\tau^i\nabla^{j}G_i^{1}(\tau)\|_{L^\infty}+(\tau-t_0)^{\gamma+i+\frac{1+j+\kappa}{2}}\|\partial_\tau^i\nabla^{j}(G_i^{1},\nabla_{x'}G_i^2)(\tau)\|_{C^\kappa}\right.\\
	    &\quad\quad\left.+(\tau-t_0)^{\gamma+i+\frac{j}{2}}\|\partial_\tau^i\nabla^{j}G_i^2(\tau)\|_{L^\infty}+(\tau-t_0)^{\gamma+i+\frac{j+\kappa}{2}}\|\partial_\tau^i\nabla^{j}G_i^2(\tau)\|_{C^\kappa}\right)\lesssim \mathbf{C}_{m-1}^1(G_i^1,G_i^2)(T).
\end{align*}
By applying $F^1=G^1$, $F^2=G^2$, $\gamma=1$ in Lemma~\ref{maincor}, for any $t\in(0,T]$, we get
\begin{equation}\label{estmix1.3}
    \begin{aligned}
	        &t\|\tilde{f}_i'(t)\|_{C^1}+\sum_{n+2k+2l\leq 2m-1}t^{1+\frac{n+2k+2l+\kappa}{2}}\|\nabla_{x'}^n\partial_d^{2k}\nabla\partial_t^l\tilde{f}_i'(t)\|_{C^\kappa}\\
        &\qquad\lesssim \sup_{\tau\in(0,T]}\tau\|\tilde{f}_i'(\tau)\|_{C^1}+\mathbf{C}_{m-1}^1(G^1_i,G^2_i)(T),\quad\forall m\in\mathbb{N}.
    \end{aligned}
\end{equation}
Here we use $Z_{T}^{m-1}$ because $\|\tilde{f}_i'\|_{Z_{T,2}^{m-1,1}}$ is of the same differential order as $\|f-\phi\|_{Z_T^m}$.
Since $\tilde f_i'=\partial_t\tilde f_i$, interpolation between the
$t^{(1+\kappa)/2}C^\kappa$ and $t^{1+\kappa/2}C^{1+\kappa}$
components of the $Z_{T,2}^m$ norm gives, for every $\delta\in(0,1)$,
\begin{equation}\label{eq4.fprime-interpolation}
 \sup_{0<t\leq T}t\|\tilde f_i'(t)\|_{C^1}
 \leq \delta\|\tilde f_i\|_{Z_{T,2}^m}
 +C_\delta\|\tilde f_i\|_{Z_{T,2}^0}.
\end{equation}
Indeed, the only nontrivial term is $t\|\nabla\partial_t\tilde f_i\|_{L^\infty}$;
the interpolation $C^1=(C^\kappa,C^{1+\kappa})_{1-\kappa,\infty}$
has exactly the time weight
$t^{\kappa(1+\kappa)/2+(1-\kappa)(1+\kappa/2)}=t$.

We have the following lemma for the quantity $\mathbf{C}_{m-1}^1(G^1_i,G^2_i)$.
\begin{lemma}\label{lem4.3}
    For $G^1_i$, $G^2_i$ defined in \eqref{defFG}, and for $\mathbf{C}_{m-1}^\gamma(G^1_i,G^2_i)$ defined in \eqref{defCgam}, we have
\begin{equation*}
    \begin{aligned}
        &\mathbf{C}_{m-1}^1(G^1_i,G^2_i)
        \lesssim T^{\frac{1}{2}}(1+T)^m
        (1+\|g-\phi\|_{Z_T^{m,\mathrm{low}}}
        +T^{\frac{\kappa}{2}}\|\phi\|_{C^{2m+3}})^{2m+3}\\
        &\qquad\times(\|f-\phi\|_{Z_T^{m,\mathrm{low}}}
        +T^{\frac{\kappa}{2}}\|\phi\|_{C^{2m+3}})\\
        &\quad+\left(\eps+\|g-\phi\|_{Z_T^m}
        +T^{\frac{\kappa}{2}}\|\phi\|_{C^{2m+3}}\right)
        (1+\|g-\phi\|_{Z_T^m}
        +T^{\frac{\kappa}{2}}\|\phi\|_{C^{2m+3}})^{2m+3}\\
        &\qquad\times(\|f-\phi\|_{Z_T^m}
        +T^{\frac{\kappa}{2}}\|\phi\|_{C^{2m+3}})
    \end{aligned}
\end{equation*}
\end{lemma}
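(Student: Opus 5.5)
We split $G_i^1$ and $G_i^2$ into their constituent pieces from \eqref{defFG} and estimate each piece in the weighted norms entering $\mathbf C_{m-1}^1$. Each piece is either a smooth geometric or cutoff coefficient times a low-order derivative of $f$, or a coefficient difference $\tilde\B[\nabla\tilde g_i]$ times a second-order derivative of $f_i$. The first type gives the $T^{1/2}(1+T)^m$ term, and the second gives the term with the small factor $\eps+\|g-\phi\|_{Z^m_T}+T^{\kappa/2}\|\phi\|_{C^{2m+3}}$.

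\emph{Step 1: reduce to spatial derivatives.} Time derivatives of $f$ and $g$ are replaced by spatial ones through the equation \eqref{eqbdref}. Since $\partial_t f_i=\Delta f_i+F_i$, each $\partial_t$ costs two spatial derivatives and one unit of time weight. Then $\partial_t^i\nabla^j$ with $2i+j\le 2m-2$, applied to $G^1_i$ (which already contains one $\partial_t$), is comparable to at most $2m+1$ spatial derivatives of the form $\nabla^a\tilde\B\cdot\nabla^b f_i$, with $a+b\le 2m+2$ and $b\le 2m+2$. The weight $t^{1+i+(1+j)/2}$ matches the parabolic weight of these terms in $Z^m_T$. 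For $\partial_t\tilde\B[\nabla\tilde g_i]$, the chain rule and Lemma~\ref{lemcom} (estimates \eqref{mainint11}--\eqref{mainint12}) express it through $\nabla\partial_t g$, which the $Z^m_T$ norm of $g-\phi$ controls because of its $\partial_t^l$ components.

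\emph{Step 2: lower-order pieces.} The terms $\partial_t\tilde F_{1,i}$ consist of the cutoff commutators $G_i\circ\Phi_i$, the geometric remainder $\mathcal R_i$, and $\Delta\tilde\phi_i$. They involve at most one derivative of $f$ beyond what the weight requires, multiplied by bounded chart and cutoff coefficients. We bound them by the $Z^{m,\mathrm{low}}_T$ norms, which carry the missing $t^{1/2}$ as $T^{1/2}$. The $\phi$-contributions give $T^{\kappa/2}\|\phi\|_{C^{2m+3}}$ after writing $f=(f-\phi)+\phi$. The resulting polynomial factor $(1+\cdots)^{2m+3}$ comes from \eqref{mainint11}--\eqref{mainint12}.

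\emph{Step 3: principal pieces.} These are $\partial_t\tilde F^1_{2,i}$, $\partial_t(\tilde\B_{dd})\partial_d^2f_i$, $\partial_d(\tilde\B_{dd})\partial_d\partial_tf_i$, and $G_i^2=\tilde\B_{dd}\partial_d\partial_tf_i$. Leibniz' rule gives a sum over splittings of derivatives between $\tilde\B$ and $f_i$. In \emph{Case A}, all derivatives fall on $f_i$. Then $\|\tilde\B\|_{L^\infty}\lesssim\eps+\|g-\phi\|+T^{\kappa/2}\|\phi\|$ by \eqref{eq4.6}. The $C^\kappa$ seminorm of $\tilde\B$ is handled with the $t^{\kappa/2}$ weight, or, for $\phi$, by the smallness of the patch.

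In \emph{Case B}, derivatives fall on $\tilde\B$. Each such term is bounded by $\|\nabla(g-\phi)\|$ or by $T^{\kappa/2}\|\phi\|$ in the corresponding weighted norm, times lower derivatives of $f$.

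The key structural point, and the main obstacle, is to check that every term needs at most $2m+1$ normal derivatives of $f_i$ and at most $2m+2$ total. This is exactly what the anisotropic norm \eqref{defnormZ} permits. For $\tilde F^1_{2,i}$, at least one of the two derivatives is tangential, and the divergence-form rewriting $G^2_i$ moved one $\partial_d$ outside, as in Remark~\ref{rmk4.2}. We therefore count derivatives in each term: $\partial_t^i\nabla^j\partial_t(\tilde\B_{lj}\partial_{lj}f_i)$ with $(l,j)\ne(d,d)$ contains at most $2i+j+3\le 2m+1$ normal derivatives. Likewise, $\nabla_{x'}G_i^2$ and $G_i^2$ contain at most $2m$ normal derivatives inside the $C^\kappa$ norm. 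The hard part is doing this counting carefully when $\partial_t$ is converted into $\Delta$. There one must use $\partial_t f_i$ directly, as a $Z^m_{T,2}$ quantity with $l\ge1$, rather than expanding it into $\partial_d^2$, so that no extra normal derivative appears.

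Finally, we combine the terms. Time weights are redistributed using $t\le T$, and the interpolation between $Z^{m,\mathrm{low}}_T$ and $Z^m_T$ is applied. This yields the stated bound.
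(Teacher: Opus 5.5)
Your proposal is correct and follows essentially the paper's route: you isolate the subcritical part $\partial_t\tilde F_{1,i}$, bounded via $Z^{m,\mathrm{low}}_T$ with a $T^{1/2}$ gain, and then treat $\partial_t\tilde F^1_{2,i}$, $\partial_t(\tilde{\B}_{dd})\partial_d^2f_i$, $\partial_d(\tilde{\B}_{dd})\partial_d\partial_tf_i$ and $G_i^2$ by Leibniz splitting with Lemma~\ref{lemcom}, the smallness \eqref{eq4.6} of $\tilde{\B}$, and the derivative count showing that no $\partial_d^{2m+2}f_i$ appears. The one superfluous element is your Step~1 conversion of $\partial_t$ into spatial derivatives, which is anyway unavailable for $g$ since $g$ solves no equation; the paper, as your Step~3 ultimately does, reads $\partial_t^lf_i$ and $\partial_t^lg$ directly off the $\partial_t^l$ components of the $Z^m_T$ norm.
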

\begin{proof}
First we notice that $\tilde{F}_{1,i}$ is subcritical, which enables us to derive the following estimate
\begin{equation*}
    \begin{aligned}
        \mathbf{C}_{m-1}^1(\tilde{F}_{1,i},0)\lesssim T^{\frac{1}{2}}(1+T)^m(1+\|g-\phi\|_{Z_T^{m,\mathrm{low}}}+T^{\frac{\kappa}{2}}\|\phi\|_{C^{2m+3}})^{2m+3}(\|f-\phi\|_{Z_T^{m,\mathrm{low}}}+T^{\frac{\kappa}{2}}\|\phi\|_{C^{2m+3}})
    \end{aligned}
\end{equation*}
The remaining terms are critical. First we consider $\tilde{F}_{2,i}^1$. By Lemma~\ref{lemcom}, for any $2i+j\leq 2m-2$, we have
\begin{align*}
        &\|\partial_t^{l+1}\nabla^j\tilde{F}_{2,i}^1(t)\|_{L^\infty}\lesssim \sum_{l_1+l_2=l+1}\sum_{j_1+j_2=j}\|\partial_t^{l_1}\nabla^{j_1}\tilde{\B}[\nabla\tilde{g}_i](t)\|_{L^\infty}\|\partial_t^{l_2}\nabla_{x'}\nabla^{1+j_2} {f}_i(t)\|_{L^\infty}\\
        &\quad\lesssim t^{-1-l-\frac{j+1}{2}}\left(\|\tilde{f}_i\|_{Z_{T,2}^{m}}+T^{\frac{\kappa}{2}}(1+T)^m\|\phi\|_{C^{m+3}}\right)\left(\eps+\|g-\phi\|_{Z_T^m}+T^{\frac{\kappa}{2}}(1+T)^m\|\phi\|_{C^{m+3}} \right),\\
        &\|\partial_t^{l+1}\nabla^j\tilde{F}_{2,i}^1(t)\|_{\dot C^\kappa}\lesssim \sum_{l_1+l_2=l+1}\sum_{j_1+j_2=j}\left(\|\partial_t^{l_1}\nabla^{j_1}\tilde{\B}[\nabla\tilde{g}_i](t)\|_{\dot C^\kappa}\|\partial_t^{l_2}\nabla_{x'}\nabla ^{1+j_2}{f}_i(t)\|_{L^\infty}\right.\\
        &\quad\quad\left.+\|\partial_t^{l_1}\nabla^{j_1}\tilde{\B}[\nabla\tilde{g}_i](t)\|_{L^\infty}\|\partial_t^{l_2}\nabla_{x'}\nabla^{1+j_2} {f}_i(t)\|_{\dot C^\kappa}\right)\\
        &\quad\lesssim t^{-1-l-\frac{j+1+\kappa}{2}}\left(\|\tilde{f}_i\|_{Z_{T,2}^{m}}+T^{\frac{\kappa}{2}}(1+T)^m\|\phi\|_{C^{m+3}}\right)\left(\eps+\|g-\phi\|_{Z_T^m}+T^{\frac{\kappa}{2}}(1+T)^m\|\phi\|_{C^{m+3}} \right).
    \end{align*}
Here we use the smallness condition \eqref{eq4.6} and the fact that $\tilde{F}_{2,i}^1$ contains no highest-order normal derivative of ${f}_i$.
For the other terms in $G_i^1$, since some derivatives are moved to coefficients $\tilde{\B}[\nabla\tilde{g}_i]$, there is no highest order normal derivatives on $\tilde{f}_i$ either, and for $2l+j\leq 2m-2$, and by Lemma~\ref{lemcom}, we get the $L^\infty$ estimates
\begin{align*}
        &\|\partial_t^l\nabla^j\left(\partial_t(\tilde{\B}[\nabla \tilde{g}_i]_{dd})\partial_d^2{f}_i\right)\|_{L^\infty}\lesssim \sum_{l_1+l_2=l}\sum_{j_1+j_2=j}\|\partial_t^{l_1+1}\nabla^{j_1}\tilde{\B}[\nabla \tilde{ g}_i](t)\|_{L^\infty}\|\partial_t^{l_2}\nabla^{j_2+2}f_i(t)\|_{L^\infty}\\
        &\quad\lesssim t^{-1-l-\frac{j+1}{2}}\left(\|\tilde{f}_i\|_{Z_{T,2}^{m}}+T^{\frac{\kappa}{2}}(1+T)^m\|\phi\|_{C^{m+3}}\right)\left(\eps+\|g-\phi\|_{Z_T^m}+T^{\frac{\kappa}{2}}(1+T)^m\|\phi\|_{C^{m+3}} \right),\\
        &\|\partial_t^l\nabla^j\left(\partial_d(\tilde{\B}[\nabla \tilde{g}_i]_{dd})\partial_d\partial_t{f}_i\right)\|_{L^\infty}\lesssim \sum_{l_1+l_2=l}\sum_{j_1+j_2=j}\|\partial_t^{l_1}\nabla^{j_1+1}\tilde{\B}[\nabla \tilde{ g}_i](t)\|_{L^\infty}\|\partial_t^{l_2+1}\nabla^{j_2+1}f_i(t)\|_{L^\infty}\\
        &\quad\lesssim t^{-1-l-\frac{j+1}{2}}\left(\|\tilde{f}_i\|_{Z_{T,2}^{m}}+T^{\frac{\kappa}{2}}(1+T)^m\|\phi\|_{C^{m+3}}\right)\left(\eps+\|g-\phi\|_{Z_T^m}+T^{\frac{\kappa}{2}}(1+T)^m\|\phi\|_{C^{m+3}} \right),
        \end{align*}
        and the the $\dot C^\kappa$ estimates
        \begin{align*}
        &\|\partial_t^l\nabla^j\left(\partial_t(\tilde{\B}[\nabla \tilde{g}_i]_{dd})\partial_d^2{f}_i\right)\|_{\dot C^\kappa}\lesssim \sum_{l_1+l_2=l}\sum_{j_1+j_2=j}\|\partial_t^{l_1+1}\nabla^{j_1}\tilde{\B}[\nabla \tilde{ g}_i](t)\|_{\dot C^\kappa}\|\partial_t^{l_2}\nabla^{j_2+2}f_i(t)\|_{L^\infty}\\
        &\quad\quad+\sum_{l_1+l_2=l}\sum_{j_1+j_2=j}\|\partial_t^{l_1+1}\nabla^{j_1}\tilde{\B}[\nabla \tilde{ g}_i](t)\|_{L^\infty}\|\partial_t^{l_2}\nabla^{j_2+2}f_i(t)\|_{\dot C^\kappa}\\
        &\quad\lesssim t^{-1-l-\frac{j+1+\kappa}{2}}\left(\|\tilde{f}_i\|_{Z_{T,2}^{m}}+T^{\frac{\kappa}{2}}(1+T)^m\|\phi\|_{C^{m+3}}\right)\left(\eps+\|g-\phi\|_{Z_T^m}+T^{\frac{\kappa}{2}}(1+T)^m\|\phi\|_{C^{m+3}} \right),\\
        &\|\partial_t^l\nabla^j\left(\partial_d(\tilde{\B}[\nabla \tilde{g}_i]_{dd})\partial_d\partial_t{f}_i\right)\|_{\dot C^\kappa}\lesssim \sum_{l_1+l_2=l}\sum_{j_1+j_2=j}\|\partial_t^{l_1}\nabla^{j_1+1}\tilde{\B}[\nabla \tilde{ g}_i](t)\|_{\dot C^\kappa}\|\partial_t^{l_2+1}\nabla^{j_2+1}f_i(t)\|_{L^\infty}\\
        &\quad\quad+\sum_{l_1+l_2=l}\sum_{j_1+j_2=j}\|\partial_t^{l_1}\nabla^{j_1+1}\tilde{\B}[\nabla \tilde{ g}_i](t)\|_{L^\infty}\|\partial_t^{l_2+1}\nabla^{j_2+1}f_i(t)\|_{\dot C^\kappa}\\
        &\quad\lesssim t^{-1-l-\frac{j+1+\kappa}{2}}\left(\|\tilde{f}_i\|_{Z_{T,2}^{m}}+T^{\frac{\kappa}{2}}(1+T)^m\|\phi\|_{C^{m+3}}\right)\left(\eps+\|g-\phi\|_{Z_T^m}+T^{\frac{\kappa}{2}}(1+T)^m\|\phi\|_{C^{m+3}} \right).
    \end{align*}
The estimates above imply that
\begin{equation}\label{eq4.60}
    \begin{aligned}
        \mathbf{C}_{m-1}^1(G_i^1,0)&\lesssim
        \left(\|\tilde{f}_i\|_{Z_{T,2}^{m}}
        +T^{\frac{\kappa}{2}}(1+T)^m\|\phi\|_{C^{m+3}}\right)\\
        &\quad\times\left(\eps+\|g-\phi\|_{Z_T^m}
        +T^{\frac{\kappa}{2}}(1+T)^m\|\phi\|_{C^{m+3}} \right)\\
        &\quad+T^{\frac{1}{2}}(1+T)^m
        (1+\|g-\phi\|_{Z_T^{m,\mathrm{low}}}
        +T^{\frac{\kappa}{2}}\|\phi\|_{C^{2m+3}})^{2m+3}\\
        &\qquad\times(\|f-\phi\|_{Z_T^{m,\mathrm{low}}}
        +T^{\frac{\kappa}{2}}\|\phi\|_{C^{2m+3}}).
    \end{aligned}
\end{equation}
Finally we consider $G_i^2$. For the non-endpoint norms with $2l+j\leq 2m-2$, we have
\begin{align*}
    &\|\partial_t^l\nabla^jG_i^2(t)\|_{L^\infty}\lesssim \sum_{l_1+l_2=l}\sum_{j_1+j_2=j}\|\partial_t^{l_1}\nabla^{j_1}\tilde{\B}[\nabla\tilde{g}_i](t)\|_{L^\infty}\|\partial_t^{l_2+1}\nabla^{j_2+1}f_i(t)\|_{L^\infty}\\
    &\quad\lesssim t^{-1-l-\frac{j}{2}}\left(\|\tilde{f}_i\|_{Z_{T,2}^{m}}+T^{\frac{\kappa}{2}}(1+T)^m\|\phi\|_{C^{m+3}}\right)\left(\eps+\|g-\phi\|_{Z_T^m}+T^{\frac{\kappa}{2}}(1+T)^m\|\phi\|_{C^{m+3}} \right),\\
    &\|\partial_t^l\nabla^jG_i^2(t)\|_{\dot C^\kappa}\lesssim \sum_{l_1+l_2=l}\sum_{j_1+j_2=j}\|\partial_t^{l_1}\nabla^{j_1}\tilde{\B}[\nabla\tilde{g}_i](t)\|_{\dot C^\kappa}\|\partial_t^{l_2+1}\nabla^{j_2+1}f_i(t)\|_{L^\infty}\\
    &\quad\quad+\sum_{l_1+l_2=l}\sum_{j_1+j_2=j}\|\partial_t^{l_1}\nabla^{j_1}\tilde{\B}[\nabla\tilde{g}_i](t)\|_{L^\infty}\|\partial_t^{l_2+1}\partial_d^{j_2+1}f_i(t)\|_{\dot C^\kappa}\\
    &\quad\lesssim t^{-1-l-\frac{j+\kappa}{2}}\left(\|\tilde{f}_i\|_{Z_{T,2}^{m}}+T^{\frac{\kappa}{2}}(1+T)^m\|\phi\|_{C^{m+3}}\right)\left(\eps+\|g-\phi\|_{Z_T^m}+T^{\frac{\kappa}{2}}(1+T)^m\|\phi\|_{C^{m+3}} \right).
\end{align*}
The preceding formula contains derivatives of $f_i$ of order at most $2m+1$. In the highest-order quantity $\|\partial_t^l\nabla^j\nabla_{x'}G_i^2(t)\|_{\dot C^\kappa}$, with $2l+j\leq2m-2$, no highest-order normal derivative occurs; all normal derivatives of $f_i$ still have order at most $2m+1$. Hence
\begin{align*}
    &\|\partial_t^l\nabla^j\nabla_{x'}G_i^2(t)\|_{\dot C^\kappa}\\
    &\quad\lesssim \sum_{l_1+l_2=l}\sum_{j_1+j_2\leq j}\sum_{k_1+k_2=1}\|\partial_t^{l_1}\nabla^{j_1}\nabla_{x'}^{k_1}\tilde{\B}[\nabla\tilde{g}_i](t)\|_{\dot C^\kappa}\|\partial_t^{l_2+1}\nabla^{j_2+1}\nabla^{k_2}_{x'}f_i(t)\|_{L^\infty}\\
    &\quad\quad+\sum_{l_1+l_2=l}\sum_{j_1+j_2\leq j}\sum_{k_1+k_2=1}\|\partial_t^{l_1}\nabla^{j_1}\nabla_{x'}^{k_1}\tilde{\B}[\nabla\tilde{g}_i](t)\|_{L^\infty}\|\partial_t^{l_2+1}\nabla^{j_2+1}\nabla^{k_2}_{x'}f_i(t)\|_{\dot C^\kappa}\\
    &\quad\lesssim t^{-1-l-\frac{j+1+\kappa}{2}}\left(\|\tilde{f}_i\|_{Z_{T,2}^{m}}+T^{\frac{\kappa}{2}}(1+T)^m\|\phi\|_{C^{m+3}}\right)\left(\eps+\|g-\phi\|_{Z_T^m}+T^{\frac{\kappa}{2}}(1+T)^m\|\phi\|_{C^{m+3}} \right).
\end{align*}
So the estimates above lead to
\begin{equation}\label{eq4.61}
    \begin{aligned}
        \mathbf{C}_{m-1}(0,G_i^2)\lesssim \left(\|\tilde{f}_i\|_{Z_{T,2}^{m}}+T^{\frac{\kappa}{2}}(1+T)^m\|\phi\|_{C^{m+3}}\right)\left(\eps+\|g-\phi\|_{Z_T^m}+T^{\frac{\kappa}{2}}(1+T)^m\|\phi\|_{C^{m+3}} \right)
    \end{aligned}
\end{equation}
Finally \eqref{eq4.60} and \eqref{eq4.61} concludes the proof of this lemma.
\end{proof}
\begin{remark}\label{rmk4.4}
    Due to the fact that
    \begin{equation*}
        \|fg\|_{\dot C^\kappa}\lesssim \|f\|_{\dot C^\kappa}\|g\|_{L^\infty}+\|f\|_{L^\infty}\|g\|_{\dot C^\kappa},
    \end{equation*}
    one can see that there is at most one component in the multi-linear estimates of the highest order. As a result, we can prove a stronger version
    \begin{equation*}
    \begin{aligned}
        &\mathbf{C}_{m-1}^1(G^1_i,G^2_i)
        \lesssim T^{\frac{1}{2}}(1+T)^m
        (1+\|g-\phi\|_{Z_T^{m,\mathrm{low}}}
        +T^{\frac{\kappa}{2}}\|\phi\|_{C^{2m+3}})^{2m+3}\\
        &\qquad\times(\|f-\phi\|_{Z_T^{m,\mathrm{low}}}
        +T^{\frac{\kappa}{2}}\|\phi\|_{C^{2m+3}})\\
        &\quad+\left(\eps+\|g-\phi\|_{Z_T^{m,\mathrm{low}}}
        +T^{\frac{\kappa}{2}}\|\phi\|_{C^{2m+3}}\right)
        (1+\|g-\phi\|_{Z_T^{m,\mathrm{low}}}
        +T^{\frac{\kappa}{2}}\|\phi\|_{C^{2m+3}})^{2m+3}\\
        &\qquad\times(\|f-\phi\|_{Z_T^m}
        +T^{\frac{\kappa}{2}}\|\phi\|_{C^{2m+3}})\\
        &\quad+\left(\eps+\|g-\phi\|_{Z_T^m}
        +T^{\frac{\kappa}{2}}\|\phi\|_{C^{2m+3}}\right)
        (1+\|g-\phi\|_{Z_T^{m,\mathrm{low}}}
        +T^{\frac{\kappa}{2}}\|\phi\|_{C^{2m+3}})^{2m+3}\\
        &\qquad\times(\|f-\phi\|_{Z_T^{m,\mathrm{low}}}
        +T^{\frac{\kappa}{2}}\|\phi\|_{C^{2m+3}}).
    \end{aligned}
\end{equation*}
We emphasize that in local regularity, Lemma~\ref{lem4.3} is enough. This remark only contributes to the global regularity, see Section~\ref{sec4.3}.
\end{remark}
In the following, we will prove the a priori estimates for $\|\nabla\nabla_{x'}^{l+1}\partial_d^{2k}\tilde{f}_i(t)\|_{C^\kappa}$ with $g\in \mathcal{Z}_{T,\phi}^{m,\sigma}$ and $l+2k\leq 2m-2$.  Precisely, we want to prove that
\begin{equation}\label{eq4.43}
        \begin{aligned}
        &\sum_{\substack{l+2k\leq 2m-2}}\sup_{0<t\leq T}
        t^{\frac{l+2k+1+\kappa}{2}}
        \|\nabla_{x'}^{l+1}\partial_d^{2k}\nabla \tilde{f}_i (t)\|_{C^{\kappa}}
        \leq C \sup_{0<t\leq T}\|\nabla\tilde{f}_i(t)\|_{L^\infty}\\
        &\quad+C\left(\eps+\sigma+T^{\frac{\kappa}{2}}\|\phi\|_{C^{2m+3}}\right)
        (1+\sigma+T^{\frac{\kappa}{2}}\|\phi\|_{C^{2m+3}})^{2m+3}\\
        &\qquad\times(\|f-\phi\|_{Z_T^m}
        +T^{\frac{\kappa}{2}}\|\phi\|_{C^{2m+3}})\\
        &\quad+CT^{\frac{\kappa}{2}}(1+T)^{m}\|\tilde{\chi}_i\|_{C^m}
        (1+\sigma+T^{\frac{\kappa}{2}}\|\phi\|_{C^{2m+3}})^{2m+3}\\
        &\qquad\times(\|f-\phi\|_{Z_T^{m,\mathrm{low}}}
        +T^{\frac{\kappa}{2}}\|\phi\|_{C^{2m+3}}).
            \end{aligned}
    \end{equation}
We argue by induction on $k$. The case $k=0$ follows from \eqref{esttg1.3}. Assume that \eqref{eq4.43} holds for $k\leq K-1$; it remains to prove the case $k=K$.
First we consider $\|\nabla\nabla^{l+1}_{x'}\partial_d^{2K-2}\tilde{f}_i'(t)\|_{C^\kappa}$. By applying Lemma~\ref{maincor} and Lemma~\ref{lem4.3} to the equation \eqref{eq4.72} and the fact that $g\in\mathcal{Z}_{T,\phi}^{m,\sigma}$, we have
\begin{equation}\label{eq4.36}
    \begin{aligned}
        &\sum_{\substack{l+2k\leq 2m-2\\ 1\leq k\leq K-1}}
        \sup_{0<t\leq T}t^{\frac{1+l+2k}{2}}
        \|\nabla^{l}_{x'}\partial_d^{2k}\tilde{f}_i'(t)\|_{L^\infty}\\
        &\quad+\sum_{\substack{l+2k\leq 2m-2\\ 1\leq k\leq K-1}}
        \sup_{0<t\leq T}t^{1+\frac{1+l+2k+\kappa}{2}}
        \|\nabla^{l+1}_{x'}\partial_d^{2k}\nabla\tilde{f}_i'(t)\|_{C^\kappa}\\
        &\quad\lesssim \sup_{0<t\leq T}t\|\tilde{f}_i'(t)\|_{C^1}+\mathbf{C}_{m-1}^1(G_i^1,G_i^2) \\
        &\quad\lesssim \sup_{0<t\leq T}t\|\tilde{f}_i'(t)\|_{C^1}
        +(\eps+\sigma+T^{\frac{\kappa}{2}}\|\phi\|_{C^{2m+3}})
        (1+\sigma+T^{\frac{\kappa}{2}}\|\phi\|_{C^{2m+3}})^{2m+3}\\
        &\qquad\times(\|f-\phi\|_{Z_T^m}
        +T^{\frac{\kappa}{2}}\|\phi\|_{C^{2m+3}})\\
        &\quad\quad+T^{\frac{1}{2}}(1+T)^m\|\tilde{\chi}_i\|_{C^m}
        (1+\sigma+T^{\frac{\kappa}{2}}\|\phi\|_{C^{2m+3}})^{2m+3}\\
        &\qquad\times(\|f-\phi\|_{Z_T^{m,\mathrm{low}}}
        +T^{\frac{\kappa}{2}}\|\phi\|_{C^{2m+3}}).
    \end{aligned}
\end{equation}
By the definition of $\tilde{f}'$, estimating $\partial_{d}^{2K+1}\nabla_{x'}^{l+1}\tilde{f}_i$ requires subtracting the contribution of $\Delta_{x'}\tilde{f}_i+\tilde{F}_i[f,g]$. The induction hypothesis \eqref{eq4.43} gives
\begin{equation}\label{eq4.39}
    \begin{aligned}
&\sum_{\substack{l+2k\leq 2m-2\\ 1\leq k\leq K-1}}
\sup_{0<t\leq T}t^{\frac{1+l+2k}{2}}
\|\nabla_{x'}^{l}\partial_d^{2k}\Delta_{x'}\tilde{f}_i(t)\|_{L^\infty}\\
&\quad+\sum_{\substack{l+2k\leq 2m-2\\ 1\leq k\leq K-1}}
\sup_{0<t\leq T}t^{1+\frac{1+l+2k+\kappa}{2}}
\|\nabla_{x'}^{l+1}\partial_d^{2k}\nabla\Delta_{x'}\tilde{f}_i(t)\|_{C^\kappa}\\
&\quad\lesssim \|\tilde{f}_{i,0}\|_{W^{1,\infty}}
+\left(\eps+\sigma+T^{\frac{\kappa}{2}}\|\phi\|_{C^{2m+3}}\right)
(1+\sigma+T^{\frac{\kappa}{2}}\|\phi\|_{C^{2m+3}})^{2m+3}\\
&\qquad\times(\|f-\phi\|_{Z_T^m}
+T^{\frac{\kappa}{2}}\|\phi\|_{C^{2m+3}})\\
&\quad\quad+T^{\frac{1}{2}}(1+T)^m\|\tilde{\chi}_i\|_{C^m}
(1+\sigma+T^{\frac{\kappa}{2}}\|\phi\|_{C^{2m+3}})^{2m+4}\\
&\qquad\times(\|f-\phi\|_{Z_T^{m,\mathrm{low}}}
+T^{\frac{\kappa}{2}}\|\phi\|_{C^{2m+3}}).
    \end{aligned}
\end{equation}
Furthermore, by \eqref{eq4.34} and \eqref{deftilF}, $\tilde{F}_i[f,g]$ consists of the subcritical term $(G_i[f,g]\circ\Phi_i+\mathcal R_i[f,g])\circ\Theta_i+\Delta\phi_i$ and the critical coefficient-oscillation term $\Theta_i^{-1}\left(\left(\tilde{\A}[\nabla\tilde{g}_i(\nabla\Phi_i)^{-1}]-\tilde{\A}_0\right)\circ\Theta_i\right)\Theta_i^{-\top}:\nabla^2f_i$. Hence
\begin{equation}\label{estrem1.3}
    \begin{aligned}
    &\sum_{\substack{l+2k\leq 2m-2\\ 1\leq k\leq K-1}}
    \sup_{0<t\leq T}t^{\frac{l+2k+1}{2}}
    \|\nabla_{x'}^{l}\partial_d^{2k}\tilde{F}_i[f,g](t)\|_{L^\infty}\\
    &\quad+\sum_{\substack{l+2k\leq 2m-2\\ 1\leq k\leq K-1}}
    \sup_{0<t\leq T}t^{1+\frac{1+l+2k+\kappa}{2}}
    \|\nabla_{x'}^{l+1}\partial_d^{2k}\nabla\tilde{F}_i[f,g](t)\|_{C^\kappa}\\
    &\quad\lesssim T^{\frac{1}{2}}(1+T)^m\|\tilde{\chi}_i\|_{C^m}(1+\sigma+T^{\frac{\kappa}{2}}\|\phi\|_{C^{2m+3}})(\|f-\phi\|_{Z_T^{m,\mathrm{low}}}+T^{\frac{\kappa}{2}}\|\phi\|_{C^{2m+3}})\\
        &\quad\quad+\left(\eps+\sigma+T^{\frac{\kappa}{2}}(1+T)^m\|\phi\|_{C^{2m+3}}\right)(1+\sigma+T^{\frac{\kappa}{2}}\|\phi\|_{C^{2m+3}})^{2m+3}(\|f-\phi\|_{Z_T^m}+T^{\frac{\kappa}{2}}\|\phi\|_{C^{2m+3}})
    \end{aligned}
\end{equation}
By \eqref{estz0}, \eqref{estmix1.3}, \eqref{eq4.39} and \eqref{estrem1.3}, we conclude that
\begin{equation*}
    \begin{aligned}
        &\sum_{\substack{l+2k\leq 2m\\ k\leq K}}\sup_{0<t\leq T}t^{\frac{1+l+2k+\kappa}{2}}\|\nabla^{l+1}_{x'}\partial_d^{2k}\nabla\tilde{f}_i(t)\|_{C^\kappa}\lesssim \sup_{0<t\leq T}t\|\tilde{f}_i'(t)\|_{C^1}\\
        &\quad\quad+(\eps+\sigma+T^{\frac{\kappa}{2}}\|\phi\|_{C^{2m+3}})(1+\sigma+T^{\frac{\kappa}{2}}\|\phi\|_{C^{2m+3}})^{2m+3}(\|f-\phi\|_{Z_T^m}+T^{\frac{\kappa}{2}}\|\phi\|_{C^{2m+3}})\\
        &\quad\quad+T^{\frac{1}{2}}(1+T)^m\|\tilde{\chi}_i\|_{C^m}(1+\sigma+T^{\frac{\kappa}{2}}\|\phi\|_{C^{2m+3}})^{2m+3}(\|f-\phi\|_{Z_T^{m,\mathrm{low}}}+T^{\frac{\kappa}{2}}\|\phi\|_{C^{2m+3}}).
    \end{aligned}
\end{equation*}
Applying \eqref{eq4.fprime-interpolation}, choosing $\delta$ sufficiently
small, and using the lower-order estimate \eqref{estz0}, we absorb the
$\delta\|\tilde f_i\|_{Z_{T,2}^m}$ term into the left-hand side. This proves
the case $k=K$ in \eqref{eq4.43} and completes the a priori estimate.
Using \eqref{estz0} for the remaining $Z_{T,2}^0$ term, and taking
    \begin{equation*}
        \eps\leq \frac{1}{500C},\quad \sigma\leq \frac{1}{2^{10dm}C},\quad\eps_0\leq\frac{\sigma}{2^{10dm}C},\quad T\leq \left(\frac{\sigma}{2^{10dm}C\|\phi\|_{C^{2m+3}}\|\tilde{\chi}_i\|_{C^m}}\right)^{\frac{10dm}{\kappa}},
    \end{equation*}
    for $C$ in \eqref{eq4.43}, we obtain
 	\begin{equation*}
 		\|\tilde{f}_i\|_{Z_{T,2}^m}\leq \frac{1}{10}(\sigma+\|f-\phi\|_{Z_T^m}).
 	\end{equation*}
 	Combining the result with \eqref{bdmczt1}, we obtain that
	 \begin{equation*}
	 	\|f-\phi\|_{Z_T^m}\leq \frac{1}{2}\sigma.
	 \end{equation*}  

	The construction of $\mathcal S$ for rough data can now be justified without circularity. Choose $\delta_q\downarrow0$, set $f_{0,q}=e^{\delta_q\Delta_\Omega}f_0$, and choose smooth boundary-compatible $g_q\to g$ in $\mathfrak Z_T^m$. Classical linear parabolic theory \cite{L1996} gives a smooth solution $f_q$ of \eqref{eqmcbd} with coefficient $\A[\nabla g_q]$ and initial value $f_{0,q}$ on a common interval $[0,T]$: the preceding a priori estimate is uniform in $q$. For every $\tau>0$, compactness of bounded subsets of the relevant H\"older spaces on $[\tau,T]$ yields a subsequence converging to a solution $f$ of \eqref{eqmcbd}; the uniform $Z_T^m$ bound passes to the limit by lower semicontinuity. Moreover, $f(t)\to f_0$ in $L^\infty(\Omega)$ as $t\downarrow0$, while $\nabla f(t)\rightharpoonup^*\nabla f_0$ in $L^\infty(\Omega)$. Thus the construction gives the asserted initial trace but does not use the maximum principle to claim strong $W^{1,\infty}$ continuity at $t=0$.
	    
	    Now we prove the contraction property. Let $g_1,g_2\in\mathcal{Z}_{T,\phi}^{m,\sigma}$, and $f_1=\mathcal{S}g_1$, $f_2=\mathcal{S}g_2$, and denote $\mathbf{f}=f_1-f_2$, $\mathbf{g}=g_1-g_2$, $\mathbf{A}=\A[\nabla g_1]-\A[\nabla g_2]$, then we can write the equation of $\mathbf{f}$ as
 	\begin{equation*}
 		\begin{aligned}
 			&\partial_t\mathbf{f}-\A[\nabla g_1]:\nabla^2\mathbf{f}=\mathbf{A}:\nabla^2f_2,\ \ \ \text{in}\ [0,T]\times \Omega,\\
 			&\mathbf{f}|_{t=0}=0,\ \ \ \text{in}\ \Omega,\\
 			&\mathbf{f}=0,\ \ \text{on}\ [0,T]\times\partial \Omega.
 		\end{aligned}
 	\end{equation*}
 	For interior estimates we can just denote $\mathbf{f}_{in}=\chi_0\mathbf{f}$ and we can write the equation of $\mathbf{f}_{in}$ as
 	\begin{equation*}
 		\begin{aligned}
 			&\partial_t\mathbf{f}_{in}-\A[\nabla g_1]:\nabla^2\mathbf{f}_{in}=\mathbf{A}:\nabla^2f_{2,in}+\chi_0\A[\nabla g_1]:\nabla^2\mathbf{f}-\A[\nabla g_1]:\nabla^2\mathbf{f}_{in}:=\mathbf{F}_{in},\ \ \ \text{in}\ [0,T]\times \mathbb{R}^d,\\
 			&\mathbf{f}_{in}|_{t=0}=0,\ \ \ \text{in}\ \mathbb{R}^d.
 		\end{aligned}
 	\end{equation*}
 	Apply Theorem~\ref{propb=0} to obtain the interior estimate
 	\begin{equation*}
 		\begin{aligned}
	 		&\|\mathbf{f}_{in}\|_{Z_{T,1}^m}
	 		\lesssim \sup_{0<t\leq T}\left(t^{\frac{1+\kappa}{2}}
	 		\|\mathbf{F}_{in}(t)\|_{\dot C^{\kappa}}
	 		+t^{\frac{2m+1+\kappa}{2}}
	 		\|\mathbf{F}_{in}(t)\|_{\dot C^{2m+\kappa}}\right) \\
            &\quad\lesssim (1+T)^m(\|\mathbf{f}\|_{Z_T^m}+\|\mathbf{g}\|_{Z_T^m})\\
            &\qquad\times(\|f_1-\phi\|_{Z_T^m}+\|f_2-\phi\|_{Z_T^m}
            +\|g_1-\phi\|_{Z_T^m}+\|g_2-\phi\|_{Z_T^m}
            +T^{\frac{\kappa}{2}}\|\phi\|_{C^{2m+3}})\\
				&\qquad\times(1+\|f_1-\phi\|_{Z_T^m}+\|f_2-\phi\|_{Z_T^m}
				+\|g_1-\phi\|_{Z_T^m}+\|g_2-\phi\|_{Z_T^m}
				+T^{\frac{\kappa}{2}}\|\phi\|_{C^{2m+3}}
				\|\chi_0\|_{C^{2m+3}})^{2m+4}\\
	 		&\quad\lesssim (1+T)^m
	 		(\sigma+T^{\frac{\kappa}{2}}\|\phi\|_{C^{2m+3}})
	 		(1+\sigma+T^{\frac{\kappa}{2}}\|\phi\|_{C^{2m+3}}
	 		\|\chi_0\|_{C^{2m+3}})^{2m+4}\\
	 		&\qquad\times(\|\mathbf{f}\|_{Z_T^m}+\|\mathbf{g}\|_{Z_T^m}).
 		\end{aligned}
 	\end{equation*}
	 	Since $\|\mathbf{f}_{in}\|_{Z_{T,1}^m}\leq \|\mathbf{f}\|_{Z_T^m}$,
	 	we can take $\sigma$ and $T$ small enough to ensure
 	\begin{equation*}
		\|\mathbf{f}_{in}\|_{Z_{T,1}^m}\leq\frac{1}{2^{10dm}}\|\mathbf{g}\|_{Z_T^m}.
	\end{equation*}
 	Now we only need to consider the boundary case. Consider the system \eqref{eqbdref} and \eqref{deftilF}, and denote 
    \begin{equation*}
        \begin{aligned}
           &f_{i,l}=(\chi_if_l)\circ\Phi_i\circ\Theta_i,\quad l=1,2\\
	           &\tilde g_{i,l}=(\tilde \chi_ig_l)\circ\Phi_i,\quad l=1,2\\
           &\mathbf{f}_i=f_{i,1}-f_{i,2},\quad \tilde{\mathbf{g}}_i=\tilde g_{i,1}-\tilde g_{i,2}\\
	           &\tilde{\A}^l=\tilde{\A}[\nabla\tilde{g}_{i,l}(\nabla\Phi_i)^{-1}],\quad\tilde{\A}_0=\tilde{\A}[\nabla\tilde{\phi}_i(\nabla\Phi_i)^{-1}](0),\quad G^l_i=G_i(f_l,g_l), \quad R^l_i=\mathcal R_i[f_l,g_l],\quad l=1,2, \\
	           &\tilde{\mathbf{A}}=\tilde{\A}^1-\tilde{\A}^2,\quad\mathbf{G}_i=G^1_i-G^2_i,\quad\mathbf{R}_i=R^1_i-R^2_i,\\
	           &\tilde{\B}^l=\Theta_i^{-1}
	           \bigl((\tilde{\A}^l-\tilde{\A}_0)\circ\Theta_i\bigr)
	           \Theta_i^{-\top},\quad
	           \tilde{\mathbf{B}}=\tilde{\B}^1-\tilde{\B}^2.
        \end{aligned}
    \end{equation*} 
    Then we can write the equation of $\mathbf{f}_i$ as 
 	\begin{equation}\label{eqcaph}
 		\begin{aligned}
 			&\partial_t\mathbf{f}_i-\Delta\mathbf{f}_i=\tilde{\mathbf{F}}_i,\ \ \ \text{in}\ [0,T]\times \mathbb{R}_+^d,\\
 			&\mathbf{f}_i|_{t=0}=0,\ \ \ \text{in}\ \mathbb{R}_+^d,\\
 			&\mathbf{f}_i=0,\ \ \text{on}\ [0,T]\times\partial \mathbb{R}_+^d,
 		\end{aligned}
 	\end{equation}
 	with 
 	\begin{equation*}
 		\begin{aligned}
 			&\tilde{\mathbf{F}}_i=(\mathbf{G}_i\circ\Phi_i+\mathbf{R}_i)\circ\Theta_i+\sum_{1\leq l,j\leq d}\tilde{\mathbf{B}}_{lj}\partial_{lj}f_{i,2}+\sum_{1\leq l,j\leq d}\tilde{\B}^1_{lj}\partial_{lj}\mathbf{f}_i,
 			\end{aligned}
 	\end{equation*}
    where we will simply denote $f=(f_1,f_2)$, $g=(g_1,g_2)$. Then by direct calculation as in Lemma~\ref{lem4.3}, one has
    \begin{equation}\label{eq4.45}
        \begin{aligned}
            &\sup_{0<t\leq T}\left(t^{\frac{1}{2}}\|\tilde{\mathbf{F}}_i(t)\|_{L^\infty}
            +t^{\frac{1+\kappa}{2}}\|\tilde{\mathbf{F}}_i(t)\|_{C^\kappa}\right)
            \lesssim (1+T)^3(\|\mathbf{g}\|_{Z_T^m}+\|\mathbf{f}\|_{Z_T^m})\\
            &\quad\quad\times\left(\eps+\|g-\phi\|_{Z_T^m}+\|f-\phi\|_{Z_T^m}
            +T^{\frac{\kappa}{2}}\|\phi\|_{C^{3}}\right)\\
            &\quad\quad\times\left(1+\|g-\phi\|_{Z_T^m}+\|f-\phi\|_{Z_T^m}
            +T^{\frac{\kappa}{2}}\|\phi\|_{C^{3}}\right)^3,\\
            &\sum_{n\leq 2m+1}\sup_{0<t\leq T}
            t^{\frac{n+\kappa}{2}}\|\nabla_{x'}\nabla^{n}\tilde{\mathbf{F}}_i(t)\|_{C^{\kappa}}
            \lesssim (1+T)^{m}(\|\mathbf{g}\|_{Z_T^m}+\|\mathbf{f}\|_{Z_T^m})\\
            &\quad\quad\times\left(\eps+\|g-\phi\|_{Z_T^m}+\|f-\phi\|_{Z_T^m}
            +T^{\frac{\kappa}{2}}\|\phi\|_{C^{2m+3}}\right)\\
            &\quad\quad\times\left(1+\|g-\phi\|_{Z_T^m}+\|f-\phi\|_{Z_T^m}
            +T^{\frac{\kappa}{2}}\|\phi\|_{C^{2m+3}}\right)^{2m+3}.
        \end{aligned}
    \end{equation}
    We only need to estimate the spatial derivatives, since time derivatives can be obtained from spatial derivatives and \eqref{eqcaph}. Apply Lemma~\ref{maincor} to \eqref{eqcaph} with $F^1=\tilde{\mathbf{F}}_i$, $F^2=0$, and use \eqref{eq4.45} and the fact that $\|(f-\phi,g-\phi)\|_{Z_T^m}\leq\sigma$, we get the lower-order estimates
    \begin{equation}\label{eq4.82}
        \|\mathbf{f}_i\|_{Z_{T,2}^{0}}\leq \frac{1}{2^{10dm}}\left(\|\mathbf{g}\|_{Z_T^m}+\|\mathbf{f}\|_{Z_T^m}\right),
    \end{equation}
	   provided $\eps,\sigma,T$ are sufficiently small. For higher-order tangential derivatives, the argument used for \eqref{esttg1.3}, together with \eqref{eq4.43} and \eqref{eq4.45}, gives
    \begin{equation}\label{eq4.46}
        \begin{aligned}
        &\sum_{l\leq 2m+1}\sup_{0<t\leq T}
        t^{\frac{l+\kappa}{2}}\|\nabla_{x'}^l\nabla \mathbf{f}_i(t)\|_{C^{\kappa}}\\
        &\quad\lesssim \sum_{l\leq 2m+1}\sup_{0<t\leq T}
        \left(t^{\frac{1+\kappa}{2}}\|\tilde{\mathbf{F}}_i(t)\|_{C^{\kappa}}
        +t^{\frac{1+l+\kappa}{2}}\|\nabla_{x'}^l\tilde{\mathbf{F}}_i(t)\|_{C^{\kappa}}\right) \\
        &\quad\lesssim (1+T)^m
        \left(\eps+\|g-\phi\|_{Z_T^m}
        +T^{\frac{\kappa}{2}}(1+\|\phi\|_{C^{2m+3}})\right)\\
        &\qquad\times(1+\|g-\phi\|_{Z_T^m}
        +T^{\frac{\kappa}{2}}\|\phi\|_{C^{2m+3}})^{2m+3}
        (\|\mathbf{f}\|_{Z_T^m}+\|\mathbf{g}\|_{Z_T^m}).
            \end{aligned}
    \end{equation}
	For higher-order normal derivatives, as in the a priori estimate, set
	\[
	\tilde{\mathbf{f}}'_i:=\partial_d^2\mathbf{f}_i+\Delta_{x'}\mathbf{f}_i+\tilde{\mathbf{F}}_i.
	\]
which satisfies the equation
\begin{equation*}
    \begin{aligned}
	        &\partial_t\tilde{\mathbf{f}}_i'-\Delta\tilde{\mathbf{f}}_i'=\partial_t\tilde{\mathbf{F}}_i, \quad\text{in }(0,T]\times\mathbb{R}_+^d,\\
	        &\tilde{\mathbf{f}}_i'=0, \quad\text{on }(0,T]\times\partial\mathbb{R}_+^d.
    \end{aligned}
\end{equation*}
Decompose the new forcing term $\partial_t\tilde{\mathbf{F}}_i$ to the form like $\mathbf{G}^1_i+\partial_d\mathbf{G}^2_i$, with
\begin{align*}
	    &\mathbf{G}^1_i=\partial_t\left((\mathbf{G}_i\circ\Phi_i+\mathbf{R}_i)\circ\Theta_i+\sum_{l\neq d\ \text{or}\ j\neq d}\tilde{\mathbf{B}}_{lj}\partial_{lj}f_{i,2}+\sum_{l\neq d\ \text{or}\ j\neq d}\tilde{\B}^1_{lj}\partial_{lj}\mathbf{f}_i\right)\\
	    &\quad\quad\quad+\partial_t\tilde{\mathbf{B}}_{dd}\partial_{dd}f_{i,2}+\partial_t\tilde{\B}_{dd}^1\partial_{dd}\mathbf{f}_i-\partial_d\tilde{\mathbf{B}}_{dd}\partial_{d}\partial_tf_{i,2}-\partial_d\tilde{\B}_{dd}^1\partial_d\partial_t\mathbf{f}_i, \\
    &\mathbf{G}_i^2=\tilde{\mathbf{B}}_{dd}\partial_{d}\partial_tf_{i,2}+\tilde{\B}_{dd}^1\partial_d\partial_t\mathbf{f}_i
\end{align*}
as in \eqref{defFG}. Then for $f,g\in\mathcal{Z}_{T,\phi}^{m,\sigma}$, we can use the same methods as Lemma~\ref{lem4.3} to get the estimates
\begin{equation*}
    \mathbf{C}_{m-1}^1(\mathbf{G}^1_i,\mathbf{G}^2_i)\lesssim (\eps+\sigma+T^{\frac{\kappa}{2}}\|\phi\|_{C^{2m+3}})(1+\sigma+T^{\frac{\kappa}{2}}\|\phi\|_{C^{2m+3}})^{2m+3}(1+T)^m\left(\|\mathbf{f}\|_{Z_T^m}+\|\mathbf{g}\|_{Z_T^m}\right).
\end{equation*}
Since $\tilde{\mathbf{f}}_i'(0) $ is not well-defined, to be precise, for any $t>0$, we take $t_0=\frac{t}{2}$, and apply $\tilde{\mathbf{f}}_i'(t_0)$ as the initial time, and we can see that 
\begin{align*}
	    &t\|\tilde{\mathbf{f}}_i'(t)\|_{C^1}+\sum_{n+2k+2l\leq 2m-1}t^{1+\frac{n+2k+2l+\kappa}{2}}\|\nabla_{x'}^n\partial_d^{2k}\nabla\partial_t^l\tilde{\mathbf{f}}_i'(t)\|_{C^\kappa}\\
    &\qquad\approx(t-t_0)\|\tilde{\mathbf{f}}_i'(t)\|_{C^1}+\sum_{n+2k+2l\leq 2m-1}(t-t_0)^{1+\frac{n+2k+2l+\kappa}{2}}\|\nabla_{x'}^n\partial_d^{2k}\nabla\partial_t^l\tilde{\mathbf{f}}_i'(t)\|_{C^\kappa}
\end{align*}
Additionally,
\begin{align*}
	    &\sup_{\tau\in[t_0,T]}\sum_{2i+j\leq 2m-2}\left((\tau-t_0)^{\gamma+i+\frac{1+j}{2}}\|\partial_\tau^i\nabla^{j}\mathbf{G}_i^{1}(\tau)\|_{L^\infty}+(\tau-t_0)^{\gamma+i+\frac{1+j+\kappa}{2}}\|\partial_\tau^i\nabla^{j}(\mathbf{G}_i^{1},\nabla_{x'}\mathbf{G}_i^2)(\tau)\|_{C^\kappa}\right.\\
	    &\quad\quad\left.+(\tau-t_0)^{\gamma+i+\frac{j}{2}}\|\partial_\tau^i\nabla^{j}\mathbf{G}_i^2(\tau)\|_{L^\infty}+(\tau-t_0)^{\gamma+i+\frac{j+\kappa}{2}}\|\partial_\tau^i\nabla^{j}\mathbf{G}_i^2(\tau)\|_{C^\kappa}\right)\lesssim \mathbf{C}_{m-1}^1(\mathbf{G}_i^1,\mathbf{G}_i^2)(T).
\end{align*}
Then we apply \eqref{eq4.45}, \eqref{eq4.82} and Lemma~\ref{maincor} to get 
\begin{equation*}
    \begin{aligned}
		        &\|\tilde{\mathbf{f}}_i'\|_{Z_{T,2}^{m-1,1}}\lesssim \sup_{\tau\in(0,T]}\tau\|\tilde{\mathbf{f}}_i'(\tau)\|_{C^1}+ \mathbf{C}_{m-1}^1(\mathbf{G}^1_i,\mathbf{G}^2_i)(T)\\
        &\quad\lesssim \delta\|\mathbf f_i\|_{Z_{T,2}^m}
        +C_\delta\|\mathbf f_i\|_{Z_{T,2}^0}
        +(\eps+\sigma+T^{\frac{\kappa}{2}}\|\phi\|_{C^{2m+3}})(1+\sigma+T^{\frac{\kappa}{2}}\|\phi\|_{C^{2m+3}})^{2m+3}(1+T)^m\left(\|\mathbf{f}\|_{Z_T^m}+\|\mathbf{g}\|_{Z_T^m}\right).
    \end{aligned}
\end{equation*}
Choose $\delta$ sufficiently small and use \eqref{eq4.82} to control
the $Z_{T,2}^0$ term. Finally, we use \eqref{eq4.46} to exclude the effect of $\Delta_{x'}\mathbf{f}_i+\tilde{\mathbf{F}}_i$, and obtain
 	\begin{equation*}
 		\begin{aligned}
			\|\mathbf{f}_i\|_{Z_{T,2}^m}\leq C_7(\eps+\sigma+T^{\frac{\kappa}{2}}\|\phi\|_{C^{2m+3}})(1+\sigma+T^{\frac{\kappa}{2}}\|\phi\|_{C^{2m+3}})^{2m+3}(1+T)^m\left(\|\mathbf{f}\|_{Z_T^m}+\|\mathbf{g}\|_{Z_T^m}\right).
 		\end{aligned}
 	\end{equation*}
 	So by taking $\eps$, $\sigma$ and $T$ small enough, we can prove
 	\begin{equation*}
		\|\mathbf{f}\|_{Z_T^m}\leq \frac{1}{2} \|\mathbf{g}\|_{Z_T^m},
	\end{equation*}
	This completes the proof of Theorem~\ref{mcbdglo}(i).
 \end{proof}
 \begin{remark}
 	The compatibility condition $u_0|_{\partial \Omega}=0$ is necessary. For a counterexample,
 	\begin{equation*}
 		\begin{aligned}
 			&    \partial_tu-\partial_{x}^2 u=0,\quad \text{in} \ [0,T]\times[0,\infty),\\
 			&u(0,x)=1, \quad \text{on}\ [0,\infty),\\
 			&  u(t,0)=0,\quad\forall t\geq 0,
 		\end{aligned}
 	\end{equation*}
	although the initial datum is smooth. The Dirichlet heat-kernel formula gives
 	\begin{equation*}
 		u(t,x)=\int_{0}^\infty (K(t,x-y)-K(t,x+y))dy=2\int_0^xK(t,z)dz,
 	\end{equation*}
 	with
 	\begin{equation*}
 		K(t,x)=\frac{1}{\sqrt{4\pi t}}e^{-\frac{|x|^2}{4t}}.
 	\end{equation*}
 	So we take derivative to obtain
 	\begin{equation*}
 		u'(t,x)=2\frac{1}{\sqrt{4\pi t}}e^{-\frac{x^2}{4t}},
 	\end{equation*}
	which implies
 	\begin{equation*}
	    \|u\|_{L^\infty(0,T;\dot C^1)}\sim \left\|\frac{u(0,x)}{x}\right\|_{L^\infty}.
 	\end{equation*}
 	In fact one can see that $\partial_x^{2n}u(t,x)|_{x=0}=0$ and $\partial_x^{2n}u(t,x)\sim cx$ near boundary for any $n\in\mathbb{N}_+$ and $t>0$. But $\|\frac{u_0(x)}{x}\|_{L^\infty}$ cannot be controlled by $\|u_0\|_{\dot C^1}$ unless $u(0,0)=0$.
 \end{remark}
 \begin{remark}\label{mcbdrmklt}
	As a consequence of Theorem~\ref{mcbdglo}(i), there are structural constants $\varepsilon_{*,0},T_0,C>0$ such that, if $\|f_0\|_{W^{1,\infty}(\Omega)}\leq\varepsilon_*\leq\varepsilon_{*,0}$, then one may take $\phi\equiv0$ and obtain a unique solution of \eqref{eqmcbd} on $[0,T_0]$ satisfying $\|f\|_{Z_{T_0}^m}\leq C\varepsilon_*$.
 \end{remark}
 \subsection{Global well-posedness}\label{sec4.3}
 This subsection extends the local solution globally for small initial data and proves its exponential decay by coupling Schauder control with energy estimates.
 \begin{proof}[Proof of Theorem~\ref{mcbdglo}, ii)]
	\medskip\noindent\textbf{Step 1: Large-time existence.}
We first iterate the local result with the fixed profile $\phi=0$. The local constants $T_0$ and $C$ are structural, while the time weights are reset on each overlapping interval of length $T_0$. This gives existence up to any prescribed finite time, provided the initial norm is chosen sufficiently small depending on that time. We will use this only to reach the fixed structural time $T_*$.
 	In this step, we assume $T\gg 1$, and we will give the global estimates of solution $f$ in $[0,T]$ for sufficiently small initial data. By Remark~\ref{mcbdrmklt}, if $\|f_0\|_{ W^{1,\infty}}=\varepsilon_*\leq C^{-1}\sigma$, we can take $\phi=0$ to prove the local existence with initial data $f_0$ in the time interval $[0,T_0]$ for some uniform constant $T_0\leq 1$, and the solution $f$ satisfying 
 	\begin{equation*}
 		\|f\|_{Z_{T_0}^m}\leq C\varepsilon_*\leq\sigma.
 	\end{equation*}
	Next, if we take $\varepsilon_*$ small enough that $2^{m+\frac{\kappa}{2}}C\varepsilon_*\leq C^{-1}\sigma$, then by Remark~\ref{mcbdrmklt} and uniqueness, we can take $f(\frac{T_0}{2})$ as new initial data and extend the solution to $[0,\frac{3T_0}{2}]$. Then we can prove that
    \begin{equation*}
        \sup_{t\in(\frac{T_0}{2},\frac{3T_0}{2}]}\|f(t)\|_{C^1}+\sup_{t\in(\frac{T_0}{2},\frac{3T_0}{2}]}
        (t-\frac{T_0}{2})^{m+\frac{\kappa}{2}}\|f(t)\|_{\dot C^{2m+1+\kappa}}\leq C\|f(\frac{T_0}{2})\|_{C^1}\leq C^2\varepsilon_*,
    \end{equation*}
	    Hence, on the extension interval $[T_0,\frac{3T_0}{2}]$, where $t-\frac{T_0}{2}\geq \frac{t}{2}$, we have
    \begin{equation*}
        \sup_{t\in(T_0,\frac{3T_0}{2}]}t^{m+\frac{\kappa}{2}}\|f(t)\|_{\dot C^{2m+1+\kappa}}\leq 2^{m+\frac{\kappa}{2}}C\|f(\frac{T_0}{2})\|_{C^1}\leq 2^{m+\frac{\kappa}{2}}C^2\varepsilon_*,
    \end{equation*}
    and
 	\begin{equation*}
		\|f\|_{Z^m_{\frac{3T_0}{2}}}\leq 2^{m+\frac{\kappa}{2}}C^{2}\varepsilon_*\leq\sigma.
 	\end{equation*}
	Similarly, if $2^{m+\frac{\kappa}{2}}3^{m+\frac{\kappa}{2}}C^2\varepsilon_*\leq C^{-1}\sigma$, then we can take $f(T_0)$ as the new initial data and get the a priori estimates of $f$ on $[0,2T_0]$,
    \begin{equation*}
		\|f\|_{Z^m_{2T_0}}\leq (2\times 3)^{m+\frac{\kappa}{2}}C^{3}\varepsilon_*\leq\sigma.
 	\end{equation*}
	    Here $t-T_0\geq \frac{t}{3}$ on $[\frac{3T_0}{2},2T_0]$. Inductively, set $n=\lfloor T/T_0\rfloor+1$. If $\varepsilon_*$ is small enough that $((2n-1)!)^{m+\frac{\kappa}{2}}C^{2n-1}\varepsilon_*\leq C^{-1}\sigma$, then Remark~\ref{mcbdrmklt} permits $2n-1$ repetitions and gives
 	\begin{equation*}\label{eq4.76}
		\|f\|_{Z_{nT_0}^m}\leq ((2n-1)!)^{m+\frac{\kappa}{2}}C^{2n-1}\varepsilon_*.
	\end{equation*}
	By Stirling's formula, the condition $((2n-1)!)^{m+\frac{\kappa}{2}}C^{2n-1}\varepsilon_*\leq C^{-1}\sigma$ leads to
 	\begin{equation}\label{z1}
 		e^{C_0T\log T}\varepsilon_*\leq \sigma,
 	\end{equation}
 	for some $C_0$ depending on $C,T_0,m,\kappa$.
	Then \eqref{eqmcbd} admits a unique solution on $[0,T]$ such that
	\begin{equation}\label{eq4.80}
		\|f\|_{Z_T^m}\lesssim e^{C_0T\log T}\varepsilon_*\lesssim\sigma
	\end{equation}
	provided that \eqref{z1} holds.
 	Let $T_*\geq 2$ be a uniform large constant satisfying
 	\begin{equation}\label{mcbdgi}
		C_1e^{-\frac{1}{d+5}\lambda t}(1+t)^{\beta_{m,\kappa}}\leq\frac{\iota}{10},\quad \forall t\geq T_*, 
 	\end{equation}
 	where $\iota\leq\sigma$, $\beta_{m,\kappa}$, $C_1$ and $\lambda$ will be decided later. Take $\varepsilon_*$ small enough such that
 	\begin{equation}\label{mcbddefs}
		e^{C_0(T_*+1)\log (T_*+1)}\varepsilon_*\leq \frac{\iota}{100}.
 	\end{equation}
	Moreover,  \eqref{eqmcbd} admits a unique solution on $[0,T_*+1]$ satisfying $\|f(T_*)\|_{C^2}\leq \frac{\iota}{50}$. So, we can  define
 	\begin{equation}\label{bddmct*}
 		T^*=\sup\left\{t>T_*:\sup_{t'\in[T_*,t]}\|f(t')\|_{C^2}\leq\iota\right\}.
 	\end{equation}
 	We want to prove by contradiction that $T^*=\infty$.
 	\\
 	\medskip\noindent\textbf{Step 2: Exponential decay.}
 	We want to prove the exponential decay of $L^2$ norm, which is
 	\begin{equation}\label{mcbdexpd}
 		\|f(t)\|_{L^2}\leq e^{-\lambda(t-1)}\|f(1)\|_{L^2}, \quad T^*\geq t\geq 1,
 	\end{equation}
	 	for some $\lambda>0$. Test \eqref{eqmcbd} against $f$. For a smooth solution, integration by parts gives
	 	\[
	 	\int_\Omega f\,\A_{ij}(\nabla f)\partial_{ij}f\,dx
	 	=\int_{\partial\Omega}f\,\A_{ij}(\nabla f)\partial_jf\,n_i\,dS
	 	-\int_\Omega \A_{ij}(\nabla f)\partial_i f\,\partial_j f\,dx
	 	-\int_\Omega f\,\partial_i\A_{ij}(\nabla f)\partial_jf\,dx.
	 	\]
	 	The boundary term vanishes because $f|_{\partial\Omega}=0$. Hence
 	\begin{equation*}
		\begin{aligned}
			\frac{1}{2}\frac{d}{dt}\|f(t)\|_{L^2}^2+\int_{\Omega} \A_{ij}(\nabla f)\partial_if\partial_jf\,dx
            &=-\int_\Omega f\,\partial_i\A_{ij}(\nabla f)\,\partial_jf\,dx\\
			&\leq C\|f(t)\|_{L^\infty}\|\nabla^2f(t)\|_{L^\infty}\|\nabla f(t)\|_{L^2}^2\\
			&\leq C_2(\Omega) \iota^2\|\nabla f(t)\|_{L^2}^2,\quad T^*>t>1,
		\end{aligned}
	\end{equation*}
	 	where $C_2(\Omega)$ depends only on $\Omega$, and we used
	 	\eqref{bddmct*}. Uniform ellipticity in the small-gradient regime gives
	 	a constant $C_3>0$, independent of $t$, such that
	 	\[
	 	\int_{\Omega} \A_{ij}(\nabla f)\partial_if\partial_jf\,dx
	 	\geq C_3\|\nabla f\|_{L^2}^2.
	 	\]
	 	If $\iota$ is sufficiently small that
 	\begin{equation}\label{eq4.57}
 		C_2(\Omega)\iota^2\leq\frac{C_3}{2},
 	\end{equation}
	 	then
 	\begin{equation*}
 		\partial_t\|f(t)\|_{L^2}^2+C_3\|\nabla f(t)\|_{L^2}^2\leq 0.
 	\end{equation*}
	On $[1,T_*]$, the smallness needed above follows from
	\eqref{eq4.80} and \eqref{mcbddefs}; on $[T_*,T^*)$ it follows from
	\eqref{bddmct*}. Thus this energy inequality is valid for
	$1\leq t<T^*$. Poincar\'e's inequality
	$\|f\|_{L^2}\lesssim_{\Omega}\|\nabla f\|_{L^2}$ and Gronwall's
	inequality yield \eqref{mcbdexpd}, with $\lambda$ depending only on
	$\Omega$. To control its value at $t=1$, observe that
	\[
	 (\partial_t-\A_{ij}(\nabla f)\partial_{ij})|f|^2
	 =-2\A_{ij}(\nabla f)\,\partial_i f\cdot\partial_j f\leq0.
	\]
	The parabolic maximum principle and the homogeneous boundary condition
	therefore give
	$\|f(1)\|_{L^2}\leq |\Omega|^{1/2}\|f_0\|_{L^\infty}$.
	No unweighted $C^2$ bound at $t=0$ is required.\\
 	\medskip\noindent\textbf{Step 3: Long-time behavior.}
We prove a polynomial bound at the $Z_T^1$ level.  This is
slightly stronger than the estimate needed in Step~4 and, more importantly,
allows the lower anisotropic norm to be treated without a circular argument.
Fix $K\geq1$ and $\beta_\kappa>0$, to be chosen below, and define
\begin{equation}\label{eq4.z1bootstrap}
 \widetilde T=\sup\left\{T\in[2,T^*):
 \|f\|_{Z_T^1}\leq K(1+T)^{\beta_\kappa}\right\}.
\end{equation}
The finite-time estimate \eqref{eq4.80} and \eqref{mcbddefs} initialize
this bootstrap after increasing $K$ if necessary.  We show that the bound in
\eqref{eq4.z1bootstrap} improves by a factor $1/2$ on every
$2\leq T<\widetilde T$, which implies that $\widetilde T=T^*$.

We have
  	\begin{equation*}
 		\begin{aligned}
 			&\partial_tf(t,x) -\Delta f(t,x) =F(t,x) ,\quad\text{in}\ [0,T^*]\times\Omega,\\
 			&f(t,x)=0,\quad\text{on}\ [0,T^*]\times\partial\Omega,\\
 			&f(0,x)=f_0(x),
 		\end{aligned}
 	\end{equation*}
 	with
 	\begin{equation*}
 		F = (\A[\nabla f]:\nabla^2f-\Delta f)= ((\A[\nabla f]-\A[0]):\nabla^2f).
 	\end{equation*}
    Apply Remark~\ref{rmk4.4}, \eqref{bddmciorh}, \eqref{eq4.36}, \eqref{eq4.39} and \eqref{estrem1.3} with $\phi=0$ and $g=f$, and with $\eps$ small enough, to obtain
    \begin{equation}\label{eq4.74}
        \begin{aligned}
            &\|f\|_{Z_T^1}\leq C_4\Bigl[
            \|f_0\|_{W^{1,\infty}}
            +(\|f\|_{Z_T^{1,\mathrm{low}}}+\eps)
            (1+\|f\|_{Z_T^{1,\mathrm{low}}})^2\|f\|_{Z_T^1}\\
            &\qquad+\|f\|_{Z_T^{1,\mathrm{low}}}
            (\eps+\|f\|_{Z_T^1})
            (1+\|f\|_{Z_T^{1,\mathrm{low}}})^2\Bigr]\\
            &\quad+C_4T^{\frac{\kappa}{2}}(1+T)^2
            (1+\|f\|_{Z_T^{1,\mathrm{low}}})^2
            \|\tilde{\chi}\|_{C^3}\|f\|_{Z_T^{1,\mathrm{low}}}.
        \end{aligned}
    \end{equation}
The cutoff term in \eqref{eq4.74} is estimated, as in \eqref{eq4.75}, by
an interpolation with an absorption parameter $\delta\in(0,1)$:
    \begin{equation}\label{eq4.75}
		\begin{aligned}
		C_4T^{\frac{\kappa}{2}}(1+T)^2\|f\|_{Z_T^{1,\mathrm{low}}}
		&\leq C_4' T^{\frac{\kappa}{2}}(1+T)^2
		\|f\|_{Z_T^1}^{\alpha_\kappa}
		\|f\|_{L^\infty(0,T;W^{1,\infty})}^{1-\alpha_\kappa}\\
		&\leq \delta\|f\|_{Z_T^1}
		+C_{\delta}(1+T)^{\beta_{\kappa}}
		\|f\|_{L^\infty(0,T;W^{1,\infty})},
		\qquad \delta\in(0,1).
		\end{aligned}
    \end{equation}
for some $\alpha_\kappa\in(0,1)$ and a polynomial dependence of
$C_\delta$ on $\delta^{-1}$.

It remains to prove that $Z_T^{1,\mathrm{low}}$ is uniformly small on the
bootstrap interval.  Write $h_1(t)$ and $\ell_1(t)$ for the time-$t$
integrands in $Z_T^1$ and $Z_T^{1,\mathrm{low}}$.  The anisotropic
interpolation following the definition of the latter norm gives
\begin{equation}\label{eq4.pointlow}
 \ell_1(t)\leq \theta h_1(t)+C\theta^{-p}\|f(t)\|_{C^1},
 \qquad 0<\theta<1,
\end{equation}
for some $p>0$.  The mixed boundary derivatives cause no loss here because
the high and low norms contain the same tangential--normal multi-indices.
Terms containing time derivatives are first rewritten as spatial
derivatives using the equation.

For $T_*\leq t<\widetilde T$, the bootstrap assumption gives
$h_1(t)\leq K(1+t)^{\beta_\kappa}$.  Choose
\[
 \theta(t)=\frac{\eta}{2K}(1+t)^{-2\beta_\kappa},
\]
for  $\eta\ll C_4^{-1}$ is a small enough constant. The first term in \eqref{eq4.pointlow} is at most $\eta/2$.  Moreover,
interpolating between \eqref{mcbdexpd} and the high norm supplied by the
bootstrap gives constants $a,b>0$ such that
\[
 \|f(t)\|_{C^1}\leq C\|f(t)\|_{L^2}^{\gamma}\|f(t)\|_{C^2}^{1-\gamma}\leq C e^{-a\lambda t}(1+t)^b,
 \qquad T_*\leq t<\widetilde T,
\]
for some $a,b>0,\gamma\in(0,1)$. Since $C\theta(t)^{-p}$ grows only polynomially, we may enlarge $T_*$ so
that the second term in \eqref{eq4.pointlow} is at most $\eta/2$.  On
$(0,T_*]$, \eqref{eq4.80} and \eqref{mcbddefs} give the same smallness after
decreasing the initial norm.  Hence
\begin{equation}\label{eq4.z1lowsmall}
 \|f\|_{Z_T^{1,\mathrm{low}}}\leq\eta,
 \qquad 2\leq T<\widetilde T.
\end{equation}

Insert \eqref{eq4.z1lowsmall} into \eqref{eq4.74}, and use
\eqref{eq4.75} with a fixed sufficiently small absorption parameter.
After choosing $\eta,\eps,$ and $\sigma$ small, we obtain
\[
 \|f\|_{Z_T^1}\leq C\left[\|f_0\|_{W^{1,\infty}}
 +(1+T)^{M_\kappa}\right],
 \qquad 2\leq T<\widetilde T,
\]
for some $M_\kappa>0$.  Taking $\beta_\kappa>M_\kappa$ and then $K$
sufficiently large, together with the smallness of data $\|f_0\|_{W^{1,\infty}}$, yields the strict improvement
\begin{equation*}\label{eq4.59}
 \|f\|_{Z_T^1}\leq \frac K2(1+T)^{\beta_\kappa},
 \qquad 2\leq T<\widetilde T.
\end{equation*}
Thus $\widetilde T=T^*$.  Finally, \eqref{holembed}, the time weights in
$Z_T^1$, and $\kappa>3/4$ imply, after changing the polynomial exponent,
\begin{equation}\label{bdmcltu}
 \|\nabla^2f(T)\|_{\dot C^{3/4}}
 \leq C(1+T)^{\beta_\kappa},
 \qquad 2\leq T<T^*.
\end{equation}

 	\medskip\noindent\textbf{Step 4: Global existence.}
 	We prove further that $T^*=\infty$. If this is not true, \textit{i.e.} $T^*<\infty$, we denote 
 	\begin{align*}
 		&\|f(T^*)\|_{C^2}=\|f(T^*)\|_{L^\infty}+\|\nabla f(T^*)\|_{L^\infty}+\|\nabla^2 f(T^*)\|_{L^\infty}:=X_0+X_1+X_2.
 	\end{align*}
 	For $\iota$ satisfying \eqref{eq4.57}, using the exponential decay \eqref{mcbdexpd}, long-time estimate \eqref{bdmcltu}, the choice of $T_*$ in \eqref{mcbdgi}, and the Gagliardo--Nirenberg interpolation inequality
 	\begin{equation*}
 		\|\nabla^l f\|_{L^\infty} \leq C_7\|f\|_{L^2}^{a_l} \|\nabla^2f\|_{\dot C^{\frac{3}{4}}}^{1-a_l}, \quad l=0,1,2,\quad a_l=\frac{11-4l}{2d+11},
 	\end{equation*}
 	we get
 	\begin{align*}
		&X_0\leq C_7\|f(T^*)\|_{L^2}^{\frac{11}{2d+11}}\|\nabla^2f(T^*)\|_{\dot C^{\frac{3}{4}}}^{\frac{2d}{2d+11}}\leq C_{7,\Omega}e^{-\frac{11}{2d+11}\lambda (T^*-1)}(1+T^*)^{\beta_{\kappa}},\\
		&X_1\leq C_7\|f(T^*)\|_{L^2}^{\frac{7}{2d+11}}\|\nabla^2f(T^*)\|_{\dot C^{\frac{3}{4}}}^{\frac{2d+4}{2d+11}}\leq C_{7,\Omega}e^{-\frac{7}{2d+11}\lambda (T^*-1)}(1+T^*)^{\beta_{\kappa}},\\
		&X_2\leq  C_7\|f(T^*)\|_{L^2}^{\frac{3}{2d+11}}\|\nabla^2f(T^*)\|_{\dot C^{\frac{3}{4}}}^{\frac{2d+8}{2d+11}}\leq C_{7,\Omega}e^{-\frac{3}{2d+11}\lambda (T^*-1)}(1+T^*)^{\beta_{\kappa}}.
 	\end{align*}
	The displayed Gagliardo--Nirenberg inequality is the bounded-domain
	inequality on the smooth domain $\Omega$.  Equivalently, one may first apply a
	bounded extension operator for $C^{2,3/4}(\overline\Omega)$ and $L^2(\Omega)$;
	the zero extension is not used, since the Dirichlet condition alone does not
	make its second derivatives regular across $\partial\Omega$. We take $\iota$ satisfying \eqref{mcbdgi} and enlarge $C_1$ so that it also dominates $e^{\lambda}C_{7,\Omega}$; then
 	\begin{equation*}
 		X_0+X_1+X_2\leq \frac{\iota}{2},
 	\end{equation*}
By continuity and the local continuation criterion, this strict improvement contradicts the definition of $T^*$; hence $T^*=\infty$.\\
	\medskip\noindent\textbf{Step 5: Higher regularity.}
We only give a sketch, since the argument is the same
bootstrap as in Steps~1--4, now carried out at the $Z_T^m$ level.  No new
boundary issue occurs: $Z_{T,2}^m$ and $Z_{T,2}^{m,\mathrm{low}}$ contain
the same tangential--normal multi-indices, while the terms involving time
derivatives are reduced to spatial derivatives by the equation, exactly as
in Section~\ref{sec4.2}.

Applying the localized estimates of Section~\ref{sec4.2} with $\phi=0$ and
$g=f$, and using the tame product estimates in Remark~\ref{rmk4.4}, we
obtain schematically
\begin{equation}\label{eq4.highpoly}
 \|f\|_{Z_T^m}
 \leq C_m\left[
 \|f_0\|_{W^{1,\infty}}
 +(\eps+\|f\|_{Z_T^{m,\mathrm{low}}})\|f\|_{Z_T^m}
 +(1+T)^{M_m}\|f\|_{Z_T^{m,\mathrm{low}}}
 \right],
\end{equation}
where the highest-order derivative occurs in at most one factor.

To close \eqref{eq4.highpoly}, repeat the first-failure argument of
Step~3.  Assume on a bootstrap interval that
\[
 \|f\|_{Z_S^m}\leq K_m(1+S)^{\beta_{m,\kappa}}
 \quad\text{for every }1\leq S\leq T.
\]
Writing $h_m(t)$ and $\ell_m(t)$ for the time-$t$ integrands in $Z_T^m$
and $Z_T^{m,\mathrm{low}}$, respectively, the same pointwise anisotropic
interpolation as in \eqref{eq4.pointlow} gives
\[
 \ell_m(t)\leq \theta h_m(t)
 +C_m\theta^{-p_m}\|f(t)\|_{C^1}.
\]
Choose
\[
 \theta(t)=\frac{\eta}{2K_m}(1+t)^{-2\beta_{m,\kappa}}.
\]
The first term is at most $\eta/2$.  For the second term, interpolate the
$L^2$ decay \eqref{mcbdexpd} with the polynomial high norm supplied by the
bootstrap.  As in Step~3, the resulting exponential decay of
$\|f(t)\|_{C^1}$ dominates the polynomial growth of $\theta(t)^{-p_m}$.
After enlarging $T_*$ and decreasing the initial norm to handle
$0<t\leq T_*$, we obtain
\[
 \|f\|_{Z_T^{m,\mathrm{low}}}\leq\eta
\]
throughout the bootstrap interval.  Taking $\eta$ and $\eps$ sufficiently
small, the highest-order term in \eqref{eq4.highpoly} is absorbed.  Choosing
$\beta_{m,\kappa}>M_m$ and then $K_m$ sufficiently large gives a strict
improvement of the bootstrap bound.  Hence the first-failure time is
infinite and
\begin{equation*}\label{eq4.highgrowth}
 \|f\|_{Z_T^m}
 \leq C_m(1+T)^{\beta_{m,\kappa}},
 \qquad T\geq1.
\end{equation*}
This is precisely the higher-order analogue of Step~3; the tame estimate in
Remark~\ref{rmk4.4} is what prevents the appearance of two highest-order
factors.
Consequently, by \eqref{holembed},
\begin{equation*}\label{eq4.highholder}
 \|f(T)\|_{C^{2m+1+\kappa}(\overline\Omega)}
 \leq C_m(1+T)^{M_{m,\kappa}},
 \qquad T\geq1,
\end{equation*}
after changing the polynomial exponent.

On the other hand, \eqref{mcbdexpd} gives
\[
 \|f(T)\|_{L^2}
 \leq C_\Omega e^{-\lambda T}
 \|f_0\|_{W^{1,\infty}}.
\]
The bounded-domain Gagliardo--Nirenberg inequality yields
\begin{equation}\label{eq4.highinterpolation}
 \|\nabla^{2m}f(T)\|_{L^\infty}
 \leq C
 \|f(T)\|_{L^2}^{\gamma_m}
 \|f(T)\|_{C^{2m+1+\kappa}}^{1-\gamma_m},
\end{equation}
where
\[
 \gamma_m
 =
 \frac{1+\kappa}{2m+1+\kappa+d/2}\in(0,1).
\]
Combining the preceding estimates, we obtain
\[
 \|\nabla^{2m}f(T)\|_{L^\infty}
 \leq
 C_m e^{-\gamma_m\lambda T}
 (1+T)^{(1-\gamma_m)M_{m,\kappa}},
 \qquad T\geq1.
\]
Thus, for some $c_m>0$,
\[
 \sup_{T\geq1}
 e^{c_mT}\|\nabla^{2m}f(T)\|_{L^\infty}
 \leq C\bigl(\|f_0\|_{W^{1,\infty}},m,\Omega\bigr).
\]
Combining this estimate with the finite-time $Z^m$ estimate proves
\eqref{bddmcgloe}.

 \end{proof}
 \section{Appendix: Boundary identities}\label{sec5}
This section proves the boundary identities and compatibility formulas used to recover normal derivatives in the half-space estimates.
 \begin{lemma}\label{lem5.1}
	     Assume $f,g\in C^{2}(\overline{\mathbb{R}}^d_+)$ and $f|_{\partial\mathbb{R}_+^d}=\Delta f|_{\partial\mathbb{R}_+^d}=g|_{\partial\mathbb{R}_+^d}=0$. Then
     \begin{equation}\label{eq5.1}
         \A[\nabla g]:\nabla^2f=0,\quad\forall x\in\partial\mathbb{R}_+^d.
     \end{equation}
     Moreover, for any $u\in C^2(\overline{\mathbb R^d_+})$ and $x_0\in\partial\mathbb R^d_+$,
	 \begin{equation}\label{mcbdfma}
	 (\A[\nabla g]:\nabla^2 u)(x_0)
	 =\left(\Delta_{x'}u+\frac{\partial_{x_d}^2u}{1+|\partial_{x_d}g|^2}\right)(x_0).
	 \end{equation}
 \end{lemma}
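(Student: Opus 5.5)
The plan is to prove the pointwise identity \eqref{mcbdfma} first, and then deduce \eqref{eq5.1} from it.

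Fix $x_0\in\partial\mathbb R^d_+$. Since $g$ vanishes identically on $\{x_d=0\}$, all tangential derivatives vanish there: $\partial_j g^\alpha(x_0)=0$ for $j<d$ and every component $\alpha$. Hence
\[
\nabla g^\alpha(x_0)=\partial_dg^\alpha(x_0)\,e_d,
\]
and
\[
\mathrm{Id}+\sum_\alpha\nabla g^\alpha\otimes\nabla g^\alpha
=\mathrm{Id}+|\partial_dg(x_0)|^2\,e_d\otimes e_d
\]
at $x_0$, where $|\partial_d g|^2=\sum_\alpha|\partial_dg^\alpha|^2$. This matrix is diagonal, so its inverse is
\[
\A[\nabla g](x_0)=\mathrm{Id}-\frac{|\partial_dg|^2}{1+|\partial_dg|^2}\,e_d\otimes e_d,
\]
that is, $\A_{ij}=\delta_{ij}$ except $\A_{dd}=(1+|\partial_dg|^2)^{-1}$.

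Contracting this matrix with $\nabla^2u(x_0)$ kills the off-diagonal entries and gives
\[
\A[\nabla g]:\nabla^2u\,(x_0)=\Delta_{x'}u(x_0)+\frac{\partial_d^2u(x_0)}{1+|\partial_dg|^2},
\]
which is \eqref{mcbdfma}. The only point to check is that this holds for $C^2$ functions up to the boundary, so the one-sided derivatives at $x_0$ make sense.

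For \eqref{eq5.1}, apply \eqref{mcbdfma} with $u=f$. Because $f\equiv0$ on the boundary, $\Delta_{x'}f(x_0)=0$, since tangential derivatives of the trace vanish for $f\in C^2$ up to the boundary. The hypothesis $\Delta f(x_0)=0$ then forces
\[
\partial_d^2f(x_0)=\Delta f(x_0)-\Delta_{x'}f(x_0)=0.
\]
Both terms on the right of \eqref{mcbdfma} therefore vanish, which proves \eqref{eq5.1}.

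There is no real obstacle here. The only care needed is to justify that tangential derivatives of a $C^2$ function vanishing on $\{x_d=0\}$ vanish at boundary points, which follows by differentiating the trace along the boundary. For vector-valued $f$ the identity holds componentwise, since $\A[\nabla g]$ is a scalar-matrix coefficient acting on each component.
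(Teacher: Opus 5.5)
Your proposal is correct and follows the paper's proof: both observe that $g|_{\partial\mathbb R^d_+}=0$ forces $\nabla g^\alpha(x_0)\parallel e_d$, so that $\A[\nabla g](x_0)=\operatorname{diag}(1,\ldots,1,(1+|\partial_dg(x_0)|^2)^{-1})$, which gives \eqref{mcbdfma}. Both then combine $f|_{\partial\mathbb R^d_+}=\Delta f|_{\partial\mathbb R^d_+}=0$, via $\Delta_{x'}f(x_0)=0$, to obtain $\partial_d^2f(x_0)=0$ and hence \eqref{eq5.1}.
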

 \begin{proof}
     Since $f|_{\partial\mathbb{R}_+^d}=\Delta f|_{\partial\mathbb{R}_+^d}=0$, we can deduce that
     \begin{equation}\label{eq5.2}
         \partial_{d}^2f|_{\partial\mathbb{R}_+^d}=0.
     \end{equation}
     Furthermore, since $g|_{\partial\mathbb{R}_+^d}=0$, $\nabla g=(0,\ldots,0,\partial_dg)$ on the boundary. Thus, for each component $g^k$ and every $x_0\in\partial\mathbb{R}_+^d$, $\nabla g^k(x_0)$ is parallel to $e_d$. Consequently,
	 $\A[\nabla g](x_0)=\operatorname{diag}(1,\ldots,1,(1+|\partial_dg(x_0)|^2)^{-1})$, which proves \eqref{mcbdfma}. Together with \eqref{eq5.2}, this implies \eqref{eq5.1}.
 \end{proof}
 \begin{lemma}\label{bdcon}
 	Assume $f,g\in C^{2k+2}(\bar{\mathbb{R}}^d_+)$, and 
 	\begin{align*}
 	\Delta^j g=\Delta^j f=0,\ \ \ \ j=0,1,\cdots,k, \ \ \ x\in\partial\mathbb{R}^d_+.
 	\end{align*}
 	Then it holds
 	\begin{align}\label{Acommu}
 	\Delta^k\left(\A[\nabla g]:\nabla^2 f\right)- \A[\nabla g]:\nabla^2\Delta^k f=0,\ \ \ x\in\partial\mathbb{R}^d_+.
 	\end{align}
 \end{lemma}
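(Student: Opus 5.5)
The approach is a parity argument. I will show that, under the hypotheses, every term in the commutator $\Delta^k(\A[\nabla g]:\nabla^2f)-\A[\nabla g]:\nabla^2\Delta^kf$ is odd in $x_d$ after reflection, so it vanishes on $\{x_d=0\}$. Fix a component of $f$ and let $f^{\mathrm{odd}},g^{\mathrm{odd}}$ be the odd reflections across $\partial\mathbb R^d_+$. The conditions $\Delta^jf=\Delta^jg=0$ on the boundary for $j\le k$, together with $f=g=0$ there, give $\partial_d^{2j}f=\partial_d^{2j}g=0$ on $\partial\mathbb R^d_+$ for $j\le k$. I will prove this by induction on $j$. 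Write $\Delta^j=\sum_{i}\binom{j}{i}\Delta_{x'}^{j-i}\partial_d^{2i}$. Tangential derivatives of a boundary-vanishing quantity still vanish on the boundary, and the induction hypothesis kills every term with $i<j$, leaving $\partial_d^{2j}f=0$. Since all even normal derivatives up to order $2k$ vanish on the boundary, the odd reflections belong to $C^{2k+1}$, and in fact have the regularity needed to differentiate $2k+2$ times in the relevant combinations. Concretely, the Taylor expansion in $x_d$ at the boundary contains only odd powers up to order $2k+1$.

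Next I determine the parity of the coefficient. If $g$ is odd in $x_d$, then $\partial_{x'}g$ is odd and $\partial_dg$ is even. Write $\A[\nabla g]=(\mathrm{Id}+\sum_\alpha\nabla g^\alpha\otimes\nabla g^\alpha)^{-1}$. The entries of $\nabla g\otimes\nabla g$ indexed by $(i,j)$ with exactly one index equal to $d$ are odd, and all other entries are even. So $\A[\nabla g]$ has the same block parity: write $\A_{ij}=\sigma_i\sigma_j\times$(even), with $\sigma_d=-1$ and $\sigma_{i'}=1$. This holds because conjugating by $\mathrm{diag}(1,\dots,1,-1)$ commutes with matrix inversion. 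Similarly $\partial_{ij}f$ has parity opposite to $\sigma_i\sigma_j$ times odd, that is, $\partial_{ij}f$ is odd exactly when $\sigma_i\sigma_j=1$. Hence each product $\A_{ij}\partial_{ij}f$ is odd, so $\A[\nabla g]:\nabla^2 f$ is odd in $x_d$ for the reflected functions. The same reasoning with $\Delta^kf$, which is odd, shows that $\A[\nabla g]:\nabla^2\Delta^kf$ is odd. Applying $\Delta^k$ preserves oddness, so the whole commutator is an odd function of $x_d$ on $\mathbb R^d$.

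The main obstacle is regularity. The reflected functions are only guaranteed to be smooth enough up to a finite order, and one must check that the odd function appearing in the commutator is continuous across $\{x_d=0\}$; a continuous odd function vanishes there. The difficulty is that $\Delta^k(\A:\nabla^2 f)$ involves $2k+2$ derivatives of $f$ and $2k+1$ derivatives of $g$, but the reflection is guaranteed only in $C^{2k+1}$. So rather than reflecting, I will work directly on $\overline{\mathbb R^d_+}$ with one-sided Taylor expansions. Expanding the commutator by Leibniz, every surviving term is a product of a derivative of $\A[\nabla g]$ of order $\ge1$, taken to at most $2k+1$ total derivatives of $g$, and a derivative of $f$ of order at most $2k+1$. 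All of these are continuous up to the boundary. The parity bookkeeping then applies termwise: each term is a product of factors $\partial^{\beta}g^\alpha$ and $\partial^\gamma f$. For $|\beta|,|\gamma|\le 2k+1$, such a factor vanishes on the boundary whenever its normal order is even, by the first step. For each term I count normal derivatives mod 2, and show that at least one factor has an even number of normal derivatives. Equivalently, the total count of normal derivatives, plus the number of $f,g$ factors, has the parity that forces it. This counting is exactly the polynomial version of the oddness argument and finishes the proof of \eqref{Acommu}.
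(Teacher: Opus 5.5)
Your parity strategy is sound, but the step you actually commit to at the end has a gap. That step drops the reflection and counts normal derivatives on ``factors $\partial^\beta g^\alpha$ and $\partial^\gamma f$''. By the chain rule, $\partial^j\A[\nabla g]$ is a sum of terms $(D^q\A)[\nabla g](\partial^{j_1}\nabla g,\ldots,\partial^{j_q}\nabla g)$. The coefficient $(D^q\A)[\nabla g]$ is not a product of derivatives of $g$. At $x_0\in\partial\mathbb R^d_+$ it is evaluated at $p_0=\nabla g(x_0)$, whose rows are $\partial_dg^\alpha(x_0)e_d^{\top}$ with $\partial_dg(x_0)$ arbitrary, and some terms vanish only through this coefficient.

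For $k=1$, the commutator contains $2\,\partial_d\A_{aa}\,\partial_d\partial_a^2f$ with $a<d$, and hence the term $2\frac{\partial\A_{aa}}{\partial p_{\alpha a}}(p_0)\,\partial_d\partial_ag^\alpha\,\partial_d\partial_a^2f$. Both the $g$-factor and the $f$-factor carry exactly one normal derivative, and neither vanishes on the boundary. So your count cannot force this term to be zero. It is zero only because $\frac{\partial\A}{\partial p_{\alpha a}}(p_0)=-\partial_dg^\alpha\,\A(p_0)(e_a\otimes e_d+e_d\otimes e_a)\A(p_0)$ with $\A(p_0)$ diagonal, so its $(a,a)$ entry vanishes. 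Expanding $\A$ as the power series $\sum_n(-p^{\top}p)^n$ would make your count work, but that series converges only for small $p$, and the lemma assumes nothing about $|\partial_dg|$. This is exactly why the paper expands around $\mathsf P=\mathrm{Id}+|\partial_dg|^2e_d\otimes e_d$ instead.

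The repair is already in your proposal, so you should not abandon the reflection. You correctly observed that, after the top-order cancellation, the commutator is an expression $Q$ in $\nabla g,\ldots,\nabla^{2k+1}g$ and $\nabla^2f,\ldots,\nabla^{2k+1}f$ only. Evaluate $Q$ on the odd reflections $\tilde f,\tilde g\in C^{2k+1}(\mathbb R^d)$:
\begin{itemize}
\item $Q[\tilde f,\tilde g]$ is continuous on $\mathbb R^d$.
\item Near any point with $x_d\neq0$ the reflections are locally $-f\circ S$ and $-g\circ S$, with $S=\mathrm{diag}(1,\ldots,1,-1)$. Your parity computation, which rests on the identity $\A[-pS]=S\A[p]S$, then gives $Q(Sx)=-Q(x)$.
\item Letting $x_d\to0$ yields $Q=0$ on $\partial\mathbb R^d_+$, where $Q$ coincides with the commutator of the original $f,g$.
\end{itemize}
Equivalently, replace $f,g$ by their Taylor polynomials of degree $2k+1$ at $x_0$; these are odd in $x_d$ and have the same jets.

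If you prefer a termwise argument, you must add a coefficient constraint. Differentiate $\A[-pS]=S\A[p]S$ at $p_0$, where $-p_0S=p_0$. This shows $\frac{\partial^q\A_{ab}}{\partial p_{\alpha_1c_1}\cdots\partial p_{\alpha_qc_q}}(p_0)=0$ unless the number of indices among $a,b,c_1,\ldots,c_q$ equal to $d$ has the parity of $q$. Together with $j_d+l_d$ even, this excludes every term in which all factors have odd normal order.

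Compared with the paper: it verifies, by an indicator-matrix induction through the Neumann series, that $\partial_d^{j}\A[\nabla g](x_0)$ and $\nabla^2\partial_d^{l}f(x_0)$ have complementary block supports when $j+l$ is even. The completed reflection argument gets the same cancellation from this single symmetry plus continuity, with no series and no index bookkeeping.
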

 \begin{proof}[Proof of Lemma~\ref{bdcon}]
 	By boundary condition, for any $x_0\in\partial\mathbb{R}_+^d$, one has 
 	\begin{align}\label{dex0}
	( \partial_1^{n_1}\cdots\partial_{d-1}^{n_{d-1}} \partial_d^{2j})f(x_0)=0,\quad\quad \forall n_1,\cdots,n_{d-1}\in\mathbb{N},\quad 2j+\sum_{i=1}^{d-1}n_i\leq 2k,\quad 0\leq j\leq k, 
 	\end{align}
 	and 
 	\begin{align}\label{dex0g}
	( \partial_1^{n_1}\cdots\partial_{d-1}^{n_{d-1}} \partial_d^{2j})g(x_0)=0,\quad\quad \forall n_1,\cdots,n_{d-1}\in\mathbb{N},\quad 2j+\sum_{i=1}^{d-1}n_i\leq 2k,\quad 0\leq j\leq k. 
	\end{align}
 	We can write 
 	\begin{equation}\label{deldif}
 	\begin{aligned}
 	&|\Delta^k\left(\A[\nabla g]:\nabla^2 f\right)- \A[\nabla g]:\nabla^2\Delta^k f|\\
 	&\quad\quad\quad\quad\leq \sum_{j,l} |\partial_1^{j_1}\partial_2^{j_2}\cdots\partial_d^{j_d}\A[\nabla g]:\nabla^2\partial_1^{l_1}\partial_2^{l_2}\cdots\partial_d^{l_d}f|,
 	\end{aligned}
 	\end{equation}
 	here the summation is taken for some $j=(j_1,\cdots,j_d)$, $l=(l_1,\cdots,l_d)$ satisfying
 	\begin{align}\label{indjl}
 	\sum_{i=1}^d(j_i+l_i)=2k,\ \ \sum_{i=1}^dj_i> 0,\ \text{and}\  \{j_i+l_i\}_{i=1}^d\subset 2\mathbb{N},
 	\end{align}
 	where $2\mathbb{N}=\{2n:n\in\mathbb{N}\}$ denotes the set of even integers.
	 	We prove that the Frobenius inner product on the right-hand side of \eqref{deldif} is zero for every $j,l$ satisfying \eqref{indjl}. No cancellation is used: it suffices to determine the support of the matrices $\partial_1^{j_1}\partial_2^{j_2}\cdots\partial_d^{j_d}\A[\nabla g](x_0)$ and $\nabla^2\partial_1^{l_1}\partial_2^{l_2}\cdots\partial_d^{l_d}f(x_0)$.
 	
 	For simplicity, for any $M\in\mathbb{R}^{d\times d}$, we define its indicator matrix $\mathbf{I}(M)$ as follows.
 	\begin{align*}
 	\mathbf{I}_{ij}(M)=\begin{cases}
 	1,\ \ \ \ \text{if}\ M_{ij}\neq 0,\\
 	0,\ \ \ \ \text{if}\ M_{ij}= 0.\\
 	\end{cases}
 	\end{align*}
		The boundary identities \eqref{dex0}--\eqref{dex0g} imply that the odd reflections of $f$ and $g$ have the required jets through order $2k$. Consequently, the tangential--tangential and normal--normal entries of $\A[\nabla g]$ are even in $x_d$, whereas its mixed tangential--normal entries are odd. For $\nabla^2f$ the parity is reversed. Tangential differentiation preserves parity in $x_d$ and therefore cannot create a new support pattern at $x_d=0$; only the numbers $j_d$ and $l_d$ of normal derivatives determine which block may be nonzero. Thus the support analysis reduces to $\mathbf{I}(\partial_d^{j_d}\A[\nabla g](x_0))$ and $\mathbf{I}(\nabla^2\partial_d^{l_d}f(x_0))$, with all tangential derivatives regarded as parity-preserving prefactors. For notational simplicity, write $j=j_d$ and $l=l_d$ below.
 	We claim that:
 	(i). If $l,j\in 2\mathbb{N}$, $j\leq 2k, l<2k$, then 
 	\begin{align}
 	&\mathbf{I}(\partial_d^{j}\A[\nabla g](x_0))\leq\left(\begin{array}{cc}
 	1_{(d-1)\times(d-1) }  & 0_{(d-1)\times 1}  \\
 	0_{1\times(d-1)}  & 1
 	\end{array}\right),\label{eveA}\\    &\mathbf{I}(\nabla^2\partial_d^{l}f(x_0))\leq\left(\begin{array}{cc}
 	0_{(d-1)\times(d-1) }  & 1_{(d-1)\times 1}  \\
 	1_{1\times (d-1)}  & 0
 	\end{array}\right).\label{evef}
 	\end{align}
 	Here $1$ denotes a matrix all of whose entries equal $1$. For matrices $A,B$, we write $A\leq B$ if $A_{ij}\leq B_{ij}$ for every $i,j$.
 	(ii). If $l,j\in \mathbb{N}\backslash 2\mathbb{N}$, $j\leq 2k, l<2k$, then 
 	\begin{align}
 	&\mathbf{I}(\partial_d^{j}\A[\nabla g](x_0))\leq\left(\begin{array}{cc}
 	0_{(d-1)\times(d-1) }  & 1_{(d-1)\times 1}  \\
 	1_{1\times (d-1)}  & 0
 	\end{array}\right),\label{oddA}\\    &\mathbf{I}(\nabla^2\partial_d^{l}f(x_0))\leq\left(\begin{array}{cc}
 	1_{(d-1)\times(d-1) }  & 0_{(d-1)\times 1}  \\
 	0_{1\times (d-1)}  & 1
 	\end{array}\right).\label{oddf}
 	\end{align}
 	Then it follows from (i), (ii) and \eqref{deldif} that 
 	\begin{align*}
 	\Big(\Delta^k\left(\A[\nabla g]:\nabla^2 f\right)- \A[\nabla g]:\nabla^2\Delta^k f\Big)(x_0)=0.
 	\end{align*}
 	This holds for any $x_0\in\partial\mathbb{R}_+^d$. This completes the proof of \eqref{Acommu}.
 	
 	Now we prove (i) and (ii). By \eqref{dex0} we obtain \eqref{evef} and \eqref{oddf}. It remains to verify \eqref{eveA} and \eqref{oddA}. 
 	
	 	We first write $\A[\nabla g]$ as a Neumann series
 	\begin{align}\label{series}
	\A[\nabla g]=\left[\mathrm{Id}+\sum_{i=1}^{N}\nabla g^i\otimes \nabla g^i\right]^{-1}=\sum_{m=0}^\infty(-1)^m(\mathsf{P}^{-1}\B)^m\mathsf{P}^{-1},
 	\end{align}
 	where\begin{align*}
	 	\mathsf{P}=\mathrm{Id}+|\partial_d g|^2 e_d\otimes e_d,\ \ \ \B=\sum_{i=1}^N\nabla g^i\otimes \nabla g^i-|\partial_d g|^2 e_d\otimes e_d.
 	\end{align*}
	 	The series on the right-hand side of \eqref{series} is well-defined in a sufficiently small neighborhood of $x_0$ because $\|\mathsf{P}^{-1}(x)\|\leq1$ and
	 	\[
	 	\|\B(x)\|\leq C\|g\|_{C^2}^2|x-x_0|,\qquad |x-x_0|\ll1.
	 	\]
 	Moreover,
 	it suffices to consider finite terms in the series \eqref{series}. In fact, one can check that all the elements in the matrix
 	$
 	(\mathsf{P}^{-1}\B)^2$ contain a factor in  the form $\partial_{i_1}g\partial_{i_2}g$, where $i_1,i_2<d$. Hence for $m\geq 2$, all the elements in $(\mathsf{P}^{-1}\B)^m\mathsf{P}^{-1}$ contain a factor in the form $\prod_{k=1}^{m-1}\partial_{i_k}g$, with $i_k<d$. Then by \eqref{dex0g}, we have 
 	\begin{align*}
 	\left(\partial_d^{j}\left((\mathsf{P}^{-1}\B)^m\mathsf{P}^{-1}\right)\right)(x_0)=0,\ \ \ \forall m>j+1, \quad x_0\in\partial\mathbb{R}_+^d,
 	\end{align*}
	 	since $\partial_d^j$ supplies at most $j$ normal derivatives, at most $j$ tangential factors can be prevented from vanishing at $x_0$; hence only $m\leq j+1$ contributes. We therefore consider $\mathsf{D}_m=(\mathsf{P}^{-1}\B)^m\mathsf{P}^{-1}$ with $0\leq m\leq j+1$. To prove \eqref{eveA} and \eqref{oddA}, it suffices to prove, for any $0\leq m\leq j+1$,
 	\begin{equation}\label{indum}
 	\begin{aligned}
 	&\mathbf{I}(\partial_d^{j}\mathsf{D}_m(x_0))\leq
 	\left(\begin{array}{cc}
 	1_{(d-1)\times(d-1) }  & 0_{(d-1)\times 1}  \\
 	0_{1\times(d-1)}  & 1
 	\end{array}\right),\ \forall j\in 2\mathbb{N},j\leq 2k,\\
 	&	    \mathbf{I}(\partial_d^{j}\mathsf{D}_m(x_0))\leq \left(\begin{array}{cc}
 	0_{(d-1)\times(d-1) }  & 1_{(d-1)\times 1}  \\
 	1_{1\times(d-1)}  & 0
 	\end{array}\right),\ \forall j\in \mathbb{N}\backslash 2\mathbb{N},j\leq 2k.
 	\end{aligned}
 	\end{equation}
	 	When $m=0$, $\mathsf{D}_0=\mathsf{P}^{-1}=\operatorname{diag}(1,\ldots,1,(1+|\partial_dg|^2)^{-1})$. The odd reflection of $g$ makes $\partial_dg$ even in $x_d$, so $|\partial_dg|^2$ is even and every odd normal derivative of $\mathsf P^{-1}$ vanishes at $x_0$. Thus \eqref{indum} holds for $m=0$.
 	By induction, assume \eqref{indum} holds for $m\leq M$, we prove that it also holds for $M+1$. We can write 
 	\begin{align*}
 	\mathsf{D}_{M+1}=(\mathsf{P}^{-1}\B)^{M+1}\mathsf{P}^{-1}=\mathsf{D}_M \B\mathsf{P}^{-1}.
 	\end{align*}
 	And 
 	\begin{align*}
 	\partial_d^j\mathsf{D}_{M+1}=\sum_{j'=0}^j\partial_d^{j'}\mathsf{D}_M\ \partial_d^{j-j'}(\B\mathsf{P}^{-1}).
 	\end{align*}
 	Note that 
 	\begin{align*}
 	\B\mathsf{P}^{-1}=\left(\begin{array}{cc}
 	\nabla'g\otimes \nabla'g & \frac{\partial_d g (\nabla'g)^\top  }{1+|\partial_d g|^2} \\
 	\partial_d g \nabla'g & 0
 	\end{array}\right).
 	\end{align*}
 	By \eqref{dex0g}, we have 
 	\begin{equation}\label{indi}
 	\begin{aligned}
 	&\mathbf{I}(\partial_d^{j-j'}(\B\mathsf{P}^{-1})(x_0))\leq\left(\begin{array}{cc}
 	0_{(d-1)\times (d-1)} & 1_{(d-1)\times 1}  \\
	1_{1\times(d-1)} & 0
 	\end{array}\right),\ \ \ \ j-j'\in \mathbb{N}\backslash 2\mathbb{N},\\
 	&\mathbf{I}(\partial_d^{j-j'}(\B\mathsf{P}^{-1})(x_0))\leq\left(\begin{array}{cc}
 	1_{(d-1)\times (d-1)} & 0_{(d-1)\times 1}  \\
 	0_{1\times(d-1)} & 0
 	\end{array}\right),\ \ \ \ j-j'\in 2\mathbb{N}.
 	\end{aligned}
 	\end{equation}
 	If $j\in 2\mathbb{N}$, then we have $j',j-j'\in 2\mathbb{N}$ or $j',j-j'\in \mathbb{N}\backslash 2\mathbb{N}$, then by \eqref{indum} and \eqref{indi} we have
 	\begin{align*}
	 \mathbf{I}\left(\partial_d^{j'}\mathsf{D}_M\ \partial_d^{j-j'}(\B\mathsf{P}^{-1})\right)&\leq \mathbf{I}\left(  \left(\begin{array}{cc}
 	1_{(d-1)\times(d-1) }  & 0_{(d-1)\times 1}  \\
 	0_{1\times(d-1)}  & 1
 	\end{array}\right)\left(\begin{array}{cc}
 	1_{(d-1)\times (d-1)} & 0_{(d-1)\times 1}  \\
 	0_{1\times(d-1)} & 0
 	\end{array}\right)\right)\\
 	&\leq\left(\begin{array}{cc}
 	1_{(d-1)\times (d-1)} & 0_{(d-1)\times 1}  \\
 	0_{1\times(d-1)} & 0
 	\end{array}\right),\ \ \ j',j-j'\in 2\mathbb{N},
 	\end{align*}
 	and 
 	\begin{align*}
	 \mathbf{I}\left(\partial_d^{j'}\mathsf{D}_M\ \partial_d^{j-j'}(\B\mathsf{P}^{-1})\right)&\leq \mathbf{I}\left(  \left(\begin{array}{cc}
 	0_{(d-1)\times(d-1) }  & 1_{(d-1)\times 1}  \\
 	1_{1\times(d-1)}  & 0
 	\end{array}\right)\left(\begin{array}{cc}
 	0_{(d-1)\times (d-1)} & 1_{(d-1)\times 1}  \\
 	1_{1\times(d-1)} & 0
 	\end{array}\right)\right)\\
 	&\leq\left(\begin{array}{cc}
 	1_{(d-1)\times (d-1)} & 0_{(d-1)\times 1}  \\
 	0_{1\times(d-1)} & 1
 	\end{array}\right),\ \ \ j',j-j'\in \mathbb{N}\backslash 2\mathbb{N}.
 	\end{align*}
 	Hence we obtain 
 	\begin{align}\label{evre}
	 \mathbf{I}(\partial_d^j\mathsf{D}_{M+1}(x_0))\leq\left(\begin{array}{cc}
 	1_{(d-1)\times (d-1)} & 0_{(d-1)\times 1}  \\
 	0_{1\times(d-1)} & 1
 	\end{array}\right),\ \ \ j\in2\mathbb{N}, j\leq 2k.
 	\end{align}
 	Similarly, if $j\in\mathbb{N}\backslash 2\mathbb{N}$, then we have $j'\in 2\mathbb{N}$, $j-j'\in \mathbb{N}\backslash 2\mathbb{N}$ or  $j'\in \mathbb{N}\backslash 2\mathbb{N}$, $j-j'\in 2\mathbb{N}$, then by \eqref{indum} and \eqref{indi} we have
 	\begin{align*}
	 \mathbf{I}\left(\partial_d^{j'}\mathsf{D}_M\ \partial_d^{j-j'}(\B\mathsf{P}^{-1})\right)&\leq \mathbf{I}\left(  \left(\begin{array}{cc}
 	1_{(d-1)\times(d-1) }  & 0_{(d-1)\times 1}  \\
 	0_{1\times(d-1)}  & 1
 	\end{array}\right)\left(\begin{array}{cc}
 	0_{(d-1)\times (d-1)} & 1_{(d-1)\times 1}  \\
 	1_{1\times(d-1)} & 0
 	\end{array}\right)\right)\\
 	&\leq\left(\begin{array}{cc}
 	0_{(d-1)\times (d-1)} & 1_{(d-1)\times 1}  \\
 	1_{1\times(d-1)} & 0
 	\end{array}\right),\ \ \ j'\in 2\mathbb{N}, j-j'\in \mathbb{N}\backslash 2\mathbb{N},
 	\end{align*}
 	and 
 	\begin{align*}
	 \mathbf{I}\left(\partial_d^{j'}\mathsf{D}_M\ \partial_d^{j-j'}(\B\mathsf{P}^{-1})\right)&\leq \mathbf{I}\left(  \left(\begin{array}{cc}
 	0_{(d-1)\times(d-1) }  & 1_{(d-1)\times 1}  \\
 	1_{1\times(d-1)}  & 0
 	\end{array}\right)\left(\begin{array}{cc}
 	1_{(d-1)\times (d-1)} & 0_{(d-1)\times 1}  \\
 	0_{1\times(d-1)} & 0
 	\end{array}\right)\right)\\
 	&\leq\left(\begin{array}{cc}
 	0_{(d-1)\times (d-1)} & 0_{(d-1)\times 1}  \\
 	1_{1\times(d-1)} & 0
 	\end{array}\right),\ \ \ j'\in\mathbb{N}\backslash 2\mathbb{N},  j-j'\in 2\mathbb{N}.
 	\end{align*}
 	This implies that 
 	\begin{align}\label{odre}
	 \mathbf{I}(\partial_d^j\mathsf{D}_{M+1}(x_0))\leq\left(\begin{array}{cc}
 	0_{(d-1)\times (d-1)} & 1_{(d-1)\times 1}  \\
 	1_{1\times(d-1)} & 0
 	\end{array}\right),\ \ \ j\in\mathbb{N}\backslash 2\mathbb{N}, j\leq 2k.
 	\end{align}
	By \eqref{evre} and \eqref{odre}, \eqref{indum} holds for $M+1$. This yields \eqref{eveA} and \eqref{oddA} and completes the proof.
 \end{proof}

 The next lemma shows that the Dirichlet boundary condition is inherited by the Laplacian for forcing terms with this special structure.
 \begin{lemma}\label{lembdc}
 	Assume $g\in C((0,T];C^{2}(\overline{\mathbb{R}^d_+}))$ with $g(t,x)=0$ for any $(t,x)\in(0,T]\times\partial\mathbb{R}_+^d$. Let $u\in C^1((0,T];C(\overline{\mathbb R_+^d}))\cap C((0,T];C^2(\overline{\mathbb R_+^d}))$ be a solution to 
 	\begin{equation}\label{bddmcbdyeq}
 	\begin{aligned}
 	&\partial_tu(t,x)-\A[\nabla g](t,x):\nabla^2u(t,x)=F(t,x),\quad \text{in}\ [0,T]\times\mathbb{R}^d_+,\\
 	&u(0,x)=u_0(x),\quad \text{in}\ \mathbb{R}^d_+,\\
 	&u(t,x)=0,\quad \text{on}\ [0,T]\times\partial\mathbb{R}^d_+,
 	\end{aligned}
 	\end{equation}
 	with
 	\begin{equation*}
 	F(t,x)=0,\quad \text{on}\ (0,T]\times \partial\mathbb{R}^d_+.
 	\end{equation*}
 	Then we have 
 	\begin{equation*}
 	\Delta u(t,x)=0,\quad\text{on}\ (0,T]\times\partial\mathbb{R}^d_+.
 	\end{equation*}
 	\end{lemma}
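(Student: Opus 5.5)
The plan is to evaluate the equation \eqref{bddmcbdyeq} at a boundary point and use the structure of $\A[\nabla g]$ there, as computed in Lemma~\ref{lem5.1}. Fix $t\in(0,T]$ and $x_0\in\partial\mathbb R^d_+$.

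\emph{Step 1: the time derivative vanishes on the boundary.} Since $u(s,x_0)=0$ for all $s\in(0,T]$ and $u\in C^1((0,T];C(\overline{\mathbb R^d_+}))$, differentiating in time gives $\partial_tu(t,x_0)=0$. Here it matters that $\partial_t u$ is continuous up to the boundary, so the trace of $\partial_tu$ is the time derivative of the trace. The same reasoning applied to tangential differentiation gives $\Delta_{x'}u(t,x_0)=0$, because $u(t,\cdot)$ vanishes identically on $\{x_d=0\}$ and $u(t)\in C^2$ up to the boundary.

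\emph{Step 2: evaluate the equation at $x_0$.} Using $F(t,x_0)=0$ and Step~1, the equation gives $(\A[\nabla g]:\nabla^2u)(t,x_0)=0$. Since $g(t)|_{\partial\mathbb R^d_+}=0$ and $g(t)\in C^2$, formula \eqref{mcbdfma} of Lemma~\ref{lem5.1} applies (it needs only the condition on $g$). We obtain
\[
0=\Delta_{x'}u(t,x_0)+\frac{\partial_d^2u(t,x_0)}{1+|\partial_dg(t,x_0)|^2}=\frac{\partial_d^2u(t,x_0)}{1+|\partial_dg(t,x_0)|^2}.
\]
Therefore $\partial_d^2u(t,x_0)=0$, and so $\Delta u(t,x_0)=\Delta_{x'}u(t,x_0)+\partial_d^2u(t,x_0)=0$.

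\emph{Main obstacle.} The only delicate point is regularity: the equation must hold pointwise up to the boundary, not just in the open half-space. This follows by continuity, since $\partial_tu$, $\nabla^2u$ and $F$ are continuous on $\overline{\mathbb R^d_+}$ for each fixed $t>0$ and $\A[\nabla g]$ is continuous. We can therefore pass to the limit $x\to x_0$ from inside $\mathbb R^d_+$. The positivity of the denominator $1+|\partial_dg|^2$ is automatic. For vector-valued $u$ the same argument applies componentwise, since $\A$ is scalar in the target index.
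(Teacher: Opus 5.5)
Your proposal is correct and follows the paper's proof essentially step for step. Since $\partial_t u$ and $F$ vanish on the boundary, $\A[\nabla g]:\nabla^2u=0$ there, and formula \eqref{mcbdfma} together with $\Delta_{x'}u=0$ forces $\partial_d^2u=0$. Your remarks on extending the equation to the boundary by continuity, and on applying \eqref{mcbdfma} componentwise, are small justifications that the paper leaves implicit.
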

 	\begin{proof}
 		Since $u|_{\partial\mathbb{R}_+^d}=0$, one has $\partial_t u|_{\partial\mathbb{R}_+^d}=0$. Moreover, we have $F|_{\partial\mathbb{R}_+^d}=0$, so by \eqref{bddmcbdyeq}, one has
 		\begin{equation*}
	 	(\A[\nabla g](t):\nabla^2u(t))(x_0)=0.
 		\end{equation*}
 		Combining this with \eqref{mcbdfma}, and the fact that 
 		$\Delta_{x'}u( x_0)=0$, one has
 		\begin{equation*}
	 	\partial_{x_d}^2 u(t,x_0)=0.
 		\end{equation*}
 		So we have 
 		\begin{equation*}
 		\Delta u(t,x_0)=0,\ \ \ \forall t\in(0,T].
 		\end{equation*}
 		Since the argument holds for any $x_0\in\partial\mathbb{R}_+^d$, one has 
 		\begin{equation*}
 		\Delta u(t,x)=0,\quad\text {on}\ (0,T]\times \partial\mathbb{R}_+^d.
 		\end{equation*}
 		This completes the proof.
	\end{proof}

    The following boundary lemma expresses even normal derivatives through derivatives of the forcing and lower-normal-order derivatives of the solution.
 	 \begin{lemma}\label{mcbdlembdy}
	 	Let $F\in C^\infty ((0,T]\times\mathbb{R}_+^d)$, and assume that $f$ is sufficiently smooth on $(0,T]\times\overline{\mathbb R_+^d}$ to justify the derivatives below. If $f$ solves
 	 	\begin{equation}\label{mcbdbdeq}
 	 	\begin{aligned}
 	 	&\partial_t f-\Delta f=F,\ \ \ \text{in}\ [0,T]\times \mathbb{R}_+^d,\\
 	 	&f|_{t=0}=f_0,\ \ \ \text{in}\ \mathbb{R}_+^d,\\
 	 	&f=0,\ \ \text{on}\ [0,T]\times\partial 
 	 	\mathbb{R}_+^d,
 	 	\end{aligned}
 	 	\end{equation}
	 	then for any $k\geq1$, we have the boundary condition
 	 	\begin{equation}\label{mcbdbdyfr}
 	 	\begin{aligned}
	 	&\partial_d^{2k}f(t,x)=F^{k-1}(t,x)+R^{2k}(t,x),\quad \forall x\in \partial\mathbb{R}_+^d,\\
	 	&F^{k-1}=\sum_{2i+|\alpha|=2k-2}C^1_{i,\alpha}\partial_t^i\nabla^{\alpha}F,\quad R^{2k}=\sum_{\substack{l+|\beta|= 2k\\l<2k-1}}C^2_{l,\beta}\partial_{d}^{l}\nabla_{x'}^{\beta}f,
 	 	\end{aligned}
 	 	\end{equation}
	  Furthermore, applying $\nabla_{x'}^\beta$ gives the analogous decomposition
 	 	\begin{equation}\label{mcbdbdyfrm}
 	 	\begin{aligned}
 	 	&\partial_d^{2k}\nabla_{x'}^\beta f=F^{k-1}_\beta+R^{2k}_\beta,\quad x\in(0,T]\times \partial\mathbb{R}_+^d,\\
	 	&F^{k-1}_\beta=\sum_{2i+|\alpha|=2k-2}C^{1,\beta}_{i,\alpha}\partial_t^{i}\nabla^{\alpha}\nabla_{x'}^\beta F,\quad R^{2k}_\beta=\sum_{\substack{l+|\zeta|= 2k\\l<2k-1}}C_{l,\zeta}^{2,\beta}\partial_{d}^{l}\nabla_{x'}^{\zeta+\beta}f.
 	 	\end{aligned}
 	 	\end{equation}
	 \end{lemma}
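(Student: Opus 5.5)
The plan is an induction on $k$, using the equation to trade normal derivatives for time derivatives and forcing, and using the Dirichlet condition to kill every pure time and tangential derivative of $f$ on the boundary. Since $f=0$ on $[0,T]\times\partial\mathbb R^d_+$, all derivatives $\partial_t^i\nabla_{x'}^\gamma f$ vanish there for $t>0$. This is the only boundary input; no parity is needed.

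\emph{Base case $k=1$.} On $\partial\mathbb R^d_+$ the equation gives
\[
\partial_d^2 f=\partial_t f-\Delta_{x'}f-F=-F,
\]
so $F^{0}=-F$ and $R^2=0$. Equivalently, $R^2$ may be taken to be $-\Delta_{x'}f$, which has $l=0<1$ normal derivatives.

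\emph{Inductive step.} Assume the identity holds at order $2k$, and differentiate the equation in the interior:
\[
\partial_d^{2k+2}f=\partial_d^{2k}\bigl(\partial_t f-\Delta_{x'}f-F\bigr).
\]
\begin{itemize}
\item The forcing piece $\partial_d^{2k}F$ already has the form of $F^{k}$, since $2k=2k$.
\item For the tangential piece, $\partial_d^{2k}\Delta_{x'}f$ has $l=2k<2k+1$ normal derivatives, so it belongs to $R^{2k+2}$ directly.
\item For the time piece $\partial_d^{2k}\partial_t f$, apply the inductive identity to $\partial_t f$, which also solves a Dirichlet problem with forcing $\partial_t F$. On the boundary this gives $\partial_d^{2k}\partial_t f=\partial_t F^{k-1}+\partial_t R^{2k}$.
\end{itemize}
The term $\partial_t F^{k-1}$ is a combination of $\partial_t^{i+1}\nabla^\alpha F$ with $2(i+1)+|\alpha|=2k$, which is admissible in $F^k$.

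The remaining term is $\partial_t R^{2k}=\sum C\,\partial_d^l\nabla_{x'}^\beta\partial_t f$ with $l<2k-1$ and $l+|\beta|=2k$. Substitute $\partial_t f=\Delta f+F$ in the interior and evaluate at the boundary:
\begin{itemize}
\item $\partial_d^l\nabla_{x'}^\beta\Delta_{x'}f$ has $l<2k+1$ normal derivatives and total order $2k+2$, so it goes into $R^{2k+2}$.
\item $\partial_d^l\nabla_{x'}^\beta F$ has order $2k$ and goes into $F^k$.
\item $\partial_d^{l+2}\nabla_{x'}^\beta f$ has $l+2<2k+1$ normal derivatives and total order $2k+2$, so it also goes into $R^{2k+2}$.
\end{itemize}
The constraint $l<2k-1$ is exactly what guarantees that $l+2$ stays below $2k+1$. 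This closes the induction, with constants $C^1,C^2$ depending only on $k$ and $d$.

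The statement \eqref{mcbdbdyfrm} for $\nabla_{x'}^\beta f$ follows by applying \eqref{mcbdbdyfr} to $\nabla_{x'}^\beta f$. That function satisfies the same equation with forcing $\nabla_{x'}^\beta F$ and still vanishes on the boundary, because tangential derivatives commute with both the operator and the trace.

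The main obstacle is bookkeeping: checking that each substitution keeps the normal order strictly below $2k+1$ and produces the correct weights $2i+|\alpha|$. The regularity needed to differentiate up to the boundary and to commute traces with $\partial_t$ is granted by the hypothesis. One should also record that terms with odd $l$ are allowed, which matches the form of $R^{2k}$ in the statement.
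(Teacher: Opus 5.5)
Your proposal is correct and follows essentially the same induction as the paper: differentiate the equation $2k$ times in $x_d$, rewrite $\partial_t\partial_d^{2k}f$ on the boundary via the inductive identity, and substitute $\partial_t f=\Delta f+F$ inside $\partial_t R^{2k}$ so that every normal order stays below $2k+1$. The only cosmetic difference is that you apply the inductive hypothesis to $\partial_t f$, which has forcing $\partial_t F$. The paper instead differentiates the boundary identity in $t$ along $(0,T]\times\partial\mathbb R^d_+$, and the two are equivalent under the smoothness assumption.
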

 	 \begin{proof}
 	 	We prove by induction. First, for $k=1$, since $f|_{\partial 
 	 		\mathbb{R}_+^d}=0$, we have $\partial_tf|_{\partial 
 	 		\mathbb{R}_+^d}=0$, then by the equation \eqref{mcbdbdeq} we have
 	 	\begin{equation*}
 	 	\begin{aligned}
	 	\partial_{dd}f=-F-\Delta_{x'}f=:F^0+R^2,\qquad F^0=-F,\quad R^2=-\Delta_{x'}f,\quad\text{on}\ (0,T]\times \partial
 	 	\mathbb{R}_+^d.
 	 	\end{aligned}
 	 	\end{equation*}
	 	So we have proved the case $k=1$. For $k>1$, assume inductively that \eqref{mcbdbdyfr} holds for every $k\leq K$. Taking $\partial_d^{2K}$ on both sides of \eqref{mcbdbdeq}, we have
 	 	\begin{equation*}
 	 	\begin{aligned}
 	 	\partial_d^{2K+2}f&=\partial_t\partial_d^{2K}f-\Delta_{x'}\partial_d^{2K}f-\partial_d^{2K}F\\
	 	&=-\Delta_{x'}\partial_d^{2K}f-\partial_d^{2K}F+\partial_t(F^{K-1}+R^{2K})\\
	 	&=-\Delta_{x'}\partial_d^{2K}f-\partial_d^{2K}F+\sum_{2i+|\alpha|=2K-2}C^{1}_{i,\alpha}\partial_t^{i+1}\nabla^{\alpha}F+\sum_{\substack{l+|\zeta|=2K\\l<2K-1}}C_{l,\zeta}^{2}\partial_{d}^{l}\nabla_{x'}^{\zeta}(\Delta f+F)\\
	 	&=\sum_{2i+|\alpha|=2K}C^{1}_{i,\alpha}\partial_t^{i}\nabla^{\alpha}F+\sum_{\substack{l+|\zeta|= 2K+2\\l<2K+1}}C_{l,\zeta}^{2}\partial_{d}^{l}\nabla_{x'}^{\zeta}f,\quad \text{on}\ (0,T]\times\partial\mathbb{R}_+^d,
 	 	\end{aligned}
 	 	\end{equation*}
		for constants $C_{i,\alpha}^{1},C_{l,\beta}^{2}$ depending only on the displayed indices, which implies that \eqref{mcbdbdyfr} holds for $K+1$. Thus \eqref{mcbdbdyfr} follows by induction for every $k\geq1$. The identity \eqref{mcbdbdyfrm} follows by applying $\nabla^\beta_{x'}$ to \eqref{mcbdbdyfr}. This completes the proof.
	 \end{proof}

\section*{Declarations}
\noindent\textbf{Data availability.} This research has no associated data.

\smallskip\noindent
\textbf{Conflict of interest.} The authors declare that they have no conflicts of interest.

\smallskip\noindent
\textbf{Acknowledgments.} Quoc-Hung Nguyen is supported by the CAS Project for Young Scientists in Basic Research, Grant No. YSBR-031, and by the NSFC under Grant Nos. 1251101538 and 12595282. Ke Chen is supported by the Research Centre for Nonlinear Analysis at The Hong Kong Polytechnic University.

\smallskip\noindent
\textbf{Note.} This manuscript is Part III of a study initially posted as \cite{KHN2024} and subsequently divided into papers on free-boundary problems, the Muskat problem, and fixed-boundary problems.

\end{document}